\let\orgdescriptionlabel\descriptionlabel
\renewcommand*{\descriptionlabel}[1]{%
  \let\orglabel\label
  \let\label\@gobble
  \phantomsection
  \edef\@currentlabel{#1\unskip}%
  \let\label\orglabel
  \orgdescriptionlabel{#1}%
}
\newtheorem{theorem}{Theorem}[section] 
\newtheorem{proposition}[theorem]{Proposition} 
\newtheorem{lemma}[theorem]{Lemma} 
\newtheorem{corollary}[theorem]{Corollary}
\newtheorem{conjecture}[theorem]{Conjecture}
\newtheorem*{theorem*}{Theorem}%
\newtheorem*{corollary*}{Corollary}
\theoremstyle{remark}
\newtheorem{remark}[theorem]{Remark}
\newtheorem{example}[theorem]{Example}
\newtheorem{question}[theorem]{Question}
\theoremstyle{definition}
\newtheorem{definition}[theorem]{Definition}
\newcommand{\defn}[1]{\emph{\color{blue} #1}} %
\def\cC{\mathcal{C}}
\def\cF{\mathcal{F}}
\def\cG{\mathcal{G}}
\def\cL{\mathcal{L}}
\def\cM{\mathcal{M}}
\def\cO{\mathcal{O}}
\def\cQ{\mathcal{Q}}
\def\cR{\mathcal{R}}
\def\cS{\mathcal{S}}
\def\cT{\mathcal{T}}
\def\RR{\mathbb{R}}
\def\SS{\mathbb{S}}
\renewcommand{\b}[1]{\bm{#1}} %
\renewcommand{\c}[1]{\mathcal{#1}} %
\newcommand{\set}[2]{\left\{ #1 \;\middle|\; #2 \right\}} %
\newcommand{\smallset}[2]{\{ #1 \;\big|\; #2 \}} %
\newcommand{\ssm}{\smallsetminus} %
\newcommand{\sprod}[2]{\left\langle  #1 \, , \, #2  \right\rangle} %
\newcommand{\symdif}{\,\triangle\,} %
\newcommand{\norm}[1]{\left\lVert  #1 \right\rVert} %
\newcommand{\one}{\boldsymbol{1}} %
\newcommandx{\ones}[1][1=n]{\one_{#1}} %
\newcommand{\zero}{\boldsymbol{0}} %
\newcommandx{\zeros}[1][1=n]{\zero_{#1}} %
\newcommandx{\Id}[1][1=n]{I_{#1}} %
\DeclareMathOperator{\conv}{conv} %
\DeclareMathOperator{\vol}{vol} %
\DeclareMathOperator{\id}{id} %
\newcommandx{\Sph}[1][1=n]{\SS^{#1}} %
\newcommandx{\Sym}[1][1=n]{\mathfrak{S}_{#1}} %
\newcommandx{\ivl}[1][1=n]{[{#1}]}
\newcommand{\p}[1]{\b{#1}} %
\newcommandx{\pol}[1][1=P]{\b{#1}} %
\newcommandx{\pc}[1][1=A]{\b{#1}} %
\newcommandx{\vc}[1][1=V]{\b{#1}} %
\newcommandx{\hyp}[1][1=H]{\b{#1}} %
\newcommandx{\ha}[1][1=H]{\c{#1}} %
\newcommand{\h}[1]{\bar {#1}} %
\newcommand{\polar}[1]{{#1}^\circ} %
\newcommandx{\Law}[1][1=\h {\pc}]{{\pol[\Lambda]({#1})}} %
\newcommandx{\simp}[1][1=n-1]{\pol[\triangle]_{#1}} %
\newcommandx{\cube}[1][1=n]{\pol[\square]_{#1}}%
\newcommandx{\cpol}[1][1=n]{\pol[\lozenge]_{#1}}%
\newcommandx{\tsimp}[1][1=n-1]{\tilde{\pol[\triangle]}_{#1}} %
\newcommandx{\Hs}[2][1=n,2=k]{\pol[\triangle]_{#1,#2}} %
\newcommandx{\Pn}[1][1=n]{\pol[P]_{#1}}%
\newcommandx{\cPn}[1][1=n]{\pol[P]_{#1}'}%
\newcommandx{\braid}[1][1=n]{\ha[B]_{#1}}%
\newcommandx{\braidB}[1][1=n]{\ha[B]_{#1}^{B}}%
\newcommandx{\braidBB}[1][1=n]{\widetilde{\ha[B]}_{#1}^{B}}%
\newcommand{\lf}{\ell}%
\newcommand{\af}{\gamma}%
\newcommandx{\lfv}[1][1={\p u}]{\sprod{#1}{\cdot\ }}%
\newcommand{\polf}{p}%
\newcommandx{\VO}[1][1={\pc}]{\ha[SH]({#1})}%
\newcommand{\Pset}{\Pi}%
\newcommandx{\PsetA}[1][1={\pc}]{\Pset({#1})}%
\newcommandx{\Scom}[1][1={\pc}]{\overline{\Pi}({#1})}%
\newcommandx{\PScom}[1][1={\OM,T}]{\widetilde{\Pi}({#1})}%
\newcommandx{\PScomF}[2][1={\OM,T},2=F]{\widetilde{\Pi}_{#2}({#1})}%
\newcommandx{\Sp}[1][1={\pc}]{\pol[SP]({#1})}%
\newcommandx{\Z}[1][1={\h {\pc}}]{\pol[Z]({#1})}%
\newcommandx{\sur}[1][1=I]{p_{#1}}%
\newcommandx{\lm}[1][1={\pc}]{M_{#1}}%
\newcommand{\monomials}{\cM}%
\newcommand{\veronese}{\chi}%
\newcommand{\height}{h}
\newcommandx{\HA}[1][1={\h {\pc}}]{\ha_{#1}}%
\DeclareMathOperator{\sign}{sign} %
\newcommand{\plus}{\boldsymbol{+}} %
\newcommandx{\pluses}[1][1=n]{\plus_{#1}} %
\newcommand{\minus}{\boldsymbol{-}} %
\newcommandx{\minuses}[1][1=n]{\minus_{#1}} %
\newcommand{\parallelclass}[1]{\overline{\overline{#1}}}
\newcommand{\OM}{\cM}%
\newcommand{\cov}{\OM}%
\newcommand{\topes}{\cT}%
\newcommandx{\VectOM}[1][1=\vc]{\cov({#1})}%
\newcommandx{\unVectOM}[1][1=\vc]{\un{\cov}({#1})}%
\newcommandx{\LOM}[1][1=\OM]{{#1}^{\textsf{lit}}}%
\newcommandx{\BOM}[1][1=\OM]{{#1}^{\textsf{big}}}%
\newcommandx{\MOM}[1][1=\OM]{{#1}^{\textsf{sw}}}%
\newcommandx{\LOMA}[1][1=\h \pc]{\LOM({#1})}%
\newcommandx{\BOMA}[1][1=\h \pc]{\BOM({#1})}%
\newcommandx{\MOMA}[1][1=\h \pc]{\MOM({#1})}%
\newcommandx{\unLOMA}[1][1=\h \pc]{\un{\LOM}({#1})}%
\newcommandx{\unBOMA}[1][1=\h \pc]{\un{\BOM}({#1})}%
\newcommandx{\unMOMA}[1][1=\h \pc]{\un{\MOM}({#1})}%
\newcommandx{\ipairs}[1][1={\ivl}]{\binom{#1}{2}}%
\newcommandx{\sep}[2][1=X,2=Y]{S({#1},{#2})}%
\DeclareMathOperator{\inv}{inv} %
\DeclareMathOperator{\invset}{inv} %
\newcommand{\reor}[2]{_{-#2}{#1}}
\newcommand{\supp}[1]{\underline{#1}}
\newcommand\restr[2]{{%
  \left.\kern-\nulldelimiterspace %
  #1 %
  \vphantom{\big|} %
  \right|_{#2} %
  }}%
\newcommand\contract[2]{#1 / _{#2}}%
\newcommandx{\op}[1][1=X]{I_{#1}}%
\newcommandx{\sv}[1][1=I]{X^{#1}}%
\newcommandx{\Mbraid}[1][1=n]{\braid[{#1}]}%
\newcommandx{\MbraidB}[1][1=n]{\braidB[{#1}]}%
\newcommandx{\MbraidBB}[1][1=n]{\braidBB[{#1}]}%
\newcommandx{\Mbool}[2][1=n, 2=r]{\mathcal{G}^{#2}_{#1}}%
\newcommand{\RG}{\cR\cG} %
\newcommand{\moves}{\cL} %
\newcommand{\idperm}{\id} %
\newcommand{\un}[1]{\underline{#1}} %
\DeclareMathOperator{\rank}{rk}
\newcommand{\rk}[2]{\rank_{#1}(#2)} %
\newcommand{\flats}{\cF} %
\newcommandx{\flatsM}[1][1=\OM]{\flats_{#1}} %
\newcommand{\Dil}[1]{D_1(#1)} %
\newcommand{\unop}{S} %
\newcommand{\unopSet}[1]{\cS\left(#1\right)} %
\newcommandx{\fib}[2][1=\pol,2={\pi}]{\Sigma\left(#1,#2\right)}%
\newcommand{\SC}{\Delta}%
\newcommandx{\OC}[1][1=P]{\SC\left(#1\right)}
\newcommand{\greatest}{\hat{\boldsymbol{1}}}
\title%
{Sweeps, polytopes, oriented matroids, and allowable graphs of permutations}
\date{\today}
\thanks{Supported by the project CAPPS (ANR-17-CE40-0018) of the French National Research Agency ANR, the French\,--\,Austrian project PAGCAP (ANR~21\,CE48\,0020 \& FWF I 5788), the project PID2019-106188GB-I00 of MCIN/AEI/10.13039/501100011033, and the project CLaPPo (21.SI03.64658) of Universidad de Cantabria and Banco Santander.}
\author{Arnau Padrol}
\address[Arnau Padrol]
{Departament de Matem\`atiques i Inform\`atica, Universitat de Barcelona, Gran Via de les Corts Catalanes 585, 08007 Barcelona, Spain.}
\email{arnau.padrol@ub.edu}
\author{Eva Philippe}
\address[Eva Philippe]{Sorbonne Université and Université Paris Cité, CNRS, IMJ-PRG, F-75005 Paris, France.}
\email{eva.philippe@imj-prg.fr}
\begin{document}

\begin{abstract}
A sweep of a point configuration is any ordered partition induced by a linear functional. 
Posets of sweeps of planar point configurations were formalized and abstracted by Goodman and Pollack under the theory of allowable sequences of permutations. We introduce two generalizations that model posets of sweeps of higher dimensional configurations.

Sweeps of a point configuration are in bijection with faces of an associated sweep polytope.
Mimicking the fact that sweep polytopes 
are projections of permutahedra, we define sweep oriented matroids as strong maps of the braid oriented matroid. Allowable sequences are then the sweep oriented matroids of rank~$2$, and many of their properties extend to higher rank. We show strong ties between sweep oriented matroids and both modular hyperplanes and Dilworth truncations from (unoriented) matroid theory. 
Pseudo-sweeps are a generalization of sweeps in which the sweeping hyperplane is allowed to slightly change direction, 
and that can be extended to arbitrary oriented matroids in terms of cellular strings. 
We prove that for sweepable oriented matroids, sweep oriented matroids provide a sphere that is a deformation retract of the poset of pseudo-sweeps. This generalizes a property of sweep polytopes (which can be interpreted as monotone path polytopes of zonotopes), and solves a special case of the strong Generalized Baues Problem for cellular strings.

A second generalization are allowable graphs of permutations: symmetric sets of permutations pairwise connected by allowable sequences. They have the structure of acycloids and include sweep oriented matroids.
\end{abstract}

\subjclass[2020]{52B05, 52B11, 52B12, 52B22, 52B40, 52C35, 52C40, 05B35, 06B99}

\keywords{Allowable sequence of permutations, sweep algorithm, monotone path polytope, generalized Baues problem, permutahedron, oriented matroid.} %

\maketitle

\section{Introduction}

It is very natural to order a point configuration by the values of a linear functional, and it is not surprising that applications abound in discrete and combinatorial geometry. For example, this is the core of sweep algorithms, a central paradigm in computational geometry (see \cite[Section 2.1]{DCVO08}). 
The simplex methods for linear programming visit vertices of a convex polytope in such a linear order (see for example~\cite{MatousekGartner2006}). Moreover, these orderings are precisely those inducing the Bruggesser-Mani line shellings in the polar polytope~\cite{BruggesserMani1971} 
(see \cite[Lec.~~8]{Ziegler1995}).

The set of all linear orderings of a planar point configuration was already studied by Perrin in 1882 \cite{Per1882}. This was a precursor to
the theory of \defn{allowable sequences}, introduced and developed by Goodman and Pollack~\cite{GP80,GP80b,GP82,GP84,GP93}. 
The idea is the following. Given a configuration~$\pc$ of $n$ points in the plane, for each generic vector $\p u\in\RR^2$, we sweep the plane with a line orthogonal to~$\p u$. The order in which the points are hit by the line gives rise to a permutation $\sigma\in\Sym$ (see \cref{fig:AllowableSequence}).
As~$\p u$ rotates $180^{\circ}$ clockwise, we obtain a sequence of permutations in which:
\begin{enumerate}[(i)]
\item\label{it:AllowSeq1} the move from a permutation to the next one consists of reversing one or more disjoint substrings;
\item\label{it:AllowSeq2} each pair $i,j$ with $1\leq i < j \leq n$ is reversed in exactly one move along the sequence.
\end{enumerate}
An \defn{allowable sequence} is a sequence of permutations from the identity to its reverse ($\sigma,\overline\sigma\in\Sym$ are \defn{reverse} if $\sigma(t)=\overline{\sigma}(n-t+1)$ for all~$t$) fulfilling these two conditions. Contrary to Perrin's claim, Goodman and Pollack showed that there are unrealizable allowable sequences~\cite[Fig.~3 and Thm.~3.1]{GP80}, that is, that do not arise from a point configuration with this construction (c.f.\ \cref{fig:unrealizable_pent}). 

\begin{figure}[htpb]
        \includegraphics[width=\textwidth]{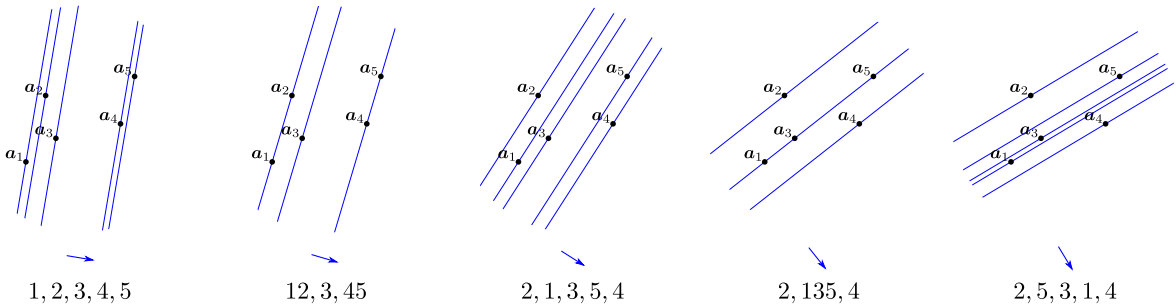}
    \caption{A segment of an allowable sequence. The sweeps between two consecutive permutations in the sequence correspond to ordered partitions.}
    \label{fig:AllowableSequence}
\end{figure}

Allowable sequences are hence purely combinatorial objects abstracting geometric properties of planar point configurations.
They are closely related to pseudoline arrangements and oriented matroids (see~\cite[Sects.~1.10 \& 6.4]{BLSWZ99}), 
although their combinatorial structure is in some senses easier to grasp and manipulate.
In particular, in the \defn{simple} case (where consecutive permutations differ by a transposition), allowable sequences are in correspondence with reduced decompositions of the reverse of the identity and maximal chains in the weak Bruhat order of~$\Sym$, see~\cite[Sec.~6.4]{BLSWZ99}, as well as with (minimal primitive) sorting networks~\cite[Sec.~5.3.4]{KnuthTAOCP3}. This has allowed for their complete enumeration~\cite{Stanley1984,EdelmanGreene1987}, as well as the study of uniform random instances~\cite{AHRV2007,ADHV2019,Dauvergne2019}.

They turned out to be a very effective tool to study problems of geometric combinatorics in the plane, used for example to prove Ungar's theorem (a configuration of $2n$ points not all on a same line determines at least $2n$ slopes)~\cite{Ung82}, to decide the stretchability of arrangements of at most eight pseudolines~\cite{GP80b}, or to estimate the number of $k$-sets and ($\leq k$)-sets~\cite{AlonGyori1986,LovaszVesztergombiWagnerWelzl2004,Welzl1986}. See~\cite[Ch.~V]{GP93} and \cite[Ch.~6]{Felsner2004} for some of their applications.

The construction detailed above extends naturally to any higher dimensional point configuration~$\pc\in\RR^{d\times \ivl}$. Every vector $\p u\in\RR^d$ defines a \defn{sweep}, which is the ordered partition of~$\ivl$ in which the points of~$\pc$ are met when sweeping 
with a hyperplane in direction~$\p u$. Goodman and Pollack already observed that sweeps induce a complex on the unit sphere~$\Sph[d-1]$, \emph{``which has not yet been fully investigated''} (\cite[after Def.~2.3]{GP93}). This was further explored by Edelman~\cite{Edelman2000} and Stanley~\cite{Stan15} who, in particular, presented a tight upper bound for the number of sweeping orders of a $d$-dimensional configuration of $n$~points.

Ordered by refinement, the \defn{poset of sweeps}~$\Scom$ is isomorphic to the face poset of a polyhedral fan 
generated by a hyperplane arrangement~$\VO$, called the \defn{valid order arrangement} by Stanley in a polar formulation~\cite{Stan15}. 
As we discuss in \cref{sec:sweep polytopes}, this is the normal fan of a zonotope: the \defn{sweep polytope}~$\Sp$
(mentionned under the name of \defn{shellotope} by Gritzmann and Sturmfels in~\cite{GritzmannSturmfels1993}).

Posets of sweeps of point configurations are the high-dimensional analogue of realizable allowable sequences. 
However, there is no purely combinatorial description of these objects. Indeed, Hoffmann and Merckx recently adapted the classical Universality Theorem for oriented matroids by Mn\"ev \cite{Mnev88} to give a Universality Theorem for allowable sequences~\cite{HM18}. This shows that already in the plane the problem of deciding whether an allowable sequence arises from a point configuration is very hard (equivalent to the ``existential theory of the reals'', and in particular NP-hard). 

Our main goal is to give a purely combinatorial high-dimensional generalization of allowable sequences that abstracts and encompasses the posets of sweeps of point configurations.
We present two strongly related approaches with two levels of generality (\defn{sweep oriented matroids} and \defn{sweep acycloids}). As we will see, the objects that we introduce fill a gap connecting several topics studied by different communities, providing a new and unified point of view.
We also hope that, beside their intrinsic interest, 
having a purely combinatorial framework without the rigid constraints of realizability
will open the door to new approaches to problems on discrete and combinatorial geometry, as happened in the two-dimensional case.

Our starting point are sweep polytopes. We report alternative constructions that highlight different points of view. On the one hand, sweep polytopes are affine projections of permutahedra. The \defn{$n$-permutahedron}~$\Pn\subset \RR^d$ is a classical polytope whose
normal fan is the braid arrangement~$\braid$.
Up to translation, every affine projection of a permutahedron is a sweep polytope, which gives a natural combinatorial interpretation of permutahedral shadows. 
Moreover, sweep polytopes can be realized as fiber polytopes, and in particular as monotone path polytopes of zonotopes~\cite[Sec.~5]{Edelman2000}.
These are polytopes whose vertices encode the parametric simplex paths induced by a linear functional~\cite{BS92,BilleraKapranovSturmfels1994}.
Conversely, every monotone path polytope of a zonotope is a sweep polytope (under mild technical conditions, see \cref{prop:monotonepathzonotope}). This interpretation of sweep polytopes appears in the study of pivot rules in linear programming~\cite{BLLS22}.

Moreover, this construction naturally reveals a decomposition of sweep polytopes as Minkowski sums of~\defn{$k$-set polytopes}~\cite{AndrzejakWelzl2003,EdelsbrunnerValtrWelzl1997} (see \Cref{rmk:fiberksets}).
After the appearance of the first version of this article, most of these constructions have been generalized to \defn{lineup polytopes}, which encode prefixes of sweeps and are relevant for the $1$-body $N$-representability problem in quantum physics, see~\cite{CLLPPS2021} and references therein.

Inspired by the characterization of sweep polytopes as permutahedral shadows, in \Cref{sec:sweeporientedmatroids} we define \defn{sweep oriented matroids} as strong maps of the oriented matroid of the braid arrangement. 
The strong link between allowable sequences, oriented matroids of rank~$3$, and arrangements of pseudolines is well documented in~\cite[Sects.~1.10 \& 6.4]{BLSWZ99} and explained in terms of \defn{big} and \defn{little oriented matroids}.
These concepts extend to high dimensions too: each sweep oriented matroid of rank~$r$ determines a little and a big oriented matroid of rank~$r+1$ (\Cref{thm:BOMisOM,lem:rank_BOM}). For sweep oriented matroids of rank~$2$, which are equivalent to allowable sequences, we recover the original definitions.
In particular, in the realizable case, the little oriented matroid is the standard oriented matroid associated to the point configuration.

We show that, up to isomorphism, big oriented matroids are characterized by having a \defn{tight modular hyperplane} (\Cref{thm:modularcharacterization}). Modular flats of matroids were introduced by Stanley~\cite{Stanley1971} and play a structural role for matroid constructions~\cite{Brylawski75}. We call a modular hyperplane \defn{tight} if it is no longer modular after the deletion of one of its elements. The operation that determines the big oriented matroid from its sweep oriented matroid extends to all oriented matroids equipped with certain decorations (\Cref{cor:decorations}), and can be seen as an oriented matroid version of~\cite[Thm.~2.1]{Bonin06}. 

We extend the bounds from~\cite{Edelman2000} and~\cite{Stan15} to the non-realizable case (\Cref{thm:boundsweeps}). For this, we show in \Cref{sec:flats} that, at the level of the underlying unoriented matroids, the lattice of flats of a sweep oriented matroid is (a weak map of) the first Dilworth truncation of the lattice of flats of the little oriented matroid (\Cref{thm:Dilworth_unweakmap}).
When one removes all the atoms from a geometric lattice, the resulting poset is no longer a geometric lattice. The first Dilworth truncation is a  lattice obtained by adding the necessary joins in the most generic way to obtain a geometric lattice~\cite{Brylawski1986,Dilworth1944}. 
We can therefore view sufficiently generic sweep oriented matroids as an oriented version of the first Dilworth truncation of the associated little oriented matroid. Unfortunately, in contrast to rank~$3$, not every (little) oriented matroid can be extended to a big oriented matroid (\Cref{thm:unextendable}). The question of characterizing oriented matroids admitting such an extension is open.

In \Cref{sec:pseudosweeps}, we discuss \defn{pseudo-sweeps}, which correspond to sweeps in which the sweeping hyperplane is allowed to change direction (in a controlled monotonous way). Whereas sweeps of a point configuration correspond to the parametric (coherent) monotone paths on an associated zonotope,
pseudo-sweeps take into account all monotone paths. They admit a polar formulation in terms of galleries and cellular strings of pseudo-hyperplane arrangements, which extends to oriented matroids~\cite{Bjorner1992}. This way, for every (little) oriented matroid, even those that cannot be extended to a big oriented matroid, one can define a poset of pseudo-sweeps. In general, an oriented matroid~$\OM$ can be the little oriented matroid of several sweep oriented matroids; each with a different associated poset of sweeps. They are all subposets of the poset of pseudo-sweeps of~$\OM$. A classification of the cases when all pseudo-sweeps are actual sweeps is given in~\cite{EPLM18}.

There is a lot of literature concerning the graphs of pseudo-sweep permutations of oriented matroids. Cordovil and Moreira had shown that they are connected~\cite{CordovilMoreira1993}, extending to oriented matroids results that went back to Tits~\cite{Tits1969} (for reflection arrangements), Deligne~\cite{Deligne1972} (for simplicial arrangements), and Salvetti~\cite{Salvetti1987} (for realizable oriented matroids). 
More results concerning 
graphs of pseudo-sweeps can be found in~\cite{AthanasiadisSantos2001,ReinerRoichman2013}.

The topology of the posets of pseudo-sweeps has been extensively studied as a special case of the \defn{generalized Baues problem}~\cite{BS92,Reiner1999}. Without the trivial sweep, their order complexes have the homotopy type of, but in general are not homeomorphic to, a sphere. In the realizable case, Billera, Kapranov, and Sturmfels proved that the poset of sweeps is a strong deformation retract of the poset of pseudo-sweeps~\cite{BilleraKapranovSturmfels1994}. Their proof uses strongly the geometry of the fiber polytope construction. Bj\"orner~\cite{Bjorner1992} and Athanasiadis, Edelman, and Reiner~\cite{AER2000}
found combinatorial proofs that extend to general oriented matroids,
but only give the homotopy type. Nevertheless, Bj\"orner claims that it is
\emph{``undoubtedly true''} that even for unrealizable oriented matroids there must be a sphere to which the poset of pseudo-sweeps retracts~\cite[below Thm.~2]{Bjorner1992}. However, there were no explicit candidates for these spheres. For oriented matroids that are little oriented matroids, we show in \Cref{thm:retract} that any of the associated sweep oriented matroids can play this role. That is, that the poset of non-trivial sweeps (which is a sphere) is a strong deformation retract of the poset of non-trivial pseudo-sweeps of the little oriented matroid.
This highlights the fact that sweep oriented matroids should be seen as combinatorial analogues of monotone path polytopes of zonotopes; that is, sweep polytopes.
Unfortunately, the existence of oriented matroids that are not little oriented matroids leaves some cases where Bj\"orner's observation remains open.

In \Cref{sec:sweepacycloids} we present a further generalization of sweep oriented matroids in terms of \defn{allowable graphs of permutations}, 
which are closer to the original formulation of allowable sequences.
Allowable graphs of permutations are graphs whose vertex sets are sets of permutations closed under taking reverses
in which
every pair of permutations is connected through a sequence of permutations fulfilling conditions \ref{it:AllowSeq1} and \ref{it:AllowSeq2} above (plus some technical conditions when the moves are not simple). In the simple case, these are antipodal isometric subgraphs of the permutahedron.
Translating back to sign-vectors, we obtain \defn{sweep acycloids} (\Cref{thm:sweeppermutationsetisacycloid}), which have the structure of acycloids~\cite{Handa90}, also known as antipodal partial cubes~\cite{FH93}. 
Again, sweep acycloids (and thus allowable graphs of permutations) of rank~$2$ are equivalent to allowable sequences. Not every acycloid is an oriented matroid~\cite[Sec.~7]{Handa93}, but there are characterizations of those that are~\cite{Handa93,daSilva95,KM20}.
Since sweep acycloids that are oriented matroids are sweep oriented matroids (\Cref{cor:sweepacycloidmatroidissweepmatroid}), these give alternative characterizations of sweep oriented matroids in terms of allowable graphs of permutations (\Cref{cor:characterizations}). So far we could not find any example of a sweep acycloid that is not a sweep oriented matroid, and we leave this question as an open problem.

\subsection{A note concerning the terminology}\label{sec:terminology}

The terms \emph{sweep} and \emph{sweeping} had already been used in the oriented matroids literature in the context of \defn{topological sweepings} of affine oriented matroids and pseudo-hyperplane arrangements. These concepts should not be confused with the notions that we introduce in this paper. 

The two colliding terminologies arise from the two classical dual geometric representations of realizable oriented matroids; namely, point configurations and hyperplane arrangements. Both give rise to a natural definition of \emph{sweep} that generalizes to non-realizable matroids.

On the one hand, our definition of \emph{sweep} is meant to model sweeps of point configurations by parallel hyperplanes. Such a sweep induces an ordering of the points, which are the elements of the underlying oriented matroid. 
When this picture is polarized, the point configuration gives rise to a hyperplane arrangement, but the collection of sweeping hyperplanes becomes a point that travels in a linear direction (the associated sweep permutation records the order in which the point crosses the hyperplanes). This is the formulation studied by Edelman~\cite{Edelman2000} and Stanley~\cite{Stan15}.

On the other hand, one can consider sweeps of hyperplane arrangements by parallel hyperplanes. Such a sweep induces an ordering of the vertices of the arrangement, which are the cocircuits of the underlying oriented matroid. This is the point of view of the literature on \defn{topological sweepings} of pseudo-hyperplane arrangements and oriented matroids (see, for example, \cite[p.172]{BLSWZ99}, \cite{EG89}, \cite{EOS86}, \cite{Hochstattler2016} and \cite{FelsnerWeil2001}), which concerns mostly the rank~$3$ case (pseudoline arrangements).

In rank~$3$, the two notions are strongly related. Indeed, the allowable sequence of a planar point configuration (which is a collection of sweeps in our terminology), can be interpreted as a topological sweep of the dual arrangement of lines. This correspondence exists in rank~$3$ but completely fails in higher rank, as it only works because in an oriented matroid of rank~$3$ the lines (flats of rank~$2$) coincide with the hyperplanes (flats of corank~$1$).

It is worth to note that in this second setup there exist other approaches to generalize allowable sequences to higher dimensions. For example, the \defn{signotopes} described in~\cite{FelsnerWeil2001} (see also~\cite{Felsner2004}). These are strongly related to higher Bruhat orders~\cite{ManinSchechtman1989} and single-element extensions of cyclic hyperplane arrangements~\cite{FelsnerZiegler2001,Ziegler1993}. 
However, as these generalizations are meant to model (topological) sweeps of hyperplane arrangements with a (pseudo) hyperplane, they do not cover the spherical complexes that Goodman and Pollack alluded to in~\cite{GP93} as the natural way to generalize allowable sequences to higher dimensions.

\subsection{Structure of this document}

This paper gravitates around the concept of sweep oriented matroid, which lies in the intersection of the theories of allowable sequences, valid order arrangements, and the generalized Baues problem for cellular strings. Our hope is to provide a unified reference that reflects all these connections. To this end, we give a broad overview of the topic, as we expect readers with diverse backgrounds and motivations to be interested in different aspects. In particular, most of the sections can be read independently.

\Cref{sec:sweepsandpolytopes} serves as an introduction and focuses in the realizable case. We present polytopal constructions that serve as motivation for the upcoming definitions.
Sweep oriented matroids are defined in \Cref{sec:sweeporientedmatroids}. In \Cref{sec:bigandlittle} we show how the structural results on allowable sequences from~\cite{BLSWZ99} generalize to sweep oriented matroids of arbitrary rank. \Cref{sec:flats} demonstrates that the results in~\cite{Edelman2000,Stan15} do not require realizability. \Cref{sec:pseudosweeps} depicts sweep oriented matroids as highlighted spheres inside the poset of cellular strings of oriented matroids whose existence was conjectured by~\cite{Bjorner1992}.
A presentation in terms of permutations, akin to Goodman and Pollack's original formulation of allowable sequences~\cite{GP93}, is given in \Cref{sec:sweepacycloids} under the name of allowable graphs of permutations.

We end by discussing some open problems and further directions of research in \Cref{sec:further}.

\section{Sweeps and sweep polytopes}\label{sec:sweepsandpolytopes}
\subsection{Sweeps of point configurations}

For any integer~$n$, we use \defn{$\ivl$} to denote the set $\{1, \ldots, n\}$, \defn{$\mathfrak{S}_n$} to denote the set of all permutations of~$\ivl$,
and \defn{$\ipairs$}$=\set{(i,j)}{1\leq i < j \leq n}$ to denote the set of non-repeating sorted pairs of elements of~$\ivl$. 
An \defn{ordered partition} of $\ivl$ is an ordered collection of non-empty disjoint subsets $(I_1, \ldots, I_l)$ whose union is~$\ivl$. Ordered partitions where all parts are singletons are identified with permutations. They are the maximal elements in the \defn{refinement order}: we say that $J=(J_1, \ldots , J_l)$ refines $I=(I_1, \ldots , I_k)$, noted
 $J\succeq I$,
if each $I_i$ is the union of some consecutive $J_j$'s. In some proofs, it will be more comfortable to think of an ordered partition $I$ as the surjection $\sur$ from $\ivl$ to $\ivl[l]$ such that $I_k=\sur^{-1}(\{k\})$ for all $1\leq k \leq l$. Note that for a permutation $\sigma$, the ordered partition $I=(\{\sigma(1)\}, \ldots, \{\sigma(n)\})$ corresponds to the bijection~$\sur=\sigma^{-1}$.

We always consider $\RR^d$ as an Euclidean space, equipped with the usual orthogonal scalar product~$\sprod{\cdot}{\cdot}$. A \defn{point configuration} is an ordered sequence $\pc=(\p a_1,\dots,\p a_n)\in\RR^{d\times \ivl}$ of points in~$\RR^d$ indexed by $\ivl$. We do not require the points to be distinct, although it will be often convenient to make this simplification.
For $\p u\in\RR^d$, consider the linear form $\lfv[\p u]:\RR^d\to\RR$ sending $\p x$~to~$\sprod{\p u}{\p x}$. The \defn{sweep} of $\pc$ associated to $\p u$ is the ordered partition $I^{\p u}=(I_1, \ldots, I_l)$ of $\ivl$ that verifies $\sprod{\p u}{\p a_i}=\sprod{\p u}{\p a_j}$ for all $i,j$ in a same part $I_k$, and $\sprod{\p u}{\p a_i} < \sprod{\p u}{\p a_j}$ if $i\in I_r,\, j\in I_s$ with $r<s$.  In particular, $\sprod{\p u}{\p a_i} \leq  \sprod{\p u}{\p a_j}$ if and only if $\sur[{I^{\p u}}](i) \leq  \sur[{I^{\p u} }](j)$. Note that the partition associated to the linear form $\zero$ is the \defn{trivial sweep}~$(\ivl)$.

The \defn{poset of sweeps} of $\pc$, denoted~\defn{$\Scom$}, is the set of all sweeps ordered by refinement. 
Its maximal elements are permutations whenever~$\pc$ does not contain repeated points. 
We will often assume that this is the case, as we can always identify repeated points. Under this assumption, we denote by \defn{$\PsetA$}~$\subseteq \Sym$ the set of its maximal elements, the \defn{sweep permutations} of~$\pc$. If there are repeated points, we will still call the maximal elements \defn{sweep permutations} for brevity.

Sweeps induce an equivalence relation on $\RR^d$, where $\p u\sim \p v$ if they give the same sweep. Its equivalence classes are the cells of the polyhedral fan induced by the \defn{sweep hyperplane arrangement} \defn{$\VO$}; the arrangement of the linear hyperplanes 
$\set{\p u \in~\RR^d}{\sprod{\p u}{\p a_i}=\sprod{\p u}{\p a_j}}$ for all $(i, j) \in \ipairs$. 
Note that the face poset of $\VO$ is isomorphic to the poset $\Scom$, with a bijection that sends each cell $\cC$ of $\VO$ to the sweep $I$ in $\Scom$ that verifies that the relative interior of $\cC$ is $\set{\p u \in \RR^d}{I^{\p u}=I}$. In particular, the cones of dimension $d$ of $\VO$ are indexed by the sweep permutations in $\PsetA$.

We will see in \cref{sec:sweep polytopes} that $\VO$ is the normal fan of a polytope: the \defn{sweep polytope} of~$\pc$, denoted by~\defn{$\Sp$}. Thus, 
the poset of sweeps~$\Scom$ enlarged with a top element is isomorphic to the poset opposite to the face lattice of~$\Sp$, and is in particular a lattice.
This provides a natural labeling of the faces of $\Sp$ by sweeps. In particular, the vertices of~$\Sp$ are labeled by the sweep permutations in~$\PsetA$.

The identification of sweeps with faces of~$\Sp$ reflects the inherent topological structure of the poset of sweeps. This can be made precise in terms of its order complex. The \defn{order complex}~$\OC[P]$ of a poset~$P$ is the simplicial complex whose simplices are the chains of~$P$, see~\cite{Bjorner1995} or~\cite[Sec.~4.7]{BLSWZ99} for some background. In our case, the order complex of~$\Scom\ssm (\ivl)$, the poset of sweeps without the trivial sweep, is just the barycentric subdivision of the boundary of~$\Sp$. We will implicitly identify~$\Scom$ with~$\OC[{\Scom \ssm (\ivl)}]$ whenever we make topological statements about posets of sweeps.

\subsection{Examples}
Before providing constructions for this polytope, we will present two particular examples.

\subsubsection{The simplex and the permutahedron}
If $\pc_n$ is the set of vertices of a standard $(n-1)$-simplex~$\simp[n-1]$, i.e.\ the points $\p a_i$ are the canonical basis vectors $\p e_i$ in $\RR^n$, then $\VO[\pc_n]$ is the \defn{braid arrangement~$\braid$} consisting of the hyperplanes $\set{\p u}{\p u_j-\p u_i=0}$ for all $1\leq i < j \leq n$, the set of sweep permutations is the whole symmetric group $\PsetA[\pc_n]=\Sym$, and the poset of sweeps $\Scom[\pc_n]$ is the poset of all ordered partitions of $\ivl$. Likewise for any set $\pc$ of affinely independent points, up to affine transformation of the braid arrangement.

The braid arrangement~$\braid$ is the normal fan of a polytope, the \defn{$n$-permutahedron~$\Pn$}. It is usually defined as the convex hull of the points $(\sigma(1), \ldots , \sigma(n))\in \RR^n$ for all $\sigma \in \Sym$ (see~\cite[Ex~0.10]{Ziegler1995} or \cite[Ex.~2.2.5]{BLSWZ99}). Thus, it lives in the $(n-1)$-dimensional affine subspace of the sum of coordinates constant equal to~$\tfrac{n(n+1)}{2}$. It can be described as the zonotope: 
\begin{equation}\label{eq:permutahedronzonotope}
\Pn = \tfrac{n+1}{2} \ones + \sum_{1 \leq i < j \leq n} \left[-\frac{\p e_i-\p e_j}{2}, \frac{\p e_i-\p e_j}{2}\right], 
\end{equation}
where $\ones=\sum_{i=1}^n \p e_i$ is the all-ones vector and $[\p p,\p q]\subset \RR^d$ denotes the segment between the points~$\p p$ and~$\p q$, see~\cite[Ex.~7.15]{Ziegler1995}.

\begin{figure}[htpb]
        \vspace{-.3cm}
        \includegraphics[width=\textwidth]{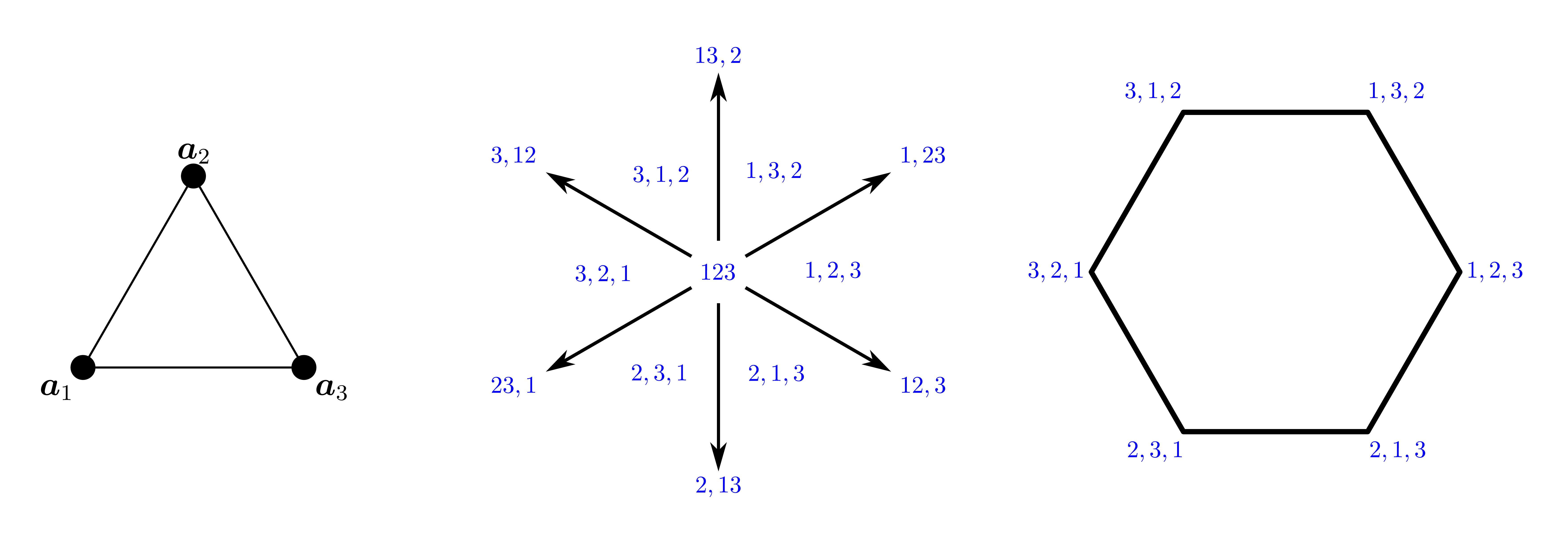}
        \vspace{-1cm}
    \caption{$\pc_3$, its sweep hyperplane arrangement~$\VO[\pc_3]=\braid[3]$ (modulo linearity),
and its sweep polytope $\Sp[{\pc_3}]=\cPn[3]$, the $3$-permutahedron, where each vertex is labeled by the corresponding sweep permutation of $\pc_3$. 
    }
    \label{fig:SweepsA3}
\end{figure}

\begin{figure}[htpb]
        \centering
\input{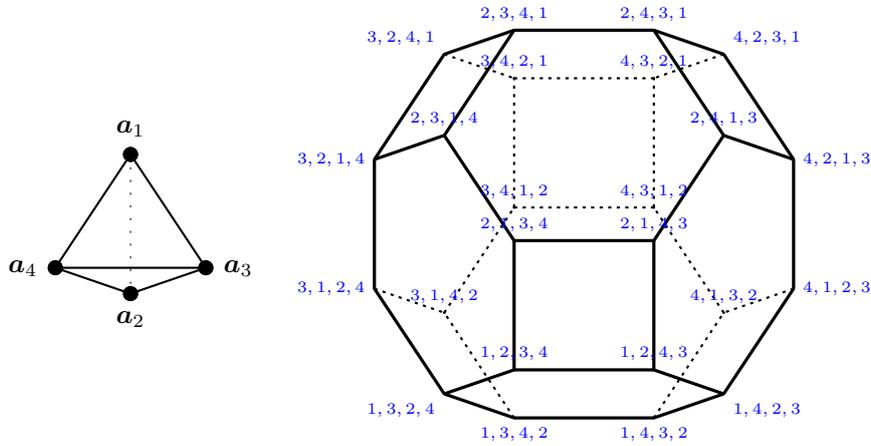}
\caption{$\pc_4$ and its sweep polytope $\Sp[{\pc_4}]=\cPn[4]$, the $4$-permutahedron. 
}\label{fig:SweepsA4}
\end{figure}

The sweep polytope~$\Sp$ associated to the standard simplex is the translation of~$\Pn$ centered at the origin. We will denote this translated permutahedron by~$\cPn$
\begin{equation}\label{eq:centeredpermutahedron}
\cPn=\sum_{1 \leq i < j \leq n} \left[-\frac{\p e_i-\p e_j}{2}, \frac{\p e_i-\p e_j}{2}\right] 
\end{equation}
to distinguish it from the standard realization.
See \cref{fig:SweepsA3,fig:SweepsA4} for the cases~$n=3,4$.

\subsubsection{The cross-polytope and the permutahedron of type~$B$}\label{sec:crosspolytope}

Let $\pc[B]_n$ be the set of vertices of the cross-polytope $\cpol$, that is, the set of standard basis vectors of $\RR^n$ and their opposites. It is convenient to index the points by $[\pm n]=\{-n,\dots,-1,1,\dots,n\}$: $\pc[B]_n=\{  \p b_{-n}=-\p e_n , \ldots, \p b_{- 1}=-\p e_1,  \p b_1=\p e_1, \ldots, \p b_n=\p e_n\}$. 
Then the sweep permutations of $\pc[B]_n$ are the  centrally symmetric permutations of $\Sym[{[\pm n]}]$, which satisfy $\sigma(-i)=-\sigma(i)$ for all $i\in [\pm n]$. By symmetry, the first half determines the whole permutation. This way, they can be represented by signed permutations of $\ivl$, where 
$-k$ is denoted by $\overline k$. We use this notation in \cref{fig:SweepsB2,fig:SweepsB3}.

\enlargethispage{1cm}
\begin{figure}[htpb]
        \vspace{-.3cm}
        \includegraphics[width=\textwidth]{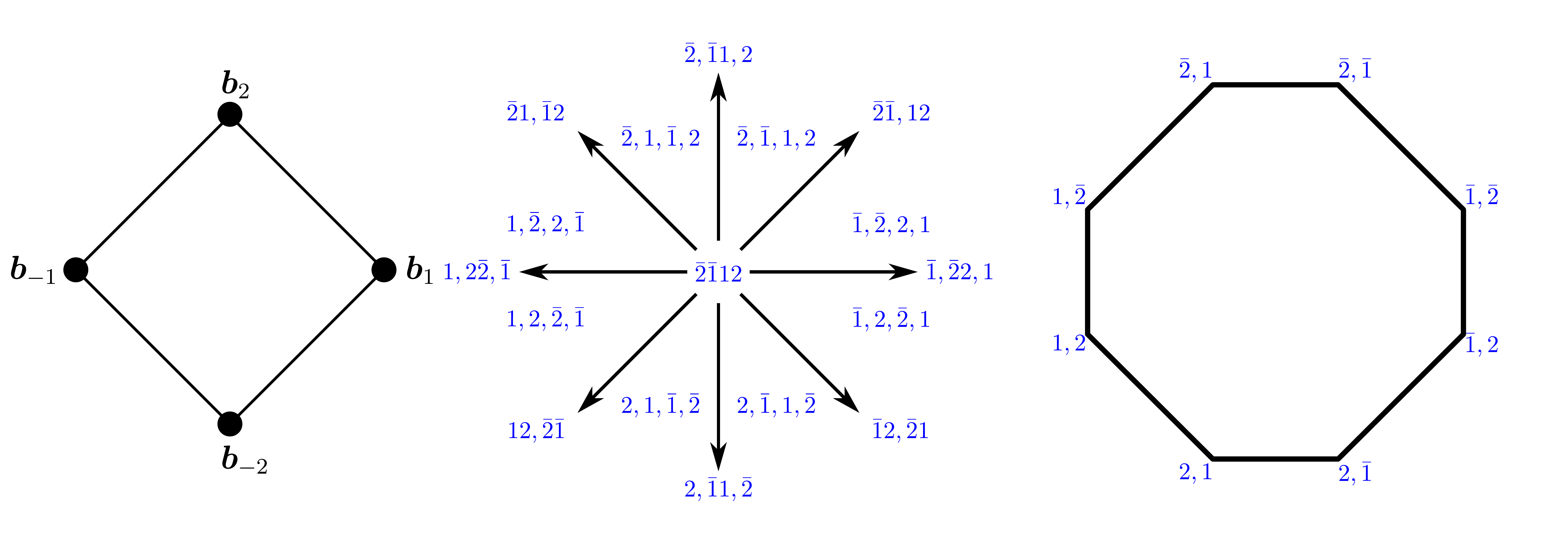}
        \vspace{-1cm}
    \caption{$\pc[B]_2$, its sweep hyperplane arrangement~$\VO[{\pc[B]_2}]$, 
    and its sweep polytope~$\Sp[{\pc[B]_2}]$. }
    \label{fig:SweepsB2}
\end{figure}

\begin{figure}[htpb]
        \centering
\input{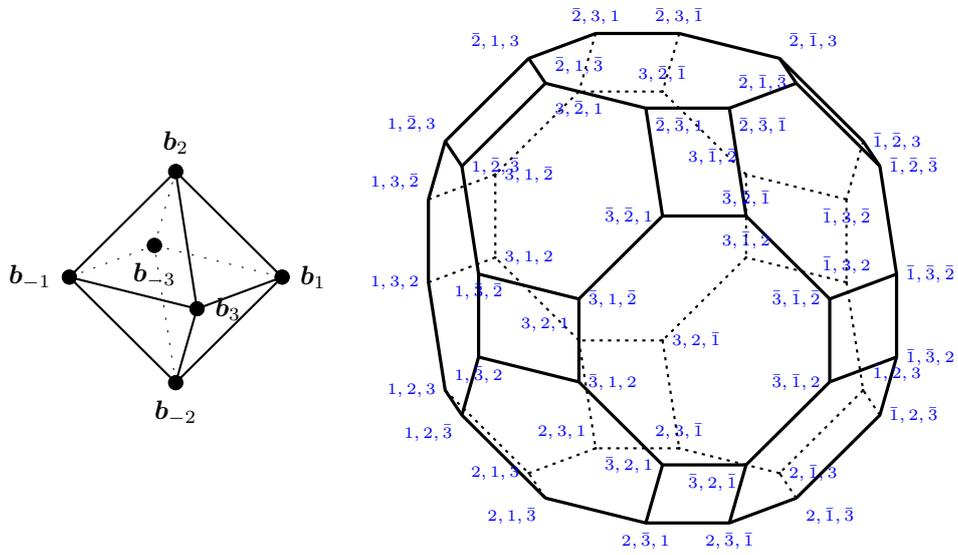}
\caption{$\pc[B]_3$ and its sweep polytope $\Sp[{\pc[B]_3}]$, the $3$-permutahedron of type~$B$.}\label{fig:SweepsB3}
\end{figure}

They are the elements of the Coxeter group of type~$B$, also called hyperoctahedral group. See \cite[Section 8.1]{BB05} for more details on the combinatorics of this group. 
The sweep hyperplane arrangement $\VO[{\pc[B]_n}]$ is the Coxeter arrangement of type~$B$, which consists of the hyperplanes  $\set{\p u\in \RR^n}{\p u_i\pm \p u_j=0}$ for all $1\leq i < j\leq n$ and $\set{\p u\in \RR^n}{\p u_i=0}$ for all $1\leq i \leq n$. The sweeps are the centrally symmetric ordered partitions of $[\pm n]$.
This complex is known as the Coxeter complex of type~$B$, see~\cite[Sec.~2.3(c)]{BLSWZ99}. See~\cref{fig:SweepsB2} for an example.

The associated sweep polytope is the Coxeter permutahedron of type~$B$, also known as the Coxeterhedron of type~$B$~\cite{RZ94}. See \cref{fig:SweepsB2,fig:SweepsB3} for pictures in dimensions $2$ and $3$.

\subsubsection{Sweeping with polynomial functions}

Sweep polytopes can also be used to model sweeps of a point configuration $\pc=(\p a_1, \ldots, \p a_n) \in \RR^{d\times \ivl}$ by polynomial functions $\polf \in \RR[x_1, \ldots, x_d]$ of bounded degree. The \defn{polynomial sweep} of $\pc$ associated to~$\polf$ is the ordered partition of~$\ivl$ induced by the ordered level sets of~$\polf$ on~$\pc$.

Let $\monomials$ be the set of monomials of degree at most~$D$ on variables $x_1,\dots,x_d$. There are $|\monomials|=\binom{D+d}{D}$ elements in $\monomials$.
For a point $\p v=(v_1, \ldots, v_d) \in \RR^d$ and a monomial $M\in \monomials$, denote by $M(\p v) \in \RR$ the evaluation of $M$ on the values $x_1=v_1, \ldots, x_d=v_d$.
The \defn{Veronese mapping} is defined by the map
\[ \veronese :
\begin{cases}
\RR^d &\to \RR^{\monomials} \\
\p v &\mapsto \left( M(\p v) \right)_{M\in \monomials}.
\end{cases}\]

Then, the polynomial sweep of $\pc$ induced by the polynomial $\polf=\sum_{M\in \monomials} c_M M$ exactly corresponds to the sweep of $\veronese(\pc)$ induced by the linear functional $\sprod{\p c}{\cdot}$ for $\p c=(c_M)_{M\in \monomials} \in \RR^{\monomials}$. In particular,  the poset of sweeps of $\veronese(\pc)$ coincides with the poset of polynomial sweeps of $\pc$ induced by polynomials of degre at most~$D$.
Note that if $d=1$, the image $\veronese(\pc)$ is a standard cyclic polytope of dimension~$D$ with $n$ vertices.

Variants of the Veronese mapping can be used for particular families of polynomial sweeps. For example, the embedding 
\[(v_1,\dots,v_d)\mapsto (v_1,\dots,v_d, v_1^2+\cdots+v_d^2)\]
onto the paraboloid models sweeps by families of concentric spheres.

\subsection{Constructions for sweep polytopes}\label{sec:sweep polytopes}

In what follows, we describe three approaches to construct the {sweep polytope}~$\Sp$. Recall that~$\Sp$ is a polytope whose normal fan coincides with the sweep hyperplane arrangement~$\VO$, and whose face poset is opposite to the poset of sweeps~$\Scom$.

\subsubsection{As a zonotope}
The most direct realization is as the Minkowski sum of the segments with directions the differences between the points of the configuration, which is (a translation of) the presentation of sweep polytopes given in~\cite{GritzmannSturmfels1993} (under the name of \defn{shellotopes}). 

\begin{definition}\label{def:sweep polytope}
The \defn{sweep polytope $\Sp$} associated to the configuration $\pc =(\p a_1,\dots,\p a_n)\in\RR^{d\times \ivl}$ is the zonotope:
\[ \Sp = \sum_{1\leq i < j \leq n}\left[-\frac{\p a_i-\p a_j}{2} ,\frac{\p a_i-\p a_j}{2}\right] \subset \RR^d.\]

\end{definition}

The normal fan of a zonotope is the arrangement of the hyperplanes orthogonal to its generators, see for example \cite[Sec.~2]{Ziegler1989} and \cite[Thm.~7.16]{Ziegler1995}. Applied to sweep polytopes, we directly get:

\begin{proposition}
The normal fan of $\Sp$ is the hyperplane arrangement $\VO$.
\end{proposition}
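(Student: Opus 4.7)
My plan is to apply directly the cited general fact that the normal fan of a zonotope equals the arrangement of hyperplanes orthogonal to its generators (\cite[Thm.~7.16]{Ziegler1995}). Concretely, I would write $\Sp$ as the Minkowski sum of the segments $\left[-\tfrac{\p a_i - \p a_j}{2}, \tfrac{\p a_i - \p a_j}{2}\right]$ for $(i,j) \in \ipairs$, observe that each such segment has direction vector proportional to $\p a_i - \p a_j$, and note that the centering of the segments does not affect the normal fan (only the position of the polytope). The generator associated to the pair $(i,j)$ is therefore (up to a nonzero scalar) the vector $\p a_i - \p a_j$.

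The hyperplane orthogonal to this generator is
\[
\set{\p u \in \RR^d}{\sprod{\p u}{\p a_i - \p a_j} = 0} = \set{\p u \in \RR^d}{\sprod{\p u}{\p a_i} = \sprod{\p u}{\p a_j}},
\]
which is exactly the hyperplane indexed by $(i,j)$ in the sweep arrangement $\VO$. Taking the union over all $(i,j) \in \ipairs$, the arrangement of hyperplanes orthogonal to the generators of $\Sp$ coincides precisely with $\VO$, and hence the normal fan of $\Sp$ equals the fan induced by $\VO$. There is no real obstacle here: the only step that requires any attention is checking that the orthogonal hyperplane depends only on the direction of the segment (not its length or translation), which is immediate from the construction of the normal fan of a Minkowski sum via the common refinement of the normal fans of its summands.
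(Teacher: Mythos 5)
Your argument is correct and is essentially the same one the paper gives: invoke the general fact that the normal fan of a zonotope is the arrangement of hyperplanes orthogonal to its generators, and observe that the generators $\p a_i - \p a_j$ of $\Sp$ give exactly the hyperplanes of $\VO$. The paper states this more tersely, but the content matches.
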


\subsubsection{As a projection of the permutahedron}\label{sec:asPermutahedralShadows}
Our second incarnation is as a projection of the (centered) permutahedron~$\cPn$. For a configuration~$\pc$ of $n$ points~$\p a_1, \ldots , \p a_n$ in~$\RR^d$, let $\lm$ be the linear map
\begin{align}
\lm : \RR^n & \to  \RR^d \label{eq:phiA}\\ 
 \p e_i & \mapsto  \p a_i.\notag
\end{align}
Then it follows from \Cref{def:sweep polytope} and the description of $\cPn$ in~\eqref{eq:centeredpermutahedron} that:
\begin{proposition}\label{prop:sweep polytopeprojection}
\(\Sp = \lm(\cPn).\) 
\end{proposition}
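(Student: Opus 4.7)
The plan is to verify the identity by a direct Minkowski-sum computation, exploiting the fact that linear maps commute with Minkowski addition and send line segments to line segments. The key algebraic ingredient is that $\lm(\p e_i - \p e_j) = \p a_i - \p a_j$ by the defining property of $\lm$, so that the generating segments of $\cPn$ are mapped to the generating segments of $\Sp$.

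First I would record the two elementary facts that drive everything: (i) for any linear map $L \colon \RR^n \to \RR^d$ and any subsets $A, B \subseteq \RR^n$, one has $L(A+B) = L(A) + L(B)$, by checking both inclusions on elements of the form $L(a+b) = L(a) + L(b)$; and (ii) for any $\p p, \p q \in \RR^n$, $L([\p p, \p q]) = [L(\p p), L(\p q)]$, since $L$ preserves convex combinations. Iterating (i) extends commutation to any finite Minkowski sum.

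Next I would apply these facts to the presentation \eqref{eq:centeredpermutahedron} of $\cPn$ as a Minkowski sum of segments:
\[
\lm(\cPn) \;=\; \lm\!\left(\sum_{1 \leq i < j \leq n} \left[-\tfrac{\p e_i-\p e_j}{2},\, \tfrac{\p e_i-\p e_j}{2}\right]\right) \;=\; \sum_{1 \leq i < j \leq n}\left[-\tfrac{\lm(\p e_i)-\lm(\p e_j)}{2},\, \tfrac{\lm(\p e_i)-\lm(\p e_j)}{2}\right].
\]
Substituting $\lm(\p e_i) = \p a_i$ recovers exactly the definition of $\Sp$ in \Cref{def:sweep polytope}, completing the identification.

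There is no genuine obstacle here; the statement is essentially a tautology once one fixes the two definitions and the description of $\lm$ on the canonical basis. The only thing worth stressing explicitly is that the construction of $\Sp$ as a zonotope and the construction of $\cPn$ as a zonotope were set up so that their generators match under $\lm$, which is precisely what makes this projection realization work.
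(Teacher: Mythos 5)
Your proposal is correct and takes exactly the approach the paper intends: the paper simply asserts that the identity follows from \cref{def:sweep polytope} and the zonotopal description \eqref{eq:centeredpermutahedron} of $\cPn$, and your argument fills in the routine verification that linear maps commute with Minkowski sums and carry the generating segments $\bigl[-\tfrac{\p e_i-\p e_j}{2}, \tfrac{\p e_i-\p e_j}{2}\bigr]$ to $\bigl[-\tfrac{\p a_i-\p a_j}{2}, \tfrac{\p a_i-\p a_j}{2}\bigr]$.
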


Conversely, all affine images of permutahedra are sweep polytopes, up to translation. This provides a combinatorial interpretation, in terms of sweeps, of the face lattice of any affine projection of a permutahedron (a \defn{permutahedral shadow}).

\begin{corollary}\label{cor:permutahedralshadows}
 Let $\lm[]:\RR^n\to \RR^d$ be a linear map, then $\lm[](\cPn)$ is the sweep polytope of
 the point configuration $\lm[](\p e_1),\dots,\lm[](\p e_n)$.
\end{corollary}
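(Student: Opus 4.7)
The plan is to observe that this corollary is essentially a restatement of \Cref{prop:sweep polytopeprojection}, read in the opposite direction: instead of starting from a point configuration $\pc$ and producing its associated linear map $\lm$, we start from an arbitrary linear map $\lm[]$ and recover a point configuration from it. The whole argument should fit in two or three lines.

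Concretely, given $\lm[]:\RR^n\to\RR^d$, I would define the point configuration $\pc = (\p a_1,\dots,\p a_n)$ by setting $\p a_i \eqdef \lm[](\p e_i)$ for each $i \in \ivl$. By the defining formula \eqref{eq:phiA}, the linear map $\lm[\pc]$ attached to this configuration agrees with $\lm[]$ on the basis $\p e_1,\dots,\p e_n$, so $\lm[] = \lm[\pc]$ as linear maps on $\RR^n$. Applying \Cref{prop:sweep polytopeprojection} to~$\pc$ then yields
\[
\lm[](\cPn) \;=\; \lm[\pc](\cPn) \;=\; \Sp[\pc],
\]
which is exactly the claim.

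There is no real obstacle here: the statement is a tautological consequence of the previous proposition together with the fact that a linear map out of $\RR^n$ is completely determined by its values on the standard basis, so any $\lm[]$ arises as $\lm[\pc]$ for exactly one configuration, namely the images of the basis vectors. The only thing worth being careful about is the translation convention (centered vs.\ uncentered permutahedron), but since \Cref{prop:sweep polytopeprojection} is stated for the centered permutahedron $\cPn$, the identification is direct and no translation correction is needed.
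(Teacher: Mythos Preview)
Your proposal is correct and matches the paper's approach exactly: the paper treats this corollary as an immediate consequence of \Cref{prop:sweep polytopeprojection} and does not even include a proof. Your observation that any linear map $\lm[]:\RR^n\to\RR^d$ is precisely $\lm[\pc]$ for the configuration $\pc=(\lm[](\p e_1),\dots,\lm[](\p e_n))$ is exactly the intended one-line justification.
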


Note that, given a linear map from~$\cPn$ to~$\RR^d$, there is a $d$-dimensional family of ways to extend it to a linear map from~$\RR^n$ to~$\RR^d$. This amounts to the fact that point configurations related by a translation give rise to the same sweep polytope.

\begin{remark}\label{rmk:hypersimplices}
 \Cref{prop:sweep polytopeprojection} follows from the fact that Minkowski sums and linear projections commute.
 This can be exploited also with other decompositions of the permutahedron. For example, the permutahedron~$\Pn$ can be written as the Minkowski sum of the hypersimplices~$\Hs=\set{\p x\in [0,1]^n}{\sum \p x_i=k}$ with $k$ ranging from~$1$ to~$n-1$ (see for example~\cite{Postnikov2009}). Therefore, any sweep polytope can be expressed as a Minkowski sum of projections of hypersimplices. Projections of hypersimplices are studied under the name of \defn{$k$-set polytopes}~\cite{AndrzejakWelzl2003,EdelsbrunnerValtrWelzl1997}, which (up to homothety) can be described as the convex hull of the barycenters of all $k$-subsets of~$\pc$, see~\cite{MartinezSandovalPadrol2020}. The sweep polytope of $\pc$ is thus the Minkowski sum of its $k$-set polytopes, up to translation and homothety. In particular, because $\conv(\pc)=\lm(\Hs[n][1])$, this shows that $\conv(\pc)$ is a Minkowski summand of~$\Sp$.
 See \cref{fig:ksetpolytopes} for an example. Another point of view on this Minkowski decomposition will be discussed in \Cref{rmk:fiberksets}.
\end{remark}

\begin{figure}[htpb]
\input{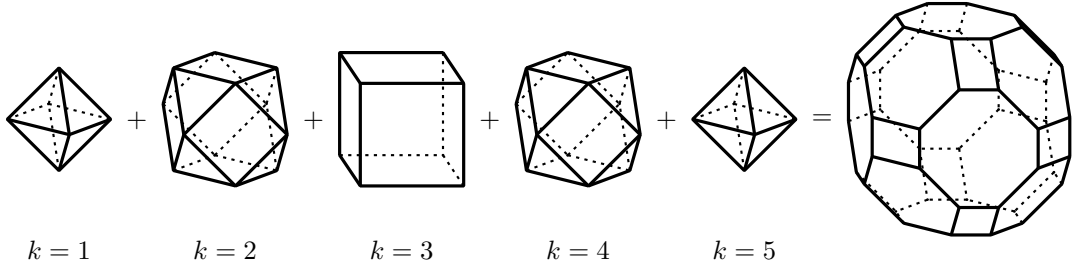}
    \caption{The sweep polytope $\Sp[{\pc[B]_3}]=\cPn[3]$ as a Minkowski sum of the $k$-set polytopes of $\pc[B]_3$ for $k=1,\dots,5$.}
    \label{fig:ksetpolytopes}
\end{figure}

\subsubsection{As a monotone path polytope} \label{sec:asFiberPolytope}

Fiber polytopes are certain polytopes associated to polytope projections. This construction was introduced by Billera and Sturmfels in~\cite{BS92}, generalizing the theory of secondary polytopes in a unified way that encompasses concepts such as monotone path polytopes, zonotopal tiling polytopes and secondary polytopes. We refer to \cite[Lec.~9]{Ziegler1995} and \cite[Sec.~9.1]{DRS10} for gentle introductions to the topic.

Consider polytopes $\pol$ and~$\pol[Q]$ related by a linear surjection $\pi : \pol[P] \to \pol[Q]$. The fibers of $\pi$ over $\pol[Q]$ form a \defn{polytope bundle} $\p y\in \pol[Q]\mapsto \pi^{-1}(\{\p y\})$ whose \emph{Minkowski integral}, after some normalization, is the \defn{fiber polytope $\fib$}:
\[ \fib = \frac{1}{\vol(\pol[Q])}\int_{\pol[Q]}\pi^{-1}(\{\p y\}) d\p y. \]
Fiber polytopes can also be described as a finite Minkowski sum. Namely,
\[ \fib = \frac{1}{\vol(\pol[Q])}\sum_{\pol[C] \in \Gamma(\pol, \pi)} \vol(\pol[C])\ \pi^{-1}(\{\p b_{\pol[C]}\}), \]
where $\Gamma(\pol, \pi)$ is the set of \defn{chambers}: the subsets of $\pol[Q]$ of the form \[\pol[C]_{\p y}~=~\bigcap\limits_{\underset{\p y \in \pi(\pol[F])}{\pol[F]\text{ face of }\pol}} \pi(\pol[F])\]
for $\p y\in \pol[Q]$; and $\p b_{\pol[C]}$ is the barycenter of the chamber $\pol[C]$. 

Note that $\fib$ lies in the fiber over the barycenter of $\pol[Q]$: $\fib \subset \pi^{-1}\left(\frac{1}{\vol(\pol[Q])}\int_{\pol[Q]}\p y\ d\p y \right)$.

An important feature of fiber polytopes is that their face lattice is isomorphic to the poset of $\pi$-coherent subdivisions of $Q$ (ordered by refinement), which are subdivisions of $\pol[Q]$ composed of images of faces of $\pol$ that are \defn{coherently induced} by the map~$\pi$. We refer to the aforementioned sources for the details in the definitions. 
We are particularly interested in a special case of fiber polytopes: monotone path polytopes. 
They will give a new interpretation of sweep polytopes and provide motivation for the definition of pseudo-sweeps, that will be further explored in \cref{sec:pseudosweeps}.

If $\pol[Q]$ is one dimensional and $\pol \subset \RR^n$, then $\pi:\pol\to \pol[Q]$ is a linear form defined by a vector $\p u\in \RR^n$ via $\pi(\p x)=\sprod{\p u}{\p x}$. For simplicity, assume that $\pi$ is generic in the sense that it is not constant along any edge of $\pol$, and let $\p p_m$ and $\p p_M$ be the minimal and maximal vertices of $\pol$ with respect to $\pi$. A \defn{$\pi$-monotone path} is a path from $\p p_m$ to $\p p_M$ composed of edges of~$\pol$ along which $\pi$ is always increasing. One way to obtain $\pi$-monotone paths is to consider some generic vector $\p w$ orthogonal to~$\p u$ and consider the sequence of vertices of $\pol$ that are extreme in the direction $\p w+\lambda \p u$ as $\lambda $ ranges from $-\infty$ to~$\infty$ (see \cref{fig:zonotopeandfibers}). These paths induce the finest $\pi$-coherent subdivisions of $\pol[Q]$, and are known as \defn{parametric simplex paths} in linear programming, where they play an important role as they are the paths followed by the shadow-vertex simplex method~\cite{Borgwardt1987,GassSaaty1955}. 

 More generally, a \defn{cellular string} on $\pol$ with respect to $\pi$ is a sequence of faces $\pol[F]_1,\dots,\pol[F]_k$ of~$\pol$ of dimension at least~$1$ such that $\p p_m\in \pol[F]_1$, $\p p_M\in \pol[F]_k$, and every two adjacent faces $\pol[F]_i,\pol[F]_{i+1}$ meet at a vertex $\p p_i$ such that $\pi(\p x)\leq \pi(\p p_i)\leq \pi(\p y)$ for each $\p x\in \pol[F]_i$ and $\p y\in \pol[F]_{i+1}$. Such a cellular string is $\pi$-coherent if there is some (not-necessarily generic) vector $\p w$ orthogonal to~$\p u$ such that these are the maximal faces of~$\pol$ maximized in a direction of the form $\p w+\lambda \p u$.
 The fiber polytope $\fib$ is called the \defn{monotone path polytope} of $\pol$ and $\pi$. Its vertices are in one-to-one correspondence with the parametric $\pi$-monotone paths of~$\pol$, and its faces are in correspondence with the $\pi$-coherent cellular strings.

\begin{example}[{{\cite[Ex.~5.4]{BS92}, see also \cite[Ex.~9.8]{Ziegler1995}}}]\label{ex:permutahedronfiber}
 Let $\cube = [-1, 1]^n$ be the $n$-dimensional $\pm 1$-hypercube, and let $s: \RR^n\to \RR$ be the linear form that sums the coordinates, i.e.\ the form $s=\lfv[\ones]$ induced by the all-ones vector. Then the fiber polytope~$\fib[\cube][s]=\frac{2}{n} \cPn$ is (homothetic to) the (centered) permutahedron~$\cPn$, and $\fib[\frac{n}{2}\cube][s]=\cPn$.

\end{example}

The following central property of fiber polytopes will be key for our purposes.

\begin{lemma}[{{\cite[Lem.~2.3]{BS92}}}]\label{lem:proj_fiber_pol}
Let $\RR^n\xrightarrow{\theta}\RR^m\xrightarrow{\pi}\RR^d$ be linear maps, and $\pol\subset \RR^n$ a polytope. Then $\fib[\theta(\pol)][\pi]=\theta(\fib[\pol][\pi\circ\theta])$.
\end{lemma}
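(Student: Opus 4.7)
The plan is to compare the support functions of the two polytopes. Both $\fib[\theta(\pol)][\pi]$ and $\theta(\fib[\pol][\pi\circ\theta])$ are convex bodies in $\RR^m$, and both fiber-polytope constructions project onto the same base $\pol[Q]\eqdef\pi(\theta(\pol))\subset\RR^d$, so the normalization factor $\vol(\pol[Q])$ agrees on both sides.

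First I would recall the integral expression for the support function of a fiber polytope: for a linear surjection $\pi:\pol\to\pol[Q]$ and a direction~$\p c$,
\[
h_{\fib[\pol][\pi]}(\p c)
= \frac{1}{\vol(\pol[Q])}\int_{\pol[Q]} \max_{\p x \in \pol\,\cap\,\pi^{-1}(\{\p y\})} \sprod{\p c}{\p x}\, d\p y.
\]
This is obtained by writing $\fib[\pol][\pi]$ as the Minkowski integral of its fiber bundle and maximizing pointwise over measurable sections.

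Then, for a direction $\p c\in\RR^m$, I would expand both sides. Using the identity $h_{\theta(K)}(\p c)=\max_{\p x\in K}\sprod{\p c}{\theta(\p x)}$ for any convex body $K\subset\RR^n$, one obtains
\begin{align*}
h_{\theta(\fib[\pol][\pi\circ\theta])}(\p c)
&= \frac{1}{\vol(\pol[Q])}\int_{\pol[Q]} \max_{\p x \in \pol\,\cap\,(\pi\circ\theta)^{-1}(\{\p y\})} \sprod{\p c}{\theta(\p x)}\, d\p y, \\
h_{\fib[\theta(\pol)][\pi]}(\p c)
&= \frac{1}{\vol(\pol[Q])}\int_{\pol[Q]} \max_{\p z \in \theta(\pol)\,\cap\,\pi^{-1}(\{\p y\})} \sprod{\p c}{\p z}\, d\p y.
\end{align*}
The key and essentially only non-trivial observation is the elementary set identity
\[
\theta\bigl(\pol\,\cap\,(\pi\circ\theta)^{-1}(\{\p y\})\bigr)
= \theta(\pol)\,\cap\,\pi^{-1}(\{\p y\}),
\]
which holds for every $\p y\in\pol[Q]$ because $\p x\in\pol$ satisfies $\pi(\theta(\p x))=\p y$ if and only if $\theta(\p x)\in\theta(\pol)$ satisfies $\pi(\theta(\p x))=\p y$. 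Hence the two inner maxima coincide pointwise, the integrands agree for every $\p y$, and the support functions agree for every $\p c$, proving the lemma.

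The main (mild) obstacle is bookkeeping: one must check that the fiber-polytope construction is well posed on both sides, namely that $\pol \to \pol[Q]$ via $\pi\circ\theta$ and $\theta(\pol) \to \pol[Q]$ via $\pi$ are both linear surjections onto the same base polytope $\pol[Q]$, and that applying $\theta$ commutes with the Minkowski integral —which is immediate from the definition of the Minkowski integral together with the linearity of~$\theta$.
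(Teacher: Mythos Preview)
The paper does not give its own proof of this lemma: it is stated with a citation to \cite[Lem.~2.3]{BS92} and used as a black box. Your support-function argument is correct and self-contained; the fiber identity $\theta\bigl(\pol\cap(\pi\circ\theta)^{-1}(\{\p y\})\bigr)=\theta(\pol)\cap\pi^{-1}(\{\p y\})$ is indeed the heart of the matter, and comparing Minkowski integrals via pointwise equality of the integrand support functions is a clean way to package it.
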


We need some extra notation. Let~$\pc=(\p a_1,\dots,\p a_n)\in\RR^{d\times \ivl}$ be a point configuration, and consider its \defn{homogenization} $\h \pc=(\h {\p a}_1,\dots,\h {\p a}_n)\in\RR^{(d+1)\times \ivl}$ consisting of the vectors $\h {\p a}_i=(\p a_i,1)$. We define the zonotope~\defn{$\Z$} associated to~$\pc$ as the following Minkowski sum of centrally symmetric segments:
\[\Z = \sum_{i=1}^n [-\h {\p a}_i, \h{\p a}_i].\]
Let $\defn{\height} :\RR^{d+1}  \to \RR$ denote the map that returns the last coordinate of a point, that we call its \defn{height}.

This gives us another point of view on sweep polytopes.

\begin{proposition}\label{prop:spasmonotonepath}
For any point configuration~$\pc$ we have \[\fib[\tfrac{n}{2}\Z][\height]=\Sp\times\{0\},\] and hence $\Sp$ is affinely isomorphic to the monotone path polytope $\fib[\Z][\height]$.
\end{proposition}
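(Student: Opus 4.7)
The plan is to reduce the statement to Example~\ref{ex:permutahedronfiber} (which realizes the permutahedron as a monotone path polytope of a cube) via Lemma~\ref{lem:proj_fiber_pol} (which lets one push a fiber polytope along a linear map). The bridge between the cube in $\RR^n$ and the zonotope $\Z \subset \RR^{d+1}$ will be the linear map naturally associated to the \emph{homogenized} configuration $\h\pc$.

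Concretely, I would introduce $\lm[{\h\pc}]: \RR^n \to \RR^{d+1}$, the linear map sending $\p e_i \mapsto \h{\p a}_i = (\p a_i, 1)$. Because Minkowski sums commute with linear maps, $\lm[{\h\pc}](\tfrac{n}{2}\cube) = \tfrac{n}{2}\Z$; because every $\h{\p a}_i$ has last coordinate equal to $1$, the composition $\height \circ \lm[{\h\pc}]$ is the sum-of-coordinates form $s = \lfv[\ones]$. Applying Lemma~\ref{lem:proj_fiber_pol} with $\theta = \lm[{\h\pc}]$ and $\pi = \height$ then yields
\[ \fib[\tfrac{n}{2}\Z][\height] = \lm[{\h\pc}]\bigl(\fib[\tfrac{n}{2}\cube][s]\bigr), \]
and Example~\ref{ex:permutahedronfiber} identifies the inner fiber polytope with $\cPn$.

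It remains to compute $\lm[{\h\pc}](\cPn)$. The decisive observation is that $\cPn$ sits in the hyperplane $\{\sum x_i = 0\}$, so for every $\p x \in \cPn$ one has $\lm[{\h\pc}](\p x) = (\sum_i x_i \p a_i,\, \sum_i x_i) = (\lm(\p x), 0)$; combined with $\Sp = \lm(\cPn)$ from Proposition~\ref{prop:sweep polytopeprojection}, this gives $\lm[{\h\pc}](\cPn) = \Sp \times \{0\}$, proving the first equality. The affine isomorphism $\fib[\Z][\height] \cong \Sp$ then follows from the homogeneity $\fib[\lambda\pol][\pi] = \lambda\, \fib[\pol][\pi]$ inherited from the Minkowski-integral definition, which gives $\fib[\Z][\height] = \tfrac{2}{n}(\Sp \times \{0\})$. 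The only delicate point is bookkeeping the scaling constants (the $\tfrac{n}{2}$ in front of the cube/zonotope and the one implicit in Example~\ref{ex:permutahedronfiber}); no genuinely new ingredient beyond composing the cited results is required, and so no serious obstacle is expected.
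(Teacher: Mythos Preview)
Your proposal is correct and follows essentially the same approach as the paper: introduce $\lm[{\h\pc}]$, observe that it sends the cube to the zonotope and intertwines $s$ with $\height$, then combine Lemma~\ref{lem:proj_fiber_pol} with Example~\ref{ex:permutahedronfiber} and Proposition~\ref{prop:sweep polytopeprojection}, using that $\cPn$ lies in $s^{-1}(0)$ to get the last coordinate equal to zero. The only cosmetic difference is that the paper first computes $\fib[\Z][\height]=\tfrac{2}{n}\Sp\times\{0\}$ and then scales, whereas you scale the zonotope from the start; the content is identical.
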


\begin{proof}
The projection $\lm[\h \pc] :\mathbb{R}^n  \to \mathbb{R}^{d+1}$ that maps $\p e_i$ to~$\h {\p a}_i=(\p a_i, 1)$ is such that $\Z=\lm[\h \pc](\cube)$ and $s=\height\circ \lm[\h \pc]$, where $s$ is the linear form that sums the coordinates defined in \cref{ex:permutahedronfiber}. 
Hence, by \cref{lem:proj_fiber_pol} and \cref{ex:permutahedronfiber} we have $\fib[\Z][\height]=\lm[\h \pc](\fib[\cube][s])=\lm[\h \pc](\frac{2}{n} \cPn)$. 
Now, $\cPn$ lies in $s^{-1}\left(\frac{1}{\vol(\cube)}\int_{\cube} {\p y} d\p y\right)=s^{-1}(\zeros)$, and thus $\lm[\h \pc](\frac{2}{n} \cPn)$ lies in the kernel of $\height$, which means that $\fib[\Z][\height]=\frac{2}{n}\lm(\cPn)\times \{0\}$. Finally, by \cref{prop:sweep polytopeprojection}, we have \(\lm(\cPn) = \Sp\), and therefore $\fib[\Z][\height]=\frac{2}{n}\Sp\times \{0\}$.
\end{proof}

\begin{remark}\label{rmk:fiberksets}
If we intersect $\Z$ with the hyperplane of height equal to $-n+2$, we obtain
$$\conv(-\sum_{i=1}^n \h{\p a_i} +2\h{\p a_j}, j\in [n])=\conv(-\sum_{i=1}^n {\p a_i} + 2\pc)\times \{-n+2\},$$
which is an embedding of a dilation of the convex hull of $\pc$ in $\RR^{d+1}$. 
Similarly, for any $k\in [n]$ the slice at height $-n+2k$ is an embedding of a dilation of the projection of the hypersimplex~$\Hs$ under the map~$\lm[\pc]$. This is the $k$-set polytope of~$\pc$, see~\cref{rmk:hypersimplices}. The fiber polytope realization reflects the decomposition of the sweep polytope as a sum of $k$-set polytopes.
\end{remark}

\begin{figure}[htpb]
 
\begin{tikzpicture}%
	[x={(0.1cm, 0.25cm)},
	y={(-0.0cm, 1cm)},
	z={(1cm, -0.cm)},
	scale=1.7,
	back/.style={dotted, thick},
	edge/.style={color=black, very thick},
	facet/.style={fill=white,fill opacity=0},
	vertex/.style={inner sep=0pt,circle,black,fill=black,thick,anchor=base},
	etiq/.style={prefix after command= {\pgfextra{\tikzset{every label/.style={font=\tiny,text=blue}}}}},
    fiber/.style={fill=green,fill opacity=0.5},
    on each segment/.style={
        decorate,
        decoration={
        show path construction,
        moveto code={},
        lineto code={
            \path [#1]
            (\tikzinputsegmentfirst) -- (\tikzinputsegmentlast);
        },
        curveto code={
            \path [#1] (\tikzinputsegmentfirst)
            .. controls
            (\tikzinputsegmentsupporta) and (\tikzinputsegmentsupportb)
            ..
            (\tikzinputsegmentlast);
        },
        closepath code={
            \path [#1]
            (\tikzinputsegmentfirst) -- (\tikzinputsegmentlast);
        },
        },
    },
    mid arrow/.style={postaction={decorate,decoration={
        markings,
        mark=at position .5 with {\arrow{>}}
      }}},
    mp/.style={color=red, very thick,round cap-round cap,postaction={on each segment={mid arrow}}}
	]
\coordinate (-1.00000, -1.00000, 2.00000) at (-1.00000, -1.00000, 2.00000);
\coordinate (-1.00000, 0.00000, 1.00000) at (-1.00000, 0.00000, 1.00000);
\coordinate (-1.00000, 0.00000, 3.00000) at (-1.00000, 0.00000, 3.00000);
\coordinate (-1.00000, 1.00000, 2.00000) at (-1.00000, 1.00000, 2.00000);
\coordinate (0.00000, -1.00000, 1.00000) at (0.00000, -1.00000, 1.00000);
\coordinate (0.00000, -1.00000, 3.00000) at (0.00000, -1.00000, 3.00000);
\coordinate (0.00000, 0.00000, 0.00000) at (0.00000, 0.00000, 0.00000);
\coordinate (1.00000, 1.00000, 2.00000) at (1.00000, 1.00000, 2.00000);
\coordinate (0.00000, 0.00000, 4.00000) at (0.00000, 0.00000, 4.00000);
\coordinate (0.00000, 1.00000, 1.00000) at (0.00000, 1.00000, 1.00000);
\coordinate (0.00000, 1.00000, 3.00000) at (0.00000, 1.00000, 3.00000);
\coordinate (1.00000, -1.00000, 2.00000) at (1.00000, -1.00000, 2.00000);
\coordinate (1.00000, 0.00000, 1.00000) at (1.00000, 0.00000, 1.00000);
\coordinate (1.00000, 0.00000, 3.00000) at (1.00000, 0.00000, 3.00000);
\draw[edge,back] (0.00000, -1.00000, 1.00000) -- (1.00000, -1.00000, 2.00000);
\draw[edge,back] (0.00000, -1.00000, 3.00000) -- (1.00000, -1.00000, 2.00000);
\draw[edge,back] (0.00000, 0.00000, 0.00000) -- (1.00000, 0.00000, 1.00000);
\draw[edge,back] (1.00000, 1.00000, 2.00000) -- (1.00000, 0.00000, 1.00000);
\draw[edge,back] (1.00000, 1.00000, 2.00000) -- (1.00000, 0.00000, 3.00000);
\draw[edge,back] (0.00000, 0.00000, 4.00000) -- (1.00000, 0.00000, 3.00000);
\draw[edge,back] (1.00000, -1.00000, 2.00000) -- (1.00000, 0.00000, 1.00000);
\draw[edge,back] (1.00000, -1.00000, 2.00000) -- (1.00000, 0.00000, 3.00000);
\node[vertex] at (1.00000, 0.00000, 3.00000)     {};
\node[vertex] at (1.00000, 0.00000, 1.00000)     {};
\node[vertex] at (1.00000, -1.00000, 2.00000)     {};
\fill[facet] (1.00000, 1.00000, 2.00000) -- (0.00000, 1.00000, 3.00000) -- (-1.00000, 1.00000, 2.00000) -- (0.00000, 1.00000, 1.00000) -- cycle {};
\fill[facet] (-1.00000, 1.00000, 2.00000) -- (-1.00000, 0.00000, 1.00000) -- (-1.00000, -1.00000, 2.00000) -- (-1.00000, 0.00000, 3.00000) -- cycle {};
\fill[facet] (0.00000, 1.00000, 1.00000) -- (-1.00000, 1.00000, 2.00000) -- (-1.00000, 0.00000, 1.00000) -- (0.00000, 0.00000, 0.00000) -- cycle {};
\fill[facet] (0.00000, 1.00000, 3.00000) -- (-1.00000, 1.00000, 2.00000) -- (-1.00000, 0.00000, 3.00000) -- (0.00000, 0.00000, 4.00000) -- cycle {};
\fill[facet] (0.00000, 0.00000, 4.00000) -- (-1.00000, 0.00000, 3.00000) -- (-1.00000, -1.00000, 2.00000) -- (0.00000, -1.00000, 3.00000) -- cycle {};
\fill[facet] (0.00000, 0.00000, 0.00000) -- (-1.00000, 0.00000, 1.00000) -- (-1.00000, -1.00000, 2.00000) -- (0.00000, -1.00000, 1.00000) -- cycle {};

\fill[fiber] (-1.00000, 0.00000, 1.00000) -- (0.00000, -1.00000, 1.00000) -- (1.00000, 0.00000, 1.00000) -- (0.00000, 1.00000, 1.00000) -- cycle {};
\fill[fiber] (-1.00000, -1.00000, 2.00000) -- (-1.00000, 1.00000, 2.00000) -- (1.00000, 1.00000, 2.00000) -- (1.00000, -1.00000, 2.00000) -- cycle {};
\fill[fiber] (-1.00000, 0.00000, 3.00000) -- (0.00000, -1.00000, 3.00000) -- (1.00000, 0.00000, 3.00000) -- (0.00000, 1.00000, 3.00000) -- cycle {};

\draw[edge] (-1.00000, -1.00000, 2.00000) -- (-1.00000, 0.00000, 1.00000);
\draw[edge] (-1.00000, -1.00000, 2.00000) -- (-1.00000, 0.00000, 3.00000);
\draw[edge] (-1.00000, -1.00000, 2.00000) -- (0.00000, -1.00000, 1.00000);
\draw[edge] (-1.00000, -1.00000, 2.00000) -- (0.00000, -1.00000, 3.00000);
\draw[edge] (-1.00000, 0.00000, 1.00000) -- (-1.00000, 1.00000, 2.00000);
\draw[edge] (-1.00000, 0.00000, 1.00000) -- (0.00000, 0.00000, 0.00000);
\draw[edge] (-1.00000, 0.00000, 3.00000) -- (-1.00000, 1.00000, 2.00000);
\draw[edge] (-1.00000, 0.00000, 3.00000) -- (0.00000, 0.00000, 4.00000);
\draw[edge] (-1.00000, 1.00000, 2.00000) -- (0.00000, 1.00000, 1.00000);
\draw[edge] (-1.00000, 1.00000, 2.00000) -- (0.00000, 1.00000, 3.00000);
\draw[edge] (0.00000, -1.00000, 1.00000) -- (0.00000, 0.00000, 0.00000);
\draw[edge] (0.00000, -1.00000, 3.00000) -- (0.00000, 0.00000, 4.00000);
\draw[edge] (0.00000, 0.00000, 0.00000) -- (0.00000, 1.00000, 1.00000);
\draw[edge] (1.00000, 1.00000, 2.00000) -- (0.00000, 1.00000, 1.00000);
\draw[edge] (1.00000, 1.00000, 2.00000) -- (0.00000, 1.00000, 3.00000);
\draw[edge] (0.00000, 0.00000, 4.00000) -- (0.00000, 1.00000, 3.00000);
\node[vertex] at (-1.00000, -1.00000, 2.00000)     {};
\node[vertex] at (-1.00000, 0.00000, 1.00000)     {};
\node[vertex] at (-1.00000, 0.00000, 3.00000)     {};
\node[vertex] at (-1.00000, 1.00000, 2.00000)     {};
\node[vertex] at (0.00000, -1.00000, 1.00000)     {};
\node[vertex] at (0.00000, -1.00000, 3.00000)     {};
\node[vertex] at (0.00000, 0.00000, 0.00000)     {};
\node[vertex] at (1.00000, 1.00000, 2.00000)     {};
\node[vertex] at (0.00000, 0.00000, 4.00000)     {};
\node[vertex] at (0.00000, 1.00000, 1.00000)     {};
\node[vertex] at (0.00000, 1.00000, 3.00000)     {};

\draw[mp] ( 0, 0, 0) -- ( 0,-1, 1) -- (-1,-1, 2) -- ( 0,-1, 3) -- ( 0, 0, 4); %
\draw[mp, color=blue] (0, 0, 0) -- (-1,0, 1) -- (-1,1, 2) -- (0, 1, 3) -- (0, 0, 4); %

\coordinate[label={above:$\bar{\bm{b}}_{1}$}] (b1) at (1, 0, 1);
\coordinate[label={above:$\bar{\bm{b}}_{2}$}] (b2) at (0, 1, 1);
\coordinate[label={above:$\bar{\bm{b}}_{-{1}}$}] (b-1) at (-1, 0, 1);
\coordinate[label={below:$\bar{\bm{b}}_{-{2}}$}] (b-2) at (0, -1, 1);

\draw[-latex, thick] ( 0, -2, 0) -- node [above,midway] {$\height$} ( 0,-2, 4);
\end{tikzpicture}
    \caption{The zonotope $\Z[\h{\pc[B]_2}]$. Three fibers of the height function~$\height$ are highlighted, representing a copy of the convex hull of $\pc[B]_2$, and of its $2$-set and $3$-set polytopes. The lower (red) path represents the coherent monotone path associated to the permutation $(\bar 2, \bar 1, 1, 2)$ (which can be read off the directions of the steps in the path). The upper (blue) path is a monotone path that is not coherent. It is associated to the permutation $(\bar 1,2,1,\bar 2)$, which is not a sweep permutation, but a pseudo-sweep permutation, see \cref{sec:pseudosweeps}.}
    \label{fig:zonotopeandfibers}
\end{figure}

Conversely, monotone path polytopes of zonotopes for nondegenerate functionals are sweep polytopes, up to normal equivalence. 
Two polytopes are called \defn{normally equivalent} if they have the same normal fan, and normal equivalence obviously implies combinatorial equivalence. 

\begin{proposition}\label{prop:monotonepathzonotope}
Let $\pol[Z]\subset \RR^d$ be a zonotope, $\pi:\RR^d\to \RR$ a linear map, and $\pol[Z]^\pi$ the face of~$\pol[Z]$ minimizing~$\pi$. Then the monotone path polytope $\fib[{\pol[Z]}][\pi]$ is normally equivalent to the Minkowski sum of~$\pol[Z]^\pi$ with the sweep polytope~$\Sp$, where $\pc$ consists of the points $ \frac{1}{\pi(\p z_i)} \p z_i$ for the generators $\p z_i$ of $\pol[Z]$ such that $\pi(\p z_i)\neq 0$.
\end{proposition}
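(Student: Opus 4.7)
The plan is to reduce to a cube-projection argument in the spirit of \cref{prop:spasmonotonepath}. I first peel off the generators that $\pi$ annihilates: write $\pol[Z] = \pol[Z]' + \pol[Z]''$ with $\pol[Z]'' = \sum_{\pi(\p z_i) = 0}[-\p z_i,\p z_i]$ and $\pol[Z]' = \sum_{\pi(\p z_i) \neq 0}[-\p z_i,\p z_i]$. Since $\pi$ is constant on $\pol[Z]''$, every fiber of $\pi$ on $\pol[Z]$ is the Minkowski sum of a fiber on $\pol[Z]'$ with $\pol[Z]''$, so $\fib[{\pol[Z]}][\pi] = \fib[{\pol[Z]'}][\pi] + \pol[Z]''$. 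The minimum face $\pol[Z]^\pi$ is itself a translate of $\pol[Z]''$, and translating a Minkowski summand does not alter the normal fan, so the claim reduces to showing that $\fib[{\pol[Z]'}][\pi]$ is normally equivalent to $\Sp$.

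Since $[-\p z_i,\p z_i]$ is invariant under $\p z_i \mapsto -\p z_i$, we may also assume $\pi(\p z_i) > 0$ for every generator of $\pol[Z]'$. Set $\p a_i = \p z_i/\pi(\p z_i)$, so that $\pi(\p a_i) = 1$, and consider the auxiliary zonotope $\pol[Z]^* = \sum_i [-\p a_i,\p a_i]$. The linear map $\phi : \RR^n \to \RR^d$ with $\phi(\p e_i) = \p a_i$ satisfies $\phi(\cube) = \pol[Z]^*$ and $\pi\circ\phi = s$, the sum-of-coordinates form of \cref{ex:permutahedronfiber}. Combining \cref{lem:proj_fiber_pol}, \cref{ex:permutahedronfiber}, and \cref{prop:sweep polytopeprojection} gives
\[
\fib[{\pol[Z]^*}][\pi] = \phi(\fib[\cube][s]) = \phi\bigl(\tfrac{2}{n}\cPn\bigr) = \tfrac{2}{n}\Sp.
\]

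It remains to check that $\fib[{\pol[Z]'}][\pi]$ and $\fib[{\pol[Z]^*}][\pi]$ have the same normal fan. Both polytopes, together with $\Sp$, live in translates of $\ker\pi$, so it suffices to match normal fans inside $\ker\pi$. For generic $\p w \in \ker\pi$, the parametric simplex path along $\p w + \lambda\p u$ (with $\pi = \lfv[\p u]$) transitions for generator $i$ at $\lambda_i = -\sprod{\p w}{\p z_i}/\pi(\p z_i) = -\sprod{\p w}{\p a_i}$, an expression depending only on $\p a_i$ and therefore identical for $\pol[Z]'$ and $\pol[Z]^*$. Both zonotopes thus yield the same coherent monotone paths, indexed by sweep permutations of $\pc$, and neighbouring chambers of $\p w \in \ker\pi$ are separated by the hyperplanes $\sprod{\p w}{\p a_i - \p a_j} = 0$. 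These cut $\ker\pi$ into the trace of $\VO[\pc]$, which is exactly the normal fan of $\Sp$. The main subtlety is making this last chamber description rigorous, in particular confirming that the parametric simplex construction is insensitive to the positive rescaling $\pol[Z]^* \to \pol[Z]'$ and that no additional walls creep in from combinatorial degeneracies of $\pol[Z]'$.
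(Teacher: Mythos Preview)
Your argument follows the same architecture as the paper's: peel off the $\pi$-null generators, use fiber additivity to split off $\pol[Z]''$, rescale the remaining generators so that $\pi(\p a_i)=1$, and invoke the cube--permutahedron identity via \cref{lem:proj_fiber_pol} and \cref{ex:permutahedronfiber}. The one substantive difference is how you pass from $\pol[Z]'$ to the rescaled $\pol[Z]^*$: you argue directly that the parametric transition values $\lambda_i = -\sprod{\p w}{\p z_i}/\pi(\p z_i)$ coincide for the two zonotopes, whereas the paper simply cites \cite[Cor.~4.4]{McMullen2003}, which says that normally equivalent polytopes have normally equivalent fiber polytopes under the same projection. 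Since $\pol[Z]'$ and $\pol[Z]^*$ (indeed $\pol[Z]$ and $\pol[Z]_1+\pol[Z]_2$) are normally equivalent---positive rescaling of zonotope generators preserves the normal fan---this disposes of your ``main subtlety'' in one line. Your direct computation is also correct (the chamber structure on $\ker\pi$ depends only on the ordering of the $\lambda_i$, hence only on the $\p a_i$), so the hesitation in your last sentence is unwarranted; but the McMullen citation is the cleaner route.
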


\begin{proof}
Let $\p c, \p z_1 \ldots, \p z_m\in\RR^d$ be such that 
\[\pol[Z]= \p c + \sum_{i=1}^m \left[-\p z_i ,\p z_i\right] \subset \RR^d.\]
Then $\pol[Z]$ is normally equivalent to any zonotope $\pol[Z]'= \p c' + \sum_{i=1}^m \left[-\lambda_i \p z_i ,\lambda_i \p z_i\right]$, where $\p c'$ is a vector in $\RR^d$ and the $\lambda_i$ are non-zero scalars.

Up to relabeling the $\p z_i$, one can suppose that $\set{i}{\pi(\p z_i)=0}=\{n+1, \ldots, m\}$ for a certain $n \in \{0, \ldots, m\}$.
Let $\pol[Z]_1$ and $\pol[Z]_2$ be the zonotopes:
\begin{align*}
\pol[Z]_1 &= \sum_{i=1}^n \left[-\frac{1}{\pi(\p z_i)} \p z_i, \frac{1}{\pi(\p z_i)} \p z_i\right], &
\pol[Z]_2 &= \sum_{i=n+1}^m \left[-\p z_i ,\p z_i\right].
\end{align*}

Note that the face $\pol[Z]^{\pi}$ is a translation of $\pol[Z]_2$.

Since $\pol[Z]$ is normally equivalent to the Minkowski sum $\pol[Z]_1 + \pol[Z]_2$, we have that its  monotone path polytope $\fib[{\pol[Z]}][\pi]$
is normally equivalent to the monotone path polytope $\fib[{\pol[Z]_1 + \pol[Z]_2}][\pi]$ by~\cite[Cor.~4.4]{McMullen2003}.

Moreover, $\fib[{\pol[Z]_1 + \pol[Z]_2}][\pi]=\fib[{\pol[Z]_1}][\pi]+\pol[Z]_2$ because $\pi(\pol[Z]_2)=\{0\}$, thus $(\pol[Z]_1 + \pol[Z]_2)\cap \pi^{-1}(\{y\}) = \pol[Z]_1 \cap \pi^{-1}(\{y\}) + \pol[Z]_2$ for any $y\in \RR$. 
If we denote the configuration of points $\p a_1 = \frac{1}{\pi(\p z_1)}\p z_1, \ldots, \p a_n = \frac{1}{\pi(\p z_n)} \p z_n$ in $\RR^d$ by $\pc$, we have exactly $s=\pi \circ \lm$ and $\pol[Z]_1=\lm(\cube)$, with the same notations as in \cref{prop:sweep polytopeprojection} and \cref{ex:permutahedronfiber}. Hence, \cref{lem:proj_fiber_pol} and \cref{ex:permutahedronfiber} give $\fib[{\pol[Z]_1}][\pi] = \lm(\fib[\cube][s])=\lm(\frac{2}{n}\cPn) = \frac{2}{n}\Sp$.

Hence $\fib[{\pol[Z]}][\pi]$ is normally equivalent to the Minkowski sum $\Sp + \pol[Z]^{\pi}$.
\end{proof}

There is an alternative (but strongly related) way to construct sweep polytopes as fiber polytopes. It is not directly used in the sequel, but we present it in \cref{sec:alternativefiber} for completeness.

\section{Sweep oriented matroids}\label{sec:sweeporientedmatroids}

The goal of this section is to provide a purely combinatorial definition of posets of sweeps generalizing allowable sequences to higher dimensions. Since already in the plane not all allowable sequences arise from point configurations, it is clear that our definition has to go beyond the realizable case. We will do it in terms of oriented matroids, which do have enough expressive power to completely describe allowable sequences. However, to motivate our definition, we will start by discussing some oriented matroids associated to point configurations, inspired by~\cite[Sects.~1.10 \& 6.4]{BLSWZ99}. While we will introduce the basic definitions in oriented matroid theory, we refer the reader not familiar with the topic to the introduction in~\cite[Lec.~6]{Ziegler1995}, and to the classical book~\cite{BLSWZ99} for a comprehensive source.

\subsection{Basic notions and notation}\label{sec:OMdefns}

There are several cryptomorphic approaches to oriented matroids. We will use the presentation in terms of the \defn{covector axioms}, which describe oriented matroids in terms of collections of sign-vectors $ \cov\subseteq \{+,-,0\}^E$, called \defn{covectors}, labeled by a finite ground set~$E$. 

For $X\in \cov$ and $e\in E$, $X_e$ denotes the value of $X$ at the coordinate~$e$. The \defn{opposite} $-X$ of $X\in\cov$ is the sign-vector obtained by switching $+$ and $-$ in~$X$; that is, $(-X)_e=-(X_e)$. For $X,Y \in \cov$, the \defn{composition} of $X$ and $Y$ is the sign-vector $X\circ Y \in \{ +, -,0\}^E$ such that $(X\circ Y)_e=X_e$ if $X_e\neq 0$; and $(X\circ Y)_e=Y_e$ otherwise. The \defn{separation set} of $X$ and $Y$, denoted $\sep[X][Y]$, is  the set of elements $e\in E$ such that $(X_e, Y_e) \in \{(+,-),(-,+)\}$.

\begin{definition}[{{cf.~\cite[Def.~4.1.1]{BLSWZ99}}}]\label{def:axioms_OM}
A collection of sign-vectors $\cov \subseteq \{+,-,0\}^E$ is the set of covectors of an \defn{oriented matroid} if it satisfies the following axioms:
\begin{description}
 \item[(V0)\label{it:COVaxiom0}] $\zero \in \cov$,
 \item[(V1)\label{it:COVaxiomSYM}] $X\in \cov$ implies $-X\in \cov$,
 \item[(V2)\label{it:COVaxiomCOMP}] $X,Y \in \cov$ implies $X\circ Y \in \cov$,
 \item[(V3)\label{it:COVaxiomELIM}] if $X,Y\in \cov$ and $e\in \sep[X][Y]$ then there exists $Z \in \cov$ such that $Z_e=0$ and $Z_f=(X\circ Y)_f$ for all $f\notin S(X,Y)$. 
 \end{description}
\end{definition}

The set of covectors of an oriented matroid, with the product partial order induced by $0\prec +,-$ componentwise, forms a poset. It has the structure of a lattice, called the \defn{big face lattice} of the oriented matroid, if a top element~$\greatest$ is adjoined. The \defn{rank} of the oriented matroid is the length of the maximal chains in the poset of covectors.
The minimal non-zero covectors are called \defn{cocircuits}, and they determine the oriented matroid as every non-zero covector is a composition of cocircuits.
The maximal covectors for this partial order are called the \defn{topes} of the oriented matroid. They also determine the oriented matroid, as $X$ is a covector of $\cov$ if and only if $X\circ T$ is a tope for every tope~$T$. 
In fact, the \defn{tope-graph} of $\cov$, whose vertices are the topes and whose edges are given by the covectors covered by exactly two topes, already determines the oriented matroid up to FL-isomorphism, see \cite[Theorem 6.14]{BEZ90} and \cite[Theorem 4.2.14]{BLSWZ99}.

There are several standard notions of {oriented matroid isomorphism}. By \defn{FL-isomorphism}, we mean the coarsest, induced by isomorphism of the big face lattices. 
FL-isomorphism, called just isomorphism in~\cite{FinschiFukuda2002}, is the equivalence relation induced by reorientation, relabeling, and introduction/deletion of loops and parallel elements.

To understand the concepts used in the definition of FL-isomorphism, we need some extra notation. For $X\in \{+,-,0\}^E$ and $F\subseteq E$, we denote by $\reor{X}{F}$ the signed vector $Z$ such that: $Z_f = -X_f$ for $f\in F$ and $Z_e = X_e$ for $e\in E\setminus F$, which we call the \defn{reorientation} of $X$ on $F$. If $\OM$ is an oriented matroid on the ground set~$E$, its \defn{reorientation} on $F$ is the oriented matroid $\reor{\OM}{F}$ with covectors $\reor{X}{F}$ for $X\in \OM$. The \defn{support} of a sign-vector~$X$ is $\supp{X}=\set{e\in E}{X_e\neq 0}$. A \defn{loop} is an element $e\in E$ that does not belong to the support of any covector. 
Two elements $e,f\in E$ are said to be \defn{parallel} if $X_f = X_e$ for all $X\in \cM$ or $X_f=-X_e$ for all $X \in \cM$. This defines an equivalence relation on $E$, whose equivalence classes are called \defn{parallelism classes}. The parallelism class of $e\in E$ is denoted $\parallelclass{e}$. An oriented matroid is called \defn{simple} if it does not contain loops or distinct parallel elements.

For $X\in \{+,-,0\}^E$ and $F\subseteq E$, \defn{the restriction of $X$ to $F$}, denoted $\restr{X}{F}$ is the covector $Z \in \{+,-,0\}^F$ such that $Z_f=X_f$ for all $f\in F$. If $\OM$ is an oriented matroid on the ground set~$E$, the set $\set{\restr{X}{F}}{ X\in \cov}$ forms an oriented matroid, denoted $\restr{\cov}{F}$ and called the \defn{restriction of $\cov$ to~$F$}. 
The set $\set{\restr{X}{E\setminus F}}{ X\in \cov, X_f=0\, \forall f\in F}$ also forms an oriented matroid, denoted $\contract{\cov}{F}$ and called the \defn{contraction of $\cov$ along~$F$}. 

An oriented matroid is called \defn{acyclic} if the all-positive sign-vector~$\pluses$ is a tope.

The standard way to associate an oriented matroid to a real vector configuration~$\vc=(\p v_1,\dots,\p v_n)\in\RR^{d\times \ivl}$ is to consider the set of covectors on the ground set~$\ivl$ induced by the signs of the evaluations of linear functionals on the elements of~$\vc$: 
\begin{equation}
 \label{eq:RealizationOM}\VectOM=\set{(\sign(\sprod{\p u}{\p v_1},\dots,\sign(\sprod{\p u}{\p v_n} )}{\p u\in\RR^n}\subseteq\{+,-,0\}^n,
\end{equation}
where $\sign(x)=\begin{cases}
+ &\text{ if } x>0\\
- &\text{ if } x<0\\
 0 &\text{ if } x=0.
               \end{cases}$
               
That is, to each linear oriented hyperplane, we record which vectors of the configuration lie on the hyperplane, and which lie at the positive and negative sides, respectively. The covectors~$\VectOM$ label the regions of the hyperplane arrangement $\HA[\vc]$ consisting of the hyperplanes orthogonal to the vectors of~$\vc$. Under this labeling, the big face lattice is consistent with the inclusion order of the regions, the topes labeling the maximal cells of the arrangement. Thus, the big face lattice on $\VectOM$ is isomorphic to (the opposite of) the face lattice of the zonotope~$\sum_{i\in \ivl} [\p 0,\p v_i]$. The rank of $\VectOM$ coincides with the dimension of the linear hull of~$\vc$.
We will call this oriented matroid the \defn{oriented matroid associated to~$\vc$}. Oriented matroids that arise this way are called \defn{realizable}. Note that even non-realizable oriented matroids can be geometrically realized by \defn{arrangements of pseudo-spheres}, see \cite[Sec.~1.4.1 \&~5.2]{BLSWZ99}.

\subsection{Three realizable oriented matroids associated to a point configuration}

The construction above extends directly to affine point configurations, by considering evaluations of affine functionals instead. (Or, equivalently, linear functionals on the homogenization~$\h \pc$.) Although this is the standard way to associate an oriented matroid to a point configuration~$\pc$, we will call it the \defn{little oriented matroid} of~$\pc$, which is consistent with the notation in~\cite[Sect.~1.10]{BLSWZ99} for planar configurations. This is to avoid confusion with the other alternative notions of oriented matroid associated to a point configuration that we will introduce. The \defn{big oriented matroid}, which contains more information than the little oriented matroid, is also inspired by~\cite[Sect.~1.10]{BLSWZ99}. We will prefer a more compact presentation, the \defn{sweep oriented matroid}, which was not explicitly introduced there.

\enlargethispage{.5cm}
\begin{definition}\label{def:oms}
Let $\pc=(\p a_1, \ldots,\p a_n)\in\RR^{d\times \ivl}$ be a full-dimensional point configuration (i.e.\ its affine span is the whole space $\RR^d$):
\begin{enumerate}[(i)]
 \item The \defn{little oriented matroid} of $\pc$, denoted \defn{$\LOMA$}, is the oriented matroid of rank~$d+1$ with ground set~$\ivl$ associated to the  $(d+1)$-dimensional homogenized vector configuration $\h \pc = (\h{\p a}_1, \dots, \h{\p a}_n)\in\RR^{(d+1)\times \ivl}$, where $\h{\p a}_i=(\p a_i,1)\in \RR^{d+1}$. This is always an acyclic oriented matroid.
 
 \item
The \defn{sweep oriented matroid} of $\pc$, denoted \defn{$\MOMA$}, is the oriented matroid of rank~$d$ with ground set~$\ipairs=\set{(i,j)}{1\leq i < j \leq n}$ associated to the $d$-dimensional vector configuration \[\textstyle \set{\p a_{(i,j)}=\p a_j-\p a_i}{(i,j)\in\ipairs} \in \RR^{d\times \ipairs}.\]

 \item 
The \defn{big oriented matroid}\footnote{Our definition differs slightly from that in~\cite[Sect.~1.10]{BLSWZ99}. We admit parallel vectors when the configuration is not generic, whereas in \cite[Sect.~1.10]{BLSWZ99} all parallel vectors of the form $\p a_j-\p a_i$ are merged into a single element of the oriented matroid.} of $\pc$, denoted \defn{$\BOMA$},
is the oriented matroid of rank~$d+1$ on the ground set~$\ivl\cup\ipairs$ associated to the $(d+1)$-dimensional vector configuration 
\[\textstyle \h \pc \cup \set{ (\p a_{(i,j)},0)}{(i,j)\in\ipairs } \in \RR^{\left(d+1\right)\times \left(\ivl\cup \ipairs\right)}.\]

\end{enumerate}
\end{definition}

\begin{figure}[htpb]
    \includegraphics[width=.5\textwidth]{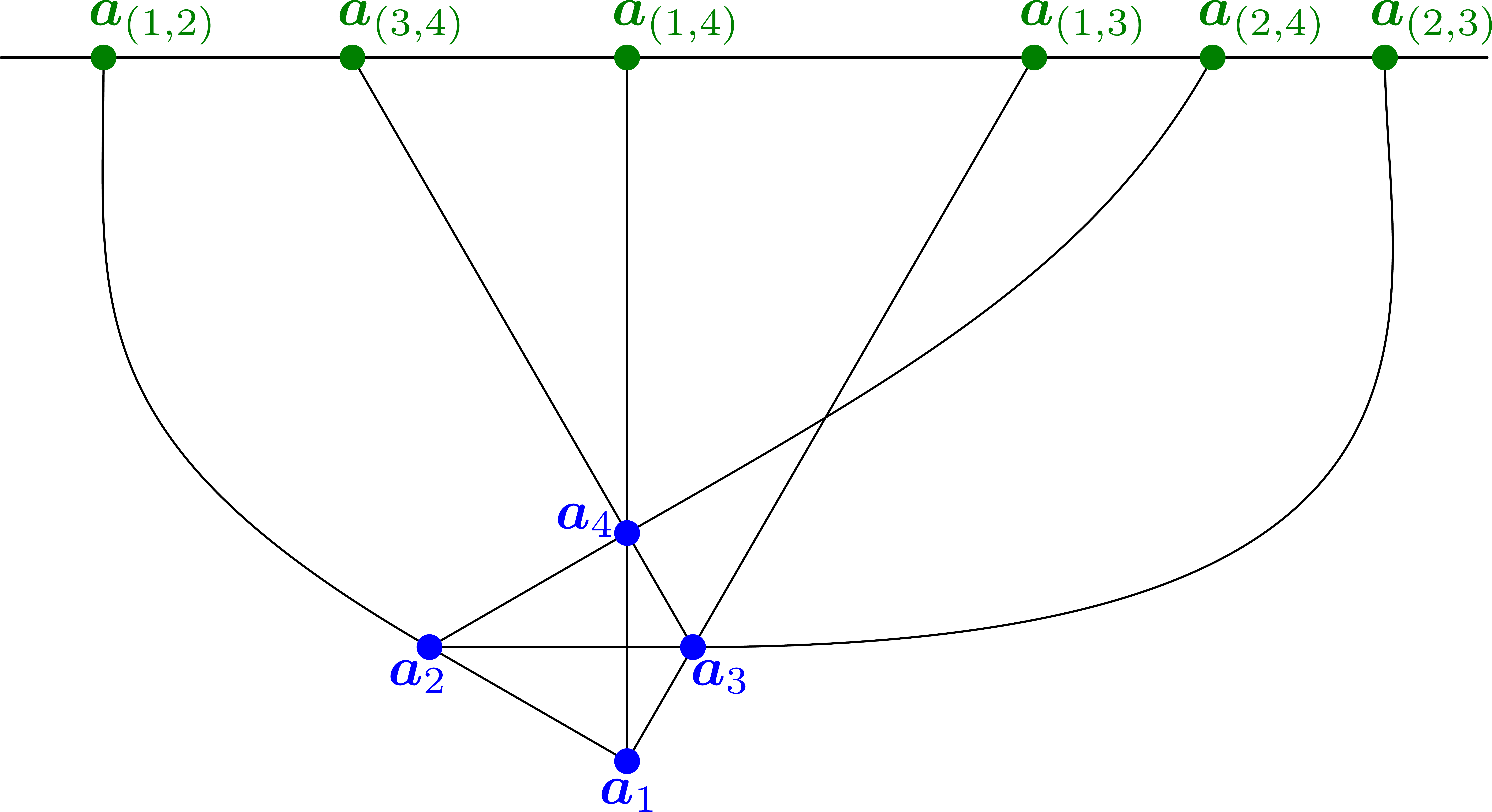}
    \caption{A big oriented matroid (with collinearities indicated). The points in the upper line, which represents the line at infinity, give rise to a sweep oriented matroid, whereas the points below give rise to the associated little oriented matroid.}
    \label{fig:BigOM}
\end{figure}

Little, sweep and big oriented matroids obtained this way from a point configuration will be called \defn{realizable}. In Sections~\ref{subsec:sweeporientedmatroids} and \ref{sec:fromsweeptobig}, we give definitions for abstract sweep, little and big oriented matroids not necessarily arising from point configurations. 
We explain below how these structures are related to each other and to the poset of sweeps and the set of sweep permutations.

For a sweep $I=(I_1, \ldots, I_l)\in \Scom$, corresponding to the surjection $\sur : [n] \to [l]$, we define the sign-vector $\sv\in\{+,-,0\}^{\ipairs}$ such that
\begin{equation}\label{eq:covectorfrompartition}
  X_{(i,j)}^I=\begin{cases}
            + &\text{ if } \sur(i)<\sur(j),\\
            - &\text{ if } \sur(i)>\sur(j),\\
            0 &\text{ if } \sur(i)=\sur(j);
           \end{cases}
\end{equation}
for $(i,j)\in\ipairs$. 

For example, if $I$ is the sweep $(\{1, 3\}, \{2\})$, we have $\sur(1)=\sur(3)=1$, $\sur(2)=2$, and the corresponding covector on the ground set $\{(1,2), (1,3), (2,3)\}$ is $\sv=(+, 0, -)$. Compare \cref{fig:SweepsA3,fig:sweeparrangementA3} to see other examples. As the figures illustrate, this map induces an isomorphism at the level of posets.

\begin{figure}[htpb]
        \includegraphics[width=.8\textwidth]{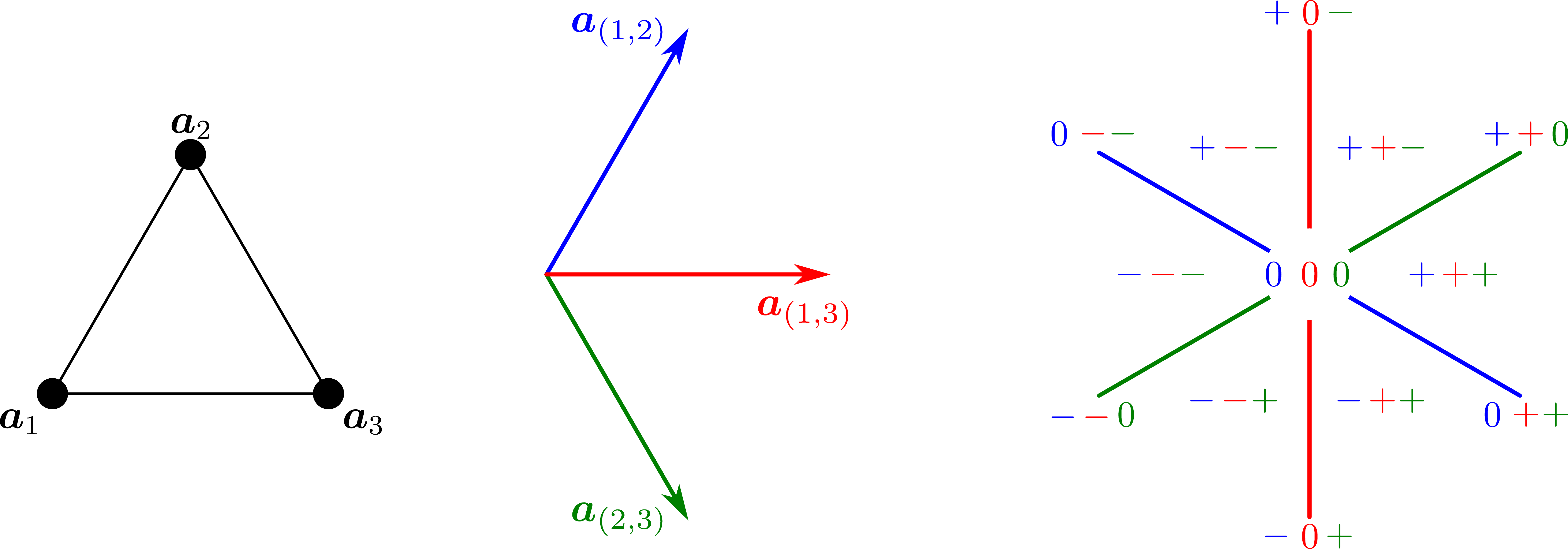}
    \caption{The vector configuration
      $\left\{\p a_{(1,2)},\p a_{(1,3)},\p a_{(2,3)}\right\}$ associated to the point configuration $\pc_3$ from \cref{fig:SweepsA3}. The covectors associated to the regions of the sweep hyperplane are indicated
by sign-vectors of length~$3$ containing the sign of the scalar product of a vector in the region with $\p a_{(1,2)}$, $\p a_{(1,3)}$, and $\p a_{(2,3)}$, respectively. This should be compared with the labeling of the regions of the sweep hyperplane arrangement in terms of partitions in \cref{fig:SweepsA3}.}\label{fig:sweeparrangementA3}
\end{figure}

\begin{lemma}\label{lem:equiv_sweepcomplex_sweepOM}
The map $I \mapsto \sv$ induces a poset isomorphism between the poset of sweeps $\Scom$ and the poset of covectors of the sweep oriented matroid~$\MOMA$.
\end{lemma}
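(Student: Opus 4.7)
The plan is to show that the map $I \mapsto \sv$ factors through the sweep-defining linear functional, which will give both well-definedness and surjectivity in one stroke, after which injectivity and the order relation become bookkeeping.

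First I would verify the key identity: for every $\p u \in \RR^d$, the covector of $\MOMA$ read off $\p u$ via the realization formula \eqref{eq:RealizationOM} is exactly $\sv[I^{\p u}]$. Indeed, that covector has $(i,j)$-entry $\sign(\sprod{\p u}{\p a_j - \p a_i})$, which by the definition of the sweep $I^{\p u}$ is $+$ if $\sur[I^{\p u}](i) < \sur[I^{\p u}](j)$, $-$ if $\sur[I^{\p u}](i) > \sur[I^{\p u}](j)$, and $0$ otherwise, matching \eqref{eq:covectorfrompartition}. Since by definition every sweep $I$ equals $I^{\p u}$ for some $\p u$, this immediately shows that $\sv$ is a covector of $\MOMA$; conversely, every covector of $\MOMA$ arises from some $\p u$ and therefore equals $\sv[I^{\p u}]$ for some sweep. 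So the map is well-defined and surjective.

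For injectivity, I would note that an ordered partition $I$ is recovered from $\sv$ as follows: the parts of $I$ are the equivalence classes of the relation $\{i \sim j : \sv_{(i,j)} = 0\}$ (with the convention $(j,i) \equiv (i,j)$ after sign change), and the order of the parts is determined by the signs of $\sv_{(i,j)}$ for representatives in distinct classes. Hence $\sv = \sv[J]$ forces $I = J$.

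Finally, for order preservation, suppose $J \succeq I$, so each part of $I$ is a union of consecutive parts of $J$. Then $\sur[I](i) \neq \sur[I](k)$ implies $\sur[J](i), \sur[J](k)$ lie in different blocks of $J$ ordered the same way, so $\sv_{(i,k)} = \sv[J]_{(i,k)}$; while $\sur[I](i) = \sur[I](k)$ only forces $\sv_{(i,k)} = 0$ with no constraint on $\sv[J]_{(i,k)}$. This is exactly $\sv \preceq \sv[J]$ in the product order $0 \prec \pm$. Conversely, if $\sv \preceq \sv[J]$, then every strict inequality recorded by $\sur[I]$ is preserved (with the same sign) by $\sur[J]$, and any coincidence between the two surjections gets refined into an equality or a signed inequality, which is precisely the refinement condition $J \succeq I$. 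I do not foresee any real obstacle here; the only subtlety is keeping the refinement convention aligned with the product order on sign-vectors, which the computation above pins down.
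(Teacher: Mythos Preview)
Your proposal is correct and follows essentially the same approach as the paper: both hinge on the key identity that for $\p u\in\RR^d$ the covector of $\MOMA$ read off $\p u$ equals $\sv[I^{\p u}]$. The paper's proof is a bit more compressed in that it invokes the common parametrization by the cells of the hyperplane arrangement $\VO$ to get the poset isomorphism in one line, whereas you spell out injectivity and the two directions of order preservation by hand; the underlying argument is the same.
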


In particular, $\Scom\cup\greatest$, where $\greatest$ is an additional top element, is isomorphic to the big face lattice of $\MOMA$, which is the opposite of the face lattice of the zonotope~$\Sp$ (cf.\ \cite[Cor.~7.17]{Ziegler1995}). 

\begin{proof}
Let $I$ be an ordered partition in $\Scom$, with corresponding surjection $p_I$, and associated to the linear form $u\in \RR^d$. 
This linear form $u$ is also associated to a covector $X$ of $\MOMA$ that is exactly the image of $I$ by the above bijection:

\begin{eqnarray*} 
X_{(i,j)} = 0 &\Leftrightarrow \sprod{u}{a_j-a_i}=0 &\Leftrightarrow p_I(i)=p_I(j), \\
X_{(i,j)} = + &\Leftrightarrow \sprod{u}{a_j-a_i}>0&\Leftrightarrow p_I(i)<p_I(j), \\
X_{(i,j)} = - &\Leftrightarrow \sprod{u}{a_j-a_i}<0&\Leftrightarrow p_I(i)>p_I(j).
\end{eqnarray*}

Hence both the sweeps of $\Scom$ and the covectors of $\MOMA$ are in bijection with the cells of the hyperplane arrangement $\VO$ and the bijections induce poset isomorphisms.
\end{proof}

It follows from the previous lemma that the set of sweep permutations $\PsetA$ is in bijection with the topes of the sweep oriented matroid $\MOMA$. Since the topes of an oriented matroid completely determine it (cf.~\cite[Proposition~3.8.2]{BLSWZ99}), this implies:

\begin{corollary}\label{cor:permutationsetsuffices}
The set of sweep permutations $\PsetA$ determines the whole poset of sweeps~$\Scom$.
\end{corollary}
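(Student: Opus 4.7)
The plan is to combine \Cref{lem:equiv_sweepcomplex_sweepOM} with the standard oriented matroid fact that the set of topes determines the whole oriented matroid. The strategy is essentially a translation between the combinatorial data on the $\Scom$ side and the covector data on the $\MOMA$ side.

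First I would invoke \Cref{lem:equiv_sweepcomplex_sweepOM} to identify $\Scom$ with the poset of covectors of $\MOMA$ via the bijection $I\mapsto X^I$. Under this identification, the maximal elements of $\Scom$ (the sweep permutations in $\PsetA$, or more generally the maximal sweeps if $\pc$ has repeated points) correspond precisely to the topes of $\MOMA$: indeed, $X^I$ has empty zero-support exactly when the surjection $p_I$ separates every pair $(i,j)\in\ipairs$ with $\p a_i\neq \p a_j$, and this is exactly the condition that $I$ is maximal in $\Scom$.

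Next I would apply the standard result (for example \cite[Proposition~3.8.2]{BLSWZ99}) that an oriented matroid is fully recovered from its set of topes: every covector $X$ can be reconstructed as $X=\bigcap\{T\in\topes(\OM) \mid X\circ T = T\}$, equivalently, one can read off all covectors from the topes via compositions. Applied to $\MOMA$, the set of topes $\PsetA$ thus determines the entire covector set $\MOMA$, hence the whole big face lattice.

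Transporting this back through the isomorphism of \Cref{lem:equiv_sweepcomplex_sweepOM} recovers $\Scom$ as a poset from $\PsetA$, which proves the corollary. There is essentially no obstacle here beyond carefully checking the correspondence between maximal sweeps and topes (the only subtlety being the case of repeated points, where the ``sweep permutations'' are really maximal ordered partitions of $\ivl$, but the argument is unchanged because the pairs $(i,j)$ with $\p a_i=\p a_j$ are loops of $\MOMA$ and do not affect the tope structure).
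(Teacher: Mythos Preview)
Your proposal is correct and follows essentially the same approach as the paper: both use \Cref{lem:equiv_sweepcomplex_sweepOM} to identify the sweep permutations with the topes of $\MOMA$, and then invoke \cite[Proposition~3.8.2]{BLSWZ99} that the topes determine the oriented matroid. Your added remarks about the repeated-points case and the explicit characterization of topes are extra detail, but the argument is the same.
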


The structures we have introduced are related by the following hierarchy (whose proof depends on the upcoming~\cref{prop:realizablesweeptobig}):

\begin{theorem}\label{thm:hierarchy}
Let $\pc \in\RR^{d\times \ivl}$ be a point configuration. 
Then the set of sweep permutations $\PsetA$, the poset of sweeps $\Scom$, the sweep oriented matroid $\MOMA$ and the big oriented matroid $\BOMA$ (cryptomorphically) determine each other.
They determine the little oriented matroid $\LOMA$, which does not always determine them.

\end{theorem}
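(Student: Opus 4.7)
The plan is to establish a cycle of implications witnessing cryptomorphic equivalence among $\PsetA$, $\Scom$, $\MOMA$, and $\BOMA$; to deduce $\LOMA$ from this package as a restriction of $\BOMA$; and to exhibit a pair of configurations realizing the non-implication $\LOMA \not\Rightarrow \MOMA$.

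The equivalences $\PsetA \Leftrightarrow \Scom$ and $\Scom \Leftrightarrow \MOMA$ follow at once from \cref{cor:permutationsetsuffices} and \cref{lem:equiv_sweepcomplex_sweepOM}. For the implications $\BOMA \Rightarrow \MOMA$ and $\BOMA \Rightarrow \LOMA$, both are obtained from $\BOMA$ by restriction of the ground set. On the one hand, $\restr{\BOMA}{\ivl} = \LOMA$ is immediate from \cref{def:oms}. On the other hand, $\restr{\BOMA}{\ipairs} = \MOMA$ because the vectors $(\p a_j-\p a_i,0)$ indexing the $\ipairs$-part of $\BOMA$ all lie in the hyperplane $\RR^d\times\{0\}$, so the signs of $\sprod{(\p u,c)}{(\p a_j-\p a_i,0)}=\sprod{\p u}{\p a_j-\p a_i}$ are independent of $c$ and coincide precisely with the covectors of~$\MOMA$.

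The remaining arrow $\MOMA \Rightarrow \BOMA$ is the crux of the statement and the main obstacle; it is the content of the upcoming \cref{prop:realizablesweeptobig}, which I would invoke to close the cycle and which supplies a canonical construction of $\BOMA$ from~$\MOMA$. Once available, combining it with the restriction $\restr{\BOMA}{\ivl} = \LOMA$ shows that each of $\PsetA$, $\Scom$, $\MOMA$, $\BOMA$ determines $\LOMA$.

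Finally, to establish that $\LOMA$ does not determine the other structures, I will exhibit two planar point configurations with isomorphic little oriented matroids but non-isomorphic sweep oriented matroids. Any four points in convex position in the plane share the same chirotope and the same Radon partition $\{1,3\}\mid\{2,4\}$, so their little oriented matroids are all isomorphic up to relabeling. The sweep oriented matroid, however, records the finer combinatorics of the six pairwise differences $\p a_j-\p a_i$: for the vertices of a square the two pairs of opposite sides give parallel differences, producing parallel elements of $\MOMA$, whereas for a generic convex quadrilateral the six differences lie in distinct parallelism classes. The resulting sweep oriented matroids therefore differ already in their number of parallelism classes and cannot be isomorphic, completing the proof.
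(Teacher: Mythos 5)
Your proof is correct and follows essentially the same structure as the paper's: the same two lemmas close the chain $\PsetA \Leftrightarrow \Scom \Leftrightarrow \MOMA$, the same restriction to $\ivl$ extracts $\LOMA$ from $\BOMA$, and the reverse implication $\MOMA\Rightarrow\BOMA$ is, as you correctly identify, the content deferred to \cref{prop:realizablesweeptobig}. Your observation that $\restr{\BOMA}{\ipairs}=\MOMA$ (since the vectors $(\p a_j-\p a_i,0)$ are annihilated by the last coordinate of any linear form) is a clean and correct way to note the easy direction of that equivalence.

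The one genuine difference is the final counterexample: the paper simply cites~\cite[Sec.~1.10]{BLSWZ99}, whereas you construct an explicit one. Your example (a square versus a generic convex quadrilateral, both labeled cyclically) is valid: with the cyclic labeling the little oriented matroids are actually \emph{equal}, not merely isomorphic, while the sweep oriented matroids have $4$ versus $6$ parallelism classes (hence $8$ versus $12$ sweep permutations) and so cannot coincide. This makes the proof self-contained, which is a modest but real improvement. One small suggestion: the phrase ``isomorphic up to relabeling'' undersells what you need; the theorem is about the labeled object $\LOMA$, and your argument actually delivers \emph{equality} of the labeled little oriented matroids under the common cyclic labeling, so you should say that.
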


In particular, the sweep oriented matroid is a combinatorial invariant of a point configuration that is finer than the order type (given by the little oriented matroid).

\begin{proof}
The fact that $\PsetA$ and $\Scom$ determine each other follows from \cref{cor:permutationsetsuffices}. 
The equivalence between $\Scom$ and $\MOMA$ follows from \cref{lem:equiv_sweepcomplex_sweepOM}. 
The equivalence between $\MOMA$ and $\BOMA$ will be proved later, as a consequence of \cref{def:def_big_OM} and \cref{prop:realizablesweeptobig}. 

Finally, $\BOMA$ determines $\LOMA$ by restriction to the ground set $[n]$ but this operation is not injective. 
Examples of planar configurations with different sets of sweep permutations but the same little oriented matroid can be found in~\cite[Section~1.10]{BLSWZ99}.
\end{proof}

\subsection{Sweep oriented matroids}\label{subsec:sweeporientedmatroids}

The main insight for expanding the notion of sweep oriented matroids from Definition~\ref{def:oms} beyond the realizable case is to note that a configuration of vectors of the form $\p a_j-\p a_i$ for $(i,j)\in\ipairs$ is just the projection of the \defn{braid configuration} $\smallset{\p e_j-\p e_i}{(i,j)\in\ipairs } \in \RR^{n\times \ipairs}$ (the set of positive roots of the Coxeter root system~$A_{n-1}$) under the linear map~$\lm$ defined in~\eqref{eq:phiA}.

 Consider the oriented matroid~\defn{$\Mbraid$} associated to the braid configuration, that is, the graphic oriented matroid of the complete graph $K_n$ with the acyclic orientation induced by the usual order on~$\ivl$. We will use the same notation~$\Mbraid$ as with the hyperplane arrangement, as it will be always clear from the context whether we are considering the hyperplane arrangement or the associated oriented matroid.
Note that, since the configuration of the $\p a_j-\p a_i$ is a linear projection of the braid configuration, every covector of $\MOMA$ is a covector of the braid oriented matroid, as we can pull back linear forms with $\lm^*$.
 
The oriented matroid analogues of linear projections are \defn{strong maps}. For two oriented matroids  $\OM_1$ and $\OM_2$ on the same ground set, we say that there is a \defn{strong map} from $\OM_1$ to $\OM_2$, denoted $\OM_1\to \OM_2$, if every covector of $\OM_2$ is a covector of $\OM_1$ (see~\cite[Sec.~7.7]{BLSWZ99}). This will be the starting point for our definition.

\begin{definition}
An oriented matroid~$\OM$ on the ground set~$\ipairs$ is a \defn{sweep oriented matroid} if there is a strong map $\Mbraid\to \OM$ from $\Mbraid$ to $\OM$, i.e.\ if all covectors of $\OM$ are covectors of $\Mbraid$.
\end{definition}

\begin{remark}\label{rmk:bijection}
Note that, if $\OM$ is a sweep oriented matroid, then we can interpret its covectors as covectors of the braid arrangement, and hence each covector can be uniquely identified  with an ordered partition via the bijection inverse to~\eqref{eq:covectorfrompartition}. 
For a covector $X\in\cov$ of a sweep oriented matroid, we will denote by~\defn{$\op$} the associated ordered partition.
\end{remark}

Our next result characterizes sweep oriented matroids via a $3$-term orthogonality condition on covectors (c.f.\ \cite[Sec.~3.4]{BLSWZ99}) that provides an explicit test for deciding whether an oriented matroid is a sweep oriented matroid.
It will be relevant later in the context of sweep acycloids in \cref{sec:sweepacycloids}.

Recall that the \defn{support} of a sign-vector~$X\in\{+,-,0\}^E$ is $\supp{X}=\set{e\in E}{X_e\neq 0}$. Two sign-vectors $X,Y\in\{+,-,0\}^E$ are said to be \defn{orthogonal} if either $\supp{X}\cap \supp{Y}=\emptyset$, or the restrictions of $X$ and $Y$ to $\supp{X}\cap \supp{Y}$ are neither equal nor opposite (i.e., there are $i,j$ with $X_i=Y_i\neq 0$ and $X_j=-Y_j\neq 0$).

\begin{lemma}\label{lem:OMtransitivity}

An oriented matroid $\OM$ on~$\ipairs$ is a sweep oriented matroid if and only if for every covector~$X$ and every choice of $1\leq i < j < k \leq n$, the triple $(X_{(i,j)},X_{(j,k)},X_{(i,k)})$ is orthogonal to the sign vector $(+,+,-)$.
 
 Equivalently, $\OM$ is a sweep oriented matroid if and only if for any covector~$X$, and for  $1\leq i < j < k \leq n$, the triple $(X_{(i,j)},X_{(j,k)},X_{(i,k)})$ does not belong to the following list of forbidden patterns:
 \[
 \left\{
 \begin{tabular}{ccccccc}
 $(+,+,-)$,&$(-,-,+)$,&$(0,+,-)$,&$(0,-,+)$,&$(+,0,-)$,&$(-,0,+)$,&$(+,+,0)$,\\
 $(-,-,0)$,&$(0,0,-)$,&$(0,0,+)$,&$(0,+,0)$,&$(0,-,0)$,&$(+,0,0)$,&$(-,0,0)$
 \end{tabular}
 \right\}.
 \]

\end{lemma}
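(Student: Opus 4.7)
The plan is to prove the lemma by routing through the identification of covectors of $\Mbraid$ with total preorders on $\ivl$. By \Cref{rmk:bijection}, $X\in\{+,-,0\}^{\ipairs}$ is a covector of $\Mbraid$ iff $X=\sv$ for some ordered partition $I$ of $\ivl$, which in turn happens iff the binary relation $\preceq_X$ on $\ivl$ defined (for $i<j$) by $i\prec_X j$ when $X_{(i,j)}=+$, $i\sim_X j$ when $X_{(i,j)}=0$, and $j\prec_X i$ when $X_{(i,j)}=-$ is a total preorder. Since $\OM$ is a sweep oriented matroid iff every covector of $\OM$ is a covector of $\Mbraid$, the first step reduces the problem to characterizing the total-preorder property of the relation $\preceq_X$ attached to each covector of $\OM$.

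Next, I exploit that reflexivity and totality of $\preceq_X$ hold by construction, so transitivity is the only substantive requirement. Transitivity is a local property: $\preceq_X$ is transitive globally iff its restriction to every $3$-subset $\{i,j,k\}\subseteq\ivl$ (sorted as $i<j<k$) is transitive, and this restriction depends only on the triple $(X_{(i,j)},X_{(j,k)},X_{(i,k)})\in\{+,-,0\}^3$. So the characterization becomes: for every covector and every such triple of coordinates, the restriction must itself correspond to a total preorder on $\{i,j,k\}$. Enumerating the $13$ ordered partitions of a $3$-element set and computing the corresponding sign-triples via~\eqref{eq:covectorfrompartition} yields $13$ admissible patterns; the complement in $\{+,-,0\}^3$ has $27-13=14$ elements, which a direct inspection identifies with the forbidden list displayed in the statement.

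Finally, I will verify that a triple $T\in\{+,-,0\}^3$ lies in this $14$-element forbidden list iff it fails to be orthogonal to $(+,+,-)$. By the definition of orthogonality, non-orthogonality means that the common support $S=\supp{T}\cap\{1,2,3\}$ is nonempty and $T|_S$ equals $(+,+,-)|_S$ or its opposite. A case analysis on $|S|\in\{0,1,2,3\}$ resolves this pattern by pattern, confirming the match with the $14$ forbidden patterns.

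The main conceptual step is the reduction of the global total-preorder condition to a local, $3$-element check, which is exactly what supplies the combinatorial content of the $3$-term orthogonality condition; everything else amounts to careful bookkeeping of the $27$ possible sign-triples on three coordinates, so I expect no genuine obstacle beyond this enumeration.
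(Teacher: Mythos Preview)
Your proof is correct and takes a genuinely different route from the paper's. The paper argues via the standard criterion that a sign-vector is a covector of $\Mbraid$ if and only if it is orthogonal to every circuit of $\Mbraid$ (\cite[Prop.~7.7.1]{BLSWZ99}); it then identifies the circuits of $\Mbraid$ with cycles of $K_n$ and shows by an easy induction that orthogonality to an arbitrary cycle circuit $C^{i_1,\ldots,i_r}$ is implied by orthogonality to the triangle circuits $C^{i_1,i_k,i_{k+1}}$, whose support is $\{(i,j),(j,k),(i,k)\}$ with sign pattern $(+,+,-)$ up to reorientation. You instead work directly with the bijection between covectors of $\Mbraid$ and ordered partitions: since reflexivity and totality of $\preceq_X$ are automatic, membership in $\Mbraid$ reduces to transitivity of $\preceq_X$, which is a local condition on triples, and then you enumerate. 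Your approach is more elementary in that it avoids the covector--circuit orthogonality machinery for strong maps, and it is precisely the informal interpretation the paper records just after its own proof when it names the result the ``transitivity condition.'' The paper's approach, in exchange, explains structurally why the particular sign vector $(+,+,-)$ appears: it is (a reorientation of) a circuit of $\Mbraid$, so the lemma is a specialization of a general fact about strong maps rather than the outcome of an enumeration. One small citation quibble: \Cref{rmk:bijection} is stated for sweep oriented matroids, not as a characterization of covectors of $\Mbraid$; the fact you actually use (covectors of $\Mbraid$ $\leftrightarrow$ ordered partitions of $\ivl$) is established earlier, around \Cref{lem:equiv_sweepcomplex_sweepOM} and the discussion of the permutahedron.
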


\begin{proof}
There is a strong map $\Mbraid\to\OM$  if and only if all the covectors of $\OM$ are covectors of $\Mbraid$, which is equivalent to the condition that all the covectors of $\OM$ are orthogonal to all circuits of~$\Mbraid$ (see \cite[Prop.~7.7.1]{BLSWZ99}).

The circuits of $\Mbraid$ are induced by cycles of~$K_n$.
They are of the form~$C^{i_1, \ldots, i_r}$ for any collection $i_1, \ldots, i_r$ of at least~$3$ distinct elements of~$\ivl$, with $C^{i_1, \ldots, i_r}_{(i_k, i_{k+1})}=+$ if $i_k<i_{k+1}$ and $C^{i_1, \ldots, i_r}_{(i_{k+1}, i_{k})}=-$ if $i_k>i_{k+1}$  for all $1\leq k \leq r$ (with the convention $i_{r+1}=i_1$), and $C^{i_1, \ldots, i_r}_{(h,l)}=0$ for any other pair. 

An easy induction shows that the orthogonality to the circuit $C^{i_1, \ldots, i_r}$ is implied by the orthogonality to all circuits $C^{i_1, i_k, i_{k+1}}$ for $2\leq k \leq r-1$, which is equivalent to our statement. 
\end{proof}

This condition is actually a reformulation of the transitivity of the partial order induced by an ordered partition~$I$ (namely $i \preceq j$ if and only if $p_I(i)\leq p_I(j)$).
For example, forbidding the patterns $(+,+, -)$ and $(+, +, 0)$ is equivalent to stating that $i\prec j \prec k$ implies $i\prec k$, and so on.
 This is why we refer to it as the \defn{transitivity condition} on sweep oriented matroids.

The \defn{poset of sweeps} of a sweep oriented matroid~$\OM$ is the partially ordered set $\Scom[\OM]$ of the ordered partitions $\op$ for the covectors $X\in \OM$, ordered by refinement. Enlarged with a  top element~$\greatest$, this poset is isomorphic to the big face lattice of~$\OM$. 
The topology of such complexes is well known~\cite[Thm.~4.3.3]{BLSWZ99}.
We describe it in the following proposition. Note that there is some ambiguity in the literature concerning the definition of the poset of faces of cell complexes, in particular whether it should be augmented by a bottom element or not (compare \cite[Fig.~2]{Bjorner1984} and \cite[Fig.~2]{Bjorner1995}). We follow~\cite{Bjorner1995} and~\cite{BLSWZ99} and do not include an additional bottom element in the definition of the \defn{face poset} of a cell complex. 

\begin{proposition}[{\cite[Thm.~4.3.3]{BLSWZ99}}]\label{cor:topologyPosetSweeps}
 The poset of sweeps $\Scom[\OM]\ssm(\ivl)$ of a sweep oriented matroid~$\OM$ of rank~$r$ without the trivial sweep is isomorphic to the face poset of a shellable regular cell decomposition of the $(r-1)$-sphere. 
 In particular, the order complex $\OC[{\Scom[\OM]\ssm(\ivl)}]$ triangulates the $(r-1)$-sphere.
\end{proposition}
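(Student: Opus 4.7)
The plan is to transport the statement to the classical Folkman--Lawrence Topological Representation Theorem for oriented matroids. First, I would extend the poset isomorphism of \Cref{lem:equiv_sweepcomplex_sweepOM} from the realizable setting to arbitrary sweep oriented matroids: by \Cref{rmk:bijection}, the correspondence $X \mapsto \op$ given by \eqref{eq:covectorfrompartition} identifies the covectors of $\OM$ with the ordered partitions in $\Scom[\OM]$. \Cref{lem:OMtransitivity} (the transitivity condition) is exactly what guarantees that each $X\in\OM$ yields a well-defined ordered partition of $\ivl$, and the inverse assignment is clearly injective. Moreover, by construction the product partial order on sign-vectors corresponds to the refinement order on ordered partitions (the zero coordinates of $X$ record exactly which consecutive parts of $\op$ are merged), with the zero covector corresponding to the trivial sweep $(\ivl)$. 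Hence $\Scom[\OM]\cup\{\greatest\}$ is canonically isomorphic to the big face lattice of~$\OM$, and $\Scom[\OM]\ssm(\ivl)$ is isomorphic to the poset of non-zero covectors of~$\OM$.

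Once this identification is in place, I would invoke the Folkman--Lawrence Topological Representation Theorem in the form \cite[Thm.~4.3.3]{BLSWZ99}: for any oriented matroid of rank~$r$, the poset of non-zero covectors is the face poset of a signed arrangement of oriented pseudospheres on $\Sph[r-1]$, which gives a shellable regular CW decomposition of the $(r-1)$-sphere. Applying this to~$\OM$ yields the first claim of the proposition.

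For the second assertion, I would use the standard fact that the order complex of the face poset of a regular CW complex~$\Delta$ is the barycentric subdivision of~$\Delta$, hence homeomorphic to the underlying space. Applied to our decomposition of~$\Sph[r-1]$, this shows that $\OC[{\Scom[\OM]\ssm(\ivl)}]$ is a triangulation of~$\Sph[r-1]$.

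There is no real obstacle here: the work is essentially in setting up the dictionary (covectors $\leftrightarrow$ ordered partitions, product order $\leftrightarrow$ refinement) and then citing the topological representation theorem. The only point that deserves care is checking that the bijection of \Cref{rmk:bijection} is order-preserving in both directions, which follows directly from \eqref{eq:covectorfrompartition}: a coordinate of $X$ passes from~$0$ to~$\pm$ precisely when the corresponding pair of elements, previously in a common part, is split across two consecutive parts after refinement.
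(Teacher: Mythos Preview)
Your proposal is correct and matches the paper's approach exactly: the paper does not give a separate proof but simply notes (in the sentence preceding the proposition) that $\Scom[\OM]\cup\{\greatest\}$ is isomorphic to the big face lattice of~$\OM$, and then cites \cite[Thm.~4.3.3]{BLSWZ99} directly. You have spelled out precisely the dictionary (covectors $\leftrightarrow$ ordered partitions, product order $\leftrightarrow$ refinement, zero covector $\leftrightarrow$ trivial sweep) that the paper takes for granted, and your appeal to the Topological Representation Theorem and the barycentric-subdivision fact is exactly the intended argument.
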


\section{Big and little oriented matroids}\label{sec:bigandlittle}

In this section we show how the big and little oriented matroids of a point configuration (Definition~\ref{def:oms}) are completely determined by its sweep oriented matroid. Actually, the construction of these matroids can be extended to any abstract sweep oriented matroid, providing definitions beyond the realizable case. This generalizes the results for rank~$3$ proved in~\cite[Sec.~1.10]{BLSWZ99}. 

\subsection{Big and little oriented matroids associated to sweep oriented matroids}\label{sec:fromsweeptobig}
First, we will show how to extend any sweep oriented matroid to what will be called a big oriented matroid. For a covector~$X$ of a sweep oriented matroid, let $\sur[X]:\ivl\to [l_X]$ be the surjection associated to the corresponding ordered partition.
For each $1\leq k \leq  2l_X+1$, let $X^k\in \{+,-,0\}^{\ivl\cup\ipairs}$ be the sign-vector:
\begin{align*}
{X}^k_i &= 
\begin{cases}
- &\text{ if } p_X(i)\leq \lfloor \frac{k-1}{2} \rfloor, \\
+ &\text{ if } p_X(i)>\lfloor \frac{k}{2} \rfloor, \\
0 &\text{ if $k$ is even and } p_X(i)=\frac{k}{2}.
\end{cases}&&\text{ for }1\leq i\leq n;\\
{X}^k_{(i,j)} &= X_{(i,j)} &&\text{ for all } 1\leq i < j \leq n.
\end{align*}

We defer the details of checking that the transitivity condition from Lemma~\ref{lem:OMtransitivity} implies the oriented matroid axioms for these covectors to \cref{sec:appendix}. They are easy, but tedious.

\begin{theorem}\label{thm:BOMisOM}
If $\cov$ is the set of covectors of a sweep oriented matroid, then 
\[\BOM[\cov] = \set{{X}^k}{ X\in \cov, \, 1\leq k\leq 2l_X+1}\] is the set of covectors of an oriented matroid.
\end{theorem}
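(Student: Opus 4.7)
My plan is to verify the four covector axioms of \cref{def:axioms_OM} directly, exploiting the fact that $\cov$ already satisfies them on the ground set $\ipairs$. Axiom (V0) is immediate: the trivial sweep $X=\zero\in\cov$ has $l_{\zero}=1$ and $p_{\zero}(i)=1$ for all $i\in\ivl$, so $\zero^2=\zero\in\BOM[\cov]$ by direct substitution into the defining formula for $X^k$. Axiom (V1) reduces to the identity $-(X^k)=(-X)^{2l_X+2-k}$, which I would verify using $p_{-X}(i)=l_X+1-p_X(i)$ together with the bookkeeping $\lfloor(2l_X+2-k)/2\rfloor=l_X+1-\lceil k/2\rceil$; casework on the parity of $k$ then matches the $-/0/+$ entries on $\ivl$, while on $\ipairs$ both sides equal $-X_{(i,j)}$.

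For (V2), given $X^k,Y^m\in\BOM[\cov]$, I would set $Z:=X\circ Y\in\cov$ and show $X^k\circ Y^m=Z^r$ for an explicit $r$. The key observation is that $I^Z$ refines $I^X$ by subdividing each part $I^X_s$ according to the values of $p_Y$ on it. On $\ipairs$ both sides equal $Z$; on $\ivl$, when $k=2s+1$ is odd $X^k$ has no zeros and we take $r=2r'+1$ with $r'$ the number of $I^Z$-parts inside $I^X_1\cup\cdots\cup I^X_s$; when $k=2s$ is even, the part $I^X_s$ carries the zeros of $X^k$, and one further tracks how the $p_Y$-values attained on $I^X_s$ relate to $\lfloor m/2\rfloor$ to pin down the parity of $r$.

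Axiom (V3) is the substantive step and the main obstacle. Given $W=X^k$, $W'=Y^m$, and $e\in\sep[W][W']$, I would split into two cases. When $e=(i_0,j_0)\in\ipairs$, applying (V3) inside $\cov$ to $X$, $Y$, and $e$ yields $V\in\cov$ with $V_e=0$ and $V$ agreeing with $X\circ Y$ on $\ipairs\setminus\sep[X][Y]$; the index $r$ is then chosen by the same refinement casework as in (V2), so that $V^r$ matches $W\circ W'$ on $\ivl\setminus\sep[W][W']$. When $e=i_0\in\ivl$, V3 inside $\cov$ does not apply directly. Instead, one exhibits a suitable $V\in\cov$ and takes $r=2p_V(i_0)$, which automatically gives $V^r_{i_0}=0$. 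The realizable picture guides the construction of $V$: it should correspond to the sweep direction on the segment between the realizations of $W$ and $W'$ at which the hyperplane $\sprod{\cdot}{\h{\p a}_{i_0}}=0$ is first crossed. The hard step is verifying that $V^r$ agrees with $W\circ W'$ on all remaining non-$\sep$ coordinates, which reduces to showing that the sign comparison between $p_V(i')$ and $p_V(i_0)$ matches $W_{i'}\circ W'_{i'}$ in every subcase; this is handled by exhaustive casework on the parities of $k,m$ and on the relative positions of $p_X(i'),p_X(i_0)$ and $p_Y(i'),p_Y(i_0)$, using the transitivity condition of \cref{lem:OMtransitivity} throughout.
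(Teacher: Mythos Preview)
Your outline matches the paper's proof through (V0), (V1), (V2). In (V3), your first subcase $e\in\ipairs$ has a small imprecision---the $V$ obtained from (V3) on $\cov$ need not refine $X$, so ``the same refinement casework as in (V2)'' does not literally apply; one argues instead that $p_V$ separates the sets $E_\varepsilon=\{p\in\ivl\setminus\sep[X^k][Y^m]:(X^k\circ Y^m)_p=\varepsilon\}$ and reads off $r$ from that. The substantive gap is the case $e=i_0\in\ivl$: you write ``one exhibits a suitable $V\in\cov$'' and invoke a realizable picture, but never give the combinatorial construction, and this is the crux.

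The paper's proof here hinges on whether $E_0$ is empty. When $E_0=\emptyset$, taking $V=X\circ Y$ with $r=2p_V(i_0)$ works and the verification is short. When $E_0\neq\emptyset$, this choice fails: for $j\in E_0$ one has $p_X(i_0)<p_X(j)$ (from $(X^k)_{i_0}=-$, $(X^k)_j=0$) and $p_Y(i_0)>p_Y(j)$, so $p_{X\circ Y}(i_0)\neq p_{X\circ Y}(j)$ and hence $(X\circ Y)^r_j\neq 0$ for $r=2p_{X\circ Y}(i_0)$. The fix is to observe that the pair $(i_0,j)$ (suitably ordered) lies in $\sep[X][Y]$ and to apply (V3) \emph{inside $\cov$} to $X,Y,(i_0,j)$; the resulting $V$ has $V_{(i_0,j)}=0$, and transitivity (\cref{lem:OMtransitivity}) then forces $p_V(j')=p_V(i_0)$ for every $j'\in E_0$, after which the remaining sign checks go through. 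So your sentence ``(V3) inside $\cov$ does not apply directly'' is the wrong takeaway---it is exactly what is used, but on the pair $(i_0,j)$ rather than on $i_0$ itself.
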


\begin{definition}\label{def:def_big_OM}
Let $\OM$ be a sweep oriented matroid. The oriented matroid \defn{$\BOM[\cov]$} is the \defn{big oriented matroid} of $\OM$; and the oriented matroid~\defn{$\LOM$} obtained by deleting all pairs $(i,j)$ from $\BOM$ is the \defn{little oriented matroid} of~$\OM$.
\end{definition}

These definitions are indeed coherent with the realizable case, as the following proposition shows. This proves that the sweep oriented matroid of a point configuration determines its big and little oriented matroids, concluding the proof of \cref{thm:hierarchy}.

\begin{proposition}\label{prop:realizablesweeptobig}
 The big and little oriented matroids of a point configuration are the big and little oriented matroids associated to its sweep oriented matroid.
\end{proposition}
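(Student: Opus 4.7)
The plan is to realize every covector of $\BOMA$ by a linear functional $(\p u,t)\in\RR^d\times\RR$ acting on the extended vector configuration $\h\pc\cup\{(\p a_{(i,j)},0):(i,j)\in\ipairs\}$, and then read off both parts of the sign-vector in terms of the data already encoded in $\MOMA$. First I would fix $(\p u,t)\in\RR^{d+1}$ and compute componentwise: on $(\p a_j-\p a_i,0)$ the functional evaluates to $\sprod{\p u}{\p a_j-\p a_i}$, which depends only on $\p u$; on $\h{\p a}_i=(\p a_i,1)$ it evaluates to $\sprod{\p u}{\p a_i}+t$. The first observation is that, by the very definition of $\MOMA$ and by \eqref{eq:covectorfrompartition}, the restriction of the resulting sign-vector to $\ipairs$ is precisely the covector $X\in\MOMA$ associated to $\p u$, with partition~$I^{\p u}$ and surjection $\sur[X]=\sur[I^{\p u}]$.

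The crux is then to analyze the restriction to $\ivl$ while holding $\p u$ fixed and letting $t$ vary. Writing $v_i=\sprod{\p u}{\p a_i}$, the key point is that $v_i=v_j$ if $\sur[X](i)=\sur[X](j)$ and $v_i<v_j$ if $\sur[X](i)<\sur[X](j)$, so the values $\{-v_i\}$ take exactly $l_X$ distinct values $\tau_1>\tau_2>\dots>\tau_{l_X}$ corresponding to the blocks $I_1,\dots,I_{l_X}$ in the natural order. As $t$ decreases from $+\infty$ to $-\infty$, the sign vector $(\sign(v_i+t))_{i\in\ivl}$ passes through exactly $2l_X+1$ distinct patterns: all pluses (for $t>\tau_1$); then the block $I_k$ turns to $0$ at $t=\tau_k$ and to $-$ for $t<\tau_k$, in the order $k=1,2,\dots,l_X$, ending with all minuses (for $t<\tau_{l_X}$). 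Matching this with the formula defining $X^k$ in Section~\ref{sec:fromsweeptobig} is a direct check: the intervals $t>\tau_1$, $t=\tau_1$, $\tau_2<t<\tau_1$, $t=\tau_2$, $\dots$, $t<\tau_{l_X}$ correspond bijectively to $k=1,2,3,4,\dots,2l_X+1$, and the signs on $\ivl$ agree case by case with the floor-function description of ${X}^k_i$.

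Putting these two observations together, the sign-vector produced by $(\p u,t)$ is exactly $X^k$ for the unique $X\in\MOMA$ corresponding to $\p u$ and the unique $k$ determined by the position of $t$ relative to $\tau_1,\dots,\tau_{l_X}$. Conversely, any $X\in\MOMA$ comes from some $\p u$ by definition of realizability, and any $k\in\{1,\dots,2l_X+1\}$ is attained by choosing $t$ in the corresponding interval. Hence the set of covectors of $\BOMA$ coincides with $\BOM[\MOMA]=\{X^k:X\in\MOMA,\ 1\le k\le 2l_X+1\}$, proving the first half of the statement. For the little oriented matroid part, by Definition~\ref{def:def_big_OM} the little oriented matroid of $\MOMA$ is obtained by deleting the elements of $\ipairs$, i.e.\ by restricting covectors to $\ivl$. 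Since deletion of a ground set from a realizable oriented matroid corresponds to forgetting the evaluations on the deleted vectors, the restriction of $\BOMA$ to $\ivl$ is the oriented matroid of the configuration $\h\pc$, which is precisely $\LOMA$ by Definition~\ref{def:oms}.

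The main (and only) obstacle is purely bookkeeping: verifying that the piecewise-constant behavior of $\sign(v_i+t)$ matches the floor-function description of $X^k$ on the nose, including the boundary cases $k=1$ and $k=2l_X+1$ and the ``$k$ even'' clause for the zero coordinates. Once the indexing is aligned, no further argument is needed since both sides are determined by the same family of linear functionals.
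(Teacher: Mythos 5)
Your proof is correct and takes essentially the same approach as the paper's: the paper likewise parametrizes covectors of $\BOMA$ by pairs $(\p u, c)$ with $c\in\RR$ playing the role of your $-t$, and observes that varying $c$ through the $l_X$ attained values of $\lfv$ produces exactly the $2l_X+1$ sign patterns $X^k$. You spell out the floor-function bookkeeping that the paper leaves implicit, and you explicitly check the $\ipairs$ coordinates and the deletion step for $\LOMA$; these are the same observations the paper's terse proof relies on.
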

\begin{proof}
 Let $\pc=(\p a_1,\dots,\p a_n)\in\RR^{d\times\ivl}$ be a $d$-dimensional point configuration. Every vector $\p u\in\RR^d$ induces an ordering of $\pc$, which is encoded in a covector $X$ of $\MOMA$. 
 For $c\in \RR$, the partition 
 \[\set{i}{\sprod{\p u}{\p a_i}<c}, \set{i}{\sprod{\p u}{\p a_i}=c}, \set{i}{\sprod{\p u}{\p a_i}>c}\] only depends on which, or between which pair, of the $l_X$ values attained by $\lfv$ on $\pc$ does $c$ lie. These $2l_X+1$ distinct partitions are precisely those encoded by the covectors $X^k$ defining the big oriented matroid of $\MOMA$.
\end{proof}

Note that, by the definition of the big oriented matroid of $\OM$, the zero covector~$\zero$
of $\OM$ induces the all-positive tope~$\pluses$ in $\LOM$, which is hence an acyclic oriented matroid.

The following lemma concerning the ranks of the big and little oriented matroids will be needed later.

\begin{lemma}\label{lem:rank_BOM}
If the sweep oriented matroid $\OM$ is of rank~$r$, then $\BOM$ and $\LOM$ are of rank~\mbox{$r+1$}. 
\end{lemma}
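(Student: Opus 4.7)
The plan is to deduce both rank equalities by applying the standard matroid identity $\rank(M) = \rank(M|_S) + \rank(M/S)$ to $M = \BOM$ with the two complementary choices $S = \ipairs$ and $S = \ivl$, after reading off the relevant restrictions and contractions directly from the covector formula defining $\BOM$.

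First I would verify that $\BOM|_\ipairs = \OM$, which is immediate since $X^k|_\ipairs = X$ for every $X \in \OM$ and every admissible $k$; this gives $\rank(\BOM|_\ipairs) = r$. For the complementary contraction, the covectors of $\BOM/\ipairs$ are the $\ivl$-restrictions of those $Y \in \BOM$ with $Y|_\ipairs = \zero$. Since $X^k_{(i,j)} = X_{(i,j)}$, this vanishing condition forces $X = \zero$, so only $\zero^1 = (\pluses, \zero)$, $\zero^2 = \zero$, and $\zero^3 = (\minuses, \zero)$ contribute; their $\ivl$-restrictions are exactly $\pluses$, $\zero$, and $\minuses$, which are the covectors of a rank-$1$ oriented matroid. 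Combining the two summands gives $\rank(\BOM) = r + 1$.

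For the rank of $\LOM$, note that $\LOM = \BOM|_\ivl$ by \cref{def:def_big_OM}, so it suffices to compute $\rank(\BOM/\ivl)$. The explicit form of $X^k$ shows that the condition $X^k_i = 0$ for every $i \in \ivl$ requires $k$ to be even and $p_X(i) = k/2$ for all $i$, which in turn forces $X = \zero$ and $k = 2$. Hence the only covector of $\BOM$ vanishing on $\ivl$ is $\zero$ itself, $\BOM/\ivl$ has rank~$0$, and the identity gives $\rank(\LOM) = \rank(\BOM) = r + 1$.

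The only technical point is to check these two explicit descriptions of the restrictions and contractions of $\BOM$, which follow directly from unwinding the definition of $X^k$. In particular, no combinatorial analysis of chains in the big face lattice of $\BOM$ is required; the contraction computations do all the work.
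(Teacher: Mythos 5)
Your proof is correct, and it takes a genuinely different and arguably cleaner route than the paper's. The paper proves $\rank(\BOM)=r+1$ by lifting a maximal chain $\zeros=Y^0\prec\cdots\prec Y^r$ of $\OM$ to a chain $\zeros\prec Z^0\prec\cdots\prec Z^r$ in $\BOM$ (with $\restr{Z^k}{\ipairs}=Y^k$, $\restr{Z^k}{\ivl}=\pluses$) and then arguing by hand that no intermediate covector can be inserted; and it proves $\rank(\LOM)=r+1$ by a separate induction on $r$, relabelling so that $(n-1,n)$ is not a loop, contracting and deleting parallel elements, and splicing chains. Your argument replaces both of these with two applications of the rank identity $\rank(M)=\rank(\restr{M}{S})+\rank(\contract{M}{S})$, reading off $\restr{\BOM}{\ipairs}=\OM$ and the two contractions $\contract{\BOM}{\ipairs}$ (rank $1$, with covectors $\pluses,\zero,\minuses$) and $\contract{\BOM}{\ivl}$ (rank $0$, only $\zero$) directly from the defining formula for $X^k$. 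What the paper's approach buys is an explicit maximal chain of $\BOM$, which is reused implicitly in other arguments; what yours buys is brevity and the elimination of the induction for $\LOM$. One small caveat worth making explicit if you write this up: the rank identity is a statement about the underlying unoriented matroids, so you should note that oriented-matroid restriction and contraction commute with passage to the underlying matroid (which is standard, cf.\ \cite[Prop.~4.1.9]{BLSWZ99}), and that the computation of $\contract{\BOM}{\ivl}$ requires observing that $X^k_i=0$ for all $i$ forces the partition $\op$ to be trivial, hence $X=\zero$ and $k=2$ --- which you do correctly but compress into one sentence.
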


\begin{proof}
To justify that $\BOM$ has rank $r+1$, it is sufficient to notice that if $\zeros[\ipairs]=Y^0\prec Y^1\prec \cdots\prec Y^{r}$ is a maximal chain of covectors of $\OM$, then $\zeros[\ivl\cup \ipairs]=Z^{-1}\prec Z^0\prec Z^1\prec \cdots\prec Z^{r}$ is a maximal chain of covectors of $\BOM$, where for any $k\in \{0, \ldots, r\}$, we define $Z^k$ by $\restr{Z^k}{\ipairs}=Y^k$ and $\restr{Z^k}{\ivl}=\pluses$. 
Indeed, we cannot add a covector $Z$ in the big oriented matroid between $Z^{-1}$ and $Z^0$ because if $Z_i=0$ and $Z_j=+$ we necessarily have $Z_{(i,j)}\neq 0$ since $i$ and $j$ are not in the same part of the ordered partition~$l_Z$. We cannot add a covector strictly between $Z^k$ and $Z^{k+1}$ either because its restriction to $\ipairs$ would give a covector of $\OM$ strictly between $Y^k$ and $Y^{k+1}$.

We prove that $\LOM$ also has rank $r+1$ by induction on~$r$. 
If $\OM$ is of rank~$r=0$, then $\zeros[\ipairs]$ is its only covector. It induces the little oriented matroid of rank~$1$ consisting of the covectors $\minuses$, $\zeros$, and~$\pluses$.

Now, suppose that $\OM$ is a sweep oriented matroid on ground set $\ipairs$ that has rank $r\geq 1$. Up to relabelling, we can suppose that $(n-1,n)$ is not a loop. Then the contraction of $\OM$ along $\{(n-1,n)\}$ has rank $r-1$. Under the bijection~\eqref{eq:covectorfrompartition}, the covectors of this contraction $\contract{\OM}{\{(n-1,n)\}}$ correspond to the partitions associated to covectors of $\OM$ such that $n-1$ and $n$ are in the same part. This implies that for all $i\leq n-2$, the pairs $(i,n-1)$ and $(i,n)$ are parallel. By deleting all the pairs $(i,n)$ we obtain an oriented matroid $\OM'$ on $\ipairs[{[n-1]}]$ isomorphic to $\contract{\OM}{\{(n-1,n)\}}$. The transitivity condition from~\cref{lem:OMtransitivity} is preserved, and hence $\OM'$ is a sweep oriented matroid of rank~$r-1$ and $\LOM[\OM']$ has rank~$r$, by induction.
A maximal chain of the contraction $\LOM[\OM']/(n-1)$ induces a chain $\zeros=X^0 \prec \cdots\prec X^{r-1}$ of $\LOM$ in which $(X^i)_{n-1}=(X^i)_n=0$ for all $0\leq i\leq r-1$ and that is maximal with this property. Since $n-1$ and $n$ are not parallel (because $(n-1,n)$ is not a loop), there is a covector $Y$ of $\LOM$ such that $Y_{n-1}=+$ and $Y_n=0$. Setting $X^{r}=X^{r-1}\circ Y$, and $X^{r+1}=X^{r}\circ \pluses$, we obtain a chain \[\zeros=X^0 \prec \cdots\prec  X^{r-1}\prec  X^{r}\prec  X^{r+1}\] of lenght~$r+1$ of covectors of $\LOM$. 
Moreover, the restriction operation on oriented matroids cannot increase the rank, thus the rank of $\LOM$ cannot be bigger than the rank of $\BOM$.
Hence $\LOM$ also has rank~$r+1$.
\end{proof}

\begin{example}[The braid oriented matroids in types~$A$ and $B$]\leavevmode\label{ex:braidBOM}
The study of big oriented matroids of Coxeter hyperplane arrangements in types~$A$ and~$B$ unveils a recursive decomposition that, in view of the upcoming \Cref{sec:modularhyperplanes}, explains the existence of a maximal chain of modular flats. This important property was first studied by Stanley under the name of \defn{supersolvability}~\cite{Stanley1972}. 

\textbf{Type $\mathbf{A}$.} The big oriented matroid of the braid oriented matroid~$\Mbraid$ is the braid oriented matroid~$\Mbraid[n+1]$. More precisely, if we relabel the elements $i\in \ivl$ by $(1,i+1)$ and the elements $(i,j)\in \ipairs$ by $(i+1,j+1)$, then we recover the braid oriented matroid~$\Mbraid[n+1]$. Indeed, the topes of $\BOM[\Mbraid]$ are of the form $X^{2k+1}$ where $X$ is a tope of~$\Mbraid$ and $0\leq k\leq n$. If $X$ corresponds to the permutation $(\sigma(1),\dots,\sigma(n))\in \Sym$, then $X^{2k+1}$ corresponds to the permutation in $\Sym[n+1]$:
\[(\sigma(1)+1,\dots,\sigma(k)+1,1,\sigma(k+1)+1,\dots,\sigma(n)+1).\]

 \textbf{Type $\mathbf{B}$.} Consider the type~$B$ braid oriented matroid~$\MbraidB$ from \cref{sec:crosspolytope}, indexed by the elements in~$\ipairs[{[\pm n]}]$. That is, $\MbraidB$ is the sweep oriented matroid of the vertex set of the cross-polytope. Then its big oriented matroid~$\BOM[{(\MbraidB)}]$ is FL-isomorphic to~$\MbraidB[n+1]$ without one element (of those of the form $(-i,i)$%
 ).
 
 To see it, it is easier to consider first an enlarged version, with base elements \[\ivl[-n,n]=\{-n,\dots,-1,0,1,\dots,n\}\] corresponding to the point configuration \[\widetilde {\pc[B]}_n=(- \p e_{n}, \ldots, - \p e_1, \zero , \p e_1, \ldots, \p e_n)\] that contains the vertices of the cross-polytope together with the origin. The FL-isomorphism class of the sweep oriented matroid does not change, but we get some new parallel elements. Namely, the elements labeled $(-i,i)$, $(-i,0)$, and  $(0,i)$ become parallel (with the same orientation) in the enlarged sweep oriented matroid~$\MbraidBB=\MOMA[\widetilde {\pc[B]}_n]$. Now, relabel the elements $\ivl[-n,n]\cup \ipairs[{\ivl[-n,n]}]$ to~$\ipairs[{\ivl[-n-1,n+1]}]$ by sending each $i\in \ivl[\pm n]$ to the pair of parallel elements $(-n-1,-i), (i,n+1)$; $0$ to the triple of parallel elements $(-n-1,n+1),(-n-1,0),(0,n+1)$; and leaving the pairs in $\ipairs[{[-n,n]}]$ unchanged.
 Each tope~$X$ of the sweep oriented matroid~$\MbraidBB$ is represented by a centrally symmetric permutation~$\sigma$ of~$\ivl[-n,n]$:
 \[(-\sigma(n), \dots, -\sigma(1),0,\sigma(1),\dots,\sigma(n)).\]
 Under the relabeling we can read the topes $X^{2k+1}$ of the big oriented matroid~$\BOM[{(\MbraidBB)}]$ as centrally symmetric permutations of $\ivl[-n-1,n+1]$ representing topes of~$\MbraidBB[n+1]$. Namely,
 for $0\leq k\leq n+1$, the tope $X^{2k+1}$ corresponds to the centrally symmetric permutation:
 \[(-\sigma(n), \dots, -\sigma({n-k+1}), -n-1, -\sigma({n-k}), \dots, \sigma({n-k}),n+1,\sigma({n+1-k}),\dots \sigma({n})).\]
whereas for $n+2\leq k\leq 2n+2$ it corresponds to:
\[(-\sigma(n), \dots, -\sigma({n-k+1}), n+1, -\sigma({n-k}), \dots,  \sigma({n-k}),-n-1,\sigma({n+1-k}),\dots \sigma({n})).\]
This shows that~$\BOM[{(\MbraidBB)}]$ is FL-isomorphic to~$\MbraidBB[n+1]$, and hence to~$\MbraidB[n+1]$.

If we want to consider the original configuration without the origin, we simply need to remove all the elements of the big oriented matroid that involve a label
using~$0$. Every parallelism class conserves at least one representative except for the singleton $0$, which was sent to the triple $(-n-1,n+1),(-n-1,0),(0,n+1)$ with our relabeling. This shows that $\BOM[{(\MbraidB)}]$ is FL-isomorphic to $\MbraidB[n+1]\ssm(-n-1,n+1)$.\qed
\end{example}

\begin{remark}[On labeling and isomorphism]\label{rmk:isomorphism}
The labeling plays an important role in the definition of a sweep oriented matroid and in~\cref{thm:hierarchy}.
Indeed, non-isomorphic big oriented matroids might arise from isomorphic sweep oriented matroids. (Here, we mean FL-isomorphism, but the statement is also true for the other standard notions of oriented matroid isomorphism.) For example, all sufficiently generic planar $n$-point configurations give rise to FL-isomorphic sweep oriented matroids but their big oriented matroids are not FL-isomorphic.
\end{remark}

\begin{remark}[On realizability]\label{rmk:realizability}
Note that, for a big oriented matroid $\OM$, realizability as an oriented matroid (i.e.\ in the sense of~\eqref{eq:RealizationOM}) is equivalent to realizability as a big oriented matroid (i.e.\ in the sense of \cref{def:oms}). Indeed, any point configuration $\pc$ such that $\BOMA=\OM$ can be extended (with the corresponding points at infinity) to an oriented matroid realization of~$\OM$. And reciprocally, the restriction of any oriented matroid realization of $\OM$ to the elements indexed by $\ivl$ can be sent, after a suitable projective transformation and dehomogenization, to a point configuration $\pc$ such that $\BOMA=\OM$.

\begin{figure}[htpb]
    \includegraphics[width=.35\textwidth]{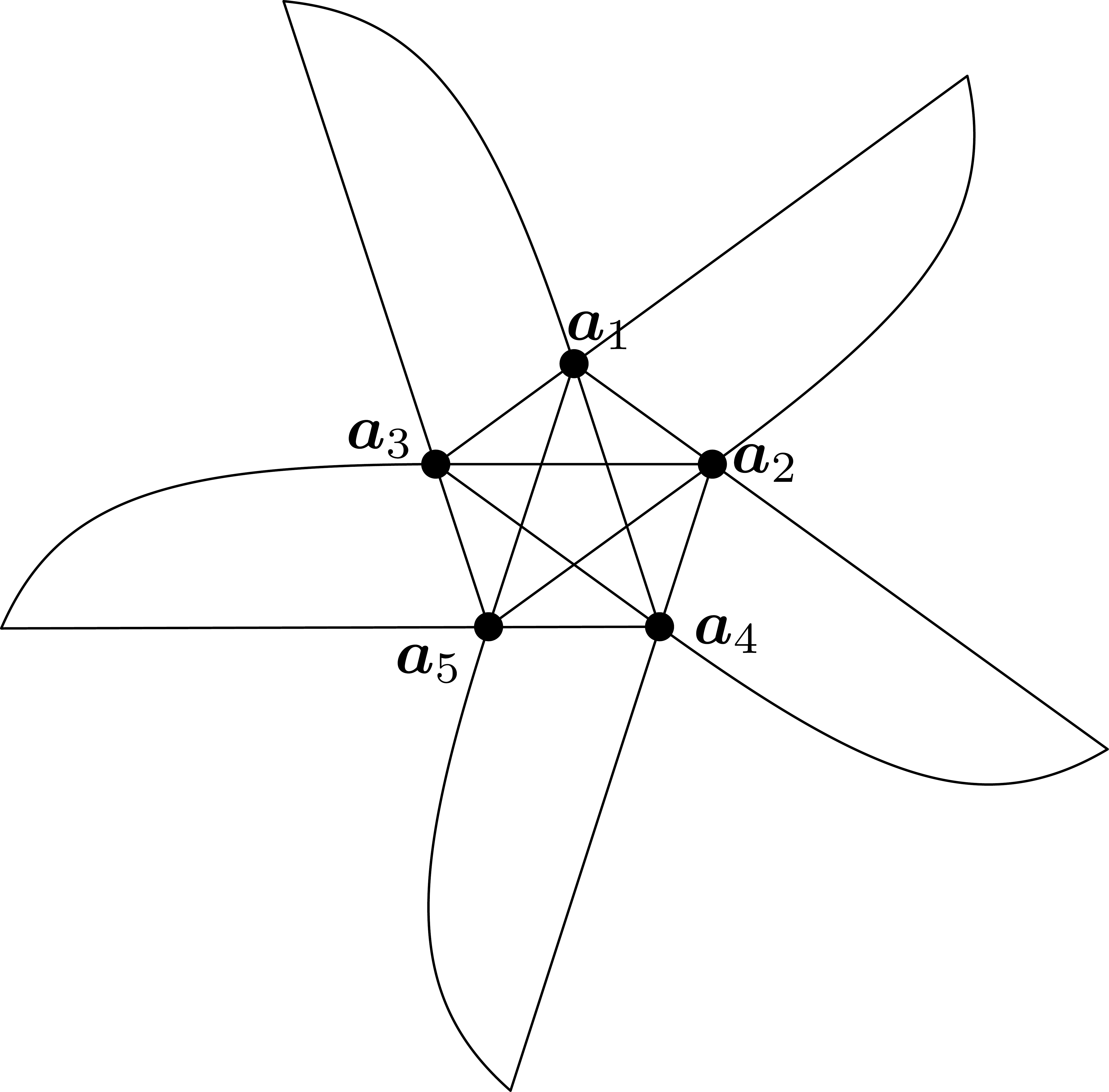}
        \caption{The allowable sequence $(1,2,3,4,5) \rightarrow (1,2,4,3,5)\rightarrow (2,1,4,3,5) \rightarrow (2,1,4,5,3) \rightarrow (2,4,1,5,3) \rightarrow (2,4,5,1,3) \rightarrow (4,2,5,1,3) \rightarrow (4,5,2,1,3) \rightarrow (4,5,2,3,1) \rightarrow (4,5,3,2,1) \rightarrow (5,4,3,2,1) $ cannot be realized by a point configuration, because it would necessarily be a pentagon whose sides and ``parallel diagonals'' meet as in the above picture, which is geometrically impossible~\cite{GP80}.}
    \label{fig:unrealizable_pent}
\end{figure}

In contrast, there are sweep oriented matroids that are realizable as an oriented matroid but that are not of the form $\MOMA$ for any point configuration~$\pc$. Indeed, the non-realizable pentagon of~\cite{GP80} (see \cref{fig:unrealizable_pent}) gives rise to a non-realizable allowable sequence; that is, to a non-realizable big oriented oriented matroid of rank~$3$. The associated sweep oriented matroid is an oriented matroid of rank~$2$, and thus realizable (as an oriented matroid)~\cite[Cor.~8.2.3]{BLSWZ99}. However, it is not the sweep oriented matroid of a point configuration, because the corresponding big oriented matroid is not realizable.

We end this remark by noting that the Universality Theorem for allowable sequences of Hoffmann and Merckx~\cite{HM18} implies that it is $(\exists\RR)$-hard to decide whether a big oriented matroid is realizable.
\end{remark}

\subsection{Big oriented matroids and tight modular hyperplanes}\label{sec:modularhyperplanes}

In this section we provide an alternative characterization of the FL-isomorphism classes of big oriented matroids, and hence of sweep oriented matroids. It is purely structural, without relying on the labeling of the elements. We show that they are closely related to the concept of {modular hyperplanes}.

According to our definition, every big oriented matroid~$\BOM$ on $\ivl\cup \ipairs$ contains the cocircuit $Z=(\pluses,\zeros[\binom{n}{2}])$. 
Moreover, $X_{(i,j)}=0$ for any covector $X$ such that $X_i=X_j=0$; which is equivalent to the fact that for any $i,j$ not in the same parallelism class, the restriction of~$\BOM$ to the set $\{i,j,(i,j)\}\subset E$ has rank $2$. These two properties show that the set of indices $\ipairs$ form a \defn{modular hyperplane}. 

The \defn{flats} of an oriented matroid~$\OM$ of rank~$r$ on~$E$ are the flats of its underlying (unoriented) matroid~$\un\OM$; that is, the zero-sets of its covectors. The poset of flats ordered by inclusion forms a geometric lattice~\cite[4.1.13]{BLSWZ99}. The \defn{hyperplanes} are the flats of rank~$r-1$, and they arise as zero-sets of cocircuits. A flat~$F$ is called \defn{modular} if $\rk{}{F}+\rk{}{G}=\rk{}{F\wedge G}+\rk{}{F\vee G}$ for any other flat~$G$, where $\rk{}{\cdot}$ is the rank function of the geometric lattice (for a flat~$F$,  $\rk{}{F}$ coincides with the rank of the oriented matroid~$\restr{\OM}{F}$). Modular flats have many interesting properties, and play an important role in the theory of matroids, see~\cite{Stanley1971} and \cite{Brylawski75}.

Hence, a \defn{modular hyperplane} is a hyperplane $F\subset E$ such that $\rk{}{F\wedge G}=\rk{}{F\cap G}=\rk{}{G}-1$ for any flat~$G$ not contained in $F$. Said differently, $F\cap G$ is a hyperplane in $\restr{\OM}{G}$. In~\cite[Cor.~3.4]{Brylawski75} it is shown that a hyperplane is modular if and only if it intersects every line (flat~$G$ with $\rk{}{G}=2$). Equivalently, if for every pair of elements $x,y\in E\ssm F$ that are not parallel nor a loop, there is some element $z\in F$ such that for every covector $X$ with $X_x=X_y=0$ we have $X_z=0$. We will say that a modular hyperplane $F$ is \defn{tight} if there is no $z\in F$ such that $F\ssm z$ is a modular hyperplane of $\restr{\OM}{E\ssm z}$.%

The following result gives a characterization of big oriented matroids similar to the one given in \cite[Sect.~6.4]{BLSWZ99} for the rank~$3$ case.

\begin{proposition}\label{prop:characterizationbig}
Let $\OM$ be an oriented matroid on ground set $E=\ivl\cup \ipairs$ such that:
\begin{enumerate}
\item there exists a cocircuit $Z$ of $\cM$ such that $\{e\in E \, |\, Z_e=0\}=\ipairs$ (i.e.\ $\underline{Z}=[n]$),\label{cond:modular}
\item for any $(i,j)\in \ipairs$, for any covector $X$ of $\cM$, if two coordinates among $X_i$, $X_j$, $X_{(i,j)}$ are zero, then the third one is zero too. \label{cond:rank2}
\end{enumerate}
Then, up to reorientation, $\OM$ is the big oriented matroid of the sweep oriented matroid $\restr{\OM}{\ipairs}$. 
\end{proposition}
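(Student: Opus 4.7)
The plan is to put~$\OM$ in a canonical form by reorientation, analyze the rank-$2$ restrictions $\restr{\OM}{\{i,j,(i,j)\}}$ forced by conditions~(1) and~(2), and then identify the covectors of~$\OM$ with those of~$\BOM[\OM']$, where $\OM':=\restr{\OM}{\ipairs}$.

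First I reorient~$\OM$ at the elements of~$\ivl$ on which~$Z$ is negative, so that $Z_e=+$ for $e\in\ivl$ and $Z_e=0$ for $e\in\ipairs$. For each pair $(i,j)\in\ipairs$, condition~(2) rules out both rank~$3$ for the restriction $\restr{\OM}{\{i,j,(i,j)\}}$ (which would admit covectors with exactly one nonzero coordinate on the triple, violating~(2)) and rank~$1$ (incompatible with $Z$ restricting to $(+,+,0)$), so this restriction has rank~$2$ with $\{i,j,(i,j)\}$ as its unique circuit. Orthogonality of that circuit with $(+,+,0)$ forces opposite signs on~$i$ and~$j$, and after reorienting the element $(i,j)\in\ipairs$ if needed the signed circuit becomes $(-,+,-)$, matching the big oriented matroid convention. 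Enumerating the $13$ covectors of the resulting rank-$2$ restriction yields two key facts, valid for every $X\in\OM$ and every $(i,j)\in\ipairs$: (A) $X_{(i,j)}=0$ implies $X_i=X_j$, and (B) if $X_i\neq X_j$ then $X_{(i,j)}$ equals the sign of their order in $-<0<+$, so $X_{(i,j)}=+$ if and only if $X_i<X_j$.

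These facts imply both that $\OM'$ is a sweep oriented matroid and that $\OM\subseteq\BOM[\OM']$ as covector sets. For the first, lifting any $X'\in\OM'$ to $X\in\OM$, the triple $(X'_{(i,j)},X'_{(j,k)},X'_{(i,k)})$ for $i<j<k$ encodes the transitive order of $X_i,X_j,X_k\in\{-,0,+\}$ by (A)--(B), avoiding all forbidden patterns of Lemma~\ref{lem:OMtransitivity}. For the second, for any $Y\in\OM$ with $X:=Y|_{\ipairs}$ inducing ordered partition $(I_1,\dots,I_l)$, Fact~(A) forces $Y|_{I_s}$ to be constant for each~$s$ (since $Y_{(i,j)}=0$ within a part), Fact~(B) makes the sequence of values non-decreasing in $-<0<+$ (since $Y_{(i,j)}=X_{(i,j)}=+$ across successive parts forbids the pattern $(+,-)$), and condition~(2) rules out two distinct parts with value~$0$ (else $Y_i=Y_j=0$ with $Y_{(i,j)}=0$ would contradict $X'_{(i,j)}\neq 0$). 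This identifies~$Y$ with the unique $X^k$ for some $k\in\{1,\dots,2l_X+1\}$.

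The hard part will be the reverse inclusion $\BOM[\OM']\subseteq\OM$: every $X^k$ must be exhibited as a covector of~$\OM$. Starting from a lift $Y=X^{k_0}$, which exists since $X\in\restr{\OM}{\ipairs}$, I plan to produce the other lifts by combining (i) compositions $Y\circ(\pm Z)$, which pass from an even-$k$ lift to the two adjacent odd-$k$ lifts, with (ii) cocircuit elimination~(V3) applied to two adjacent tope-lifts $X^{2s-1},X^{2s+1}$ at any $i_0\in I_s$: their separation set lies in $I_s\subset\ivl$, so the resulting covector~$W$ satisfies $W|_{\ipairs}=X$ and $W_{i_0}=0$, and Fact~(A) then propagates $W_{i_0}=0$ across all of~$I_s$, giving $W=X^{2s}$. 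The main technical obstacle is the initial bootstrap from a single~$k_0$ to a neighbouring value of~$k$; I expect to resolve this by combining the modular-hyperplane structure of~$\ipairs$ with the connectivity of the tope graph of~$\OM$, together with the rank identity $\rank(\OM)=\rank(\BOM[\OM'])=r+1$ coming from Lemma~\ref{lem:rank_BOM} and the fact that~$\ipairs$ is a hyperplane of~$\OM$.
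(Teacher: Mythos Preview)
Your overall strategy matches the paper's, and packaging the rank-$2$ analysis of each $\restr{\OM}{\{i,j,(i,j)\}}$ into Facts~(A) and~(B) is a clean device that the paper uses only implicitly. There are, however, two genuine gaps.

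\textbf{Transitivity of $\OM'$.} Your argument that $\OM'=\restr{\OM}{\ipairs}$ is a sweep oriented matroid is incomplete. For a lift $X\in\OM$ of $X'\in\OM'$, Facts~(A)--(B) constrain $X'_{(i,j)}$ only when $X_i\neq X_j$; if the chosen lift happens to satisfy $X_i=X_j=X_k=+$, nothing you wrote excludes a forbidden triple such as $(X'_{(i,j)},X'_{(j,k)},X'_{(i,k)})=(+,+,-)$. The fix uses your own tools: eliminate (axiom~(V3)) between $X$ and $-Z$ at~$i$ to produce $W\in\OM$ with $W_i=0$ and $\restr{W}{\ipairs}=X'$. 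Then condition~(2) gives $W_j,W_k\neq 0$, Fact~(B) forces $W_j=+$ (from $W_{(i,j)}=+$) and $W_k=-$ (from $W_{(i,k)}=-$), and Fact~(B) applied to $(j,k)$ gives $W_{(j,k)}=-$, contradicting $X'_{(j,k)}=+$. The paper handles this step differently, by a finite case analysis of the rank~$\le 3$ restriction $\restr{\OM}{\{i,j,k,(i,j),(j,k),(i,k)\}}$.

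\textbf{The bootstrap.} For the reverse inclusion your moves (i) and (ii) are correct, but the ``modular-hyperplane structure'' and ``tope-graph connectivity'' you plan to invoke are an unnecessary detour. The composition axiom already hands you $(X')^{1}=Z\circ\tilde Y$ and $(X')^{2l+1}=(-Z)\circ\tilde Y$ in~$\OM$ for any lift~$\tilde Y$ of~$X'$. From $(X')^{1}$, eliminate against $(X')^{2l+1}$ at some $i_0\in I_1$: the separation set is all of~$[n]$, but Facts~(A)--(B) together with condition~(2) determine every coordinate of the resulting covector, yielding $(X')^{2}$. Then $(X')^{3}=(X')^{2}\circ(-Z)$ by~(i), and the induction runs. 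This is precisely the paper's argument, which phrases the odd-to-even step as eliminating $Y^{k_0}$ against $(-Z)\circ Y^{k_0}=(X')^{2l+1}$.
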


In a realizable setting, and without parallel elements and loops, the conditions on $\cM$ amount to asking that the real vector representing $(i,j)$ is in the intersection of the $2$-plane spanned by the real vectors representing $i$ and $j$ and the hyperplane given by the cocircuit~$Z$ (which contains all the vectors corresponding to elements in~$\ipairs$). 
One can check that the example depicted in Figure~\ref{fig:BigOM} satisfies this condition. 

\begin{proof}
We need to prove that, after the reorientation of some elements of the ground set, the restriction $\restr{\OM}{\ipairs}$ is a sweep oriented matroid, i.e.\ it satisfies \cref{lem:OMtransitivity}, and the covectors of $\OM$ are exactly those obtained from the covectors of $\restr{\OM}{\ipairs}$ as in \cref{thm:BOMisOM}.

We can assume that, after a suitable reorientation of $\OM$ we have  that $Z=(\pluses,\zeros[\ipairs])$. 
Note that $\restr{\OM}{\ivl}$ cannot have loops, as witnessed by~$Z$; and that if $i$ and $j$ are parallel, then $(i,j)$ must be a loop. We will from now on assume that $\OM$ does not have parallel elements, as it simplifies the exposition.

Let us show that for any two covectors $X, Y\in \OM$ such that $X_i=Y_i=-$, $X_j=Y_j=+$ we have $X_{(i,j)}=Y_{(i,j)}\neq 0$. 
Assume the contrary. Then the axiom~\ref{it:COVaxiomELIM} on oriented matroids would imply the existence of a covector $T\in \OM$ such that $T_i=-$, $T_j=+$ and $T_{(i,j)}=0$. 
A second application of the axiom~\ref{it:COVaxiomELIM} between $T$ and $Z$ would give the existence of a covector $T'\in \OM$ such that $T'_i=T'_{(i,j)}=0$ and $T'_j=+$, which contradicts the second assumption on $\OM$. 
Hence, we can  reorient $(i,j)$ so that for any covector $X$ of $\cM$ with $X_i = -$ and $X_j = +$, we have $X_{(i,j)} = +$.

To check that $\restr{\OM}{\ipairs}$ is a sweep oriented matroid, it suffices to look at all restrictions of the form \[\restr{\OM}{\{i, j, k, (i,j), (j,k), (i,k)\}}\] for $1\leq i< j < k \leq n$. This gives an oriented matroid of rank at most~$3$. One can easily check that with our conditions there are only three possible configurations, none of which violates the condition from \cref{lem:OMtransitivity}.

Moreover, it is clear that any covector $X$ of $\OM$ can be obtained from the covector $\restr{X}{\ipairs}$ of $\restr{\OM}{\ipairs}$ by the method described at the beginning of \cref{sec:fromsweeptobig}. Indeed, our reorientation on $\ipairs$ implies that the ordered partition of $\ivl$ given by $(I_-=\{i \, |\, X_i=-\}, \, I_0=\{i \, |\, X_i=0\}, I_+=\{i \, |\, X_i=+\})$ is refined by the ordered partition $J$ induced by $\restr{X}{\ipairs}$, in such a way that either $I_0=\emptyset$ or $I_0$ is an entire part of $J$. Thus $X$ is of the form $(\restr{X}{\ipairs})^k$ for some $k$.

It remains to check that, for every covector~$Y\in \restr{\OM}{\ipairs}$, all covectors~$Y^k$ obtained by the method described in \cref{sec:fromsweeptobig} are indeed covectors of $\OM$. We do it by induction on $k$.
Observe first that $Y^1=Z\circ \tilde{Y}$, where $\tilde{Y}$ is any covector in $\OM$ whose restriction to $\ipairs$ gives $Y$. Thus, we have $Y^1\in \cM$. 

Now, for an odd $k_0\in \ivl[2 l_Y]$, we apply the Elimination Axiom~\ref{it:COVaxiomELIM} to the covectors $Y^{k_0}$ and $(-Z)\circ Y^{k_0}$, and the smallest element $i_0\in\sur[Y]^{-1}(\{\frac{k_0 + 1}{2}\})$ to obtain a covector $T$. 
We claim that $T=Y^{k_0+1}$. Indeed, for all $i$ where $\sur[Y](i)<\frac{k_0 + 1}{2}$ we have $T_i=Y^{k_0}_i=(-Z)_i=-$. For all $i\in \sur[Y]^{-1}(\{\frac{k_0 + 1}{2}\})$, we have $T_{i_0}=0$ and $T_{(i_0,i)}=Y^{k_0}_{(i_0,i)}=(-Z)_{(i_0,i)}=0$, so the second hypothesis on $\OM$ implies that $T_{i}=0$. Let $i$ where $\sur[Y](i)>\frac{k_0 + 1}{2}$. We assume that $i>i_0$, the other case is analogous. We have that $T_{(i_0,i)}=Y^{k_0}_{(i_0,i)}\neq 0$ and $T_{i_0}=0$, so $T_{i}\neq 0$ by the second hypothesis. This forces that $T_i=+$ as otherwise $T\circ Z$ would satisfy $(T\circ Z)_{i_0}=-(T\circ Z)_{i}=(T\circ Z)_{(i_0,i)}$, which contradicts our assumption on the reorientation.

To conclude, if $k_0$ is even, then $Y^{k_0+1} = Y^{k_0}\circ (-Z)$.
\end{proof}

We get the following characterization as a direct corollary.
\begin{theorem}\label{thm:modularcharacterization}
 A simple oriented matroid~$\OM$ is FL-isomorphic to a big oriented matroid if and only if it has a tight modular hyperplane.
\end{theorem}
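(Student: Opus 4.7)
The plan is to prove both implications via Proposition~\ref{prop:characterizationbig}, with a preliminary lattice-theoretic reformulation of tightness as the key technical step.

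First, for any simple oriented matroid $\OM$ with a modular hyperplane~$F$, I would label the elements of $E\ssm F$ as $x_1,\ldots,x_n$. By modularity of~$F$ and simplicity of~$\OM$, for each pair $(i,j)\in\ipairs$ the rank-$2$ flat (line) through $x_i$ and $x_j$ meets~$F$ in exactly one element~$y_{ij}\in F$. The preliminary claim is that $F$ is tight if and only if the map $(i,j)\mapsto y_{ij}$ is surjective onto~$F$. The ``if'' direction is immediate: when $z=y_{ij}$, in $\OM\ssm z$ the line through $x_i$ and $x_j$ still has rank~$2$ but no longer meets $F\ssm z$, so modularity fails. For the ``only if'' direction, assuming no pair yields~$z$, I would verify that $F\ssm z$ remains a modular hyperplane of~$\OM\ssm z$ via a case analysis on rank-$2$ flats: every flat~$L$ of~$\OM$ with $L\cap F=\{z\}$ contains at most one element of $E\ssm F$ (otherwise $z$ would equal some~$y_{ij}$), so by simplicity $L\ssm z$ has rank at most~$1$ and no longer appears as a line of~$\OM\ssm z$. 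Elements of~$F$ that are coloops of~$\restr{\OM}{F}$ cannot be removed at all, since then $F\ssm z$ fails to be a hyperplane.

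For the $(\Leftarrow)$ direction, given simple~$\OM$ with tight modular hyperplane~$F$, I would build an oriented matroid $\OM'$ on $\ivl\cup\ipairs$ by adjoining, for each $(i,j)\in\ipairs$, a new element labeled~$(i,j)$ parallel to~$y_{ij}$, and then deleting the original elements of~$F$. By surjectivity each $z\in F$ arises as some~$y_{ij}$, so no parallelism class is lost; parallel extensions and deletions preserve the FL-isomorphism class, hence $\OM'$ is FL-isomorphic to~$\OM$. The cocircuit of~$\OM'$ with zero-set~$\ipairs$, inherited from the cocircuit of~$\OM$ with zero-set~$F$, verifies hypothesis~(1) of Proposition~\ref{prop:characterizationbig}; hypothesis~(2) follows because $(i,j)$ is parallel to~$y_{ij}$ and $\{i,j,y_{ij}\}$ spans a rank-$2$ flat, so if two of $X_i, X_j, X_{(i,j)}$ vanish on a covector~$X$, the third must vanish too. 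The proposition then identifies~$\OM'$, up to reorientation, as the big oriented matroid of its sweep restriction~$\restr{\OM'}{\ipairs}$.

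For the $(\Rightarrow)$ direction, let $\OM'$ be a big oriented matroid whose simplification is~$\OM$, via a simplification map~$\pi$. The set~$\ipairs$ is a hyperplane of~$\OM'$ by construction (Theorem~\ref{thm:BOMisOM}), and is modular: any line not contained in~$\ipairs$ that contains two non-parallel elements $i,j\in\ivl$ also contains~$(i,j)$ by hypothesis~(2) of Proposition~\ref{prop:characterizationbig}, while a line containing only one element of~$\ivl$ is spanned by that element together with some element of~$\ipairs$. Modularity of a hyperplane is a property of the geometric lattice of flats and descends through~$\pi$ to $F:=\pi(\ipairs)$ in~$\OM$. For tightness, every $z\in F$ has a preimage $(i,j)\in\ipairs$ under~$\pi$, and since $(i,j)$ lies on the line through~$i$ and~$j$ in~$\OM'$, applying~$\pi$ yields $z = y_{\pi(i),\pi(j)}$ in~$\OM$; surjectivity of $(i,j)\mapsto y_{ij}$ follows, and tightness of~$F$ follows from the preliminary equivalence.

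The main obstacle is the tightness-surjectivity equivalence, which demands careful tracking of how rank-$2$ flats transform under single-element deletion and of where simplicity is essential. Once this equivalence is in place, both directions of the theorem follow mechanically from Proposition~\ref{prop:characterizationbig}.
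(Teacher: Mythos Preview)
Your approach is essentially the same as the paper's: both directions go through Proposition~\ref{prop:characterizationbig} after relabeling $E\ssm F$ by~$\ivl$ and adjoining, for each pair $(i,j)$, a parallel copy of the unique element $y_{ij}\in F$ on the line through $x_i$ and~$x_j$. You are in fact more explicit than the paper about the key step, since the paper simply asserts that tightness of~$F$ forces every $z\in F$ to be some~$y_{ij}$, while you attempt to prove this surjectivity characterization.

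There is one imprecision in your treatment of that characterization. Your final sentence claims that when $z$ is a coloop of $\restr{\OM}{F}$, the set $F\ssm z$ fails to be a hyperplane of $\OM\ssm z$. This is only true when $z$ is \emph{not} a coloop of~$\OM$; if $z$ is a coloop of~$\OM$ itself, then the rank of $\OM\ssm z$ drops as well and $F\ssm z$ remains a hyperplane (and your line analysis still shows it is modular, so $F$ is not tight). The case you worry about---$z$ a coloop of $\restr{\OM}{F}$ but not of $\OM$---actually cannot occur under the hypothesis that $z$ is not any~$y_{ij}$: if $y_{ij}\neq z$ for all $i,j$, then each $x_j$ lies in the hyperplane $H_i=\text{cl}((F\ssm z)\cup\{x_i\})$, forcing all the $H_i$ to coincide, hence $E\ssm z$ lies in a single hyperplane and $z$ is a coloop of~$\OM$. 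Once this is observed, your argument is complete.
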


 \begin{proof}
 It is straightforward to check that in a big oriented matroid the elements indexed by~$\ipairs$ form a modular hyperplane that is tight up to the simplification of parallel elements.
 
 For the converse, let $E$ be the ground set of $\OM$, and $F\subseteq E$ a tight modular hyperplane. We will relabel the elements of $E\ssm F$ by $\ivl$, where $n=|E\ssm F|$. Now, for each $(i,j)\in \ipairs$ there is an element $z\in F$ in the line spanned by $i$ and $j$ by the modularity of~$F$. We add to $\OM$ an element parallel to $z$ labeled by $(i,j)\in \ipairs$. We obtain this way an isomorphic oriented matroid~$\OM'$. Note that, since the modular hyperplane $F\subseteq E$ is tight, for each $z\in F$ there are some $i,j\in E\ssm F$ such that $i,j,z$ are collinear. Hence, $z$ is parallel to $(i,j)$ and $\OM'\ssm z$ is isomorphic to~$\OM$. We conclude that $\restr{\OM'}{\ivl\cup\ipairs}$ is isomorphic to $\OM$. It satisfies the conditions of~\cref{prop:characterizationbig} and is hence isomorphic to a big oriented matroid.
 
 \end{proof}

A consequence of this observation is that we can extend the process to determine the big oriented matroid from the sweep oriented matroid to any oriented matroid with a modular hyperplane (not necessarily tight). For sweep oriented matroids, this relies on the labeling of the elements (see \cref{rmk:isomorphism}). Arbitrary modular hyperplanes also need a similar extra information. Let $\OM$ be an oriented matroid on a ground set~$E$  with a modular hyperplane~$F$. To simplify the exposition, we will assume that~$\OM$ is simple (no loops or parallel elements), that $E\ssm F=\ivl$, that $F\cap \ipairs=\emptyset$, and that all the elements of~$E\ssm F$ lie in a common halfspace defined by~$F$. (We could omit this simplification by adding information to the decoration, but it unnecessarily complicates the notation.)

We will \defn{decorate} the elements in~$F$ by constructing maps $\delta: F\to 2^{\ipairs}$ and $\epsilon:\ipairs\to\{+,-\}$ that associate a subset of elements of $\ipairs$ to each $f\in F$ and a sign to each pair in $\ipairs$. 
This is done with the following algorithm.
We start decorating each element in~$F$ with an empty set. For every $(i,j)\in \ipairs$, let $f\in F$ be the element of~$F$ in the flat spanned by $i$ and~$j$. We add to the decoration~$\delta(f)$ of~$f$ the ordered pair $(i,j)$; and we set $\epsilon(i,j)=+$ if there is a covector $X\in \OM$ such that $X_i=0$ and $X_j=X_f\neq 0$,
or  $\epsilon(i,j)=-$ otherwise. We will call this information the~\defn{decoration of $F$ induced by $\OM$}. 

We will show that we can recover $\cM$ from $\OM'=\restr{\OM}{F}$, its restriction to~$F$, and the decoration. To state our result, we introduce \defn{valid decorations}, which are those that can be obtained with the procedure above. For any simple oriented matroid~$\OM'$ on the ground set~$F$, we call a \defn{valid decoration} a couple of maps~$\delta:F\to 2^{\ipairs}$ and $\epsilon:\ipairs\to\{+,-\}$
for a certain~$n$, such that:
\begin{itemize}[leftmargin=0.5cm]
\item the decorations form a partition of $\ipairs$, with empty parts accepted: $\ipairs = \bigcup_{f\in F} \delta(f)$ with $ \delta(f)\cap \delta(f')=\emptyset$ whenever $f\neq  f'$; and
\item the covectors $X\in\OM$, seen as elements of $\{+,-,0\}^{\ipairs}$ by considering $X_{(i,j)}=\epsilon(i,j)X_f$ if $(i,j)\in \delta(f)$, satisfy the transitivity condition from \cref{lem:OMtransitivity}.
\end{itemize}

The following result should be seen as the oriented version of \cite[Thm.~2.1]{Bonin06}, which similarly characterizes when an (unoriented) matroid can be extended so that its ground set is a modular hyperplane of the larger matroid. We have deferred its proof to \Cref{sec:appendix}, since it relies on the proof of \cref{thm:BOMisOM}.

\begin{corollary}\label{cor:decorations}
If $\OM'$ is a simple oriented matroid on~$F$ with a valid decoration~$(\delta,\epsilon)$, then $\OM'$ can be extended to a unique oriented matroid~$\OM$ for which $F$ is a modular hyperplane and $(\delta,\epsilon)$ is the decoration of $F$ induced by $\OM$. 

In particular, an oriented matroid $\OM$ with a modular hyperplane~$F$ is completely determined by $\restr{\OM}{F}$ together with the decoration of~$F$ induced by~$\OM$. 
\end{corollary}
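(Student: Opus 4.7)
My plan is to reduce existence to \cref{thm:BOMisOM} by lifting $\OM$ to a sweep oriented matroid on $\ipairs$ via the decoration, building its big oriented matroid, and then reassembling $\OM'$ on $\ivl\cup F$ from these two pieces.

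First I would define the map $\phi\colon \OM \to \{+,-,0\}^{\ipairs}$ by $\phi(X)_{(i,j)} = \epsilon(i,j)\,X_f$ whenever $(i,j)\in\delta(f)$, and set $\tilde\OM := \phi(\OM)$. Coordinate by coordinate, $\phi$ commutes with negation and composition; elimination also transfers, since for $(i,j)\in\delta(f)\subseteq \sep[\phi(X)][\phi(Y)]$, an $\OM$-elimination of $X,Y$ at $f$ gives $Z\in\OM$ with $Z_f=0$, whence $\phi(Z)_{(i,j)}=0$ and $\phi(Z)$ matches $\phi(X)\circ\phi(Y)$ outside $\sep[\phi(X)][\phi(Y)]$. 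Hence $\tilde\OM$ is an oriented matroid, and the valid-decoration hypothesis is exactly the transitivity condition of \cref{lem:OMtransitivity}, making $\tilde\OM$ a sweep oriented matroid. Applying \cref{thm:BOMisOM} then produces $\BOM[\tilde\OM]$ on $\ivl\cup\ipairs$, in which $\ipairs$ is a modular hyperplane by \cref{prop:characterizationbig}.

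Next I would define $\OM'$ on $\ivl\cup F$ as the set of sign vectors $Y_{X,k}$, indexed by $X\in\OM$ and $1\le k\le 2l_{\phi(X)}+1$, with $\restr{Y_{X,k}}{F} = X$ and $\restr{Y_{X,k}}{\ivl}=\restr{\phi(X)^k}{\ivl}$ following the recipe of \cref{thm:BOMisOM}. By construction $\restr{\OM'}{F}=\OM$. If $Y_{X,k}$ vanishes on $i,j$ with $(i,j)\in\delta(f)$, then $\phi(X)^k_{(i,j)}=0$ forces $X_f=(Y_{X,k})_f=0$; combined with the modularity of $\ipairs$ in $\BOM[\tilde\OM]$, this shows that $F$ is modular in $\OM'$ and that the induced decoration is $(\delta,\epsilon)$ (the signs $\epsilon(i,j)$ being exhibited by any $Y_{X,k}$ with $X_f\ne 0$ and $k=2p_{\phi(X)}(i)$). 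For uniqueness, any extension $\OM''$ can be enlarged to an oriented matroid $\widehat{\OM''}$ on $\ivl\cup F\cup\ipairs$ by adjoining, for each $(i,j)\in\delta(f)$, an element parallel to $\epsilon(i,j)\,f$; the restriction of $\widehat{\OM''}$ to $\ivl\cup\ipairs$ (which deletes all $F$-elements with nonempty decoration) satisfies the hypotheses of \cref{prop:characterizationbig} and thus equals $\BOM[\tilde\OM]$. Combining this with $\restr{\OM''}{F}=\OM$ forces every covector of $\OM''$ to be of the form $Y_{X,k}$ and, conversely, forces every $Y_{X,k}$ to arise by restricting to $\ivl\cup F$ a covector of $\widehat{\OM''}$ obtained by extending the corresponding covector of $\BOM[\tilde\OM]$ along its $F$-part, yielding $\OM''=\OM'$.

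The main technical obstacle will be checking the covector axioms for $\OM'$ directly, particularly the elimination axiom when the eliminated element is some $f\in F$ with $\delta(f)\ne\emptyset$. Given $Y_{X,k},Y_{X',k'}\in\OM'$ with $f\in \sep[Y_{X,k}][Y_{X',k'}]$, the $\OM$-elimination at $f$ provides $X''\in\OM$, but one must then select a level $k''$ so that $\restr{\phi(X'')^{k''}}{\ivl}$ coincides with $\restr{\phi(X)^k\circ \phi(X')^{k'}}{\ivl}$ on $\ivl\setminus \sep[Y_{X,k}][Y_{X',k'}]$. This is the same kind of level-matching problem as in the axiom verification underlying \cref{thm:BOMisOM} (deferred to its appendix), with the extra subtlety that $F$-elements with empty decoration contribute only to the $F$-part of $\OM'$ and hence trivially to the $\ivl$-side of the elimination.
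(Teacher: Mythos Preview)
Your proposal is correct and takes essentially the same approach as the paper: lift $\OM$ to a sweep oriented matroid via the decoration, apply the big-oriented-matroid construction of \cref{thm:BOMisOM}, and read off $\OM'$. The only organizational difference is that the paper first adjoins the parallel elements $(i,j)=\epsilon(i,j)f$ to $\OM$ to obtain $\tilde\OM$ on the larger ground set $F\cup\ipairs$, then runs the $X\mapsto X^k$ construction directly on $F\cup\ivl\cup\ipairs$ (so the appendix verification applies verbatim, with the $F$-coordinates carried along as parallels), and finally restricts to $\ivl\cup F$; you instead keep the $F$-part and the $\ipairs$-part separate and reassemble at the end, which forces you to recheck elimination on $\ivl\cup F$ rather than inheriting it.

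Two small remarks. First, eliminating at an $f\in F$ with $\delta(f)=\emptyset$ is not ``trivial'' on the $\ivl$-side: you still need the level-matching argument (the $E_-,E_0,E_+$ analysis of case~1) in the appendix), using that the $\OM$-elimination $X''$ satisfies $\phi(X'')_{(p,q)}=(\phi(X)\circ\phi(X'))_{(p,q)}$ whenever $(p,q)\notin S(\phi(X),\phi(X'))$; this does go through. Second, in your uniqueness argument, restricting $\widehat{\OM''}$ to $\ivl\cup\ipairs$ deletes \emph{all} of $F$, and when some $f$ has $\delta(f)=\emptyset$ the hypotheses of \cref{prop:characterizationbig} may not be met on the nose (the rank can drop). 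Your recovery via ``combining with $\restr{\OM''}{F}=\OM$'' is the right idea; the cleanest fix is to run the argument on $\ivl\cup F\cup\ipairs$ throughout, as the paper implicitly does, so that those $f$'s are never deleted.
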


\subsection{Not every oriented matroid is a little oriented matroid}\label{sec:extendability}

Little oriented matroids are always acyclic, meaning that $\pluses$ is a tope. 
A first guess could be that all acyclic oriented matroids can be extended to a big oriented matroid. After all, this is trivially the case for realizable oriented matroids. Moreover, it is also true for rank~$3$ oriented matroids. Although stated in a different language, this follows directly from \cite[Thm.~6.3.3]{BLSWZ99} and \cite[Lemma~1]{FelsnerWeil2001}\footnote{This is usually presented in the context of ``topological sweepings'' of arrangements of pseudolines, for example in~\cite{FelsnerWeil2001,Felsner2004}. Note that the notation in these references collides slightly with ours, see~\cref{sec:terminology}.}%
, which was first proved in the uniform case in~\cite{SnoeyinkHershberger1991}. (Actually, their result is stronger, as the sweep oriented matroid they construct is Dilworth in the sense of the upcoming~\cref{sec:Dilworth}.)

\begin{theorem}[{\cite[Thm.~6.3.3]{BLSWZ99}}]\label{thm:sweepabilityRank3}
Every loopless acyclic oriented matroid $\OM$ of rank~$3$ is the little oriented matroid of a sweep oriented matroid.
\end{theorem}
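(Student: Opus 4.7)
The plan is to construct a rank-$2$ sweep oriented matroid $\cN$, equivalently an allowable sequence, such that $\LOM[\cN]=\OM$. By \Cref{lem:rank_BOM}, the little oriented matroid of a rank-$2$ sweep oriented matroid has rank $3$, which matches the hypothesis. The approach exploits the fact that rank-$3$ oriented matroids enjoy a topological representation by pseudoline arrangements, together with a single-element extension argument that is particular to this rank.

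First, I would apply the Folkman--Lawrence topological representation theorem to realize $\OM$ as an arrangement $\cA=\{\gamma_1,\dots,\gamma_n\}$ of oriented pseudolines in the projective plane $\RR P^2$. Looplessness ensures that each $\gamma_i$ is a genuine pseudoline, and acyclicity guarantees the existence of a $2$-cell $F$ in $\RR P^2\setminus\cA$ lying on the positive side of every $\gamma_i$.

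Second, I would extend $\cA$ by a generic pseudoline $\gamma_0$ (playing the role of a ``line at infinity'') passing through $F$, crossing every $\gamma_i$ transversely at a single point, and avoiding all vertices of $\cA$. The existence of such $\gamma_0$ is a classical fact about single-element extensions of rank-$3$ oriented matroids; it can be obtained by drawing $\gamma_0$ as a small topological loop inside $F$ in a disk model of $\RR P^2\setminus F$, or via an explicit wiring-diagram construction. Cutting $\RR P^2$ along $\gamma_0$ turns $\cA$ into an affine pseudoline arrangement in $\RR^2$, and sweeping it by pseudolines parallel to $\gamma_0$ as the sweep direction rotates through $180^\circ$ produces a sequence of permutations of $[n]$. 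The combinatorics of $\cA$ ensure axioms (i) and (ii) of an allowable sequence: consecutive permutations differ by reversing disjoint substrings corresponding to pseudolines concurrent at a single vertex currently on the sweep line, and every pair $i<j$ is reversed exactly once at the unique crossing $\gamma_i\cap\gamma_j$ in the affine part. Let $\cN$ denote the rank-$2$ sweep oriented matroid encoded by this allowable sequence.

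Finally, I would verify that $\LOM[\cN]=\OM$. Via the recipe preceding \Cref{thm:BOMisOM} (and consistently with \Cref{prop:characterizationbig}), the big oriented matroid $\BOM[\cN]$ on $[n]\cup\ipairs$ is characterized by having $\ipairs$ as a tight modular hyperplane: the elements of $[n]$ correspond to the pseudolines $\gamma_i$, while each $(i,j)\in\ipairs$ corresponds to the ``point at infinity'' on $\gamma_0$ obtained by joining the crossing $\gamma_i\cap\gamma_j$ to $\gamma_0$. The restriction $\restr{\BOM[\cN]}{[n]}$ forgets the line $\gamma_0$ and its attached incidences and so recovers the original pseudoline arrangement $\cA$, hence $\OM$. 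The main obstacle is the existence of the generic extension $\gamma_0$ in the second step: this flexibility is genuinely particular to rank-$3$ pseudoline arrangements and fails in higher rank, which is precisely the obstruction underlying \Cref{thm:unextendable}.
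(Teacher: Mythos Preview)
Your overall strategy---represent $\OM$ topologically as a pseudoline arrangement and exploit rank-$3$ extension flexibility to manufacture an allowable sequence---matches the approach the paper attributes to \cite[Thm.~6.3.3]{BLSWZ99} and \cite[Lemma~1]{FelsnerWeil2001}; the paper gives no proof beyond this citation, noting only that Levi's extension lemma is the key tool.

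The gap is in your third step. Adding a single generic pseudoline $\gamma_0$ and cutting along it yields an affine pseudoline arrangement, but the phrase ``sweeping it by pseudolines parallel to $\gamma_0$ as the sweep direction rotates through $180^\circ$'' is exactly where the work hides: for a non-realizable arrangement there is no a priori notion of ``parallel'' pseudolines or of a ``rotating direction'', and the existence of such a sweeping family is precisely the content of the cited results (established via iterated applications of Levi's lemma, or via the topological-sweep argument of Snoeyink--Hershberger and Felsner--Weil). In the correct picture, the modular hyperplane $\ipairs$ of the prospective big oriented matroid corresponds in the rank-$3$ arrangement to a \emph{point} $p$ in the all-positive cell $F$ (not to a pseudoline playing the role of a line at infinity), and one must add, for every pair $(i,j)$, a pseudoline through $p$ and the vertex $\gamma_i\cap\gamma_j$; the cyclic order of these $\binom{n}{2}$ new pseudolines around $p$ is the desired allowable sequence. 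Note also that a ``small topological loop inside $F$'' is contractible in $\RR P^2$ and hence cannot be a pseudoline, so your proposed construction of $\gamma_0$ fails as written.
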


However, contrary to the rank~$3$ case, starting at rank~$4$ there exist acyclic oriented matroids that cannot be extended to big oriented matroids. The proof of \cref{thm:sweepabilityRank3} in~\cite{BLSWZ99} uses Levi's extension lemma, that states that every arrangement of pseudolines can be extended with an extra pseudoline through two given points. We use a famous counterexample to the analogous statement in rank~$4$ by Richter-Gebert~\cite{RichterGebert1993} to present an acyclic oriented matroid that cannot be extended to a big oriented matroid.

\begin{theorem}[{\cite[Cor.~3.4]{RichterGebert1993}}]
 There is an oriented matroid $\RG$ of rank $4$ with ground set~$\ivl[12]$ with two %
topes $U$ and $T$ such that no extending pseudoplane intersects $U$ and $T$ simultaneously.
\end{theorem}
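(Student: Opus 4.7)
The plan is to produce the oriented matroid $\RG$ explicitly and then verify the non-extendability combinatorially. The framework is single-element extensions in the sense of~\cite[Ch.~7]{BLSWZ99}: a pseudoplane extension of $\RG$ corresponds to a \emph{localization}, i.e.\ a sign function on the cocircuits of $\RG$ satisfying the cocircuit-elimination axiom. The condition that the extending pseudoplane intersects a tope $T$ translates into the requirement that some covector of the extension restrict to $T$ on the original ground set with value $0$ on the new element, which in turn fixes the localization values at certain cocircuits adjacent to $T$.

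First, I would construct $\RG$ from a non-Euclidean rank-$4$ projective configuration. The canonical strategy is to encode a forbidden closure theorem: one starts with a realizable configuration of twelve points in $\RR^3$ satisfying a Pappus- or Desargues-type incidence, and applies a single \emph{mutation} reversing the orientation of one basis. The combinatorial closure theorem forces that incidence to persist in any rank-$4$ oriented matroid sharing the same non-mutated bases, so after the mutation the resulting sign-vector system is a genuine (but non-realizable) oriented matroid in which the incidence is precisely the obstruction to extendability. Verifying the oriented matroid axioms reduces to checking the Grassmann--Pl\"ucker relations on the $\binom{12}{4}$ bases, a finite check that can be carried out (or cited).

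Second, I would designate $U$ and $T$ to be the two topes located on opposite sides of the ``closure'' hyperplane that would exist if the configuration were realizable. A simultaneous pseudoplane extension through $U$ and $T$ would propagate sign constraints along a cycle of cocircuits that mimics the Pappus/Desargues construction on the ambient oriented matroid. Following this cycle through the localization axioms, one arrives at a cocircuit forced to carry two incompatible signs, so no localization can satisfy both tope incidences at once. The verification is a finite but delicate sign-chase around the closure diagram.

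The main obstacle is the first step: exhibiting a sign-vector system simultaneously rigid enough that the Pappus-style closure is inescapable in every extension, yet flexible enough to satisfy the full oriented matroid axioms. Richter-Gebert handles this by realizing $\RG$ as an explicit pseudohyperplane arrangement obtained by one local mutation of a realizable one, after which the non-extendability becomes a tractable finite case analysis. In our setting it suffices to quote this construction directly and then unpack it into the form needed by the subsequent \Cref{thm:unextendable}.
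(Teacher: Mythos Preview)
The paper does not contain a proof of this statement at all: it is quoted verbatim as \cite[Cor.~3.4]{RichterGebert1993} and used as a black box to derive \Cref{thm:unextendable}. The only thing the paper adds is the one-sentence reformulation directly after the theorem, unpacking what ``no extending pseudoplane intersects $U$ and $T$'' means in terms of covectors of an extension $\RG'$.

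Your proposal is therefore not comparable to anything in the paper---you are sketching (roughly) Richter-Gebert's own construction rather than anything the present authors do. If you intend this as an expository outline of that external result, it is broadly in the right spirit (explicit non-realizable rank-$4$ matroid obtained by perturbing a closure configuration, then a finite sign-chase showing no localization can pass through both topes), but it remains a sketch: the actual verification in \cite{RichterGebert1993} is delicate and your proposal does not go beyond naming the ingredients. For the purposes of this paper, simply citing the result suffices.
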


This means that if $\RG'$ is an oriented matroid on $\ivl[12]\cup \{f\}$ such that $\restr{\RG'}{\ivl[12]}=\RG$, then it cannot contain covectors $U', T'\in \RG'$ such that $\restr{U'}{\ivl[12]}\preceq U$ and $\restr{T'}{\ivl[12]}\preceq T$ but $U'_f=T'_f=0$.

\begin{theorem}\label{thm:unextendable}
The reorientation of $\RG$ sending $U$ to $\pluses[12]$ is acyclic, but it is not %
the little oriented matroid of any sweep oriented matroid.
\end{theorem}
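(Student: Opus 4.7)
The first assertion is immediate: the reorientation sends $U$ to the all-positive sign vector $\pluses[12]$, certifying acyclicity. For the second, write $\widetilde\RG$ for the reorientation and $\widetilde T$ for the image of $T$, and argue by contradiction: assume $\LOM[\OM] = \widetilde\RG$ for some sweep oriented matroid $\OM$ on $\ipairs[{\ivl[12]}]$. By \Cref{def:def_big_OM} and \Cref{lem:rank_BOM}, $\BOM[\OM]$ is then a rank-$4$ oriented matroid on $\ivl[12] \cup \ipairs[{\ivl[12]}]$ restricting to $\widetilde\RG$ on $\ivl[12]$ and containing the cocircuit $Z = (\pluses[12], \zeros)$ (namely $\zero^1$ in the notation of \Cref{thm:BOMisOM}), while $\OM$ itself has rank $3$.

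The plan is to find a pair $(i,j) \in \ipairs[{\ivl[12]}]$ such that the restriction of $\BOM[\OM]$ to $\ivl[12] \cup \{(i,j)\}$, viewed as a single-element extension of $\widetilde\RG$, simultaneously intersects the topes $\pluses[12]$ and $\widetilde T$. Undoing the reorientation then yields an extending pseudoplane of $\RG$ passing through both $U$ and $T$, contradicting Richter-Gebert's theorem. Intersection with $\pluses[12]$ is automatic for every choice of $(i,j)$: the restriction $\restr{Z}{\ivl[12] \cup \{(i,j)\}} = (\pluses[12], 0)$ is a covector with $(i,j)$-coordinate zero and $\ivl[12]$-restriction $\pluses[12]$. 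The real task is thus to exhibit a covector of $\BOM[\OM]$ with $(i,j)$-coordinate zero and $\ivl[12]$-restriction equal to $\widetilde T$.

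Since $\widetilde T$ is a tope of $\LOM[\OM]$, by \Cref{thm:BOMisOM} there exist $X \in \OM$ and an index $k$ with $\restr{X^{2k+1}}{\ivl[12]} = \widetilde T$; moreover $X^{2k+1}_{(i,j)} = X_{(i,j)}$, which vanishes precisely when $i$ and $j$ lie in a common part of the partition of $X$. If $X$ has a non-singleton part, any two elements inside it furnish the required pair. Otherwise $X$ is a permutation $\sigma$ of $\ivl[12]$, and since $\OM$ has rank $3$, the tope $\sigma$ is covered in the covector poset of $\OM$ by at least three other covectors. Using the transitivity condition of \Cref{lem:OMtransitivity}, each such cover must come from merging two consecutive entries $\sigma_p, \sigma_{p+1}$ of $\sigma$; at most one of them (the merger at $p=k$) crosses the cut between $I_-^{\widetilde T}$ and $I_+^{\widetilde T}$, so at least two covers are same-side mergers. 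Any such cover yields a covector $X' \in \OM$ with $X'_{(\sigma_p,\sigma_{p+1})} = 0$ whose cut at level $k-1$ (if $p<k$) or $k$ (if $p>k$) still gives $\widetilde T$, providing the desired covector in $\BOM[\OM]$.

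The most delicate point will be justifying in the abstract setting that the covers of the permutation $\sigma$ in the covector poset of $\OM$ really do correspond to consecutive-swap mergers; this reduces to ruling out, via the forbidden triples of \Cref{lem:OMtransitivity}, every other single-zero refinement of the sign-vector of $\sigma$. The extreme case $\widetilde T = \minuses[12]$, where the cut position $k = 12$ makes every consecutive merger automatically same-side, is handled identically; the case $\widetilde T = \pluses[12]$ cannot occur since $\widetilde T \neq U$ by Richter-Gebert's choice of topes.
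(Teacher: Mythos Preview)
Your argument takes a genuinely different route from the paper's. The paper works entirely inside the little oriented matroid: it picks a covector $X$ two steps below $T$ in the face lattice of~$\widetilde\RG$ (so $X$ has zeros at some $i_0,j_0$), lifts $X$ to a covector $X'$ of the big oriented matroid, and uses the structural fact that $X'_{i_0}=X'_{j_0}=0$ forces $X'_{(i_0,j_0)}=0$. No case analysis, no descent into the sweep oriented matroid. You instead lift $\widetilde T$ to a big-matroid covector $X^{2k+1}$ coming from the sweep side, and when $X$ is a permutation you pass to a subtope of $X$ in~$\OM$ that avoids the cut. Both approaches work; the paper's is shorter, while yours produces a covector whose $\ivl[12]$-restriction equals $\widetilde T$ rather than merely lying below it.

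One point to tighten: you assume each subtope of $\sigma$ in $\OM$ is a single adjacent merge, and plan to justify this by ruling out single-zero refinements at non-consecutive pairs. But a subtope need not have a single zero---its zero-set is a full parallelism class of~$\OM$, which may contain several pairs when $\OM$ is not simple (so one subtope can merge several adjacent positions, or three-in-a-row, at once). The argument survives: the specific pair corresponding to $\sigma_k,\sigma_{k+1}$ lies in at most one parallelism class, so at most one subtope merges across the cut; any other subtope $X'$ has $I_-^{\widetilde T}$ as a union of initial blocks, yielding the required $(X')^{2k'+1}$. The index $k'$ is then not literally $k-1$ or $k$, but whatever makes the first $k'$ blocks of $X'$ equal $I_-^{\widetilde T}$. (That a rank-$3$ tope has at least three subtopes is standard---no region in a pseudoline arrangement is a bigon---but deserves a word of justification.)
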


\begin{proof}
After a suitable reorientation, assume that $U=\pluses[12]$.
Suppose that there is a big oriented matroid~$\OM$ on $\ivl[12]\cup \ipairs[{\ivl[12]}]$ such that $\restr{\OM}{\ivl[12]}=\RG$. It contains a cocircuit $U'\in \OM$ with $U'_i=U_i=+$ for all $i\in \ivl[12]$ and $U'_{(i,j)}=0$ for all $(i,j)\in \ipairs[{\ivl[12]}]$. 

Let $X$ be a covector in $\RG$ such that $[X, T]$ forms an interval of length $2$ in the face lattice of $\RG$. 
This means that there are $1\leq i_0 < j_0 \leq 12$ such that $X_{i_0}=X_{j_0}=0$ and $X_i\preceq T_i$ for all $i\in \ivl[12]\setminus\{i_0, j_0\}$.
Let $X'$ be a covector in $\OM$ such that $\restr{X'}{\ivl[12]}=X$. Hence, we have  $X'_{(i_0,j_0)}=0$ and $\restr{X'}{\ivl[12]}\preceq T$. 
Hence $\RG'=\restr{\OM}{\ivl[12]\cup \{(i_0,j_0)\}}$ is an extension of $\RG$ whose covectors $\restr{U'}{\ivl[12]\cup\{(i_0,j_0)\}}$ and $\restr{X'}{\ivl[12]\cup\{(i_0,j_0)\}}$ contradict the special property of $\RG$.
\end{proof}

\section{Lattices of flats of sweep oriented matroids}\label{sec:flats}
\subsection{Dilworth sweep oriented matroids}\label{sec:Dilworth}

It is also interesting to understand the underlying (unoriented) matroid $\un{\MOM}$ associated to a sweep oriented matroid~$\MOM$. In particular, because it plays an essential role in the enumeration of sweeps~\cite[Sec.~4.6]{BLSWZ99}. In the realizable case, this was done by Edelman~\cite{Edelman2000} and Stanley~\cite{Stan15}, who showed that, under certain genericity constraint, $\un{\MOM}$ can be obtained from $\un{\LOM}$ via the operation of \defn{Dilworth truncation}.

We will work directly with the axiomatic of (unoriented) matroids in terms of \defn{geometric lattices} of flats, which was already mentioned in \cref{sec:modularhyperplanes}. We refer to \cite{White86} for a comprehensive reference on (unoriented) matroids.

Recall that if $\OM$ is an oriented matroid on ground set $E$, a \defn{flat} of $\cov$ is a subset $F\subseteq E$ that is the zero-set of a covector of $\cov$ (there is $X\in \cov$ such that $F=\{e\in E \, |\, X_e=0\}$). The set~$\flatsM$ of all flats of $\cov$, ordered by inclusion, has the special structure of a \defn{geometric lattice}; that is, a finite atomistic semimodular lattice. If $\OM$ has no loop, its minimal element is $\emptyset$. (Note that this order is reversed from the order on the covectors in the face lattice of $\cov$.) Conversely, any geometric lattice can be seen as the lattice of flats of a matroid. Let $S \subseteq E$. There is only one minimal flat $F$ that contains~$S$. The \defn{rank} of~$S$ is the length of any maximal chain from $\emptyset$ to $F$ in~$\flatsM$. It is denoted $\rk{\cov}{S}$, or $\rk{\un{\cov}}{S}$. The rank function satisfies the \defn{submodular inequality}: \[\rk{\cov}{A}+\rk{\cov}{B}\geq \rk{\cov}{A\cap B}+\rk{\cov}{A\cup B}.\]

The flats and the rank function give two cryptomorphic ways to define the underlying (unoriented) matroid~\defn{$\un{\OM}$} of the oriented matroid~$\OM$.
If $\unVectOM$ is the matroid associated to a real vector configuration $\vc=(\p v_1, \ldots, \p v_n)$, the flats correspond to the sets of vectors in a same linear subspace and the rank of $S\subseteq E$ is the dimension of the linear subspace generated by $\{\p v_i \, |\, i\in S\}$.  

The flats of the braid arrangement~$\Mbraid$ are in correspondence with the (unordered) partitions of~$\ivl$, and the lattice of flats of~$\Mbraid$ is just the lattice of partitions of~$\ivl$. Similarly, each flat of a sweep oriented matroid can be associated to a partition, and the sweeps corresponding to orderings of this partition correspond to the covectors with this zero-pattern.

We will need the oriented and unoriented notions of weak maps, which are the matroidal version of perturbing a configuration to a more special position. If $\OM$ and $\OM'$ are two oriented matroids on the same ground set~$E$, we say that there is a \defn{weak map} from $\OM$ to $\OM'$ if for every covector $X \in \OM'$, there is a covector $Y  \in \OM$ such that $X \preceq Y$. Note that every strong map is also a weak map, but not the other way round (the definition of strong maps is given in \cref{subsec:sweeporientedmatroids}). If $\un{\OM}$ and $\un{\OM}'$ are two unoriented matroids on the same ground set $E$, we say that there is a \defn{weak map} from $\un{\OM}$ to $\un{\OM}'$ if for any subset $F\subseteq E$ we have $\rk{\un{\OM}'}{F}\leq \rk{\un{\OM}}{F}$. Note that a weak map between oriented matroids induces a weak map on the underlying unoriented matroids (cf.\ \cite[Cor.~7.7.7]{BLSWZ99}).

The idea behind the Dilworth truncation is the following: if $\flats$ is a geometric lattice and we remove the elements of rank~$1$, we obtain a poset $\flats'$ that is not necessarily a geometric lattice. The most generic way to augment it with all the joins needed to fulfill the semimodularity condition gives rise to a matroid called the \defn{first Dilworth truncation} of~$\flats$. The construction works in more generality when the elements of rank $\leq k$ are removed, giving rise to the $k$th Dilworth truncation, but we will not need it in such generality (\cite{Dilworth1944}, see also~\cite{Brylawski1986}).

\begin{definition}[{\cite[Prop.~7.7.5]{Brylawski1986}}]\label{def:Dilworth_trunc}
Let $\un{\cov}$ be a matroid on ground set $E$. The \defn{first Dilworth truncation} of $\un{\cov}$, denoted \defn{$\Dil{\un{\cov}}$}, is defined on the ground set $\ipairs[E]$ and its rank function is given by:
\begin{align*}
\rk{\Dil{\un{\cov}}}{\emptyset}&=0,\\
\rk{\Dil{\un{\cov}}}{F} &= \min_{\unop \in \unopSet{F}} r_{\unop}(F) & \text{for } \emptyset \neq F \subseteq \ipairs[E],
\end{align*}
where $\unopSet{F}$ is the set of (unordered) partitions $\unop=\{F_1, \ldots, F_l\}$ of~$F$  ($F=F_1\cup\cdots\cup F_l$, $F_k\neq \emptyset$ for all $k\in [l]$, and $F_k\cap F_h=\emptyset$ for all $k\neq h$) and $r_{\unop}(F)=\big(\sum_{k=1}^l \rk{\un{\cov}}{\bigcup \{i, j\, |\, (i,j)\in F_k\}}\big) -l$.

\end{definition}

The flats of rank~$1$ of $\Dil{\un{\cov}}$  are exactly the flats of rank~$2$ (i.e.\ the lines) of $\un{\cov}$. As noted by Brylawski~\cite{Brylawski1986} and Mason (\cite[Sec.~2.1]{Mason1977}), in the realizable case the Dilworth truncation can be geometrically realized by intersecting all the lines of $\un{\cov}$ with a generic affine hyperplane. If $\pc$ is generic enough (in the sense that incomparable flats spanned by its subsets are never parallel), then the hyperplane at infinity fulfills this genericity condition and $\unMOMA$ is the first Dilworth truncation of $\unLOMA$. Otherwise, we only get a weak map of $\Dil{\unLOMA}$, as $\unMOMA$ will be in less general position. This result extends to (not necessary realizable) sweep oriented matroids.

\begin{theorem}\label{thm:Dilworth_unweakmap}
Let $\cov$ be a sweep oriented matroid on $\ipairs$. Then there is a weak map from $\Dil{\un{\LOM}$} to $\un{\cov}$.
\end{theorem}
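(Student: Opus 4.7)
The plan is to work with the three matroids $\un{\cov}$, $\un{\BOM}$, and $\un{\LOM}$ simultaneously. By the big-oriented-matroid construction of \Cref{sec:fromsweeptobig} (\Cref{thm:BOMisOM} and \Cref{def:def_big_OM}), $\cov$ is the oriented-matroid restriction of $\BOM$ to $\ipairs$ and $\LOM$ is its restriction to $\ivl$; at the level of underlying matroids this is deletion of the complementary elements, so
\[ \rk{\un{\cov}}{F} = \rk{\un{\BOM}}{F} \quad\text{for all } F \subseteq \ipairs, \qquad \rk{\un{\LOM}}{A} = \rk{\un{\BOM}}{A} \quad\text{for all } A \subseteq \ivl. \]

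The heart of the argument is the following inequality, valid for every non-empty $F \subseteq \ipairs$ with $V(F):=\bigcup\{\{i,j\}:(i,j)\in F\}$:
\[ \rk{\un{\cov}}{F} + 1 \;\leq\; \rk{\un{\LOM}}{V(F)}. \]
Two structural features of $\BOM$ (established by \Cref{prop:characterizationbig}) combine to give this. First, $(\pluses, \zeros[\ipairs])$ is a cocircuit of $\BOM$ (condition~\ref{cond:modular}), so $\ipairs$ is a flat of $\un{\BOM}$; in particular the closure of any $F\subseteq\ipairs$ stays inside $\ipairs$, and adjoining any $i\in V(F)\subseteq\ivl$ strictly increases the rank, yielding $\rk{\un{\BOM}}{F\cup V(F)} \geq \rk{\un{\BOM}}{F}+1$. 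Second, condition~\ref{cond:rank2} forces any $(i,j)$ to lie in the $\un{\BOM}$-closure of $\{i,j\}$, so $F\cup V(F) \subseteq \text{cl}_{\un{\BOM}}(V(F))$, yielding $\rk{\un{\BOM}}{F\cup V(F)} = \rk{\un{\BOM}}{V(F)} = \rk{\un{\LOM}}{V(F)}$. Chaining the two estimates delivers the core inequality.

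To conclude, submodularity of the rank function implies subadditivity over disjoint unions, so for any partition $\unop=\{F_1,\ldots,F_l\}\in \unopSet{F}$ we get $\rk{\un{\cov}}{F} \leq \sum_{k=1}^l \rk{\un{\cov}}{F_k}$. Combining with the core inequality applied to each block,
\[ \rk{\un{\cov}}{F} \;\leq\; \sum_{k=1}^l \bigl(\rk{\un{\LOM}}{V(F_k)} - 1\bigr) \;=\; r_\unop(F). \]
Minimizing over $\unop$ gives $\rk{\un{\cov}}{F} \leq \rk{\Dil{\un{\LOM}}}{F}$, which is precisely the weak-map condition of \Cref{def:Dilworth_trunc}. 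The main point requiring care is keeping the two ``modularity'' properties of $\BOM$ cleanly separated when chaining ranks (namely: that $\ipairs$ is a flat, and that each pair $(i,j)$ closes onto $\{i,j\}$); once this bookkeeping is in place the rest is a routine rank-function manipulation.
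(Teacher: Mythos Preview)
Your proof is correct and takes a somewhat different route from the paper's.

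The paper passes through an auxiliary lemma (\Cref{lem:flats_lit_sweep}) which proves the \emph{equality} $\rk{\OM}{J} = \rk{\LOM}{I} - 1$ when $I$ is a flat of $\LOM$ and $J$ is the minimal flat of $\OM$ containing the pairs from $I$; that lemma in turn invokes \Cref{lem:rank_BOM}, whose proof is an induction on rank. The paper then first replaces an arbitrary $G$ by a minimal Dilworth flat $F\supseteq G$, argues that the optimal partition of $F$ can be taken with blocks of the form $\{(i,j):i,j\in I_k\}$ for flats $I_k$ of $\LOM$, and only then applies the lemma and submodularity.

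Your approach bypasses both \Cref{lem:flats_lit_sweep} and the inductive \Cref{lem:rank_BOM}: you use only the two elementary structural facts about $\BOM$ (that $\ipairs$ is a flat, and that each $(i,j)$ lies in the closure of $\{i,j\}$) to get the \emph{inequality} $\rk{\un{\cov}}{F}+1 \leq \rk{\un{\LOM}}{V(F)}$ for every nonempty $F\subseteq\ipairs$, which is exactly what the Dilworth rank formula calls for. This is more direct and more elementary. The paper's route, on the other hand, yields the sharper equality on flats, which is what one needs later to characterize the Dilworth-equality case~\eqref{eq:Dilworthrank}. One minor remark: the two properties you use are not ``established by \Cref{prop:characterizationbig}'' (that proposition is the converse characterization); they are verified for $\BOM$ in the discussion preceding it, directly from the construction in \Cref{thm:BOMisOM}.
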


The proof needs an auxiliary lemma.

\begin{lemma}\label{lem:flats_lit_sweep}
Let $\LOM$ be the little oriented matroid of the sweep oriented matroid~$\OM$. If $I$ is a flat of $\LOM$ of rank at least two, and $J$ is the minimal flat in $\cov$ that contains $\set{(i,j)}{i,j\in I}$, then $\rk{\OM}{J}= \rk{\LOM}{I}-1$.
\end{lemma}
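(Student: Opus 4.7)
The plan is to exploit the ambient big oriented matroid $\BOM$, of which both $\LOM = \restr{\BOM}{\ivl}$ and $\OM \cong \restr{\BOM}{\ipairs}$ are restrictions, and to use crucially that $\ipairs$ is a modular hyperplane of $\BOM$ (as observed at the start of \Cref{sec:modularhyperplanes}). Since restriction preserves the rank of subsets of the remaining ground set, $\rk{\BOM}{S} = \rk{\LOM}{S}$ for $S \subseteq \ivl$ and $\rk{\BOM}{S} = \rk{\OM}{S}$ for $S \subseteq \ipairs$; this will let me translate the problem into a single computation inside $\BOM$.

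Let $s = \rk{\LOM}{I}$ and let $\overline{I}$ denote the flat of $\BOM$ spanned by $I$. Because $I$ is a flat of $\LOM$, one checks $\overline{I} \cap \ivl = I$ and $\rk{\BOM}{\overline{I}} = s$. The rank-$2$ condition \ref{cond:rank2} of \Cref{prop:characterizationbig} says that for each $i,j \in I$ the set $\{i,j,(i,j)\}$ has rank at most $2$ in $\BOM$, so $(i,j)$ lies in the closure of $\{i,j\}$, and therefore in $\overline{I}$. Hence $\{(i,j) : i,j \in I\} \subseteq \overline{I} \cap \ipairs$, and since $\overline{I} \cap \ipairs$ is itself a flat of $\OM$, we deduce $J \subseteq \overline{I} \cap \ipairs$.

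For the upper bound I invoke modularity of $\ipairs$: as $\overline{I}$ is not contained in the hyperplane $\ipairs$, the modular rank identity gives $\rk{\BOM}{\overline{I} \cap \ipairs} = \rk{\BOM}{\overline{I}} - 1 = s-1$, so $\rk{\OM}{J} \le \rk{\OM}{\overline{I} \cap \ipairs} = s - 1$. For the matching lower bound I choose a basis $i_1,\dots,i_s$ of $I$ in $\LOM$ and consider the $s-1$ pairs $(i_1,i_2),\dots,(i_1,i_s)$, all of which lie in $J$. Using the rank-$2$ relation $\{i_1,i_k,(i_1,i_k)\}$ repeatedly, one sees that $\{i_1,(i_1,i_2),\dots,(i_1,i_s)\}$ has the same closure in $\BOM$ as $\{i_1,\dots,i_s\}$, namely $\overline{I}$; being an $s$-element set of rank $s$, it is independent, so the pairs $(i_1,i_2),\dots,(i_1,i_s)$ are independent in $\BOM$, hence in $\OM$. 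Thus $\rk{\OM}{J} \ge s-1$, finishing the proof.

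The main step to be careful about is the modular hyperplane identity in the generality of oriented matroid flats (over the possibly non-simple $\BOM$), together with the bookkeeping ensuring that $\overline{I} \cap \ivl = I$ so that $\overline{I}$ is genuinely not contained in $\ipairs$; these are mild but need to be stated cleanly. Once this is granted, the upper and lower bounds match and give $\rk{\OM}{J} = \rk{\LOM}{I} - 1$ as claimed.
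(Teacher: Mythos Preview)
Your proof is correct and takes a genuinely different route from the paper's.

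The paper argues by \emph{self-similarity}: setting $I' = \{(i,j) : i,j \in I\}$, it observes that $\restr{\OM}{I'}$ is again a sweep oriented matroid whose little oriented matroid is $\restr{\LOM}{I}$, and then invokes \Cref{lem:rank_BOM} (rank of sweep OM is one less than rank of its little OM) to conclude $\rk{\OM}{J}=\rk{\OM}{I'}=\rk{\LOM}{I}-1$. This is short but relies on checking that the restriction to $I'$ really yields a sweep/little pair in the required sense.

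Your argument stays entirely inside the ambient big oriented matroid and exploits the structural fact, recorded at the beginning of \Cref{sec:modularhyperplanes}, that $\ipairs$ is a modular hyperplane of $\BOM$. The upper bound then drops out of the modular rank identity applied to the flat $\overline{I}$, and the lower bound is obtained by an explicit independent set of pairs. This is more hands-on but avoids the restriction-is-again-a-sweep-OM step and does not appeal to \Cref{lem:rank_BOM}. Two small points worth tightening in a write-up: (i) your citation of condition~\ref{cond:rank2} of \Cref{prop:characterizationbig} is slightly off, since that proposition is a converse; the property you actually need (that $\{i,j,(i,j)\}$ has rank~$2$ whenever $i,j$ are non-parallel) is stated directly for $\BOM$ at the start of \Cref{sec:modularhyperplanes}, together with the cocircuit $Z=(\pluses,\zeros)$ which you use to see that $(i_1,i_k)$ is not parallel to $i_1$; (ii) the pairs $(i_1,i_k)$ should be read as the ordered pair with smaller index first, which is harmless after reordering the basis so that $i_1=\min\{i_1,\dots,i_s\}$.
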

\begin{proof}
 Let $I'= \smallset{(i,j)\in\ipairs}{i,j\in I}$. Then $\restr{\OM}{I'}$ is a sweep oriented matroid with little oriented matroid~$\restr{\LOM}{I}$, and their respective ranks are $\rk{\LOM}{I}-1$ and $\rk{\LOM}{I}$ by \Cref{lem:rank_BOM}.
 Therefore, $\rk{\OM}{J}=\rk{\LOM}{I}-1$,
 because the rank function of a restriction is just the restriction of the rank function, see \cite[Prop~7.3.1]{Brylawski1986}. 
\end{proof}

\begin{proof}[Proof of \cref{thm:Dilworth_unweakmap}]
We want to show that $\rk{\OM}{G} \leq \rk{\Dil{\un{\LOM}}}{G}$ for every $G\subseteq \ipairs$. 
Let $F$ be a minimal flat of $\Dil{\un{\LOM}}$ that contains $G$, so that $\rk{\Dil{\un{\LOM}}}{F}=\rk{\Dil{\un{\LOM}}}{G}$. 
Then there exists an unordered partition $\{I_1, \ldots, I_l\}$ of a subset of~$\ivl$ into flats of $\LOM$ of rank at least two such that $F=\bigsqcup_{k=1}^l \set{(i,j)}{i,j\in I_k}$ and $\rk{\Dil{\un{\LOM}}}{F}=\sum_{k=1}^l (\rk{\LOM}{I_k} -1)$.

Indeed, let $\unop=\{F_1,\dots,F_l\}$ be a partition of~$F$ that minimizes $r_{\unop}(F)$, and let $I_k=\bigcup\set{i,j}{(i,j)\in F_k}$. The submodular inequality shows that $\rk{\LOM}{I_1\cup I_2} -1\leq \rk{\LOM}{I_1} + \rk{\LOM}{I_1} -2$ whenever $I_1\cap I_2\neq \emptyset$. We can therefore assume that the $I_k$'s are disjoint.
Moreover, these parts $I_k$ have to be flats of $\LOM$. Otherwise, if there was some $e\notin I_k$ such that $\rk{\LOM}{I_k}=\rk{\LOM}{I_k\cup \{e\}}$, then we could add to $F$ all the pairs $(i,e)$ and $(e,i)$ with $i\in I_k$ without augmenting its rank, but $F$ was taken to be a flat.

Let $J_k$ be the minimal flat in $\cov$ that contains $\set{(i,j)}{i,j\in I_k}$; and let $J$ be the join of all the $J_k$ in the lattice of flats of $\OM$.  The submodularity of geometric lattices implies that $\rk{\OM}{J}\leq \sum_{k=1}^l \rk{\OM}{J_k}$. Moreover, such a $J$ contains all the $J_k$, hence it contains $F$, which contains $G$; and therefore $\rk{\OM}{G} \leq \rk{\OM}{J}$.  We conclude by \cref{lem:flats_lit_sweep}, that implies that for any $k$, $\rk{\OM}{J_k}=\rk{\LOM}{I_k}-1$.
\end{proof}

In view of this result, we will say that a sweep oriented matroid~$\OM$ is \defn{Dilworth} if the weak map predicted by \cref{thm:Dilworth_unweakmap} is actually an equality and we have $\un{\cov}=\Dil{\un{\LOM}}$.

This is the case if for any flat $F$ of $\cov$ associated to a partition $I=(I_1, \ldots, I_l)$ we have 
\begin{equation}\label{eq:Dilworthrank}\rk{\cov}{F} =\left(\sum_{k=1}^l \rk{\LOM}{I_k}\right)-l.\end{equation}
                                                                                   
In other words, coplanarities in $\cov$ are induced by coplanarities in $\LOM$. 
For sweep oriented matroids that come from a point configuration, it prevents the case where some subspaces spanned by disjoint subsets of points are parallel.

Note that Dilworth sweep oriented matroids provide an oriented version of the matroid operation of Dilworth truncation. However, contrary to the unoriented case, such a truncation is often not unique and may even not exist, as shown by \cref{thm:unextendable}.

Even if \cref{thm:Dilworth_unweakmap} only works at the level of unoriented matroids, we expect that a stronger statement holds at the level of oriented matroids. The following conjecture is true for sweep oriented matroids of rank~$2$ (by \cite[Thm.~6.3.3]{BLSWZ99}), and for sweep oriented matroids arising from point configurations (it suffices to make a generic projective perturbation that removes unwanted parallelisms).

\begin{conjecture}\label{conj:Dilworth}
For any sweep oriented matroid~$\OM$ there is a Dilworth sweep oriented matroid~$\OM'$ such that there is a weak map from~$\OM'$ to~$\OM$, and $\OM$ and $\OM'$ have the same little oriented matroid.
\end{conjecture}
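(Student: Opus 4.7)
The plan is to construct $\OM'$ as the end-point of a sequence of elementary weak-map perturbations of $\OM$, each of which preserves the little oriented matroid while strictly reducing a numerical measure of non-genericity. Define the \emph{Dilworth excess} \[d(\OM) = \sum_{F \in \flats_\OM} \Bigl[\Bigl(\sum_{k=1}^{l_F} \rk{\LOM}{I^F_k}\Bigr) - l_F - \rk{\OM}{F}\Bigr],\] where the sum is taken over flats $F$ of $\OM$ with associated ordered partition $I^F = (I^F_1,\dots,I^F_{l_F})$ given by~\eqref{eq:covectorfrompartition}; by \Cref{thm:Dilworth_unweakmap} every summand is non-negative, and $d(\OM)=0$ precisely when $\OM$ is Dilworth. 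If $d(\OM)=0$ we set $\OM' = \OM$; otherwise we argue by induction on $d$.

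For the inductive step I would pass to the big oriented matroid $\BOM$, which by \Cref{thm:modularcharacterization,cor:decorations} is recovered from $\OM$ together with the decoration of the tight modular hyperplane $\ipairs$ inherited from $\LOM$. An excess contribution in $d(\OM)$ at a flat $F$ means that the pair-elements $\{(i,j) : (i,j) \in F\}$ of $\BOM$ span a flat of rank strictly smaller than what is forced by $\LOM$; this is an ``accidental'' dependency among the elements at infinity. I would aim to break it by performing a single-element weak-map lift (in the sense of \cite[Sec.~7.7]{BLSWZ99}) on the elements of $F \cap \ipairs$ that raises $\rk{\BOM}{F}$ by one while keeping the restriction to~$\ivl$ (namely $\LOM$) fixed and preserving the modularity of the hyperplane~$\ipairs$. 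The sweep-matroid axioms of \Cref{lem:OMtransitivity} only need to be checked on the triples $(i,j),(j,k),(i,k)$ touched by the lift, which is a finite and local condition; and the weak map $\OM^\ast \to \OM$ arising from the lift is automatic.

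The main obstacle will be showing that such an elementary perturbation exists. In the realizable case the argument collapses to a generic projective perturbation at infinity of a point configuration realizing $\LOM$, but combinatorially the existence of individual weak-map lifts is controlled by obstructions of Richter-Gebert type, exactly the ones exploited in \Cref{thm:unextendable}. The conjecture becomes plausible because we do not need to construct a big oriented matroid from scratch---we start with one---so the accidental dependencies we need to break are of a ``tame'' shape controlled by the lattice of flats of~$\LOM$; but turning this intuition into a rigorous lifting lemma for modular hyperplanes is where I expect the difficulty to concentrate. A possible softer route would be to reduce the problem to contracted rank-$3$ minors and apply \cite[Thm.~6.3.3]{BLSWZ99}, since every flat of $\OM$ of corank two sits inside a rank-$3$ contraction where the Dilworth property is already known to be achievable; one would then still need to show that these locally-Dilworth perturbations can be assembled into a single global sweep oriented matroid satisfying the transitivity condition throughout.
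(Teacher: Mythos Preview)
The statement you are attempting is \emph{Conjecture}~\ref{conj:Dilworth}: the paper does not prove it. It is explicitly left open, with only two special cases noted as known (rank~$2$ via \cite[Thm.~6.3.3]{BLSWZ99}, and the realizable case via a generic projective perturbation). In the ``Further directions'' section the authors describe exactly the kind of elementary splitting-of-flats operation you propose as a route toward the conjecture, and remark that understanding this procedure ``would provide a method to prove \Cref{conj:Dilworth}''---i.e.\ they do not know how to carry it out either.

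Your write-up is honest about this: you identify the crux (``the main obstacle will be showing that such an elementary perturbation exists'') and correctly locate the difficulty in the Richter-Gebert--type obstructions to single-element extensions/lifts of oriented matroids, the same phenomenon that powers \Cref{thm:unextendable}. But that means what you have is a proof \emph{strategy}, not a proof. The inductive framework with the Dilworth excess $d(\OM)$ is reasonable bookkeeping, and the idea of working inside $\BOM$ so that $\LOM$ is visibly preserved is sensible, but the step ``perform a single-element weak-map lift on the elements of $F\cap\ipairs$ that raises the rank by one while keeping $\LOM$ fixed and preserving modularity of $\ipairs$'' is precisely the unproved lemma. Nothing in \cite[Sec.~7.7]{BLSWZ99} guarantees such a lift exists in the non-realizable setting, and your suggested fallback of reducing to rank-$3$ contractions runs into the standard problem that locally valid extensions need not glue to a global oriented matroid.

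In short: your approach matches the paper's own intuition for how a proof might go, but neither you nor the paper closes the gap. As written this is not a proof of the conjecture.
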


\subsection{Bounds on the number of sweep permutations}

One motivation for studying the lattice of flats of an oriented matroid is that it completely determines its $f$-vector, as shown by the celebrated Las Vergnas-Zaslavsky Theorem \cite[Thm~4.6.4]{BLSWZ99}.

\begin{theorem}\label{thm:nbtopes}
The number of topes of an oriented matroid $\OM$ only depends on its lattice of flats $\flats$. More precisely, this number is:
\[(-1)^r\chi_{\flats}(-1),\]
where $r$ is the rank of~$\OM$, and $\chi_{\flats}$ is the characteristic polynomial of~$\flats$.
\end{theorem}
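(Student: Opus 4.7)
The plan is to prove this by induction on $|E|$ via the Tutte-Grothendieck deletion-contraction machinery. Let $t(\cov)$ denote the number of topes of $\cov$, let $r$ be the rank of $\cov$, and write $\cov\ssm e$ and $\cov/e$ for the deletion and contraction of an element $e\in E$. Since $\chi_{\flats}$ depends only on $\flats$, I write $\chi_\cov$ for $\chi_{\flats}$ without ambiguity.

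The first step is to recall that the characteristic polynomial
\[
\chi_{\flats}(q) = \sum_{F\in\flats} \mu(\hat 0, F)\, q^{r-\rk{\cov}{F}}
\]
obeys the classical deletion-contraction recurrence at the level of the underlying matroid: for $e\in E$ that is neither a loop nor a coloop, $\chi_\cov(q) = \chi_{\cov\ssm e}(q) - \chi_{\cov/e}(q)$, while $\chi_\cov(q) = (q-1)\chi_{\cov\ssm e}(q)$ if $e$ is a coloop. These identities take place entirely on $\flats$.

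The second step is the matching recurrence for topes: for any non-loop non-coloop $e$,
\[
t(\cov) = t(\cov\ssm e) + t(\cov/e).
\]
The restriction $T\mapsto \restr{T}{E\ssm e}$ surjects topes of $\cov$ onto topes of $\cov\ssm e$, each $T'\in\cov\ssm e$ admitting one or two preimages depending on whether its $e$-coordinate is forced; the $T'$ with two preimages correspond bijectively, via forgetting the $e$-coordinate of a maximal covector vanishing on $e$, to the topes of $\cov/e$. So $t(\cov)$ equals the number of one-lift topes plus twice the number of two-lift topes, which rearranges to the displayed identity.

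Combining the two recurrences and noting that the rank drops by one upon contraction but is preserved upon deletion of a non-coloop, multiplication by $(-1)^r$ and evaluation at $q=-1$ yields
\[
(-1)^{r}\chi_{\cov}(-1) = (-1)^{r}\chi_{\cov\ssm e}(-1) + (-1)^{r-1}\chi_{\cov/e}(-1),
\]
matching the tope recurrence. Induction on $|E|$ then closes the argument, with base case the oriented matroid with no non-loop elements (rank $0$, one tope, $\chi=1$) and the coloop case verified directly: adjoining a coloop doubles $t$ while multiplying $\chi$ by $q-1$, which at $q=-1$ gives a factor $-2$, absorbed by the extra sign coming from the increase in rank. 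The main obstacle is the bijective step in the tope recurrence: one must invoke composition and elimination (axiom~\ref{it:COVaxiomELIM}) to turn two topes of $\cov$ agreeing off $e$ but differing at $e$ into a maximal covector supported on $E\ssm e$ whose image in $\cov/e$ is a tope, and conversely check that every tope of the contraction lifts to two topes of $\cov$.
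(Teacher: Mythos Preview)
The paper does not supply its own proof of this statement; it is quoted as the Las Vergnas--Zaslavsky Theorem with a reference to \cite[Thm.~4.6.4]{BLSWZ99}. Your deletion--contraction argument is the standard route to this result and is essentially correct.

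One point needs care, however. The tope recurrence $t(\cov) = t(\cov\ssm e) + t(\cov/e)$ fails as stated when $e$ has a parallel element $f$: then every tope $T$ has $T_e = \pm T_f$, so restriction to $E\ssm\{e\}$ is a \emph{bijection} on topes (all fibers have size~$1$), yet $t(\cov/e) > 0$. The recurrence for $\chi_\flats$ fails for the same reason, since $\flats(\cov)\cong\flats(\cov\ssm e)$ while $\chi_{\flats(\cov/e)}\neq 0$. The deletion--contraction identity you invoke is really for the \emph{matroid} characteristic polynomial (the one that vanishes whenever a loop is present), but with that convention your base case ``$\chi=1$ for a rank-$0$ oriented matroid with only loops'' is wrong. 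The standard fix is to simplify $\cov$ first---deleting loops and all but one representative of each parallelism class changes none of $t$, $\chi_\flats$, or $r$---and then run your induction on the simple oriented matroid, re-simplifying $\cov/e$ at each step before applying the inductive hypothesis.
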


We can therefore adapt \cite[Thm.~3.4]{Edelman2000}\footnote{There is a small typo in the statement of \cite[Thm.~3.4]{Edelman2000}, but the correct statement can be recovered from \cite[Cor.~3.2]{Edelman2000} with $d=n-k-1$.} 
and \cite[Thm.~7]{Stan15} to oriented matroids. As noted by Stanley in \cite{Stan15}, for fixed~$r$ the bound is a polynomial in~$n$ of degree~$2(r-1)$.

\begin{theorem}\label{thm:boundsweeps}
Let $\OM$ be a sweep oriented matroid on $\ipairs$ of rank $r$. Then its number of sweep permutations is bounded from above by:
\[\left|\PsetA[\OM]\right|\leq \sum_{i=0}^{\lfloor \frac{r-1}{2}\rfloor} 2 c(n, n-r+1+2i),\]
where the $c(n,n-i)$ are the unsigned Stirling numbers of the first kind.

The equality is obtained for example for realizable sweep oriented matroids that come from generic configurations of~$n$ points in~$\RR^{r-1}$.
\end{theorem}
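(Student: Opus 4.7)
The plan is to combine the Las Vergnas--Zaslavsky theorem (\Cref{thm:nbtopes}) with the weak-map chain furnished by \Cref{thm:Dilworth_unweakmap}, and then to invoke the Edelman--Stanley computation for the uniform matroid as the base case. By \Cref{lem:equiv_sweepcomplex_sweepOM} the sweep permutations of $\OM$ are in bijection with its topes, so $|\PsetA[\OM]|$ equals the number of topes of $\OM$; by \Cref{thm:nbtopes} this count equals $(-1)^r \chi_{\un{\OM}}(-1)$, an invariant of the underlying unoriented matroid. The task therefore reduces to bounding this evaluation of the characteristic polynomial.

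Next, I would assemble a chain of weak maps ending at $\un{\OM}$. By \Cref{thm:Dilworth_unweakmap} there is a weak map $\Dil{\un{\LOM}} \to \un{\OM}$. By \Cref{lem:rank_BOM} the matroid $\un{\LOM}$ is loopless of rank $r+1$ on $[n]$, so it is a weak image of the uniform matroid $U_{r+1,n}$. Moreover, the Dilworth truncation is monotone under weak maps, which is verified term by term from \Cref{def:Dilworth_trunc}: enlarging the input rank function can only enlarge each summand $r_{\unop}(F)$, hence also the minimum over partitions. Composing these three steps yields that $\un{\OM}$ is a weak image of $\Dil{U_{r+1,n}}$.

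Third, I would apply the weak-map inequality $(-1)^r \chi_{M'}(-1) \leq (-1)^r \chi_M(-1)$, valid for weak maps $M \to M'$ between loopless matroids of the same rank on the same ground set. For oriented weak maps this is transparent (a tope has no zeros and must already be a maximal covector of the source), and the unoriented version follows from classical Tutte-polynomial monotonicity at $(2,0)$. Combined with the chain above, this gives
\[
|\PsetA[\OM]| \;=\; (-1)^r \chi_{\un{\OM}}(-1) \;\leq\; (-1)^r \chi_{\Dil{U_{r+1,n}}}(-1),
\]
and the right-hand side is the closed expression $\sum_{i=0}^{\lfloor(r-1)/2\rfloor} 2\, c(n,n-r+1+2i)$ computed by Edelman~\cite{Edelman2000} and Stanley~\cite{Stan15} via Möbius inversion on the truncated partition lattice. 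Equality is attained whenever $\un{\OM}=\Dil{U_{r+1,n}}$, in particular for the sweep oriented matroid of any sufficiently generic point configuration of the appropriate dimension.

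The main obstacle is the weak-map monotonicity of $(-1)^r \chi(-1)$ in the \emph{unoriented} setting. \Cref{thm:Dilworth_unweakmap} only guarantees an unoriented weak map, so the immediate oriented-matroid argument (topes of $\OM'$ inject into topes of $\OM$) does not directly apply. One must therefore either rely on a classical matroid-theoretic reference for the Tutte polynomial inequality at $(2,0)$, or exhibit an oriented lift of the weak map from some orientation of $\Dil{U_{r+1,n}}$ to $\OM$, which would amount to establishing the relevant case of \Cref{conj:Dilworth}.
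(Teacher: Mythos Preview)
Your proposal is correct and follows essentially the same route as the paper: reduce to the characteristic polynomial via Las~Vergnas--Zaslavsky, build the weak-map chain $\Dil{U_{r+1,n}}\to\Dil{\un{\LOM}}\to\un{\OM}$ using \Cref{thm:Dilworth_unweakmap} and the monotonicity of the Dilworth truncation, and invoke the Edelman--Stanley evaluation at the uniform endpoint. The obstacle you flag---monotonicity of $(-1)^r\chi(-1)$ under \emph{unoriented} weak maps---is exactly the point the paper handles by citing \cite[Cor.~9.3.7]{KungNguyen1986}, which gives the stronger statement that all Whitney numbers are monotone under weak maps; no oriented lift is needed.
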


\begin{proof}
We demonstrate how the proof of \cite[Thm.~3.4]{Edelman2000} and \cite[Thm.~7]{Stan15} extends to our set-up. We repeat the main ideas for the reader's convenience and refer to these references for more details.
We denote by $\Mbool$ the geometric lattice obtained by removing all elements of rank greater than~$r$ from the Boolean lattice on~$\ivl$ and adding a top element. This is the lattice of flats of any generic point configuration of $n$ points in $\RR^{r-1}$. The computation and evaluation of the characteristic polynomial of~$\Dil{\Mbool}$ gives the right hand side of the inequality (see \cite[Co.~3.2]{Edelman2000}), which is the number of topes of any oriented matroid whose lattice of flats is~$\Dil{\Mbool}$ via \cref{thm:nbtopes}. This is the case for the sweep oriented matroids arising from generic configurations.

By~\cite[Cor.~9.3.7]{KungNguyen1986}, it suffices to show that there is a weak map from $\Dil{\Mbool}$ to $\un{\OM}$, because this implies that the coefficients of the characteristic polynomial of $\un{\OM}$ are bounded by those of the characteristic polynomial of $\Dil{\Mbool}$. 
Note that for any subset $F\subseteq \ivl$, we have $\rk{\Mbool}{F}=\min(|F|, r)$. Like in any matroid, $\un{\LOM}$ satisfies $\rk{\un{\LOM}}{F}\leq |F|$, and hence there is a weak map from $\Mbool$ to $\un{\LOM}$. It follows from Definition~\ref{def:Dilworth_trunc} of the Dilworth truncation by its rank function that this induces a weak map from $\Dil{\Mbool}$ to $\Dil{\un{\LOM}}$. 
It follows from \cref{thm:Dilworth_unweakmap} that there is a weak map from $\Dil{\Mbool}$ to $\un{\cov}$.
\end{proof}

\section{Pseudo-sweeps}\label{sec:pseudosweeps}

Even if the little oriented matroid does not change, the poset of sweeps of a point configuration is not invariant under admissible projective transformations (in the sense of~\cite[App.~2.6]{Ziegler1995}). In this section we describe a larger poset, the \defn{poset of pseudo-sweeps}, that contains the sweeps with respect to all possible choices of ``hyperplane at infinity''. It is a poset of cellular strings, and as such it can be defined at the level of oriented matroids. Thus it exists even for those oriented matroids that are not little oriented matroids of any sweep oriented matroid.

\subsection{Pseudo-sweeps}

With the presentation of $\Sp$ as a monotone path polytope introduced in \cref{sec:asFiberPolytope}, we know that sweep permutations of a point configuration~$\pc$ can be interpreted as coherent monotone paths of the zonotope $\Z$ with respect to a linear form (which we called the height). Non-coherent monotone paths also give rise to permutations of the elements of~$\pc$, which we will call \defn{pseudo-sweep permutations}.
They can be read in terms of $k$-sets. A \defn{$k$-set} of~$\pc$ is a $k$-element subset $\pc[S]\subseteq \pc$ for which there is an affine hyperplane strictly separating $\pc[S]$ from $\pc\ssm \pc[S]$. See~\cite[Ch.~11]{Matousek2002} for background.

For simplicity, assume that $\pc =(\p a_1,\dots,\p a_n)\in \RR^{d\times \ivl}$ does not contain repeated points. A \defn{pseudo-sweep permutation} of $\pc$ is a permutation $\sigma\in\Sym$ such that $\set{\p a_{\sigma(i)}}{1\leq i \leq k}$ is a $k$-set for all $1\leq k\leq n$. Note that we are still sweeping with a hyperplane, although we are allowed to slightly change its direction every time the hyperplane hits a point, as long as the new hyperplane does not cross one of the already visited points. 

This point of view can be extended to obtain ordered partitions (and lift the constraint of not having repeated points). Consider a sequence of affine functionals $\af_r(\p x)=\sprod{\p u_r}{\p x}-c_r$ for $1\leq r \leq m$ such that
for each point $\p a_i\in \pc$ there is an $r$ with $\af_r(\p a_i)= 0$, $\af_s(\p a_i)> 0$ for all $s<r$, and $\af_s(\p a_i)< 0$ for all $r<s$; and such that for each $1\leq r\leq m$ there is some $i$ such that $\af_r(\p a_i)= 0$. The sets $I_r=\set{i}{\af_r(\p a_i)=0}$ with $1\leq r\leq m$ form an ordered partition of $\ivl$, which we call a \defn{pseudo-sweep} of~$\pc$.

There is another way to interpret pseudo-sweeps of $\pc$ and monotone paths/cellular strings of~$\Z$ in terms of hyperplane arrangements, which extends to oriented matroids. 

A \defn{gallery} of a hyperplane arrangement (without parallels) is a sequence of chambers (topes) such that adjacent chambers are separated by exactly one hyperplane. More generally, a \defn{gallery} of 
an oriented matroid is a collection of topes $T^0,\dots,T^{m+1}$ such that $\sep[T^i][T^{i+1}]$ is a parallelism class for all~$i$. A gallery is \defn{minimal} if no parallelism class is crossed twice. 
We will work with acyclic oriented matroids
and we will be interested in their minimal galleries from $\pluses$ to its opposite~$\minuses$.

This definition can be relaxed to accept paths that go accross some covectors (other than subtopes). 
A \defn{cellular string} of $\OM$ with respect to $\pluses$ is a sequence of non-tope covectors $(X^1,\dots, X^m)$ that are such that $X^1\circ \pluses=\pluses$,  $X^m\circ \minuses=\minuses$, and $X^i\circ \minuses=X^{i+1}\circ \pluses$ for all~$i$. This notation is consistent with the notion of cellular string for a polytope with respect to a linear functional given in \cref{sec:asFiberPolytope}. Indeed, for a hyperplane arrangement which is the normal fan of a zonotope~$\pol[Z]$, its cellular strings are equivalent to the cellular strings of $\pol[Z]$ with respect to a linear functional that is minimized at the vertex corresponding to~$\pluses$. (Minimal galleries are in correspondence with monotone paths.)

Note that an allowable sequence is just a cellular string on the braid arrangement based at the tope indexed by the permutation $\idperm =(1,2,\dots,n)$, and that its galleries correspond to simple allowable sequences.

\begin{figure}[htpb]
    \centering
    
    \includegraphics[width=.9\linewidth]{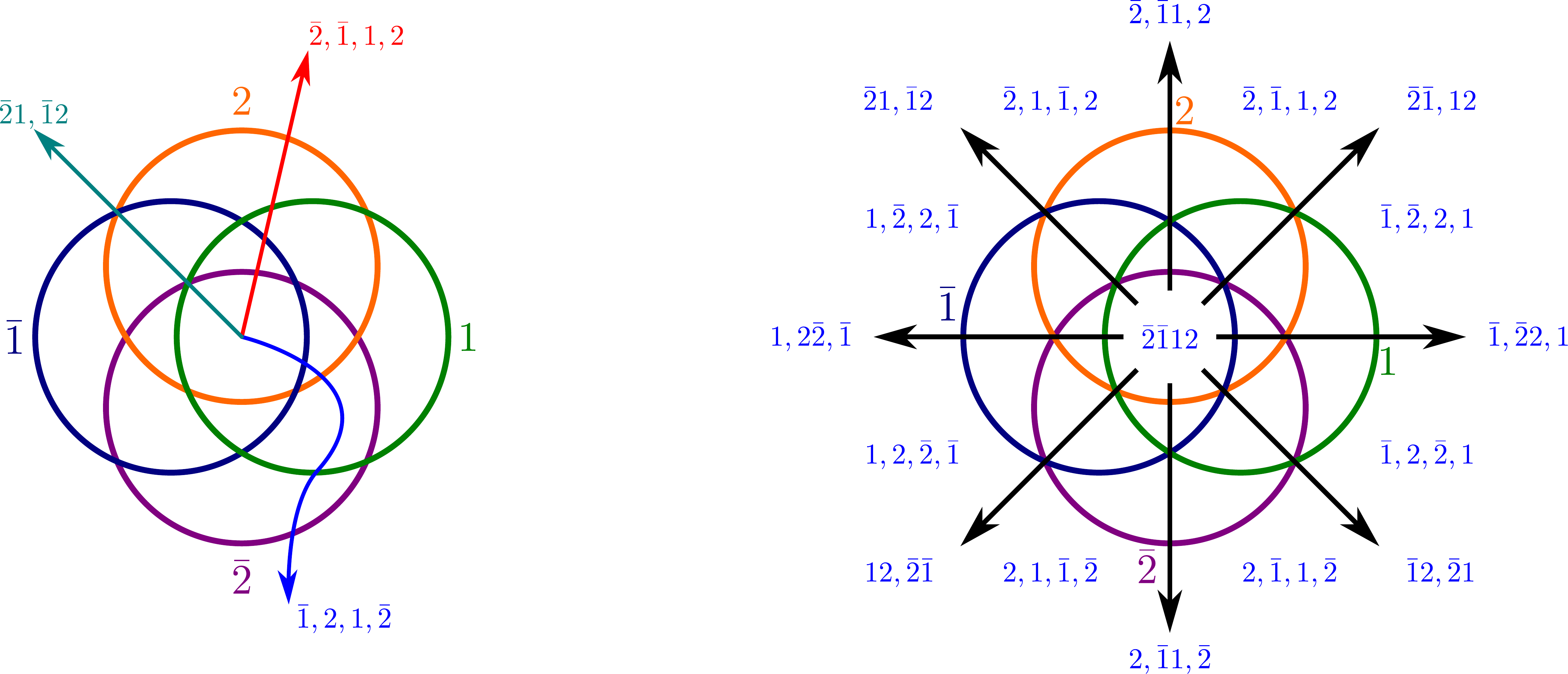}
    \caption{The hyperplane arrangement $\HA[\h{\pc[B]_2}]$. To depict the arrangement, it is intersected with the unit sphere and stereographically projected from the south pole $(0,0,-1)$. We obtain an arrangement of circles, oriented so that the positive side is the interior. Two sweeps, corresponding to the permutation $\bar 2,\bar 1, 1,2$ and the ordered partition $\bar 21,\bar 12$ are depicted; and also the pseudo-sweep that is not a sweep corresponding to the permutation $\bar 1,2,1,\bar 2$. (This resumes the example of \cref{fig:zonotopeandfibers}, where the monotone paths corresponding to these two permutations were depicted.) To represent these pseudo-sweeps, an oriented ray from the all-positive tope (containing the origin) to its opposite (at infinity) is depicted. The order in which the circles are crossed gives the corresponding permutation. If the ray meets more than one circle at the same time, then one recovers an ordered partition. Note that this gives an alternative method to construct the sweep hyperplane arrangement~$\VO[{\pc[B]_2}]$. Indeed, it is not hard to see that when one does this procedure (intersection of $\HA$ with the unit sphere plus stereographic projection), the hyperplanes spanned by the origin and the intersections of all possible pairs of spheres are precisely those of~$\VO$. This is why, under this representation, sweeps correspond to straight rays emanating from the origin. 
    }
    \label{fig:pseudosweepsB2}

\end{figure}

The following lemma sums up the relations between these objects in the realizable case. 
It is illustrated in \cref{fig:pseudosweepsB2}, where the example of~$\pc[B]_2$ from \cref{fig:SweepsB2,fig:zonotopeandfibers} is revisited.

\begin{lemma}
Let $\pc=(\p a_1,\dots,\p a_n)\in\RR^{d\times \ivl}$ be a point configuration; let $\HA$ be the hyperplane arrangement in~$\RR^{d+1}$ composed of the linear hyperplanes $\hyp_{i}=\set{\p x\in\RR^{d+1}}{\sprod{\p x}{\h {\p a}_i}=0}$ (oriented towards $\h {\p a}_i$) for $\p a_i\in \pc$, where $\h {\p a}=(\p a,1)$; and let $\Z = \sum_{i=1}^n [-\h {\p a}_i, \h {\p a}_i]$ be the associated zonotope.

There is a bijection between:
\begin{enumerate}[(i)]
 \item pseudo-sweeps of $\pc$,
 \item cellular strings of $\HA$ with respect to the all-positive tope $\pluses$, and 
 \item $\height$-monotone cellular strings of $\Z$ ($\height$-coherent subdivisions of $\height(\Z)$);
\end{enumerate}
and if moreover $\pc$ does not have repeated points, then there is a bijection between:
\begin{enumerate}[(i)]
 \item pseudo-sweep permutations of $\pc$,
  \item minimal galleries of $\HA$ from the tope $\pluses$ to its opposite $\minuses$, and
 \item $\height$-monotone paths of $\Z$.
\end{enumerate}
\end{lemma}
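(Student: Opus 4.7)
The unifying observation is that every affine functional $\af(\p x) = \sprod{\p u}{\p x} - c$ on $\RR^d$ is the same data as a linear functional $\p w = (\p u, -c) \in \RR^{d+1}$ acting on the homogenization, since $\af(\p a_i) = \sprod{\p w}{\h{\p a}_i}$. I will use this to translate each of the three descriptions into the common language of covectors of $\HA$, and then read off the bijections.

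\textbf{Step 1 (pseudo-sweeps $\leftrightarrow$ cellular strings of $\HA$).} Given a pseudo-sweep $(\af_1, \dots, \af_m)$, set $\p w_r = (\p u_r, -c_r)$ and define $X^r \in \{+,-,0\}^{[n]}$ by $X^r_i = \sign(\af_r(\p a_i)) = \sign(\sprod{\p w_r}{\h{\p a}_i})$. Each $X^r$ is a covector of $\HA$ realized by $\p w_r$, and is non-zero because at least one point is hit at step $r$. The defining sign pattern $+,\dots,+,0,-,\dots,-$ for each coordinate $i$ across the sequence translates exactly to the three cellular-string conditions: at $r=1$ no coordinate is yet $-$ (so $X^1 \circ \pluses = \pluses$), at $r=m$ none remains $+$ (so $X^m \circ \minuses = \minuses$), and at each intermediate step the zero coordinates of $X^r$ turn to $-$ in $X^{r+1}$ while the $+$ coordinates may only become $0$ or stay $+$, which is exactly $X^r \circ \minuses = X^{r+1} \circ \pluses$. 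Conversely, any covector $X^r$ of $\HA$ is realized by some $\p w_r = (\p u_r, -c_r)$; the cellular-string conditions then force precisely the sign pattern required of a pseudo-sweep, and the ordered partition $I_r = \{i : X^r_i = 0\}$ is the one common to both descriptions.

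\textbf{Step 2 (cellular strings of $\HA$ $\leftrightarrow$ $\height$-monotone cellular strings of $\Z$).} Under the standard antitone correspondence between faces of the zonotope $\Z$ and covectors of $\HA$, the face $F_X$ associated to a covector $X$ is $\sum_{X_i=+}\h{\p a}_i - \sum_{X_i=-}\h{\p a}_i + \sum_{X_i=0}[-\h{\p a}_i, \h{\p a}_i]$; its vertex minimizing (resp.\ maximizing) $\height$ is $F_{X \circ \minuses}$ (resp.\ $F_{X \circ \pluses}$), because the interval $[-\h{\p a}_i,\h{\p a}_i]$ contributes $-1$ (resp.\ $+1$) to the last coordinate at its endpoints. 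In particular the global $\height$-minimum of $\Z$ is $F_{\minuses}$ and the maximum is $F_{\pluses}$. A cellular string $(X^1,\dots, X^m)$ of $\HA$ in the sense of the paper therefore corresponds, after reversing the order, to the sequence $(F_{X^m}, \dots, F_{X^1})$: the condition $X^m \circ \minuses = \minuses$ says the minimum vertex lies in $F_{X^m}$, the condition $X^1 \circ \pluses = \pluses$ says the maximum vertex lies in $F_{X^1}$, and the meeting condition $X^{r} \circ \minuses = X^{r+1} \circ \pluses$ says that the $\height$-maximum of $F_{X^{r+1}}$ equals the $\height$-minimum of $F_{X^r}$, which is the compatibility condition for $\height$-monotone cellular strings of $\Z$. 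That non-tope covectors correspond to faces of dimension $\geq 1$ matches the dimensional constraint in the definition.

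\textbf{Step 3 (the permutation case).} When $\pc$ has no repeated points, no two vectors $\h{\p a}_i, \h{\p a}_j$ are proportional (their last coordinates are both $1$), so $\HA$ has no parallel elements. A pseudo-sweep is a permutation precisely when every part $I_r$ is a singleton, which via Step 1 means every $X^r$ has exactly one zero entry, i.e., is a subtope. Setting $T^0 = \pluses$ and $T^r = X^r \circ \minuses$ for $r \geq 1$ then yields a sequence of topes in which consecutive ones differ in exactly one coordinate (the position where $X^r$ is zero): this is a minimal gallery from $\pluses$ to $\minuses$, and the construction is clearly reversible. Under Step 2 this reverses to a $\height$-monotone edge path of $\Z$ from $F_{\minuses}$ to $F_{\pluses}$, concluding the proof. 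The only mild subtlety throughout is checking that the various orientation conventions (pseudo-sweeps start at $\pluses$ and end at $\minuses$, while $\height$-monotone strings go from min to max, forcing the order-reversal in Step 2) match up consistently; this is routine once the dictionary is set.
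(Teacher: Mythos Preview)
Your proof is correct and follows essentially the same approach as the paper's: both translate all three descriptions into the common language of covectors of $\HA$ via affine functionals on $\pc$ (equivalently, linear functionals on the homogenization $\h\pc$), and then pass to faces of $\Z$ through the standard zonotope/covector dictionary. The only cosmetic difference is that you associate to a covector $X$ the face of $\Z$ \emph{maximized} by a realizing functional, whereas the paper uses the face \emph{minimized}; this is why you need the order-reversal in Step~2 while the paper's bijection $X^r\mapsto \pol[F]_r$ is order-preserving (and, a tiny slip: in Step~1 you write ``non-zero'' where you mean ``non-tope'', though your justification that at least one coordinate vanishes is the right one).
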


\begin{proof}
The proof amounts simply to translate between definitions (the definition of cellular strings induced by a projection was given in \cref{sec:asFiberPolytope}). We omit the details and only give some indications.

To a sequence of affine functionals $\af_r(\p x)=\sprod{\p u_r}{\p x}-c_r$ for $1\leq r \leq m$ such that
for each point $\p a_i\in \pc$ there is an $r$ with $\af_r(\p a_i)= 0$, $\af_s(\p a_i)> 0$ for all $s<r$, and $\af_s(\p a_i)< 0$ for all $r<s$; we can associate
\begin{enumerate}[(i)]
 \item the ordered partition $I_1,\dots,I_m$ of $\ivl$ given by $I_r=\set{i}{\af_r(\p a_i)=0}$, 
 \item the sequence of non-tope covectors $X^1,\dots,X^m$ obtained by considering the sign of evaluating $\af_r$ on each of the points of $\pc$, and
 \item the sequence $\pol[F]_1,\dots,\pol[F]_m$  of faces of~$\Z$, where $\pol[F]_r$ is the face of~$\Z$ minimized by the linear functional $\lf_r:\RR^{d+1}\to \RR$ given by $(\p x,x_{d+1})\mapsto \sprod{\p u_r}{\p x}-c_rx_{d+1}$.
\end{enumerate}
One can easily check that the conditions imposed on $\af_1,\dots,\af_m$ imply that these sequences are a pseudo-sweep of $\pc$, a cellular string of $\HA$ with respect to the all-positive tope $\pluses$, and a $\height$-monotone cellular string of $\Z$, respectively. 
And conversely, for any pseudo-sweep or cellular string of $\HA$ or $\Z$, one can find such a sequence of affine functionals. This is direct for pseudo-sweeps and cellular strings of $\HA$. For cellular strings $\pol[F]_1,\dots,\pol[F]_m$ of $\Z$, we associate to each face $\pol[F]_r$ an affine map $\af_r$ obtained by restricting 
the linear functional minimized by $\pol[F]_r$ in $\Z$ to the hyperplane $x_{d+1}=1$.

The map that associates the partition $I_1,\dots,I_m$ to the sequence $X^1,\dots,X^m$  with $(X^r)_i=0$ if $i\in I_r$, $(X^r)_i=-$ if $i\in I_s$ with $s<r$ and $(X^r)_i=+$ if $i\in I_s$ with $s>r$, is hence a bijection between pseudo-sweeps and cellular strings of~$\HA$. And similarly the map that sends a cellular string $X^1,\dots,X^m$ of $\HA$ to the cellular string $\pol[F]_1,\dots,\pol[F]_m$ of $\Z$ given by
\[\pol[F]_r=\sum_{(X^r)_i=+}\{-\h {\p a}_i\}+\sum_{(X^r)_i=-}\{\h {\p a}_i\}+\sum_{(X^r)_i=0}[-\h {\p a}_i, \h {\p a}_i]\]
is also a bijection.

The second part of the statement arises from the observation that these bijections are order-preserving.
\end{proof}

In particular, we can define pseudo-sweeps of a realizable oriented matroid in terms of its cellular strings. We extend this definition to abstract oriented matroids.

\begin{definition}
A \defn{pseudo-sweep} of an acyclic oriented matroid~$\OM$ is an ordered partition $(I_1,\dots, I_m)$ arising from a cellular string $(X^1,\dots,X^m)$ of~$\OM$ via $I_i=\sep[X^i\circ \pluses][X^i\circ \minuses]$, that is, $I_i$ is the set of zeros of $X^i$.
\end{definition}

\begin{figure}[htpb]
 \centering
 
\begin{tikzpicture}[scale=.8,vertex/.style={color=blue, font=\footnotesize}, edge/.style={color=black, font=\tiny}]
     \node[vertex] (-2-112) at (0.00,0.00) [draw,draw=none] {$\bar{2},\! \bar{1},\! 1,\! 2$};
     \node[vertex] (-2-121) at (1.50,-1.50) [draw,draw=none] {$\bar{2},\! \bar{1},\! 2,\! 1$};
     \node[vertex] (-21-12) at (1.80,1.80) [draw,draw=none] {$\bar{2},\! 1,\! \bar{1},\! 2$};
     \node[vertex] (-212-1) at (3.30,3.30) [draw,draw=none] {$\bar{2},\! 1,\! 2,\! \bar{1}$};
     \node[vertex] (-1-212) at (-1.50,-1.50) [draw,draw=none] {$\bar{1},\! \bar{2},\! 1,\! 2$};
     \node[vertex] (-1-221) at (0.00,-3.00) [draw,draw=none] {$\bar{1},\! \bar{2},\! 2,\! 1$};
     \node[vertex] (-12-21) at (1.80,-4.80) [draw,draw=none] {$\bar{1},\! 2,\! \bar{2},\! 1$};
     \node[vertex] (-121-2) at (3.30,-6.30) [draw,draw=none] {$\bar{1},\! 2,\! 1,\! \bar{2}$};
     \node[vertex] (1-2-12) at (3.30,0.30) [draw,draw=none] {$1,\! \bar{2},\! \bar{1},\! 2$};
     \node[vertex] (1-22-1) at (4.80,1.80) [draw,draw=none] {$1,\! \bar{2},\! 2,\! \bar{1}$};
     \node[vertex] (12-2-1) at (6.60,-0.00) [draw,draw=none] {$1,\! 2,\! \bar{2},\! \bar{1}$};
     \node[vertex] (12-1-2) at (5.10,-1.50) [draw,draw=none] {$1,\! 2,\! \bar{1},\! \bar{2}$};
     \node[vertex] (2-1-21) at (3.30,-3.30) [draw,draw=none] {$2,\! \bar{1},\! \bar{2},\! 1$};
     \node[vertex] (2-11-2) at (4.80,-4.80) [draw,draw=none] {$2,\! \bar{1},\! 1,\! \bar{2}$};
     \node[vertex] (21-2-1) at (8.10,-1.50) [draw,draw=none] {$2,\! 1,\! \bar{2},\! \bar{1}$};
     \node[vertex] (21-1-2) at (6.60,-3.00) [draw,draw=none] {$2,\! 1,\! \bar{1},\! \bar{2}$};
     \draw[edge] (-2-112) --  node[pos=0.5,color=black,fill=white, inner sep=1pt] {$\bar{2},\! \bar{1},\! 12$} (-2-121);
     \draw[edge] (-2-112) --  node[pos=0.5,color=black,fill=white, inner sep=1pt] {$\bar{2},\! \bar{1}1,\! 2$} (-21-12);
     \draw[edge] (-2-112) --  node[pos=0.5,color=black,fill=white, inner sep=1pt] {$\bar{2}\bar{1},\! 1,\! 2$} (-1-212);
     \draw[edge] (-2-121) --  node[pos=0.5,color=black,fill=white, inner sep=1pt] {$\bar{2}\bar{1},\! 2,\! 1$} (-1-221);
     \draw[edge] (-21-12) --  node[pos=0.5,color=black,fill=white, inner sep=1pt] {$\bar{2},\! 1,\! \bar{1}2$} (-212-1);
     \draw[edge] (-21-12) --  node[pos=0.5,color=black,fill=white, inner sep=1pt] {$\bar{2}1,\! \bar{1},\! 2$} (1-2-12);
     \draw[edge] (-212-1) --  node[pos=0.5,color=black,fill=white, inner sep=1pt] {$\bar{2}1,\! 2,\! \bar{1}$} (1-22-1);
     \draw[edge] (-1-212) --  node[pos=0.5,color=black,fill=white, inner sep=1pt] {$\bar{1},\! \bar{2},\! 12$} (-1-221);
     \draw[edge] (-1-221) --  node[pos=0.5,color=black,fill=white, inner sep=1pt] {$\bar{1},\! \bar{2}2,\! 1$} (-12-21);
     \draw[edge] (-12-21) --  node[pos=0.5,color=black,fill=white, inner sep=1pt] {$\bar{1},\! 2,\! \bar{2}1$} (-121-2);
     \draw[edge] (-12-21) --  node[pos=0.5,color=black,fill=white, inner sep=1pt] {$\bar{1}2,\! \bar{2},\! 1$} (2-1-21);
     \draw[edge] (-121-2) --  node[pos=0.5,color=black,fill=white, inner sep=1pt] {$\bar{1}2,\! 1,\! \bar{2}$} (2-11-2);
     \draw[edge] (1-2-12) --  node[pos=0.5,color=black,fill=white, inner sep=1pt] {$1,\! \bar{2},\! \bar{1}2$} (1-22-1);
     \draw[edge] (1-22-1) --  node[pos=0.5,color=black,fill=white, inner sep=1pt] {$1,\! \bar{2}2,\! \bar{1}$} (12-2-1);
     \draw[edge] (12-2-1) --  node[pos=0.5,color=black,fill=white, inner sep=1pt] {$1,\! 2,\! \bar{2}\bar{1}$} (12-1-2);
     \draw[edge] (12-2-1) --  node[pos=0.5,color=black,fill=white, inner sep=1pt] {$12,\! \bar{2},\! \bar{1}$} (21-2-1);
     \draw[edge] (12-1-2) --  node[pos=0.5,color=black,fill=white, inner sep=1pt] {$12,\! \bar{1},\! \bar{2}$} (21-1-2);
     \draw[edge] (2-1-21) --  node[pos=0.5,color=black,fill=white, inner sep=1pt] {$2,\! \bar{1},\! \bar{2}1$} (2-11-2);
     \draw[edge] (2-11-2) --  node[pos=0.5,color=black,fill=white, inner sep=1pt] {$2,\! \bar{1}1,\! \bar{2}$} (21-1-2);
     \draw[edge] (21-2-1) --  node[pos=0.5,color=black,fill=white, inner sep=1pt] {$2,\! 1,\! \bar{2}\bar{1}$} (21-1-2);

     \fill [blue!60, opacity=0.2] (-1-212.center) -- (-2-112.center) -- (-2-121.center) -- (-1-221.center) --  cycle;
     \fill [blue!60, opacity=0.2] (-121-2.center) -- (2-11-2.center) -- (2-1-21.center) -- (-12-21.center) --  cycle;
     \fill [blue!60, opacity=0.2] (1-2-12.center) -- (-21-12.center) -- (-212-1.center) -- (1-22-1.center) --  cycle;
     \fill [blue!60, opacity=0.2] (12-1-2.center) -- (21-1-2.center) -- (21-2-1.center) -- (12-2-1.center) --  cycle;

     \path (-1-212) -- (-2-121) node[midway,edge] (-2-1,12) {$\bar{2}\bar{1},\! 12$};
     \path (-12-21) -- (2-11-2) node[midway,edge] (-12,-21) {$\bar{1}2,\! \bar{2}1$};
     \path (12-1-2) -- (21-2-1) node[midway,edge] (12,-2-1) {$12,\! \bar{2}\bar{1}$};
     \path (-21-12) -- (1-22-1) node[midway,edge] (-21,-12) {$\bar{2}1,\! \bar{1}2$};

     \path (-1-212) -- (21-2-1) node[midway,edge] (-2-112) {$\bar{2}\bar{1}12$};

\end{tikzpicture}
\caption{The pseudo-sweeps of the point configuration~$\pc[B]_2$. Without the trivial sweep, they index a non-pure cellular complex that retracts to the boundary of the sweep polytope $\Sp[{\pc[B]_2}]$ from \cref{fig:SweepsB2}, a $1$-sphere.}
\end{figure}
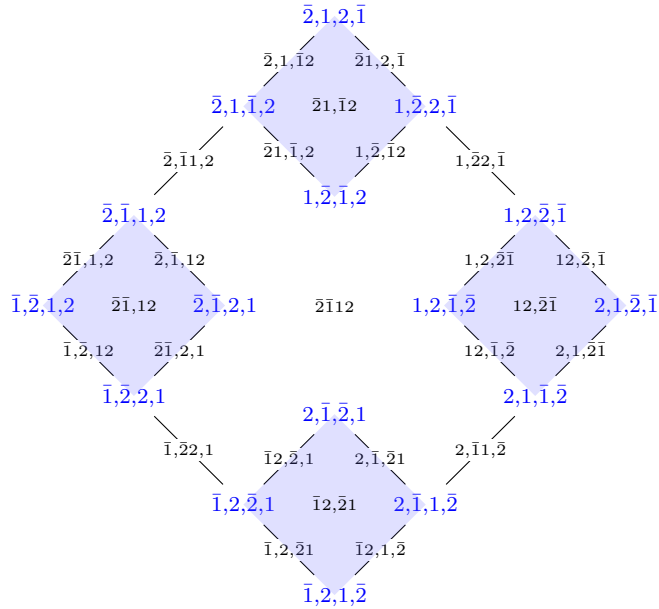

\begin{remark}
 If $\pc'$ is a (full-dimensional) admissible projective transformation of $\pc$, then any sweep of~$\pc'$ gives rise to a pseudo-sweep of $\pc$. Indeed, under an admissible projective transformation a pencil of parallel hyperplanes is mapped into a pencil of hyperplanes containing a codimension~$2$ flat that does not intersect $\conv(\pc)$. The $k$-sets defined by these hyperplanes clearly give rise to a pseudo-sweep. However, not all pseudo-sweeps arise this way. For example, if $\{\p a_1,\dots, \p a_6\}$ are the vertices of a regular hexagon in cyclic order, then 
 $[1,2,3,6,5,4]$ is a pseudo-sweep permutation that is not a sweep of any of its projective transformations. (Because in every realization the vector $\p a_6-\p a_3$ is a positive linear combination of the vectors $\p a_1-\p a_2$ and $\p a_5-\p a_4$.)
\end{remark}

\begin{remark}[{Pseudo-sweeps and shellings}]

One of Stanley's motivations for studying sweep permutations in~\cite{Stan15} is that they are in correspondence with Bruggesser-Mani line-shelling orders of polytopes~\cite{BruggesserMani1971}. For a convex polytope $\pol$ and a line~$\ell$ through its interior, this is the order in which the facets of $\pol$ become visible to a point following~$\ell$ from the interior of~$\pol$ to infinity, plus the order in which the remaining facets lose visibility when the point returns from the opposite side to the interior of~$\pol$ along~$\ell$. Now, let $\polar{\pol}$ be the polar of $\pol$ with respect to an interior point~$\p p$ of $\pol$, and let $\ell$ be a line through $\p p$. (Here, we are considering the usual projective polarity, as in~\cite[Sec.~5.1]{Matousek2002}, but after a translation by $-\p p$.) Since $\ell$ contains $\p p$, which is mapped to the hyperplane at infinity by polarity, the set of points in $\ell$ corresponds to a family of parallel affine hyperplanes orthogonal to a common direction. The shelling order given by~$\ell$ coincides with the sweep permutation of the vertices of $\polar{\pol}$ with respect to this direction. Thus, sweep permutations of a point configuration in convex position are in bijection with line shelling orders of the polar polyhedron for lines that go through the center of polarity (here, the origin, which is the image of the hyperplane at infinity).

Actually, not only sweeps, but all pseudo-sweeps, give rise to shelling orders. And this is true in the more general level of oriented matroids. Indeed, every pseudo-sweep of $\OM$ induces a shelling order of the (Edmonds-Mandel) face lattice of the tope~$\pluses$~\cite[Sec.~3.VI]{MandelPhD}, see also~\cite[Sec.~4.3]{BLSWZ99}. (To the best of our knowledge, it is still an open problem whether the opposite of this lattice, called the Las Vergnas face lattice, is shellable.) Pseudo-sweep shellings have been recently rediscovered by Heaton and Samper in the special case of matroid polytopes under the name of \defn{broken line shellings}~\cite{HeatonSamper2020}.
\end{remark}

\subsection{The poset of pseudo-sweeps and the generalized Baues problem}
Just like sweeps, pseudo-sweeps can be naturally ordered by refinement. We denote by \defn{$\PScom$} the poset of pseudo-sweeps of~$\OM$.
Topological properties of this poset have been studied in the context of a special case of the \emph{generalized Baues problem} (GBP) of Billera and Sturmfels~\cite{BS92} concerning the homotopy of the poset of subdivisions induced by a projection of polytopes; see~\cite{Reiner1999} for a nice survey.
We recall that by the topology of a poset~$P$ we mean the topology of its \defn{order complex}~$\OC[P]$: the simplicial complex whose simplices are the chains of~$P$ (see~\cite{Bjorner1995} or~\cite[Sec.~4.7]{BLSWZ99}).

Billera, Kapranov and Sturmfels~\cite[Thm.~2.3]{BilleraKapranovSturmfels1994} showed that the strong version of the GBP, as considered in \cite[Q.~2.3]{Reiner1999}, holds for monotone paths of polytopes. This implies that, in the realizable case, the poset of sweeps of a point configuration is a deformation retract of the poset of pseudo-sweeps. For the case of zonotopes, Bj\"orner~\cite[Thm.~2]{Bjorner1992} gave an alternative combinatorial proof for the weak version of the GBP (in the sense of~\cite[Q.~2.2]{Reiner1999}) that extends to oriented matroids. Namely, he proved that the poset of pseudo-sweeps of an oriented matroid is homotopy equivalent to a sphere (once the trivial sweep $(\ivl)$ is removed). A further generalization to shellable CW-spheres, for an appropriate definition of cellular strings induced by shellings, was proven in~\cite{AER2000}.

\begin{theorem}[{\cite[Thm.~2]{Bjorner1992}}]\label{thm:homotopypseudo-sweeps}
 The poset of pseudo-sweeps of an oriented matroid~$\OM$ of rank~$r$ with respect to a tope~$T$ without the trivial sweep has the homotopy type of an $(r-2)$-sphere.
\end{theorem}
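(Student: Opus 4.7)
My plan is to prove this by induction on the rank~$r$, following the general strategy that Björner~\cite{Bjorner1992} develops for cellular strings. Up to reorientation, I may assume $T=\pluses$, so the object of study is the poset $\PScom\ssm(\ivl)$ of non-trivial cellular strings from $\pluses$ to $\minuses$ in~$\OM$, ordered by refinement.

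Base step: when $r=1$, the covectors are just $\zero, \pluses, \minuses$, and the only cellular string from $\pluses$ to $\minuses$ is the trivial one $(\zero)$. Removing it leaves the empty poset, whose order complex is the $(-1)$-sphere, as required.

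Inductive step: for $r\geq 2$, I would stratify $\PScom\ssm(\ivl)$ according to the \emph{first step} of the cellular string. Concretely, for each covector $Y$ with $Y\circ\pluses=\pluses$ and $Y\ne \pluses$, let $P_Y$ be the subposet of cellular strings whose initial covector is~$Y$. Writing such a string as $(Y, X^2,\ldots,X^m)$, the tail $(X^2,\ldots,X^m)$ is a cellular string from $Y\circ\minuses$ to $\minuses$ that is completely determined by its restriction to the zero set of $Y$. Using the fact that covectors of $\OM$ with support contained in $\supp{Y}^c$ are covectors of the contraction $\OM/\supp{Y}^c$ (whose rank is strictly smaller), the tail is in natural order-preserving bijection with a pseudo-sweep in a lower-rank oriented matroid. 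The inductive hypothesis then identifies the homotopy type of each $P_Y$ with a sphere whose dimension is compatible with the eventual conclusion (after accounting for the suspension coming from the choice of first step). The cover $\{P_Y\}$ is directed by refinement of $Y$, and applying Quillen's Fiber Lemma (or the nerve lemma) to the map $(X^1,\ldots,X^m)\mapsto X^1$ from $\PScom\ssm(\ivl)$ to the proper part of the interval $[\pluses,\greatest]$ in the big face lattice of~$\OM$ reduces the problem to the known topology of that interval, which by~\cite[Thm.~4.3.3]{BLSWZ99} is an $(r-2)$-sphere.

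An alternative approach would be to exhibit the face poset of a regular CW-decomposition of the $(r-2)$-sphere directly and prove shellability via an EL-labeling of $\PScom\ssm(\ivl)$: fix a linear order on the ground set $E$, and label each covering relation $P\lessdot P'$ (where $P'$ refines exactly one block of~$P$ by inserting a cocircuit $Y$) by the minimum element of $\supp{Y}$. A Möbius function computation using the central symmetry $X\mapsto -X$ then isolates a single top-dimensional sphere.

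The main obstacle lies in controlling how refinements interact with the composition conditions $X^i\circ\minuses = X^{i+1}\circ\pluses$: outside the realizable world one cannot visualize a zonotope boundary, so every refinement step has to be justified through the elimination axiom~\ref{it:COVaxiomELIM} and through the fact that the intervals in the big face lattice of~$\OM$ are themselves homotopy spheres. Verifying that the fibers of the stratification map are contractible (or have the predicted sphere type) in a way compatible with the nerve lemma is the delicate point; this is essentially the combinatorial core of Björner's argument.
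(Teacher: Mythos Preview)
The paper does not actually prove this theorem: it is quoted from Bj\"orner~\cite{Bjorner1992} without argument, so there is no ``paper's own proof'' to compare against. That said, the machinery the paper develops immediately afterwards for \cref{thm:retract} (specifically \cref{lem:contractible} together with Quillen's \cref{thm:Quillen}) does yield an independent proof: the map $f$ sending a non-trivial cellular string to its first covector lands in the \emph{open} interval $(\zero,\pluses)$ of the covector poset, the Quillen fibers $f^{-1}\big([Z,\pluses)\big)=\PScom[\OM,\pluses]_{\subseteq F}$ are contractible by \cref{lem:contractible}, and $(\zero,\pluses)$ is an $(r-2)$-sphere by~\cite[Thm.~4.3.3]{BLSWZ99}.

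Your sketch is heading in this direction but contains genuine errors. First, the target of your first-step map is miswritten: $X^1$ lies in $(\zero,\pluses)$, not in $[\pluses,\greatest]$ (the latter interval is just $\{\pluses,\greatest\}$, since $\pluses$ is a tope). Second, and more seriously, your inductive analysis of the fibers does not work. The tail $(X^2,\dots,X^m)$ of a string starting at~$Y$ is a cellular string in~$\OM$ from the tope $Y\circ\minuses$ to $\minuses$; it is \emph{not} a cellular string in a contraction of lower rank (your notation $\OM/\supp{Y}^c$ would in any case contract the wrong set). So the induction on rank collapses. What one needs instead, and what \cref{lem:contractible} (following~\cite{AER2000}) establishes, is that the upper Quillen fibers of $f$ are \emph{contractible} outright---no sphere-gluing or suspension is required. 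Your ``alternative'' EL-labeling proposal is too vague to assess; the labeling you describe on covering relations of cellular strings is not obviously an EL-labeling, and a M\"obius computation alone does not determine homotopy type.
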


Note that, by \cref{cor:topologyPosetSweeps}, for oriented matroids that admit a sweep oriented matroid (in the sense that they are the little oriented matroid of some sweep oriented matroid) the poset of sweeps is an explicit $(r-2)$-sphere embedded in the poset of pseudo-sweeps. 
We will show that it is in fact a deformation retract; thus proving the strong GBP for cellular strings of little oriented matroids.
In the realizable case, this holds by~\cite[Thm.~2.3]{BilleraKapranovSturmfels1994}. In the more general case, Bj\"orner also remarks that he expects the poset of pseudo-sweeps to retract to a subcomplex homeomorphic to a $(r-2)$-sphere~\cite[below Thm.~2]{Bjorner1992}, but does not provide a candidate subcomplex.

\begin{theorem}\label{thm:retract}
Let $\LOM$ be the little oriented matroid of a sweep oriented matroid $\MOM$. Then the poset of sweeps of $\MOM$ is a strong deformation retract of the poset of pseudo-sweeps of~$\LOM$; and the poset of non-trivial sweeps is a strong deformation retract of the poset of non-trivial pseudo-sweeps.
\end{theorem}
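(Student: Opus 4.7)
The plan is to construct a closure operator $r \colon \PScom[\LOM] \to \PScom[\LOM]$ with image $\Scom[\MOM]$ and to invoke the classical fact that closure operators induce strong deformation retracts on order complexes (see, e.g., \cite[Sec.~10]{Bjorner1995}). First I would verify the inclusion $\Scom[\MOM] \subseteq \PScom[\LOM]$ as ordered subposets: given a covector $X \in \MOM$ with associated ordered partition $(I_1,\dots,I_l)$, the lifts $X^1,\dots,X^{2l+1} \in \BOM$ of \Cref{thm:BOMisOM} restrict on $\ivl$ to covectors $Y^k := \restr{X^{2k}}{\ivl}$ with zero-set exactly $I_k$, and a direct check from the defining formulas shows that $Y^1 \circ \pluses = \pluses$, $Y^l \circ \minuses = \minuses$, and $Y^k \circ \minuses = Y^{k+1} \circ \pluses$, so $(Y^1,\dots,Y^l)$ is a cellular string of $\LOM$ whose induced pseudo-sweep is the original ordered partition.

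The retraction is then defined by $r(P) := \bigvee \set{S \in \Scom[\MOM]}{S \le P}$, the join in the ordered-partition lattice of all sweeps coarser than $P$. The key observation is that this set is closed under pairwise joins: if $S_1, S_2 \in \Scom[\MOM]$ both lie below $P$, then the covectors $X_1, X_2 \in \MOM$ representing them cannot have opposite signs on any coordinate $(i,j) \in \ipairs$ (otherwise $P$ would have to order the pair $(i,j)$ both ways), so $X_1 \circ X_2 = X_2 \circ X_1$ lies in $\MOM$ by the covector composition axiom and its sign-vector is exactly that of the ordered-partition join $S_1 \vee S_2$. By finiteness the down-set $\set{S \in \Scom[\MOM]}{S \le P}$ has a unique maximum, so $r(P)$ is a well-defined element of $\Scom[\MOM]$. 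Order-preservation of $r$ is immediate (enlarging $P$ can only enlarge the down-set), the inequalities $r(P) \le P$ and $r(S) = S$ for $S \in \Scom[\MOM]$ hold by construction, and idempotence follows automatically; thus $r$ is a closure operator with image $\Scom[\MOM]$, which proves the first assertion.

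For the refined statement about the non-trivial posets, $r$ evidently fixes the trivial sweep $(\ivl)$, and its restriction to the non-trivial posets will yield the desired retraction provided $r(P) \ne (\ivl)$ for every $P \ne (\ivl)$. The plan here is to establish this by taking a non-zero covector $Y^k$ appearing in some cellular string representing $P$ and lifting it, via the big-oriented-matroid construction of \Cref{sec:fromsweeptobig}, to a covector of $\BOM$ whose restriction to $\ipairs$ is a non-zero covector of $\MOM$ compatible with the sign-vector of~$P$. This last verification is the principal obstacle of the argument: while transparent in the realizable setting, where a non-trivial cell of the braid fan $\Mbraid$ automatically sits inside a non-trivial cell of the $\MOM$-fan, the abstract case requires careful interplay between $\LOM$, $\MOM$, and $\BOM$, and is precisely the point at which the existence of the sweep oriented matroid $\MOM$ (rather than just the little oriented matroid $\LOM$) is really needed. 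Beyond this, the argument uses only the covector axioms and the standard closure-operator topology.
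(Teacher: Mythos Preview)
Your kernel-operator construction $r(P)=\bigvee\{S\in\Scom[\MOM]:S\le P\}$ is correct and does give a strong deformation retract of the full posets. The argument that the down-set $\{S\in\Scom[\MOM]:S\le P\}$ is closed under the ordered-partition join is right, and the usual order-homotopy fact then applies. But note that both posets contain the minimum $(\ivl)$, so both order complexes are cones and the first assertion of the theorem is essentially vacuous; the substance lies entirely in the non-trivial statement.

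For the non-trivial statement your approach breaks down, and not only in the abstract setting: the claim that $r(P)\ne(\ivl)$ for every non-trivial $P$ is already false in the realizable case, so your remark that this step is ``transparent in the realizable setting'' is incorrect. Take four points in convex position in the plane, say $\p a_1=(0,-\epsilon)$, $\p a_2=(1,0)$, $\p a_3=(0,1)$, $\p a_4=(-1,0)$ with $\epsilon>0$ small. The cyclic order $P=(1,2,3,4)$ is a pseudo-sweep permutation (every prefix is a $k$-set, as one checks directly). A non-trivial sweep $S\le P$ would require some $\p u\ne\zeros$ with $\sprod{\p u}{\p a_1}\le\sprod{\p u}{\p a_2}\le\sprod{\p u}{\p a_3}\le\sprod{\p u}{\p a_4}$, i.e.\ $\sprod{\p u}{(1,\epsilon)}\ge 0$, $\sprod{\p u}{(-1,1)}\ge 0$, $\sprod{\p u}{(-1,-1)}\ge 0$. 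The last two force $u_1\le -|u_2|$, and combined with the first this gives $\p u=\zeros$. Hence $r(P)=(\ivl)$, and your retraction does not restrict to the non-trivial posets. The lifting plan you sketch does produce a non-zero $W\in\MOM$ whose \emph{first} part equals $I_1$, but nothing forces the remaining parts of $W$ to be compatible with $P$.

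The paper avoids this obstacle by not attempting a poset retraction at all. It introduces an auxiliary order-preserving map $f$ from pseudo-sweeps to the interval $[\zeros,\pluses)$ in the face lattice of $\LOM$, sending $(I_1,\dots,I_m)$ to the non-negative covector with zero-set $I_1$. The relevant contractibility statement becomes: for every such covector there is a sweep whose first part is contained in its zero-set (\Cref{lem:contractible_sweeps}). This is exactly what the lift to $\BOM$ gives you --- it constrains only the first block of the sweep, not its agreement with all of~$P$. The homotopy equivalence of the inclusion then follows from Quillen's Fiber Theorem and the Carrier Lemma, and \Cref{lem:inclusionretract} upgrades it to a strong deformation retract.
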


The proof of~\Cref{thm:retract} needs some auxiliary results concerning (combinatorial) homotopy theorems. We refer to~\cite{Bjorner1995} for a very good introduction to the topic. 
First, we present a result that allows us to weaken the statement to prove, as a consequence of the fact that the \defn{homotopy extension property} holds for order complexes of subposets (c.f.~\cite[Ch.~0]{Hatcher2002}). Then we recall three results on the homotopy type of posets: the Carrier Lemma, Quillen's Fiber Theorem and Babson's Lemma (the last two being corollaries of the first one). Next, inspired by~\cite{AER2000}, we use the function that returns the first part of an ordered partition to show the contractibility of some subsets of pseudo-sweeps and sweeps, thanks to Babson's Lemma. Finally, we combine all these results to prove that the inclusion induces a homotopy equivalence.

The first result that we need shows that it suffices to prove a weaker statement, namely that the inclusion is a homotopy equivalence. A \defn{CW pair} of a cell complex (such as a simplicial complex) is a pair $(X,A)$ consisting of a cell complex~$X$ and a subcomplex~$A$. In particular, if~$S$ is a subposet of~$P$, then $(\OC[P],\OC[S])$ is a CW pair.

\begin{lemma}[{{\cite[Prop.~0.16 and Cor.~0.20]{Hatcher2002}}}]\label{lem:inclusionretract}
If $(X,A)$ is a CW pair and the inclusion $A\hookrightarrow X$ is a homotopy equivalence, then~$A$ is a strong deformation retract of~$X$.
\end{lemma}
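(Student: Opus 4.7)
The plan is to leverage the homotopy extension property (HEP) that CW pairs enjoy: for any CW pair $(X,A)$, any map $F_0\colon X\to Y$, and any homotopy $f_t\colon A\to Y$ with $f_0=F_0|_A$, there exists an extension $F_t\colon X\to Y$ with $F_0$ preserved and $F_t|_A = f_t$. The argument then proceeds in two stages.

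First, I would upgrade a homotopy inverse of the inclusion $i\colon A\hookrightarrow X$ into a genuine retraction. Pick a homotopy inverse $g\colon X\to A$ of $i$ and a homotopy $h_t\colon A\to A$ from $g\circ i$ to $\operatorname{id}_A$. Since $g|_A = g\circ i = h_0$, applying HEP to the pair $(X,A)$ with initial map $g$ and boundary homotopy $h_t$ on $A$ yields an extension $g_t\colon X\to A$ with $g_0 = g$ and $g_t|_A = h_t$. The map $r := g_1$ is homotopic to $g$ (hence still a homotopy equivalence) and satisfies $r\circ i = h_1 = \operatorname{id}_A$, i.e., $r$ is an honest retraction.

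Second, I would promote the retraction $r$ to a strong deformation retract by constructing a homotopy $\operatorname{id}_X\simeq i\circ r$ that is stationary on $A$. Starting from any homotopy $H\colon X\times I\to X$ between $\operatorname{id}_X$ and $i\circ r$, its restriction $H|_{A\times I}$ is a loop of self-maps of $A$ based at $\operatorname{id}_A$ (since $H(a,0)=a$ and $H(a,1) = i(r(a)) = a$). The plan is to null-homotope this loop relative to $A\times\partial I$ and then apply HEP once more, this time for the CW pair $(X\times I,\ X\times\partial I \cup A\times I)$ with target $X$, extending the null-homotopy data over all of $X\times I$. The output is a homotopy $\operatorname{id}_X\simeq i\circ r$ rel $A$, which witnesses $A$ as a strong deformation retract of $X$.

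The main obstacle lies in the second stage: the loop $H|_{A\times I}$ in $\operatorname{Map}(A,X)$ need not be contractible on the nose, and trivialising it relative to endpoints is exactly where the hypothesis that $i$ is a homotopy equivalence (and not merely that $r$ is a retraction) is essential, via an argument comparing $H|_A$ to a concatenation involving a homotopy between $r\circ i$ and $\operatorname{id}_A$ pushed forward by $i$. This delicate step is the content of \cite[Cor.~0.20]{Hatcher2002}, which we invoke rather than reprove.
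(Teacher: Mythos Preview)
The paper does not give its own proof of this lemma; it is simply stated with a citation to Hatcher's \emph{Algebraic Topology}, Proposition~0.16 and Corollary~0.20, and used as a black box in the proof of Theorem~\ref{thm:retract}. Your proposal is therefore not competing with any argument in the paper: you have supplied a correct outline of the standard proof from Hatcher that the paper merely cites. Your two-stage strategy (first upgrade a homotopy inverse to a genuine retraction via the HEP, then promote the retraction to a strong deformation retraction, again via the HEP on the pair $(X\times I,\, X\times\partial I\cup A\times I)$) is exactly the approach taken in the cited reference, and you rightly flag that the delicate point---contracting the loop $H|_{A\times I}$ rel endpoints---is where the full strength of the hypothesis and of Corollary~0.20 enters.
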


We will use the following version of the Carrier Lemma, from~\cite{Bjorner1995}. For a simplicial complex~$\SC$ and a space~$T$, let $C:\SC\to2^T$ be an order-preserving map ($C(\sigma)\subseteq C(\tau)$ for all $\sigma\subseteq \tau$). A mapping $f:\norm{\SC}\to T$ is \defn{carried} by $C$ if $f(\norm{\sigma})\subseteq C(\sigma)$ for all $\sigma\in \SC$, where $\norm{\cdot}$ denotes the associated geometric realization of the simplicial complex.

\begin{lemma}[{Carrier Lemma~\cite[Lem.~10.1]{Bjorner1995}}]\label{lem:carrier}
Let $C:\SC\to2^T$ be an order-preserving map such that $C(\sigma)$ is contractible for all $\sigma\in\SC$. If $f,g:\norm{\SC}\to T$ are both carried by~$C$, then $f$ and~$g$ are homotopy equivalent, $f\sim g$.
\end{lemma}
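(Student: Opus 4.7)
The plan is to construct a homotopy $H\colon \norm{\SC}\times[0,1]\to T$ between $f$ and $g$ by induction on the dimension of simplices, maintaining throughout the construction the invariant that $H$ is itself \emph{carried by $C$}, meaning $H(\norm{\sigma}\times[0,1])\subseteq C(\sigma)$ for every simplex $\sigma\in\SC$. The whole argument reduces to a skeleton-by-skeleton extension problem in which the obstruction to extending across each new simplex is measured by a homotopy group of $C(\sigma)$; contractibility of $C(\sigma)$ makes every such obstruction vanish.

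For the base case, fix a vertex $v\in\SC$. Since $f$ and $g$ are carried by $C$, both $f(v)$ and $g(v)$ lie in $C(\{v\})$, which is contractible and therefore path-connected; choose any path $\gamma_v\colon[0,1]\to C(\{v\})$ from $f(v)$ to $g(v)$ and set $H(v,t)=\gamma_v(t)$. For the inductive step, assume $H$ has been defined on $\norm{\SC^{(k-1)}}\times[0,1]$ in such a way that it agrees with $f$ and $g$ on $\norm{\SC}\times\{0,1\}$ and is carried by $C$. Given a $k$-simplex $\sigma$, the map $H$ is already prescribed on $\partial(\norm{\sigma}\times[0,1])=\norm{\sigma}\times\{0,1\}\cup\partial\norm{\sigma}\times[0,1]$: on $\norm{\sigma}\times\{0\}$ it equals $f$, landing in $C(\sigma)$; on $\norm{\sigma}\times\{1\}$ it equals $g$, again in $C(\sigma)$; and on each piece $\norm{\tau}\times[0,1]$ for a proper face $\tau\subsetneq\sigma$ the inductive hypothesis places $H$ in $C(\tau)\subseteq C(\sigma)$, using order-preservation of $C$. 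Thus the prescribed map $\partial(\norm{\sigma}\times[0,1])\cong S^{k}\to T$ factors through $C(\sigma)$, and contractibility of $C(\sigma)$ allows us to extend it to $\norm{\sigma}\times[0,1]\cong D^{k+1}\to C(\sigma)\subseteq T$. Gluing these extensions over all $k$-simplices yields $H$ on $\norm{\SC^{(k)}}\times[0,1]$, completing the induction; at the top dimension we obtain the desired homotopy $f\simeq g$.

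The main (and essentially only) obstacle is the extension step in the induction: one must extend a sphere-valued map $S^{k}\to C(\sigma)$ to a disk-valued map $D^{k+1}\to C(\sigma)$. This is resolved precisely by the hypothesis that $C(\sigma)$ is contractible, hence has trivial homotopy groups in all dimensions, so the extension exists for every $k$ simultaneously without any compatibility choice issues. The order-preservation of $C$ plays the crucial supporting role of ensuring that the prescriptions coming from lower-dimensional faces land in $C(\sigma)$, so the extension problem is posed inside the right contractible target.
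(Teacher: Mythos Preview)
The paper does not prove this lemma; it merely cites it from Bj\"orner~\cite[Lem.~10.1]{Bjorner1995} as a known tool. Your argument is the standard proof of the Carrier Lemma via skeleton-by-skeleton extension, and it is correct. One small remark: your closing phrase ``at the top dimension we obtain the desired homotopy'' tacitly assumes $\SC$ is finite-dimensional; in the general case the induction yields a map on each $\norm{\SC^{(k)}}\times[0,1]$ compatible with the inclusions, and continuity of the resulting $H$ on all of $\norm{\SC}\times[0,1]$ follows from the weak (CW) topology on $\norm{\SC}$. Apart from this cosmetic point, nothing is missing.
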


We will also need Quillen's Fiber Theorem~\cite{Quillen1978}. 
For a poset $Q$ and $x\in Q$, let $Q_{\geq x}=\set{y\in Q}{y\geq x}$. For the claim about the carrier, see the proof in~\cite[Thm.~10.5]{Bjorner1995}.

\begin{theorem}[Quillen's Fiber Theorem~\cite{Quillen1978}]\label{thm:Quillen}
Let $f:P\to Q$ be an order-preserving map of posets. If $f^{-1}(Q_{\geq x})$ is contractible for all $x\in Q$, then $f$ induces a homotopy equivalence between~$\OC[P]$ and~$\OC[Q]$ whose homotopy inverse is carried by $C(\sigma)=f^{-1}(Q_{\geq \min \sigma})$.
\end{theorem}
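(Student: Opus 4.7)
The plan is to construct an explicit homotopy inverse $g:\OC[Q]\to\OC[P]$ carried by $C$, and then apply the Carrier Lemma (\Cref{lem:carrier}) twice, to the compositions $f\circ g$ and $g\circ f$, to conclude that both are homotopy equivalent to the corresponding identities. The inputs are only the contractibility (in particular, non-emptiness and connectivity in all degrees) of the fibers $f^{-1}(Q_{\geq x})$.

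First I would build $g$ by induction on the skeleta of $\OC[Q]$. The key point is that for any chain $\sigma=\{x_0<\dots<x_k\}$ of $Q$ one has the chain of inclusions $f^{-1}(Q_{\geq x_k})\subseteq\dots\subseteq f^{-1}(Q_{\geq x_0})=C(\sigma)$, so any value already chosen for $g$ on a face of $\sigma$ automatically lies in $C(\sigma)$. For $0$-simplices, pick any $g(x)\in f^{-1}(Q_{\geq x})$ (non-empty by contractibility). For a $(k{+}1)$-simplex $\sigma$ with $g$ already defined on $\partial\norm{\sigma}$ with image in $C(\sigma)$, extend $g$ to $\norm{\sigma}$ using that $C(\sigma)$ is contractible and hence has vanishing $k$-th homotopy group. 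By construction, $g$ is carried by $C$.

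Next I would prove $f\circ g\simeq \id_{\OC[Q]}$. Define the auxiliary carrier $\widetilde C:\OC[Q]\to 2^{\OC[Q]}$ by $\widetilde C(\sigma)=\OC[Q_{\geq \min \sigma}]$; each $\widetilde C(\sigma)$ is contractible because $Q_{\geq\min\sigma}$ has a minimum element and its order complex is therefore a cone. The identity $\id_{\OC[Q]}$ is carried by $\widetilde C$ since $\sigma\subseteq \OC[Q_{\geq \min\sigma}]$, and $f\circ g$ is also carried by $\widetilde C$ because $f(g(\norm\sigma))\subseteq f(C(\sigma))=f(f^{-1}(Q_{\geq \min\sigma}))\subseteq Q_{\geq \min\sigma}$. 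The Carrier Lemma gives the desired homotopy.

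Finally, for $g\circ f\simeq \id_{\OC[P]}$, I would use the carrier $C_P:\OC[P]\to 2^{\OC[P]}$, $C_P(\tau)=f^{-1}(Q_{\geq f(\min \tau)})$, which is contractible by hypothesis. Clearly $\id_{\OC[P]}$ is carried by $C_P$, since $f(\tau)\subseteq Q_{\geq f(\min\tau)}$ implies $\tau\subseteq f^{-1}(Q_{\geq f(\min\tau)})$. For $g\circ f$, any point $p\in \norm\tau$ satisfies $f(p)\in \norm{f(\tau)}$, and if $\sigma'$ is the open simplex of $\OC[Q]$ containing $f(p)$ then $\sigma'\subseteq f(\tau)$, so $\min\sigma'\geq f(\min\tau)$, and thus $g(f(p))\in C(\sigma')=f^{-1}(Q_{\geq \min\sigma'})\subseteq f^{-1}(Q_{\geq f(\min\tau)})=C_P(\tau)$. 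A second application of the Carrier Lemma closes the argument. The main obstacle is the inductive construction of $g$ in the first step, where one must handle the compatibility of the carrier values across all faces simultaneously; the remaining two steps are then a clean bookkeeping exercise once the correct carriers are chosen.
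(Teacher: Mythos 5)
Your argument is correct: the skeletal construction of $g$ carried by $C$ (using non-emptiness and vanishing homotopy groups of the contractible fibers), followed by the two Carrier Lemma applications with the carriers $\sigma\mapsto \OC[Q_{\geq\min\sigma}]$ and $\tau\mapsto f^{-1}(Q_{\geq f(\min\tau)})$, is precisely the standard proof of Quillen's Fiber Theorem that the paper points to (it gives no proof of its own, citing Quillen and Bj\"orner's carrier-based argument). So the proposal is sound and follows essentially the same route as the referenced proof, including the carrier claim about the homotopy inverse.
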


For this variant of Quillen's Fiber Theorem, known as Babson's Lemma~\cite[Lem.~1 in Sec. 0.4.3]{BabsonPhD}, see also~\cite[Lem.~3.2]{SturmfelsZiegler1993}.

\begin{lemma}[{Babson's Lemma~\cite{BabsonPhD}}]\label{lem:babson}
If an order-preserving map of posets~$f : P \rightarrow Q$ fulfills
\begin{enumerate}
\item[(i)] $f^{-1}(x)$ is contractible for all $x \in Q$, and
\item[(ii)] $f^{-1}(x) \cap P_{\geq y}$ is contractible for all $x \in Q$ and~$y \in P$ with~$f(y) \leq x$,
\end{enumerate}
then $f$ induces a homotopy equivalence between $\OC[P]$ and $\OC[Q]$.
\end{lemma}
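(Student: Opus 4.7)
The plan is to deduce Babson's Lemma from Quillen's Fiber Theorem (stated just above) by verifying its hypothesis: namely, that $f^{-1}(Q_{\geq x_0})$ is contractible for every $x_0 \in Q$. Condition (i) alone only gives contractibility of individual fibers, so condition (ii) must supply the extra gluing information that upgrades fiberwise contractibility to contractibility of the preimages of upper sets.

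I would argue by induction on $|Q_{\geq x_0}|$. The base case occurs when $x_0$ is maximal in $Q$: then $f^{-1}(Q_{\geq x_0}) = f^{-1}(x_0)$, contractible by (i). For the inductive step, choose a maximal element $q^*$ of $Q_{\geq x_0}$ and decompose $f^{-1}(Q_{\geq x_0}) = A \cup B$ with $A = f^{-1}(Q_{\geq x_0}\ssm\{q^*\})$ and $B = f^{-1}(q^*)$. The restriction of $f$ to $A$ maps to the smaller poset $Q_{\geq x_0}\ssm\{q^*\}$ and continues to satisfy conditions (i) and (ii), so by the inductive hypothesis $A$ is contractible; and $B$ is contractible by (i).

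The crucial step is to glue $A$ and $B$ into a contractible $\OC[A\cup B]$. I would invoke the Carrier Lemma (stated just before in the excerpt) with the order-preserving carrier $C\colon \OC[A\cup B] \to 2^{\OC[A\cup B]}$ defined by $C(\sigma) = \OC[B \cap P_{\geq \min\sigma}]$. When $q^*$ can be chosen to dominate $f(\min\sigma)$ for every simplex $\sigma$, condition (ii) applied with $x = q^*$ and $y = \min\sigma$ guarantees that $C(\sigma)$ is contractible. The Carrier Lemma then shows that the identity $\OC[A\cup B]\to\OC[A\cup B]$ and the composition $\iota_B \circ r$ (for a simplex-by-simplex retraction $r\colon \OC[A\cup B]\to\OC[B]$ built inductively using the contractibility in (ii)) are homotopic, so the inclusion $\OC[B]\hookrightarrow\OC[A\cup B]$ is a homotopy equivalence. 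Since $\OC[B]$ is contractible, so is $\OC[A\cup B]$.

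The main obstacle is handling the case when $Q_{\geq x_0}$ has several maximal elements: for a generic choice of $q^*$, an element $\min\sigma$ may satisfy $f(\min\sigma)$ incomparable to $q^*$, in which case $B \cap P_{\geq \min\sigma}$ is empty and the carrier fails to be contractible. Overcoming this requires peeling off maximal elements of $Q_{\geq x_0}$ one at a time, at each stage applying a Carrier Lemma argument of the above form to a carefully chosen sub-decomposition, and using that conditions (i) and (ii) are inherited by every restriction of $f$ to preimages of order filters in $Q$. The hypotheses of Babson's Lemma are in fact tailored precisely so that this inductive peeling terminates with a contractible space.
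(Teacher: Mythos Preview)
The paper does not prove this lemma; it is quoted from Babson's thesis (see also \cite[Lem.~3.2]{SturmfelsZiegler1993}) and used as a black box. So there is no proof in the paper to compare against, and I will evaluate your argument on its own terms.

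Your approach has a structural mismatch that the final paragraph papers over rather than resolves. You try to verify the \emph{upper}-set hypothesis of Quillen, i.e.\ that $f^{-1}(Q_{\ge x_0})$ is contractible, but condition~(ii) is phrased with $f(y)\le x$ and therefore naturally controls \emph{lower} sets. Concretely: for your carrier $C(\sigma)=\OC[B\cap P_{\ge\min\sigma}]$ with $B=f^{-1}(q^*)$ to be nonempty you need $f(\min\sigma)\le q^*$, which fails exactly when $\min\sigma$ lies over something incomparable to $q^*$; you acknowledge this but the proposed ``peeling'' is not an argument. There is also a more basic problem earlier: the identity on $\OC[A\cup B]$ is \emph{not} carried by $C$, since a simplex $\sigma\subset A$ need not lie in $B\cap P_{\ge\min\sigma}$, so the Carrier Lemma does not give the homotopy you claim. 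Finally, your inductive hypothesis concerns sets of the form $f^{-1}(Q_{\ge x})$, yet you apply it to $A=f^{-1}(Q_{\ge x_0}\setminus\{q^*\})$, which is not of that form.

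The standard short proof avoids all of this by passing to the dual Quillen hypothesis. One shows $f^{-1}(Q_{\le x_0})$ is contractible for every $x_0$: apply Quillen (upper-set version, exactly as stated in the paper) to the inclusion $\iota\colon f^{-1}(x_0)\hookrightarrow f^{-1}(Q_{\le x_0})$. For $y\in f^{-1}(Q_{\le x_0})$ one has $f(y)\le x_0$, and $\iota^{-1}\big((f^{-1}(Q_{\le x_0}))_{\ge y}\big)=f^{-1}(x_0)\cap P_{\ge y}$, which is contractible by~(ii). Hence $\iota$ is a homotopy equivalence; since $f^{-1}(x_0)$ is contractible by~(i), so is $f^{-1}(Q_{\le x_0})$. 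Now the dual form of Quillen (obtained by applying the stated theorem to $f^{\mathrm{op}}\colon P^{\mathrm{op}}\to Q^{\mathrm{op}}$, noting $\OC[P]=\OC[P^{\mathrm{op}}]$) finishes the proof. No induction or peeling is needed.
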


Moreover, we will need the following lemmas certifying the contractibility of certain subsets of pseudo-sweeps and sweeps.
If $F\subseteq \ivl$ is the zero-set of a non-negative covector~$Z$ of~$\LOM$, we denote by $\Scom[\MOM]_{\subseteq F}$ the sets of sweeps $(I_1,\dots,I_m)$ with $I_1\subseteq F$. Similarly, we denote by $\PScom[\LOM,\pluses]_{\subseteq F}$ the sets of pseudo-sweeps $(I_1,\dots,I_m)$ with $I_1\subseteq F$.

\begin{lemma}\label{lem:contractible}
Let $F\subseteq \ivl$ be the zero-set of a non-negative covector~$Z$ of $\LOM$, then $\PScom[\LOM,\pluses]_{\subseteq F}$ is contractible.
\end{lemma}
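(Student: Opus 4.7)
The plan is to produce an order-preserving closure operator $\phi:\PScom[\LOM,\pluses]_{\subseteq F}\to\PScom[\LOM,\pluses]_{\subseteq F}$, that is, an order-preserving map satisfying $\phi(P)\leq P$ and $\phi\circ\phi=\phi$, whose image is contractible. By the standard consequence of the Carrier Lemma~(\Cref{lem:carrier}) for closure operators on posets (see \cite[Cor.~10.12]{Bjorner1995}), such a $\phi$ yields a deformation retraction of $\PScom[\LOM,\pluses]_{\subseteq F}$ onto its image, and hence contractibility of the source will follow from contractibility of the image.

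The map $\phi$ should enlarge the first part of a pseudo-sweep all the way up to $F$. More precisely, given $P=(I_1,\ldots,I_m)\in\PScom[\LOM,\pluses]_{\subseteq F}$, I define $\phi(P)=(F,J_2,\ldots,J_\ell)$, where $(J_2,\ldots,J_\ell)$ is obtained from $(I_2,\ldots,I_m)$ by removing the elements of $F\setminus I_1$ from wherever they occur and dropping any parts that become empty. At the level of the cellular string $(X^1,\ldots,X^m)$ representing $P$, this amounts to replacing $X^1$ by the unique non-negative covector $Z$ with zero-set $F$ (unique because its sign pattern is forced to be $+$ on $\ivl\setminus F$ and $0$ on $F$), and replacing each subsequent $X^i$ by the covector $Y^i$ obtained by zeroing out its entries on $F$. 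That the new sequence is still a cellular string will follow on $\ivl\setminus F$ from the corresponding conditions satisfied by the original $(X^i)$, and on $F$ from the fact that composition with $\pluses$ or $\minuses$ agrees on both sides of the gluing: the modified tail has all its $F$-entries zero, which composes with $\pluses$ (respectively $\minuses$) to $+$ (respectively $-$), matching $Z\circ\pluses$ and $Z\circ\minuses$ on $F$.

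Order-preservation, $\phi(P)\leq P$, and idempotency of $\phi$ are then immediate: coarsening commutes with refinement, $\phi(P)$ is by construction a coarsening of $P$, and after one application the first part already equals $F$. The image $\phi(\PScom[\LOM,\pluses]_{\subseteq F})$ is the subposet $\PScom[\LOM,\pluses]_{I_1=F}$ of pseudo-sweeps starting with $F$, which I will show is order-isomorphic to the poset of pseudo-sweeps of the contracted oriented matroid $\LOM/F$ (with respect to the induced positive tope $\pluses_{\ivl\setminus F}$), \emph{including} the trivial sweep $(\ivl\setminus F)$: the bijection sends $(F,J_2,\ldots,J_\ell)$ to $(J_2,\ldots,J_\ell)$, and in particular sends the minimum pseudo-sweep $(F,\ivl\setminus F)$ to the trivial sweep. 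Since the trivial sweep is the unique minimum of this augmented poset, its order complex is a cone, hence contractible.

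The main obstacle is the well-definedness step: verifying that the candidate $\phi(P)$ is indeed a valid pseudo-sweep of $\LOM$, which requires that the modified cellular string $(Z,Y^2,\ldots)$ satisfies the gluing axioms $Z\circ\minuses=Y^2\circ\pluses$ and $Y^i\circ\minuses=Y^{i+1}\circ\pluses$. This verification invokes the composition axiom~\ref{it:COVaxiomCOMP} together with the specific structure of $Z$; a secondary subtlety is showing that $(F,\ivl\setminus F)$ itself lies in $\PScom[\LOM,\pluses]$, which reduces to exhibiting a non-positive covector of $\LOM$ with support $F$—available because $Z\circ\minuses$ is a tope of $\LOM$ and the elimination axiom~\ref{it:COVaxiomELIM} applied to $Z$ and $-Z\circ\minuses$ produces the required intermediate covector.
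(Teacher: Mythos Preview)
Your closure-operator approach has a genuine gap: the map $\phi$ does not satisfy $\phi(P)\leq P$. In the refinement order, $\phi(P)\leq P$ means that $P$ refines $\phi(P)$, i.e.\ each part of $\phi(P)$ is a union of \emph{consecutive} parts of $P$. But pulling all elements of $F$ into the first block destroys this whenever some $f\in F$ lies in a part $I_j$ with $j>1$ that comes after a part $I_k$ ($1<k<j$) meeting $\ivl\setminus F$. For a concrete instance, take $n=4$, $F=\{1,2\}$, and the pseudo-sweep $P=(\{1\},\{3\},\{2\},\{4\})$ (so $I_1=\{1\}\subseteq F$). Then $\phi(P)=(\{1,2\},\{3\},\{4\})$, and $\{1,2\}$ is not a union of consecutive parts of $P$: the first two parts are $\{1\}$ and $\{3\}$. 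Hence $\phi(P)$ and $P$ are incomparable, and \cite[Cor.~10.12]{Bjorner1995} does not apply.

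The cellular-string description you give for $\phi(P)$ is also flawed. First, replacing each $X^i$ by the sign-vector obtained by ``zeroing out its entries on $F$'' need not produce a covector of $\LOM$; zeroing coordinates is not an oriented-matroid operation. Second, even granting that, your gluing check is wrong: $Z\circ\minuses$ equals $-$ on $F$, whereas your $Y^2\circ\pluses$ equals $+$ on $F$ (since $Y^2$ is $0$ there), so $Z\circ\minuses\neq Y^2\circ\pluses$. In fact, for $(F,J_2,\ldots,J_\ell)$ to be a pseudo-sweep the covector attached to $J_i$ (for $i\geq 2$) must be $-$ on $F$, not $0$; producing such covectors from the original string is the real work, and nothing in your argument supplies them. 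It is therefore not even established that $\phi(P)$ is a pseudo-sweep.

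For comparison, the paper avoids these issues by never modifying the pseudo-sweep. It uses the order-preserving map sending $(I_1,\ldots,I_m)$ to the non-negative covector of $\LOM$ with zero-set $I_1$, which lands in the interval $[Z,\pluses)$; an ideal-restricted version of \cite[Lem.~5.5]{AER2000} then shows this map is a homotopy equivalence onto that interval, which is contractible because it has a minimum.
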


\begin{proof}
The proof of \cite[Lem.~5.5]{AER2000} can be adapted to prove that $\PScom[\LOM,\pluses]_{\subseteq F}$ is contractible. 
First, we note that with the same proof we can make a slightly stronger statement. Namely, they define a map 
$f:\omega(P, \cO, a)\to D(P, \cO,a)$, between certain posets $\omega(P, \cO, a)$ and $D(P, \cO,a)$ that we describe below, and show that it induces a homotopy equivalence. However, the exact same proof also shows that $f:\omega(P, \cO, a)\cap f^{-1}(I)\to I$ induces a homotopy equivalence for any order ideal (lower set)~$I$ of~$D(P, \cO,a)$.

To match their notations, we call $P$ the poset opposite to the big face lattice of $\LOM$ (the atoms of $P$ are the topes of $\LOM$ and its $1$-skeleton is the tope graph) and $\cO$ the orientation of the tope graph that goes from $\minuses$ to $\pluses$. 
For a tope $a$, the poset $\omega(P, \cO, a)$ is the poset of partial cellular strings ending at $a$ (i.e.\ sequences of non-tope covectors $(X^1,\dots, X^m)$ such that $X^1\circ \minuses=\minuses$,  $X^m\circ \pluses=a$, and $X^i\circ \pluses=X^{i+1}\circ \minuses$ for all~$i$). 
Therefore taking $a=a_{max}=\pluses$ we have that $\omega(P,\cO, a_{max})=\omega(P,\cO)$ is exactly the poset of cellular strings of~$\LOM$ with respect to~$\minuses$, which is in bijection with~$\PScom[\LOM, \pluses]$. However, their partial order is the opposite of our refinement order and the cellular strings have to be read in reverse order.
The poset $D(P, \cO, a)$ is the poset of the non-tope covectors $X$ such that $X\circ \pluses = a$. Therefore, $D(P, \cO, a_{max})$ corresponds to the half-interval $[\zero,\pluses)$ in the face lattice of $\LOM$.

If we take $I$ the lower set of $D(P, \cO, a_{max})$ corresponding to the interval $[Z, \pluses)$, their function $f:\omega(P, \cO, a_{max})\cap f^{-1}(I)\to I$ corresponds to the function that sends the pseudo-sweep~$(I_1, \ldots, I_m) \in \PScom[\LOM, \pluses]_{\subseteq F}$ to the non-negative covector $Y\in [Z, \pluses)$ with zero-set $I_1$. 
Hence it induces a homotopy equivalence from $\PScom[\LOM,\pluses]_{\subseteq F}$ to $[Z,\pluses)$, which has a contractible order poset because it has a unique minimal element. 
\end{proof}

We wish to prove the same when restricted to sweeps. For this, we use an auxiliary result from~\cite{BCK18}. Let $\cov \subseteq \{+,-, 0\}^E$ be the set of covectors of an oriented matroid on~$E$. Then, each element $e\in E$ defines two \defn{halfspaces} $\set{X\in \cov}{X_e=+}$ and~$\set{X\in \cov}{X_e=-}$, and a \defn{hyperplane} $\set{X\in \cov}{X_e=0}$.

\begin{lemma}\label{lem:supertopes}
Let $\cov$ be the set of covectors of an oriented matroid. Then, any non-empty intersection of one or more halfspaces and hyperplanes, seen as a subposet of the face lattice, is contractible.
\end{lemma}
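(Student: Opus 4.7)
The plan is to reduce the problem to showing contractibility of an intersection of open ``positive'' halfspaces inside a simpler oriented matroid, and then carry out the contractibility argument combinatorially or via the topological representation theorem.

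First, I would perform two reductions. Let $Y\in\{+,-,0\}^{F}$ be the sign pattern defining $S=\{X\in\cov:X_e=Y_e\ \forall e\in F\}$, and split $F=F^+\sqcup F^-\sqcup F^0$ according to the entries of $Y$. Reorienting $\cov$ along $F^-$ produces an isomorphic oriented matroid in which $S$ corresponds to a subposet defined only by the sign patterns $+$ and $0$, so I can assume $F^-=\emptyset$. Next, the subposet $\{X\in\cov:X_e=0\ \forall e\in F^0\}$ is naturally isomorphic (by restriction to $E\setminus F^0$) to the face lattice of the contraction $\cov/F^0$, and the remaining positivity constraints on $F^+$ descend along this isomorphism. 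This reduces the statement to the following: in an arbitrary oriented matroid $\cov'$ and for any $G\subseteq E'$, the subposet $S_G:=\{X\in\cov':X_e=+\text{ for all }e\in G\}$ has a contractible order complex, whenever it is non-empty.

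For the reduced statement, my preferred approach is combinatorial. Pick any base covector $X_0\in S_G$ and define $r:S_G\to S_G$ by $r(X)=X_0\circ X$. A direct check using the composition and weak elimination axioms shows that $S_G$ is closed under composition, that $r$ is an order-preserving map with $r(X)\geq X_0$ for every $X$, and that $r$ is idempotent with image the upper set $S_G^{\geq X_0}:=\{X\in S_G:X\geq X_0\}$. Since $S_G^{\geq X_0}$ has $X_0$ as its minimum, its order complex is a cone, hence contractible. The remaining task is to show that the inclusion $S_G^{\geq X_0}\hookrightarrow S_G$ is a homotopy equivalence.

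The hard part is exactly this last step: contracting $S_G$ onto $S_G^{\geq X_0}$ requires an explicit homotopy, and the obstruction is that $X$ and $r(X)=X_0\circ X$ need not be comparable in the covector order. One route is to apply Quillen's Fiber Theorem to $r$: for $Y\in S_G^{\geq X_0}$, the fiber $r^{-1}(S_G^{\geq X_0,\geq Y})$ unfolds into a subposet of the same form as $S_G$ (an intersection of positive halfspaces with extra halfspace constraints coming from $\supp(Y)\setminus\supp(X_0)$), which invites an induction on $|E|-|\supp(X_0)|$. An alternative, and in some ways cleaner, route is to invoke the Folkman–Lawrence Topological Representation Theorem~\cite[Sec.~5.2]{BLSWZ99}: realize $\cov'$ as a pseudo-sphere arrangement on $S^{r-1}$, identify $S_G$ with the face poset of the cells lying in the open region $R=\bigcap_{e\in G}S_e^+$, and observe that such an intersection of open pseudo-hemispheres is PL-homeomorphic to an open ball. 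The order complex of $S_G$ then triangulates $R$ (as the barycentric subdivision of the induced cellular decomposition), so it inherits contractibility. Either route closes the proof; the delicate bookkeeping in the fiber-theorem version is the main technical cost, while the topological version relies on the non-trivial input of the representation theorem.
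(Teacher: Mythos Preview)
The paper's proof is entirely different from yours and much shorter: it simply observes that an intersection of halfspaces and hyperplanes satisfies Face Symmetry and Strong Elimination, hence is a COM in the sense of Bandelt--Chepoi--Knauer~\cite{BCK18}, and then invokes \cite[Prop.~15]{BCK18}, which asserts that the order complex of any COM is contractible. Your reductions by reorientation and contraction are correct and natural, and your topological route is essentially sound: $S_G$ is an order filter in $\cov'\setminus\{\zero\}$, so $\Delta(S_G)$ is the full subcomplex on those vertices and is a deformation retract of the complement of the opposite full subcomplex, namely the open region $R=\bigcap_{e\in G}S_e^+$; and $\overline R$ is a PL ball by the $T$-convexity results in~\cite[Sec.~4.5]{BLSWZ99}. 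So this route does close, though you are leaning on somewhat more machinery than the one-line COM citation.

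Your Quillen route, however, has a genuine gap rather than just ``delicate bookkeeping''. Quillen's Fiber Theorem for $r\colon S_G\to S_G^{\geq X_0}$ requires the fiber $r^{-1}\big((S_G^{\geq X_0})_{\geq Y}\big)$ to be contractible for \emph{every} $Y$, including $Y=X_0$; but that fiber is all of~$S_G$, so you are assuming what you want to prove, and your inductive parameter $|E|-|\supp(X_0)|$ does not decrease there. A repair is possible if you switch to Babson's Lemma and make two adjustments: choose $X_0$ to be a non-tope in $S_G$ (one exists unless $S_G$ is a single tope, by an easy elimination argument), and induct on the number of unconstrained elements $|E'|-|G|$. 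Then each point-fiber $r^{-1}(Z)=\{X\in S_G:X_e=Z_e\text{ for all }e\notin\supp(X_0)\}$ and each set $r^{-1}(Z)\cap(S_G)_{\geq Y}$ is again an intersection of halfspaces and hyperplanes, now with only $|\supp(X_0)\setminus G|<|E'|-|G|$ unconstrained elements, and the induction closes.
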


\begin{proof}
This is a consequence of~\cite[Prop.~15]{BCK18}. Indeed, an intersection of halfspaces and hyperplanes is a COM, because it satisfies Face symmetry and Strong elimination, see~\cite[Def.~1]{BCK18}.
\end{proof}

\begin{lemma}\label{lem:contractible_sweeps}
Let $F\subseteq \ivl$ be the zero-set of a non-negative covector~$Z$ of $\LOM$, then $\Scom[\MOM]_{\subseteq F}$ is contractible.
\end{lemma}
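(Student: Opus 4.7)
The plan is to apply Babson's Lemma (\Cref{lem:babson}) to the order-preserving map
\[
f: \Scom[\MOM]_{\subseteq F} \longrightarrow [Z, \pluses), \qquad (I_1, \ldots, I_m) \longmapsto Z_{I_1},
\]
where $Z_{I_1}$ denotes the non-negative covector of $\LOM$ with zero-set $I_1$, and $[Z, \pluses)$ is the half-open interval in the face lattice of $\LOM$. This map will be well-defined since $I_1 \subseteq F$ gives $Z_{I_1} \succeq Z$, and $I_1 \neq \emptyset$ gives $Z_{I_1} \neq \pluses$. Because $[Z, \pluses)$ has $Z$ as a minimum element, its order complex is contractible; so verifying Babson's two hypotheses will transfer contractibility to the domain.

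The key combinatorial input will be the identification, for each $W \in [Z, \pluses)$ with zero-set $S \subseteq F$, of a distinguished sweep $X_S \in \MOM$ with ordered partition $(S, \ivl \setminus S)$. Since $W$ is a covector of $\LOM = \BOM \setminus \ipairs$, it lifts to some covector of $\BOM$; by the explicit form of the covectors of $\BOM$ in \Cref{thm:BOMisOM} and the fact that $W$ is non-negative with non-empty zero-set, this lift must be of the form $X_*^2$ for some $X_* \in \MOM$ with $I_1(X_*) = S$. The sweep $X_S$ will then be obtained from $X_*$ by iteratively merging the consecutive parts after the first; each such merge corresponds to passing to a face in the face lattice of $\MOM$, so $X_S \in \MOM$.

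Babson's condition (i) then follows because $f^{-1}(W) = \{X \in \Scom[\MOM] : I_1(X) = S\}$ has $X_S$ as its minimum element --- every sweep with first part $S$ refines $X_S$ --- so its order complex is contractible. For condition (ii), given $Y \in \Scom[\MOM]_{\subseteq F}$ with partition $(J_1, \ldots, J_l)$ and $f(Y) \preceq W$ (equivalently $S \subseteq J_1$), the composition $Y \circ X_S$, which lies in $\MOM$ by axiom (V2), has partition $(S, J_1 \setminus S, J_2, \ldots, J_l)$ and is the coarsest sweep refining $Y$ that has first part $S$; it therefore provides the minimum of $f^{-1}(W) \cap P_{\geq Y}$, yielding contractibility. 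Babson's Lemma then produces a homotopy equivalence $\Scom[\MOM]_{\subseteq F} \simeq [Z, \pluses)$, and the latter's contractibility transfers.

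The main obstacle lies in the second paragraph: justifying that the purely combinatorial ordered partition $(S, \ivl \setminus S)$ actually corresponds to a covector of $\MOM$, and similarly for the intermediate coarsenings used. This is not immediate from the axioms of $\MOM$ alone and genuinely uses the richer structure of the big oriented matroid $\BOM$ via \Cref{thm:BOMisOM}, which produces an explicit covector $X_*^2$ lifting $W$ and allows $X_S$ to be read off as a face in the face lattice of $\MOM$.
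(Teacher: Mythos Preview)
Your overall plan---the same map $f:\Scom[\MOM]_{\subseteq F}\to[Z,\pluses)$ and Babson's Lemma---matches the paper's approach, but your verification of Babson's hypotheses has a genuine gap. You assert that $f^{-1}(W)$ has a minimum $X_S$ with ordered partition $(S,\ivl\setminus S)$, obtained from $X_*$ by ``iteratively merging the consecutive parts after the first; each such merge corresponds to passing to a face in the face lattice of $\MOM$''. But merging two consecutive parts amounts to zeroing certain entries of the covector, and the resulting sign-vector need not be a covector of~$\MOM$: that would hold in the full braid oriented matroid~$\Mbraid$, but $\MOM$ is only a strong-map image of it. Already for three collinear points $a_1<a_2<a_3$ in~$\RR$, the sweep oriented matroid has only the covectors $\zero,(+,+,+),(-,-,-)$, and the partition $(\{1\},\{2,3\})$ is not among them. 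Worse, $f^{-1}(W)$ need not have a minimum at all: for $\p a_1=\zero$, $\p a_2=\p e_1$, $\p a_3=\p e_2$, $\p a_4=\p e_3$, $\p a_5=\p e_1+\p e_2$ in $\RR^3$ with $S=\{1\}$, the sweeps $(\{1\},\{2,3,4\},\{5\})$ and $(\{1\},\{2,3\},\{4,5\})$ are incomparable and both minimal in $f^{-1}(W)$, since the only coarser candidate with first part $\{1\}$ is $(\{1\},\{2,3,4,5\})$, which is not a sweep. Your argument for condition~(ii), which composes with the nonexistent~$X_S$, inherits the same flaw.

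The paper handles this differently: it identifies $f^{-1}(W)$ (and the sets in condition~(ii)) as non-empty intersections of halfspaces and hyperplanes of~$\MOM$, and then invokes a contractibility result for COMs (\cref{lem:supertopes}). Your correct observation that some $X_*\in\MOM$ with first part~$S$ exists is precisely what the paper uses---but only to certify non-emptiness, not to produce a minimum.
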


\begin{proof}
Inspired by the proof of \cite[Lem.~5.5]{AER2000}, we apply Babson's \Cref{lem:babson} with the function~$f$ from the subposet of sweeps $\Scom[\MOM]_{\subseteq F}$ to the half-open interval of the face lattice $[Z,\pluses)$ that sends a sweep $(I_1, \ldots, I_m)$ to the non-negative covector with zero-set $I_1$. 

Let $Y$ be a covector in $[Z, \pluses)$, with zero-set $G\subseteq F$.
\begin{enumerate}
\item[(i)] $f^{-1}(Y)$ is the set of sweeps whose first part is $G$. It is not empty because $Y$ must be of the form $\tilde{Y}^1$ for a covector $\tilde{Y}\in \MOM$ (in the sense of \Cref{def:def_big_OM}), and such $\tilde{Y}$ corresponds to a sweep with first part $G$. Moreover, $f^{-1}(Y)$ is the intersection of halfspaces $\set{X\in \MOM}{X_{(i,j)}=+}$ for all $i\in G$, $j\notin G$ and $i<j$, and $\set{X\in \MOM}{X_{(i,j)}=-}$ for all $i\in G$, $j\notin G$ and $i>j$. By \Cref{lem:supertopes} it is contractible.

\item[(ii)] Let $J=(J_1, \ldots, J_r)$ be a sweep in $\Scom[\MOM]_{\subseteq F}$ such that $f(J)\leq Y$, i.e.\ $G\subseteq J_1$. 
The intersection $f^{-1}(Y)\cap (\Scom[\MOM]_{\subseteq F})_{\geq J}$ is the set of sweeps that refine $J$ and whose first part is $G$. As for $f^{-1}(Y)$, this set is an intersection of halfspaces. It is not empty because it contains the sweep corresponding to $J\circ \tilde{Y}$. Hence it is contractible.
\end{enumerate}

It follows from Babson's Lemma that $\Scom[\MOM]_{\subseteq F}$ is homotopy equivalent to $[Z, \pluses)$, which has a contractible order poset because it has a unique minimal element.
\end{proof}

\begin{proof}[Proof of \Cref{thm:retract}]
To simplify the exposition, we denote by $P=\PScom[\LOM,\pluses]$ the poset of pseudo-sweeps of $\LOM$ with respect to $\pluses$, by $S=\Scom[\MOM]$ the poset of sweeps of $\MOM$, and by $Q=[\zero,\pluses)$ the half-open interval between $\zero$ and~$\pluses$ in the face lattice of~$\LOM$ (this is its Edmonds-Mandel lattice without the top element).

By \Cref{lem:inclusionretract} it suffices to show that the inclusion map $\iota: S\hookrightarrow P$ induces a homotopy equivalence. As in the proof of \Cref{lem:contractible,lem:contractible_sweeps}, let~$f:P\to Q$ be the map that sends a pseudo-sweep $(I_1,\dots, I_m)$ to the non-negative covector with zero-set~$I_1$.

\begin{center}
\begin{tikzcd}
S=\Scom[\MOM] \arrow[hookrightarrow,r, "\iota" ] & P \arrow[d, "f"]=\PScom[\LOM,\pluses]\\ 
& Q=[\zero,\pluses) \arrow[lu, "g" ]
\end{tikzcd}
\end{center}

For any covector $Z\in Q$ with zero-set $F$, we have that $(f\circ \iota)^{-1}(Q_{\geq Z})= \Scom[\MOM]_{\subseteq F}$, which is contractible by \Cref{lem:contractible_sweeps}.
 
 We conclude by Quillen's \Cref{thm:Quillen} that $f\circ \iota:S \to Q$ induces a homotopy equivalence with a homotopy inverse $g: Q \to S$ carried by $C(\sigma)=(f\circ \iota)^{-1}(Q_{\geq \min \sigma})$.

 We will show that $g\circ f : P \to S$ is a homotopy inverse of the inclusion map $\iota: S \hookrightarrow  P$. We trivially have that $g\circ f \circ \iota\sim \id_{S}$ from the fact that $(f\circ \iota)$ and $g$ are homotopy inverses.

It remains to show that $\iota \circ g \circ f\sim \id_{P}$. 
Now, for $\sigma$ in the order complex of $P$, let $C'(\sigma)=\norm{f^{-1}(Q_{\geq \min f(\sigma)})}$. Note that $f^{-1}(Q_{\geq \min f(\sigma)})$ is of the form $\PScom[\LOM,\pluses]_{\subseteq F}$ where $F$ is the first part of the smallest ordered partition in~$\sigma$. It is therefore contractible by \Cref{lem:contractible}.
We claim that $\id_p$ and $\iota \circ g \circ f$ are both carried by $C'$, and thus that they must be homotopy equivalent by \Cref{lem:carrier}. Indeed, $\id_P$ is trivially carried by~$C'$; and so is $\iota \circ g \circ f$ because $g$ is carried by $C$. 

The same proof works if we restrict to non-trivial sweeps in~$S$ and~$P$.
\end{proof}

\section{Allowable graphs of permutations and sweep acycloids}\label{sec:sweepacycloids}

In this section we present an alternative generalization of allowable sequences to high dimensions that is closer to the original formulation, in terms of {moves} between permutations. As we will see, the resulting objects naturally have the structure of {acycloids}, and we recover sweep oriented matroids as a special case.

\subsection{Allowable graphs of permutations}

In this setting it is useful to see a permutation $\sigma \in \Sym$ as the word $[\sigma(1), \ldots, \sigma(n)]$ on the alphabet $[n]$. A \defn{substring} of $\sigma$ is then a contiguous sequence of characters, of the form $[\sigma(j), \sigma(j+1), \ldots, \sigma(k)]$ for certain $1\leq j<k\leq n$. Such a substring is said to be \defn{increasing} if $\sigma(j) < \sigma(j+1) < \ldots < \sigma(k)$.

\begin{definition}
Let $\Pset\subseteq \Sym$ be a set of permutations, and $\sigma, \sigma' \in \Pset$. We define an \defn{allowable sequence} in $\Pset$ from $\sigma$ to $\sigma'$ as a sequence of permutations of $\Pset$: $\sigma = \sigma_0, \ldots, \sigma_l=\sigma'$ such that
\begin{description}
 \item[(M1)\label{it:ASaxiomTRANS}]for each $1 \leq k \leq l$ the \defn{move} from $\sigma_{k-1}$ to $\sigma_k$ consists of reversing a set $m_k$ of one or more disjoint substrings of $\sigma_{k-1}$;
 \item[(M2)\label{it:ASaxiomDISJ}] 
 each pair $i,j$ is reversed at most once along the path. In other words, there is at most one move $m_k$ such that $i$ and $j$ are in the same substring of $m_k$.
\end{description}

A move is \defn{simple} if it consists of a single substring of two elements; and an allowable sequence is \defn{simple} if all its moves are.
\end{definition}

For example, 
$(1, 3, 2, 6, 5, 4)\xrightarrow{[{\color{Green}3, 2}], [{\color{red}6, 5, 4}]} (1,{\color{Green} 2, 3}, {\color{red}4, 5, 6})$  and 
$(6, 5, 4, 3, 1, 2)\xrightarrow{[{\color{Cyan}1, 2}]}(6, 5, 4, 3, {\color{Cyan}2, 1})$ are valid moves, the second being moreover simple. The sequence 
\[(1,2,3,4,5)\xrightarrow{[{\color{olive}1,2,3}]}({\color{olive}3,2,1},4,5)\xrightarrow{[{\color{purple}1,4}]}(3, 2, {\color{purple}4, 1}, 5)\xrightarrow{[{\color{orange}2, 4}], [{\color{blue}1, 5}]}(3, {\color{orange}4, 2}, {\color{blue}5, 1})\] 
is an allowable sequence from $(1,2,3,4,5)$ to $(3, 4, 2, 5, 1)$ in~$\Sym[5]$; whereas 
\[(1,2,3,4,5)\xrightarrow{[{\color{olive}1,2,3}]}({\color{olive}3,2,1},4,5)\xrightarrow{[{\color{purple}1,4}]}(3, 2, {\color{purple}4, 1}, 5)\xrightarrow{[{\color{brown}3,2,4}],[{\color{blue}1,5}]}({\color{brown}4, 2, 3}, {\color{blue}5, 1})\] 
is not an allowable sequence in $\Sym[5]$, because the pair $\{2, 3\}$ is reversed twice. In fact, in an allowable sequence from the identity permutation, only increasing substrings can be reversed. 
Note that if there is a move $m$ from $\sigma$ to $\gamma$, then from $\gamma$ to $\sigma$ there is the \defn{reverse move} $\overline{m}$ whose substrings are $\overline{s}=[s_k, \ldots, s_0]$ for each substring $s=[s_0, \ldots, s_k]$ of $m$.
This way, every allowable sequence can be reversed.

Another way to describe allowable sequences is by looking at the set of pairs that are reversed at each move. For a permutation $\sigma$, we denote by $\inv(\sigma)$ its set of inversions; that is, the set of pairs $(i,j)\in \ipairs$ such that $i<j$ and $\sigma(i)>\sigma(j)$. 
We denote by $\symdif$ the symmetric difference operation on sets.

\begin{definition}\label{def:invset} 
If there is a move $m$ from a permutation $\sigma$ to a permutation $\gamma$, we define the \defn{set of inversions} of the move $m$ by {\defn{$\invset_{m}$}$=\inv(\sigma)\symdif \inv(\gamma)$}.
\end{definition}
For example, for the move $(1, 3, 2, 6, 5, 4)\xrightarrow{[{\color{Green}3, 2}], [{\color{red}6, 5, 4}]} (1,{\color{Green} 2, 3}, {\color{red}4, 5, 6})$ we obtain the set of inversions $\left\{ (2,3), (4,5), (4,6), (5,6) \right\}$.

The conditions defining allowable sequences become:
\begin{description}
\item[(M1')\label{it:NEWASaxiomTRANS}] if $(a,b)$ or $(b,a)$ is in $\invset_{m_k}$ and $(b,c)$ or $(c,b)$ is in $\invset_{m_k}$, then $(a,c)$ or $(c,a)$ is in $\invset_{m_k}$;
\item[(M2')\label{it:NEWASaxiomDISJ}] the inversion sets $\invset_{m_k}$ are pairwise disjoint.
\end{description}
Note also that $\invset_{m}=\invset_{\overline{m}}$.

\begin{remark}
An \defn{allowable sequence} in the sense of Goodman and Pollack in~\cite{GP80,GP82,GP84,GP93} is exactly what we call an allowable sequence from $\idperm=(1, 2, \ldots, n)$ to $\overline \idperm = (n, n-1, \ldots, 1)$ in $\Sym$.
\end{remark}

We need to introduce another concept before our main definition.
\begin{definition}
A set of permutations $\Pset\subseteq\Sym$ is \defn{symmetric} if $\overline{\sigma} \in \Pset$ for all $\sigma \in \Pset$, where
$\overline{\sigma}$ is the \defn{reverse} of $\sigma$, defined by $\sigma(t)=\overline{\sigma}(n-t+1)$ for all~$t\in \ivl$.
\end{definition}

\begin{definition}\label{def:allowablegraphpermutations}
Consider a set of permutations $\Pset\subseteq\Sym$  and a set $\moves$ of moves such that:

\begin{description}
 \item[(P1)\label{it:SPaxiomSYM}] 
    $\Pset$ is symmetric,
\item[(P2)\label{it:SPaxiomAS}] for any $\sigma, \sigma' \in \Pset$, there is an allowable sequence from $\sigma$ to $\sigma'$ whose moves belong to~$\moves$,
 \item[(P3)\label{it:SPaxiomDISJ}] for $m,s\in\moves$, either $\invset_m=\invset_s$ or $\invset_m\cap\invset_s=\emptyset$.
 \end{description}
The graph with vertex set $\Pset$ and whose edges are the pairs of permutations differing by a move in~$\moves$ is an \defn{allowable graph of permutations}.

An allowable graph of permutations is \defn{simple} if $\moves$ consists only of simple moves.

\end{definition}

\begin{lemma}\label{rem:confusionsetgraph}
The graph is completely determined by $\Pset$ and does not depend on $\moves$. 
 More precisely,  {$\sigma, \sigma'\in \Pset$} form an edge if and only if there is no $\sigma''\in \Pset\setminus\{\sigma\}$ 
 such that $\inv(\sigma)\symdif \inv(\sigma'')\subsetneq \inv(\sigma)\symdif \inv(\sigma')$.  
\end{lemma}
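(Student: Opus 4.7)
The plan is to exploit the disjointness properties \ref{it:NEWASaxiomDISJ} and \ref{it:SPaxiomDISJ} to compute symmetric differences along allowable sequences as disjoint unions of inversion sets of moves. The key preliminary observation is that for any allowable sequence $\sigma=\sigma_0,\sigma_1,\dots,\sigma_l=\tau$ in $\Pset$ with moves $m_1,\dots,m_l$, one has
\[
\inv(\sigma)\symdif\inv(\tau) \;=\; \invset_{m_1}\sqcup\invset_{m_2}\sqcup\cdots\sqcup\invset_{m_l},
\]
a disjoint union by \ref{it:NEWASaxiomDISJ}; this follows by a straightforward induction since $\inv(\sigma_{k-1})\symdif\inv(\sigma_k)=\invset_{m_k}$ by definition. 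I would also note that each $\invset_{m_k}$ is non-empty, since substrings are required to have at least two elements, so each move reverses at least one pair.

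For the forward direction, suppose $\sigma,\sigma'$ form an edge via a move $m\in\moves$, so $\inv(\sigma)\symdif\inv(\sigma')=\invset_m$. Assume for contradiction that some $\sigma''\in\Pset\setminus\{\sigma\}$ satisfies $\inv(\sigma)\symdif\inv(\sigma'')\subsetneq\invset_m$. By \ref{it:SPaxiomAS} there is an allowable sequence from $\sigma$ to $\sigma''$ with moves $m_1,\dots,m_l\in\moves$, and the preliminary observation gives $\inv(\sigma)\symdif\inv(\sigma'')=\bigsqcup_k\invset_{m_k}$. By \ref{it:SPaxiomDISJ}, each $\invset_{m_k}$ is either equal to $\invset_m$ or disjoint from it. In the first case, $\invset_m\subseteq\inv(\sigma)\symdif\inv(\sigma'')\subsetneq\invset_m$, a contradiction. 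In the second case, $\inv(\sigma)\symdif\inv(\sigma'')$ is disjoint from $\invset_m$ yet strictly contained in it, so it must be empty, forcing $\sigma''=\sigma$, again a contradiction.

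For the converse, assume that no $\sigma''\in\Pset\setminus\{\sigma\}$ satisfies $\inv(\sigma)\symdif\inv(\sigma'')\subsetneq\inv(\sigma)\symdif\inv(\sigma')$. Pick by \ref{it:SPaxiomAS} an allowable sequence $\sigma=\sigma_0,\sigma_1,\dots,\sigma_l=\sigma'$ with moves $m_1,\dots,m_l\in\moves$. Since $\invset_{m_1}\neq\emptyset$ we have $\sigma_1\neq\sigma$, and $\inv(\sigma)\symdif\inv(\sigma_1)=\invset_{m_1}$. If $l\geq 2$, then $\invset_{m_2}$ is non-empty and disjoint from $\invset_{m_1}$, so
\[
\invset_{m_1}\;\subsetneq\;\invset_{m_1}\sqcup\cdots\sqcup\invset_{m_l}\;=\;\inv(\sigma)\symdif\inv(\sigma'),
\]
contradicting minimality. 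Hence $l=1$ and $\sigma,\sigma'$ are directly connected by the move $m_1\in\moves$, i.e.\ they form an edge of the graph.

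This also proves the first sentence of the lemma: the set of edges is characterized purely in terms of $\Pset$ via minimality of inversion symmetric differences, with no reference to the particular move set $\moves$. I do not anticipate a major obstacle; the only point requiring slight care is to invoke \ref{it:SPaxiomDISJ} in the right dichotomy (equal-or-disjoint) and to remember that substrings have length at least two so every move contributes a non-empty inversion set.
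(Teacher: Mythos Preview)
Your proof is correct and follows essentially the same approach as the paper's: both arguments hinge on decomposing $\inv(\sigma)\symdif\inv(\sigma'')$ along an allowable sequence into pairwise disjoint inversion sets of moves and then invoking \ref{it:SPaxiomDISJ} to compare these against $\invset_m$. The only differences are presentational --- you make the disjoint-union formula explicit up front and argue the forward direction by contradiction and the converse directly, whereas the paper shows equality in the forward direction and takes the contrapositive for the converse --- but the underlying ideas are the same.
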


\begin{proof}
Suppose that $\sigma, \sigma'\in \Pset$ form an edge. It means that there is a move $m$ in $\moves$ with inversion set $\inv(\sigma)\symdif \inv(\sigma')$. Suppose that $\sigma''\in \moves$ satisfies $\inv(\sigma)\symdif \inv(\sigma'')\subseteq \inv(\sigma)\symdif \inv(\sigma')$. For any move $m'$ in $\moves$ along an allowable sequence from $\sigma$ to $\sigma''$ we have $\invset_{m'}\subseteq \inv(\sigma)\symdif \inv(\sigma'')$, thus $\invset_{m'}\cap \invset_m\neq \emptyset$. 
We deduce from \ref{it:SPaxiomDISJ} that $\invset_{m'}=\invset_m$, thus $\inv(\sigma)\symdif \inv(\sigma'')= \inv(\sigma)\symdif \inv(\sigma')$. 

Reciprocally, suppose that $\sigma, \sigma' \in \Pset$ do not form an edge and let $\sigma''\in \Pset\setminus\{\sigma\}$ be the neighbor of $\sigma$ on an allowable sequence from $\sigma$ to $\sigma'$. By \ref{it:ASaxiomDISJ}, we have $\inv(\sigma)\symdif \inv(\sigma'')\subseteq \inv(\sigma)\symdif \inv(\sigma')$, as if there was a pair in $\inv(\sigma)\symdif \inv(\sigma'')\setminus{\inv(\sigma)\symdif \inv(\sigma')}$, then it would be reversed twice in the allowable sequence: first between $\sigma$ and $\sigma''$ and later between $\sigma''$ and $\sigma'$. Moreover, $\sigma'\neq \sigma''$ because $\sigma$ and $\sigma''$ form an edge, and thus $\inv(\sigma)\symdif \inv(\sigma'')\neq \inv(\sigma)\symdif \inv(\sigma')$.
\end{proof}

We will therefore usually identify~$\Pset$ with the corresponding allowable graph, and directly call $\Pset$ an \defn{allowable graph of permutations}. 

\begin{remark}
The set of moves can be recovered from the graph by gathering all moves between adjacent permutations in the graph.
\end{remark}

\begin{remark}\label{rem:identityinsweeppermset}
If $\Pset$ forms an allowable graph of permutations and $\omega\in\Sym$, then $\omega\circ \Pset = \set{\omega\circ \sigma}{ \sigma \in \Pset}$ is still an allowable graph of permutations.
Sometimes it is convenient to suppose that the identity permutation $\idperm$ belongs to $\Pset$, as Goodman and Pollack did, which can always be obtained by multiplying by an $\omega$ that is the inverse of a permutation in~$\Pset$.
\end{remark}

\begin{remark}\label{rem:axiomDISJ}

Note that in the case of a simple allowable graph of permutations Condition~\ref{it:SPaxiomDISJ} is redundant. 
However, the example of \cref{fig:non_sweep_perm_set} shows that it is necessary in the general case and this is why we needed to fix a set of moves in \cref{def:allowablegraphpermutations}.

In this example, a valid set of moves $\moves$ would necessarily contain all the moves represented with the arrows (and their reverse), which are all the singletons $\{[i,j]\}$ for $(i,j)\in \ipairs$. 
However, in order to satisfy Condition~\ref{it:SPaxiomAS}, $\moves$ also has to contain the move $\{[1, 2], [3, 4]\}$ represented by the dashed segment joining permutations $(3, 4, 5, 2, 1)$ and $(4, 3, 5, 1, 2)$, since there is no other allowable sequence between these two permutations. Indeed, we can see that the edges adjacent to $(3, 4, 5, 2, 1)$ are labeled $[2, 5]$ and $[4, 5]$ but those pairs should not be reversed on an allowable sequence to $(4, 3,  5, 1, 2)$. 
Thus, both conditions~\ref{it:SPaxiomDISJ} and~\ref{it:SPaxiomAS} cannot be satisfied simultaneously. 

We need to have both conditions in order to have the structure of acycloids, as stated in \cref{thm:sweeppermutationsetisacycloid}.
\end{remark}

\begin{figure}[htpb]
\centering
\input{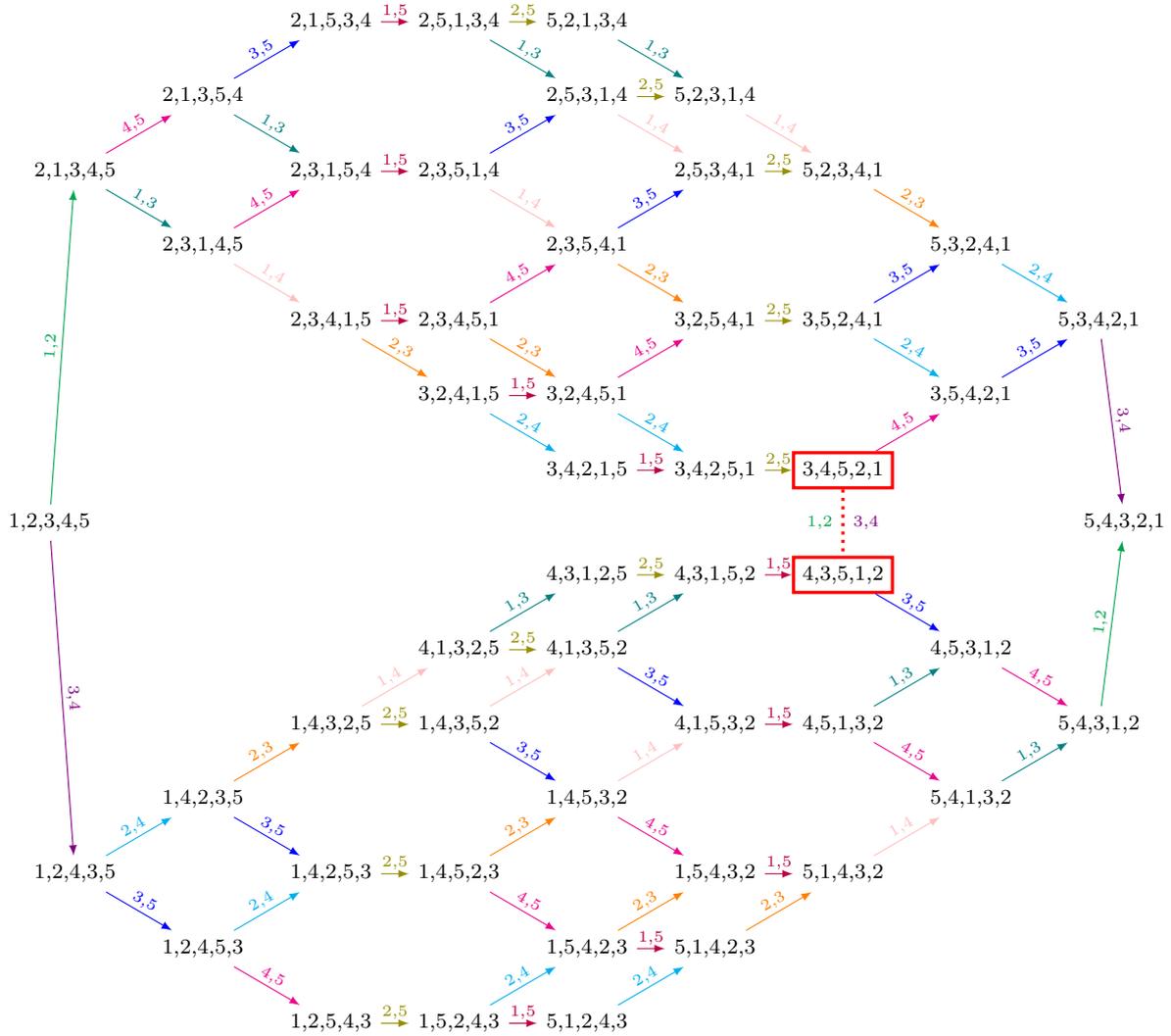}
\caption{
Example of a set of permutations that do not satisfy the definition of allowable graph of permutations.
Neither graphs with or without the dashed segment are partial cubes.}
\label{fig:non_sweep_perm_set}
\end{figure}

\subsection{Sweep acycloids}

Acycloids are combinatorial objects widely studied in connection with the characterization of tope sets of oriented matroids, c.f.~\cite{Handa90,FH93}. They are equivalent to \defn{antipodal partial cubes} (see~\cite{KM20}), a concept well-studied in metric graph theory. A graph is a \defn{partial cube} if it is (isomorphic to) an isometric subgraph of a hypercube graph, and it is \defn{antipodal} (also called \defn{symmetric even}~\cite{BermanKotzig1988}) if for every vertex $v$ there exists exactly one vertex~$\tilde v$, called the antipode of $v$, such that the distance from~$v$ to~$\tilde v$ is larger than the distance from~$v$ to any neighbor of~$\tilde v$.

Following~\cite{Handa90}, we introduce \defn{acycloids} in terms of its topes, which are subsets of sign-vectors. We use the same notation for the notions of reorientation, support and parallelism classes of oriented matroids from \cref{sec:OMdefns}, which carry on verbatim to arbitrary subsets of sign vectors.

\begin{definition}\label{def:axioms_acycloids}
A collection of sign-vectors $\topes \subseteq \{+,-,0\}^E$ is the set of topes of an \defn{acycloid} if and only if  it satisfies the following axioms\footnote{Recall that the parallelism class $\parallelclass{f}$ of $f$ is the set of elements $e\in E$ such that $X_f=X_e$ for all covectors $X$ or $X_f=-X_e$ for all covectors $X$. The reorientation $\reor{X}{F}$ is the signed vector $Z$ such that $Z_f=-X_f$ for all $f\in F$ and $Z_f=X_f$ otherwise. The separation set $S(X, Y)$ of covectors $X, Y$ are the elements $e\in E$ such that $(X_e, Y_e)\in \{(+, -), (-, +)\}$.}:
\begin{description}
 \item[(T1)\label{it:ACaxiomSUP}] 
 $X, Y \in \topes$ implies $\supp{X}=\supp{Y}$ (this set is called the \defn{support} of the acycloid),
 \item[(T2)\label{it:ACaxiomSYM}] $X\in \topes$ implies $-X\in \topes$,
 \item[(T3)\label{it:ACaxiomREOR}] if $X\neq Y \in \topes$ then there exists $f\in \sep$ such that $\reor{X}{\parallelclass{f}}\in\topes$.
 \end{description}
\end{definition}

These three axioms are satisfied by the topes of an oriented matroid but they are not sufficient; there are examples of acycloids that are not oriented matroids, see \cite[Sec.~7]{Handa93}.

To describe the link between allowable graphs of permutations and acycloids, we associate a sign-vector $\sv[\sigma]$ in~$\{+,-\}^{\ipairs}$ to each permutation $\sigma\in \Sym$ via the map~\eqref{eq:covectorfrompartition}. For simplicity, we will sometimes implicitly identify permutations and sign-vectors when it is clear from the context. For a set of permutations $\Pset\subseteq\Sym$, we denote $\topes_\Pset=\set{\sv[\sigma]}{\sigma\in \Pset}\subseteq  \{+,-\}^{\ipairs}$.

\begin{lemma}\label{lem:parallel_classes}
Let $\Pset\subseteq\Sym$ form an allowable graph of permutations, and let $\topes_\Pset\subseteq \{+,-\}^{\ipairs}$ be the set of sign-vectors associated to its permutations. Then the inversion sets of the moves in $\moves$ coincide with the parallelism classes of $\topes_\Pset$.
\end{lemma}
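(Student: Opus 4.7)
The plan is to prove the equality between the collection $\{\invset_m:m\in\moves\}$ of inversion sets and the collection of parallelism classes of $\topes_\Pset$ by showing mutual containment: every inversion set sits inside a parallelism class, and every parallelism class sits inside some inversion set. The bridge between the two structures is the elementary observation that, for any $\sigma,\gamma\in\Sym$ and any $(i,j)\in\ipairs$, one has $\sv[\sigma]_{(i,j)}\neq\sv[\gamma]_{(i,j)}$ if and only if $(i,j)\in\inv(\sigma)\symdif\inv(\gamma)$, so the pairs on which two topes disagree are precisely those in the symmetric difference of their inversion sets.

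For the first inclusion, I fix $m\in\moves$ and two pairs $(i,j),(k,l)\in\invset_m$ and show that $\sv[\sigma]_{(i,j)}\cdot\sv[\sigma]_{(k,l)}$ is independent of $\sigma\in\Pset$, which by definition means $(i,j)$ and $(k,l)$ are parallel. Fix a base permutation $\sigma_0\in\Pset$ and, for arbitrary $\sigma\in\Pset$, take via \ref{it:SPaxiomAS} an allowable sequence from $\sigma_0$ to $\sigma$ with moves $m_1,\ldots,m_l\in\moves$. By \ref{it:NEWASaxiomDISJ} their inversion sets are pairwise disjoint, and by \ref{it:SPaxiomDISJ} each $\invset_{m_r}$ either equals or is disjoint from $\invset_m$. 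Hence $(i,j)$ flips between $\sv[\sigma_0]$ and $\sv[\sigma]$ exactly when some $\invset_{m_r}=\invset_m$, and the same criterion governs $(k,l)$; the two coordinates therefore flip together, which gives the desired parallelism.

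For the reverse inclusion I first observe that every pair $(i,j)\in\ipairs$ belongs to some $\invset_m$: since reversing the word flips every sign, $\sv[\overline{\sigma}]_{(i,j)}=-\sv[\sigma]_{(i,j)}$, so by \ref{it:SPaxiomSYM} and \ref{it:SPaxiomAS} any allowable sequence from $\sigma$ to $\overline{\sigma}$ in $\Pset$ must reverse $(i,j)$, and by \ref{it:NEWASaxiomDISJ} it does so exactly once, placing $(i,j)$ in the inversion set of one of its moves. Now assume $(i,j)$ and $(k,l)$ are parallel in $\topes_\Pset$ and pick $m\in\moves$ with $(i,j)\in\invset_m$, say $\sigma\xrightarrow{m}\sigma'$. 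The sign of $(i,j)$ flips between $\sv[\sigma]$ and $\sv[\sigma']$, so by parallelism the sign of $(k,l)$ also flips, yielding $(k,l)\in\inv(\sigma)\symdif\inv(\sigma')=\invset_m$. Combining both inclusions, every $\invset_m$ coincides with the parallelism class of any of its elements, and every parallelism class is realized this way.

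The main obstacle is not a single hard step but the careful orchestration of axioms \ref{it:NEWASaxiomDISJ}, \ref{it:SPaxiomAS}, and \ref{it:SPaxiomDISJ}: they must jointly guarantee that along any allowable sequence the flips follow the ``all-or-nothing'' pattern prescribed by the inversion-set partition of $\ipairs$, uniformly across $\Pset$. Once this structural fact is isolated, both inclusions become routine parity arguments along allowable sequences.
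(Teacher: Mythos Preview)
Your proof is correct and follows the same approach as the paper's. The paper's proof is terser: it observes that every pair lies in some inversion set (via the allowable sequence from $\sigma$ to $\overline\sigma$), that by~\ref{it:SPaxiomDISJ} the inversion sets partition $\ipairs$, and then declares it ``straightforward'' that this partition coincides with the parallelism classes; you have simply spelled out that straightforward check as two mutual inclusions, using exactly the same ingredients (axioms~\ref{it:SPaxiomSYM}, \ref{it:SPaxiomAS}, \ref{it:SPaxiomDISJ}, and \ref{it:NEWASaxiomDISJ}).
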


\begin{proof}
First, the fact that $\Pset$ is symmetric and the existence of a valid path between $\sigma$ and $\overline{\sigma}$ for any $\sigma\in \Pset$ implies that any pair $\{i, j\}$ is in the inversion set of at least one move in $\moves$, which is necessarily unique by the disjointness condition~\ref{it:SPaxiomDISJ}. Hence, the inversion sets of the moves in $\moves$ define equivalence classes on the pairs~$\ipairs$. It is straightforward to check that these coincide with the parallelism classes of~$\topes_\Pset$.
\end{proof}

\begin{theorem}\label{thm:sweeppermutationsetisacycloid}
 Let $\Pset\subseteq\Sym$ form an allowable graph of permutations. Then $\topes_\Pset\subset \{+,-,0\}^{\ipairs}$ is the set of topes of an acycloid.
\end{theorem}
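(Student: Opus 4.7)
The plan is to verify directly the three acycloid axioms (T1), (T2), (T3) from Definition~\ref{def:axioms_acycloids} for $\topes_\Pset$, using Lemma~\ref{lem:parallel_classes} as the main structural input.

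First, (T1) is immediate: for every $\sigma \in \Sym$, the sign-vector $\sv[\sigma]$ has entries in $\{+,-\}$ (no zeros), so every element of $\topes_\Pset$ has support equal to all of $\ipairs$. Next, (T2) reduces to observing that the map $\sigma \mapsto \sv[\sigma]$ turns the reverse operation on permutations into the sign-negation operation, i.e.\ $\sv[\overline{\sigma}] = -\sv[\sigma]$. Since $\Pset$ is symmetric by axiom \ref{it:SPaxiomSYM}, this gives $-\topes_\Pset \subseteq \topes_\Pset$.

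The core of the proof is (T3). Given distinct $X = \sv[\sigma]$ and $Y = \sv[\sigma']$ in $\topes_\Pset$, invoke axiom \ref{it:SPaxiomAS} to obtain an allowable sequence in $\Pset$ from $\sigma$ to $\sigma'$: $\sigma = \sigma_0, \sigma_1, \ldots, \sigma_l = \sigma'$ with moves $m_1,\dots,m_l \in \moves$, where $l \geq 1$ because $\sigma \neq \sigma'$. Pick any element $f$ of the inversion set $\invset_{m_1}$ of the first move. I will check two things. First, $f \in \sep[X][Y]$: the move $m_1$ flips the sign of the coordinate $f$ between $\sv[\sigma_0]$ and $\sv[\sigma_1]$, and by condition~\ref{it:ASaxiomDISJ} no subsequent move $m_k$ ($k \geq 2$) touches the pair $f$, so $Y_f = (\sv[\sigma_1])_f = -X_f$. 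Second, $\reor{X}{\parallelclass{f}} \in \topes_\Pset$: by Lemma~\ref{lem:parallel_classes} the parallelism class $\parallelclass{f}$ in $\topes_\Pset$ is exactly $\invset_{m_1}$, and applying the move $m_1$ to $\sigma$ simultaneously flips the signs of precisely those coordinates in $\invset_{m_1}$. Hence $\reor{\sv[\sigma]}{\parallelclass{f}} = \sv[\sigma_1]$, which lies in $\topes_\Pset$ since $\sigma_1 \in \Pset$.

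The only delicate point is the interplay between move-reversal of substrings and coordinate-wise sign flipping on parallelism classes; this is precisely what Lemma~\ref{lem:parallel_classes} (which itself depends on the disjointness axiom \ref{it:SPaxiomDISJ}) is there to bridge, so once that identification is in place the verification of the axioms is straightforward. I expect the whole argument to fit in a few lines.
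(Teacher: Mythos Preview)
Your proposal is correct and follows essentially the same approach as the paper's proof: verify (T1) and (T2) directly, then for (T3) take an allowable sequence from $\sigma$ to $\sigma'$, let $f$ lie in the inversion set of the first move, and use Lemma~\ref{lem:parallel_classes} to identify $\parallelclass{f}$ with $\invset_{m_1}$ so that the reorientation of $X$ along $\parallelclass{f}$ is $\sv[\sigma_1]\in\topes_\Pset$. Your write-up is slightly more explicit in checking that $f\in\sep[X][Y]$ via condition~\ref{it:ASaxiomDISJ}, but the argument is the same.
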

\begin{proof}
 The support of all the covectors is~$\ipairs$, and we have symmetry by definition. Hence, it suffices to verify that $\topes_\Pset$ satisfies the reorientation property~\ref{it:ACaxiomREOR}. Let $X, Y \in \topes_\Pset$ and $\sigma, \gamma \in \Pset$ be the associated permutations.  Let $\sigma=\gamma_0, \ldots, \gamma_l=\gamma$ be an allowable sequence from $\sigma$ to~$\gamma$. $\sep$ corresponds to the pairs reversed along this path. Let $Z$ be the sign-vector associated to $\gamma_1$ by the map \eqref{eq:covectorfrompartition}. Then $Z$ is in $\topes_\Pset$ and $Z=\reor{X}{\invset_m}$ where $m$ is the move from $\sigma$ to $\gamma_1$. Lemma \ref{lem:parallel_classes} shows that $\invset_m$ is the parallelism class of any pair $\{i,j\}$ reversed by $m$.
\end{proof}

We can characterize which acycloids arise from allowable graphs of permutations. We do it in a slightly more general context.
\begin{definition}
 A \defn{sweep acycloid} is an acycloid on the ground set~$\ipairs$ such that
\begin{enumerate}[(i)]
\item its topes fulfill the transitivity condition from \cref{lem:OMtransitivity}; namely for every covector~$X$ and every choice of $1\leq i < j < k \leq n$, the triple $(X_{(i,j)},X_{(j,k)},X_{(i,k)})$ is orthogonal to the sign vector $(+,+,-)$, and
\item its parallelism classes verify the transitivity condition~\ref{it:NEWASaxiomTRANS}; namely, 
if $\parallelclass{(i,j)}$ or $\parallelclass{(j,i)}$ coincides with $\parallelclass{(j,k)}$ or $\parallelclass{(k,j)}$, then it also coincides with $\parallelclass{(i,k)}$ or $\parallelclass{(k,i)}$.\end{enumerate}
\end{definition}

As we show in~\cref{prop:sweepacycloidsaresweeppermutationsets} below, sweep acycloids are essentially equivalent to allowable graphs of permutations. The only nuance is that sweep acycloids might have some elements outside its support, which under the map \eqref{eq:covectorfrompartition} would give rise to some partitions that are not permutations. In this case, there would be pairs of elements that belong to the same part in all the partitions. However, up to merging non-singleton parts and relabeling, one can suppose that these maximal ordered partitions are permutations. We recover then an allowable graph of permutations.

These operations of merging and relabeling do not affect the tope-graphs.

\begin{lemma}
 Let $\topes\subseteq \{+,-,0\}^{\ipairs}$ be a sweep acycloid with support $S\subseteq \ipairs$. For $1\leq i<j\leq n$, if $(i,j)\notin S$, then the restriction of $\topes$ to $\ipairs[{\ivl\ssm\{j\}}]$ is a sweep acycloid with isomorphic tope-graph.
\end{lemma}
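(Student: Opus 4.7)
The plan is first to exploit the forbidden-pattern list from \Cref{lem:OMtransitivity} to show that, for every tope $X\in\topes$ and every $k\in\ivl\ssm\{i,j\}$, the value $X_{(i,k)}$ (resp.\ $X_{(k,i)}$) completely determines the value $X_{(j,k)}$ (resp.\ $X_{(k,j)}$). Since $(i,j)\notin S$ means $X_{(i,j)}=0$ for every $X\in\topes$, the transitivity condition applied to the triple on indices $\{i,j,k\}$ (distinguishing the three cases $k<i$, $i<k<j$, and $j<k$) forces a specific equality, up to sign, between the corresponding coordinates: for $k<i$ we get $X_{(k,i)}=X_{(k,j)}$; for $i<k<j$ we get $X_{(i,k)}=-X_{(k,j)}$; and for $j<k$ we get $X_{(i,k)}=X_{(j,k)}$. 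In particular, in the ambient acycloid the pair involving $j$ is parallel (with the appropriate sign) to the matching pair involving $i$, and each of these pairs lies in the parallelism class of its partner.

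With this in hand, I would verify the three acycloid axioms for the restriction $\topes'=\restr{\topes}{\ipairs[{\ivl\ssm\{j\}}]}$. Axioms \ref{it:ACaxiomSUP} and \ref{it:ACaxiomSYM} pass to the restriction immediately, because restricting commutes with negation and because the support of $\topes'$ is $S\cap\ipairs[{\ivl\ssm\{j\}}]$ for every tope. For the reorientation axiom \ref{it:ACaxiomREOR}, given two distinct $X',Y'\in\topes'$ coming (uniquely, as we shall see below) from $X,Y\in\topes$, one picks $f\in\sep[X][Y]$ with $\reor{X}{\parallelclass{f}}\in\topes$; since $X_{(i,j)}=Y_{(i,j)}=0$ we have $f\neq (i,j)$, and if $f$ involves $j$ one replaces it by the parallel element $f'\in\ipairs[{\ivl\ssm\{j\}}]$ provided by the previous paragraph, which lies in the same parallelism class of $\topes$ and in particular in $\sep[X][Y]$. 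Because the parallelism class $\parallelclass{f}$ in $\topes$ intersects $\ipairs[{\ivl\ssm\{j\}}]$ exactly in the parallelism class of $f'$ in $\topes'$, and because restriction commutes with reorienting along a set of indices disjoint from $\{\text{pairs involving }j\}\cap \parallelclass{f'}^c$, we get $\reor{X'}{\parallelclass{f'}_{\topes'}}\in\topes'$.

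Next I check the two transitivity conditions defining a sweep acycloid. The covector condition is inherited tautologically: any forbidden pattern in $\topes'$ on indices $a<b<c$ in $\ivl\ssm\{j\}$ would already be a forbidden pattern in $\topes$. The condition on parallelism classes is also inherited, because two pairs in $\ipairs[{\ivl\ssm\{j\}}]$ are parallel in $\topes'$ if and only if they were parallel in $\topes$ (their values on topes are unchanged by restriction), and the transitivity relation among parallelism classes on triples $\{a,b,c\}\subset\ivl\ssm\{j\}$ is a literal instance of the same condition in $\topes$.

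Finally, to get the isomorphism of tope-graphs, I would show that the restriction map $X\mapsto \restr{X}{\ipairs[{\ivl\ssm\{j\}}]}$ is a bijection from the topes of $\topes$ to those of $\topes'$: surjectivity is by definition, and injectivity follows from the first paragraph, since coordinates of a tope indexed by pairs containing $j$ are determined by the coordinates indexed by the matching pairs containing $i$. Edges in the tope-graph correspond to pairs of topes whose separation set is a single parallelism class; the correspondence between parallelism classes of $\topes$ and $\topes'$ (each class of $\topes$ either lies in $\ipairs[{\ivl\ssm\{j\}}]$ and is preserved, or meets $\ipairs[{\ivl\ssm\{j\}}]$ in its full image under the restriction), together with the fact that $X_{(i,j)}=Y_{(i,j)}=0$ for all $X,Y\in\topes$, shows that $\sep[X'][Y']$ is a single parallelism class of $\topes'$ exactly when $\sep[X][Y]$ is a single parallelism class of $\topes$. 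The main delicate point, and the part where care is most needed, is the bookkeeping of the three cases of the parallelism between pairs containing $j$ and pairs containing $i$, and tracking how the parallelism classes of $\topes$ split or survive under restriction; once this is in place, the remainder of the argument is essentially formal.
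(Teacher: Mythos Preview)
Your proposal is correct and follows essentially the same approach as the paper. The paper's own proof is a two-sentence sketch: it declares the sweep-acycloid property ``straightforward from the definition'' and then observes (exactly as your first paragraph does) that the transitivity condition with $X_{(i,j)}=0$ forces the coordinate on each pair containing $j$ to be determined by the matching pair containing $i$, yielding bijections between topes and between parallelism classes; you have simply unpacked all of this in detail, including the explicit verification of the acycloid axioms and the two sweep conditions that the paper leaves implicit.
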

\begin{proof}
That this restriction is a sweep acycloid is straigthforward from the definition. Moreover, from the characterization in \cref{lem:OMtransitivity} one sees that for $X\in \topes$ and $k\neq i,j$, the values of $X$ on the pairs $(i,k)$ (resp.\ $(k,i)$) and $(j,k)$ (resp.\ $(k,j)$) determine each other uniquely (the sign depending on the relative order of $i,j,k$), because $X_{(i,j)}=0$. 
Therefore, there is a bijection between topes (resp.\ parallelism classes) of $\topes$ and topes (resp.\ parallelism classes) of the restriction.
\end{proof}

If $\topes$ is the tope set of a sweep acycloid, we denote by $\Pset_\topes=\set{\op}{X\in \topes}$ the set of associated ordered partitions.

\begin{theorem}\label{prop:sweepacycloidsaresweeppermutationsets}
If $\Pset\subseteq\Sym$ forms an allowable graph of permutations, then $\topes_\Pset$ is the set of topes of a sweep acycloid. Conversely, if $\topes$ is the tope set of a sweep acycloid of full support~$\ipairs$, then $\Pset_\topes$ forms an allowable graph of permutations.
\end{theorem}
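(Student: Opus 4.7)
The proof has two directions. For the forward direction, \Cref{thm:sweeppermutationsetisacycloid} already gives that $\topes_\Pset$ is the tope set of an acycloid, so only the two transitivity conditions defining a sweep acycloid remain to be checked. Condition~(i) holds because each tope $X^\sigma$ has full support, which excludes all forbidden patterns involving a zero entry; the only remaining patterns $(+,+,-)$ and $(-,-,+)$ would, for $1\leq i<j<k\leq n$, encode a cyclic ordering of $i,j,k$ in $\sigma$, contradicting the transitivity of the linear order. Condition~(ii) is a direct translation of the move axiom~\ref{it:NEWASaxiomTRANS}, once we use \Cref{lem:parallel_classes} to identify the parallelism classes of $\topes_\Pset$ with the inversion sets of moves in $\moves$.

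For the converse direction, assume $\topes$ is a sweep acycloid of full support. Since each $X\in\topes$ has no zero entries and satisfies~(i), it encodes a genuine permutation $\sigma_X\in\Sym$, so $\Pset_\topes\subseteq\Sym$. Symmetry~\ref{it:SPaxiomSYM} is axiom~\ref{it:ACaxiomSYM}. Let $\moves$ be the collection of all moves whose inversion sets are parallelism classes of $\topes$; then~\ref{it:SPaxiomDISJ} is immediate because parallelism classes are equivalence classes on $\ipairs$. The main content of the proof is to build, for arbitrary $X,Y\in\topes$, an allowable sequence from $\sigma_X$ to $\sigma_Y$ with moves in $\moves$, as required by~\ref{it:SPaxiomAS}.

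The plan is to iterate acycloid axiom~\ref{it:ACaxiomREOR}: pick $f\in S(X,Y)$ with $X':=\reor{X}{\parallelclass{f}}\in\topes$; since $\parallelclass{f}\subseteq S(X,Y)$ by definition of parallelism, we obtain $S(X',Y)=S(X,Y)\setminus\parallelclass{f}$, a strictly smaller set. Iterating yields a sequence $X=X^{(0)},\dots,X^{(l)}=Y$ of topes using pairwise disjoint parallelism classes, which already ensures~\ref{it:ASaxiomDISJ}. The main obstacle is verifying that each individual reorientation step realizes a valid move in the sense of \Cref{def:allowablegraphpermutations}, namely the reversal of disjoint substrings of $\sigma_{X^{(k)}}$. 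Parallelism-class transitivity~(ii) first forces the parallelism class $C$ being reoriented to decompose, as a set of pairs, into a disjoint union of cliques $C=\bigsqcup_t C_t$ on pairwise vertex-disjoint subsets $A_t\subseteq\ivl$, since any two cliques sharing a vertex would be merged by~(ii). To see that each $A_t$ occupies consecutive positions in $\sigma_{X^{(k)}}$, I will suppose for contradiction that some $c\notin A_t$ lies between distinct elements $a_1,a_2\in A_t$; then the pairs involving $c$ and elements of $A_t$ are not in $C$, so their signs are preserved by the reorientation, forcing $a_1$ to still precede $c$ which still precedes $a_2$ in $\sigma_{X^{(k+1)}}$ and contradicting the flip of the pair $\{a_1,a_2\}$. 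Hence each reorientation reverses the disjoint substrings $A_1,\dots,A_l$, which is exactly condition~\ref{it:ASaxiomTRANS} and completes the verification.
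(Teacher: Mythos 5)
Your proposal follows the same overall strategy as the paper: the forward direction is dispatched via \Cref{thm:sweeppermutationsetisacycloid}, \Cref{lem:OMtransitivity}, and \Cref{lem:parallel_classes}, and the converse is proved by inducting on $|\inv(\sigma_X)\symdif\inv(\sigma_Y)|$ using acycloid axiom~\ref{it:ACaxiomREOR}. However, you make explicit a step that the paper asserts without justification. The paper writes ``the parallelism class $\parallelclass{f}$ corresponds to a move $m\in\moves$,'' but there is no \emph{a priori} reason why two permutations whose inversion sets differ by a given set of pairs should be related by a reversal of disjoint substrings (not every set of pairs has that form, and not every pair of topes differing on a prescribed set is related by such a reversal). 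Your argument pins this down: condition~(ii) on the parallelism classes of a sweep acycloid forces $\parallelclass{f}$ to decompose into cliques on pairwise vertex-disjoint subsets $A_1,\dots,A_l$, and then the fact that both $X$ and $Z=\reor{X}{\parallelclass{f}}$ satisfy the transitivity condition~(i) — hence correspond to honest permutations — forces each $A_t$ to occupy consecutive positions in $\sigma_X$, by the ``element in between'' contradiction. This is precisely the content needed to conclude that each reorientation step of the acycloid axiom is a legitimate move in the sense of \Cref{def:allowablegraphpermutations}, and it is a welcome addition to the argument rather than a detour.
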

\begin{proof}
 The first claim follows directly from \cref{thm:sweeppermutationsetisacycloid}. Indeed, the topes of the form $\sv[\sigma]$ for a permutation $\sigma\in\Sym$ fulfill the transitivity condition from \cref{lem:OMtransitivity} by construction. Moreover, the parallelism classes of $\topes_\Pset$ are the moves of $\Pset$ by \cref{lem:parallel_classes}, and they
 fulfill condition~\ref{it:NEWASaxiomTRANS} by definition. 
 
 For the second claim, note first that $\Pset_\topes$ is clearly symmetric by \ref{it:ACaxiomSYM}. Following \cref{lem:parallel_classes}, we set $\moves$ to be the moves whose inversion sets are parallelism classes of the topes. By construction, two distinct moves in this family are either disjoint, or they are reverse to each other and have the same set of inversions.
 
 Finally, let $\sigma_X, \sigma_Y\in \Pset_\topes$ be the permutations associated to the topes $X,Y\in \topes$. We will prove that they are joined by an allowable sequence by induction on the cardinality of the symmetric difference of their inversion sets. By the reorientation property \ref{it:ACaxiomREOR}, there is an element $f\in \sep$ such that {$Z=\reor{X}{\parallelclass{f}}\in \topes$.} 
 The parallelism class $\parallelclass{f}$ corresponds to a move $m\in \moves$ such that $\invset_{m}\subseteq \inv_{\sigma_X}\symdif\inv_{\sigma_Y}$. Hence, $Z$ is associated to a permutation $\sigma_Z$ such that $\inv_{\sigma_Z}\symdif \inv_{\sigma_Y} = (\inv_{\sigma_X}\symdif\inv_{\sigma_Y}) \setminus \invset_{m}$. By induction there is an allowable sequence  $\sigma_Z\to \cdots \to\sigma_Y$ with labels in $\moves$. Note that $m$ is not a label of this path because its inversion set is disjoint from $\inv_{\sigma_Z}\symdif \inv_{\sigma_Y}$. Then, $\sigma_X\xrightarrow{m}\sigma_Z\to \cdots \to\sigma_Y$ is an allowable sequence from $\sigma_X$ to $\sigma_Y$.
\end{proof}

\subsection{Sweeps and potential sweeps of sweep acycloids}

With Handa's notation from~\cite{Handa93}, a \defn{face} of an acycloid~$\topes\subseteq \{+,-,0\}^E$ is a sign-vector $X\in\{+,-,0\}^E$ such that $X\circ T\in\topes$ for all $T\in\topes$; and a \defn{coboundary} of $\topes$ is a sign-vector $X\in\{+,-,0\}^E$ that conforms to a tope (which means that there is a tope that refines it) and such that, for every $T\in \topes$ with $X\circ T= T$ we have $X\circ (-T)\in \topes$.
In the language of partial cubes, faces correspond to gated subgraphs, and coboundaries are antipodal subgraphs. In an acycloid, every gated subgraph is antipodal, which shows that every face is a coboundary (see \cite{KM20} for definitions and details). In general, the converse is not true.
However, if $\topes$ is the set of topes of an oriented matroid, then faces and coboundaries coincide, and correspond to the covectors of the oriented matroid.

Augmented with a top element, the set of faces of an acycloid forms a lattice, the \defn{big face lattice } of the acycloid~\cite{Handa93}. Face lattices of acycloids lack many nice properties of those of oriented matroids. In particular, they are not always graded.

We can translate these concepts to sweeps. To this end, define the \defn{composition} $I\circ J$ of two ordered partitions $I=(I_1, \ldots, I_l)$ and $J=(J_1, \ldots, J_{l'})$ of $\ivl$ as 
$$I\circ J = (I_{1, 1}, \ldots, I_{1, r_1}, \ldots, I_{l,1}, I_{l, r_l}),$$
where  for any $k \in \{1, \ldots, l\}$, $(I_{k,1}, \ldots, I_{k,r_k})$ is the sequence $(I_k\cap J_1, I_k\cap J_2, \ldots, I_k\cap J_{l'})$ where the empty parts are removed. That is, the ordered partition of the elements of $I_k$ induced by~$J$. 

\begin{definition} Let $\Pset\subseteq\Sym$ be an allowable graph of permutations.
\begin{itemize}
 \item A \defn{sweep} of~$\Pset$ is an ordered partition $I$ such that $I\circ \sigma\in \Pset$ for all $\sigma\in \Pset$. 

 \item A \defn{potential sweep} of~$\Pset$ is an ordered partition $I$ of $\ivl$ refined by some permutation in~$\Pset$ and such that any sweep permutation $\sigma \in \Pset$ that refines $I$ satisfies $I\circ \overline{\sigma} \in \Pset$. 
\end{itemize}
\end{definition}

\begin{lemma}\label{lem:facescoboundaries}
Let $\Pset\subseteq\Sym$ form an allowable graph of permutations and let $\topes_\Pset$ be its associated sweep acycloid. Then the sweeps of~$\Pset$ are in bijection with the faces of~$\topes_\Pset$ and the potential sweeps of~$\Pset$ are in bijection with the coboundaries of~$\topes_\Pset$.
\end{lemma}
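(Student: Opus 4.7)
The plan is to show that the injective map $I \mapsto X^I$ from ordered partitions to $\{+,-,0\}^{\ipairs}$ defined by \eqref{eq:covectorfrompartition} (which, by Remark~\ref{rmk:bijection}, identifies ordered partitions with the sign-vectors satisfying the transitivity condition of Lemma~\ref{lem:OMtransitivity}) restricts to the two claimed bijections. The whole argument reduces to translating the relevant combinatorial operations on ordered partitions into sign-vector operations on the associated acycloid.

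The first step is to build a small dictionary with three identities. First, for any ordered partitions $I,J$ of $\ivl$, I want to show that $X^{I\circ J} = X^I \circ X^J$. Unwinding the definition of the composition of ordered partitions, the relative position of $i$ and $j$ in $I\circ J$ is determined by $I$ whenever $I$ separates them (in which case $X^I_{(i,j)} \neq 0$, so the composition returns $X^I_{(i,j)}$) and by $J$ otherwise (in which case $X^I_{(i,j)} = 0$ and the composition returns $X^J_{(i,j)}$). Second, $X^{\overline{\sigma}} = -X^{\sigma}$: reversing a permutation flips the relative order of every pair. Third, $\sigma$ refines $I$ if and only if $X^I \preceq X^{\sigma}$, equivalently $X^I \circ X^{\sigma} = X^{\sigma}$: both state that the total order induced by $\sigma$ extends the partial order defined by $I$.

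With this dictionary in hand, the face correspondence is immediate. An ordered partition $I$ is a sweep of $\Pset$ exactly when $I\circ \sigma\in \Pset$ for every $\sigma\in \Pset$, which by the first identity translates to $X^I\circ X^{\sigma}\in \topes_{\Pset}$ for every tope $X^{\sigma}$, i.e.\ $X^I$ is a face of $\topes_{\Pset}$. For the potential sweep correspondence, the condition ``$I$ is refined by some $\sigma\in \Pset$'' becomes ``$X^I$ conforms to a tope'', by the third identity. For a $\sigma \in \Pset$ refining $I$, the condition $I\circ \overline{\sigma}\in \Pset$ translates via the first two identities to $X^I\circ (-X^{\sigma})\in \topes_{\Pset}$, which is exactly the coboundary condition applied to the tope $T=X^{\sigma}$ with $X^I\circ T = T$. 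Thus $I$ is a potential sweep if and only if $X^I$ is a coboundary of $\topes_{\Pset}$.

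The main subtlety, and the step where I would be most careful, is surjectivity onto faces (resp.\ coboundaries). That is, I need to check that any face $X$ of $\topes_{\Pset}$ actually lies in the image of \eqref{eq:covectorfrompartition}, i.e.\ satisfies the transitivity condition of Lemma~\ref{lem:OMtransitivity}. If $X$ contained a forbidden pattern on some triple $\{i,j,k\}$ — say $(X_{(i,j)},X_{(j,k)},X_{(i,k)})=(0,+,-)$ — then for every tope $T\in \topes_{\Pset}$, the composition $X\circ T$ would force $T_{(i,j)}$ to be the unique sign avoiding the forbidden pattern, so that sign would be constant across all of $\topes_{\Pset}$. By axiom~\ref{it:ACaxiomSYM} and the full-support assumption (after the reduction preceding the statement, which lets us assume the acycloid has support $\ipairs$), both signs must appear among the topes on every element, a contradiction. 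The analogous check for coboundaries is the same, restricted to those topes with $X\circ T = T$, and goes through once surjectivity for faces is established (since any coboundary that refines a tope will refine some tope where the argument applies).
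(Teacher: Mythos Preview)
Your dictionary and the easy directions are correct, and for forbidden patterns with exactly one zero your ``unique sign'' argument works (for coboundaries too, applied to $Y\circ(-T)$ rather than to all topes). The gap is in the two-zero patterns. If a face or coboundary $Y$ has, say, $(Y_{(i,j)},Y_{(j,k)},Y_{(i,k)})=(0,0,-)$, then requiring $Y\circ T$ to satisfy transitivity only forces $(T_{(i,j)},T_{(j,k)})\neq(+,+)$; this is not a constant-sign constraint on any single coordinate and is fully compatible with the symmetry axiom (the tope patterns $(+,-,-)$ and $(-,+,-)$ are both allowed and are exchanged by negation). So your argument, which invokes only symmetry and full support, does not close this case.

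The paper's proof handles the two-zero patterns by using the allowable-sequence structure, and it treats coboundaries first. Given a tope $\sigma$ with $Y\circ X^\sigma=X^\sigma$, the coboundary property yields $\tilde\sigma$ with $X^{\tilde\sigma}=Y\circ(-X^\sigma)$; on the triple $\{i,j,k\}$ these two permutations have orders $k,i,j$ and $j,k,i$ (or vice versa). Any allowable sequence between them must reverse $(i,j)$ and $(j,k)$ but not $(i,k)$, and condition~\ref{it:NEWASaxiomTRANS} forbids reversing the first two in a single move, so some intermediate $\tau\in\Pset$ has order $k,j,i$. One checks $Y\circ X^\tau=X^\tau$, whence $Y\circ(-X^\tau)$ is a tope---but it carries the forbidden pattern $(+,+,-)$, a contradiction. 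Faces then follow for free because every face of an acycloid is a coboundary; this is the opposite of the dependency you suggested. Your last sentence would require every coboundary to be a face, which is false for acycloids in general.
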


\begin{proof}

 We prove first the equivalence between potential sweeps and coboundaries. It is clear that $\sv$ is a coboundary of~$\topes_\Pset$ for any potential sweep $I$ of~$\Pset$. Indeed, if $\sigma$ refines $I$, it implies that $\sv$ conforms to~$\sv[\sigma]$, i.e.\ $\sv\circ\sv[\sigma]=\sv[\sigma]$. Moreover, $I\circ \overline{\sigma}\in \Pset$ implies that $\sv\circ(-\sv[\sigma])=\sv\circ\sv[\overline{\sigma}]=\sv[I\circ \overline{\sigma}]$ is in~$\topes_\Pset$.
 
 For the converse statement, let $Y$ be a coboundary of $\topes_\Pset$. We need to show that it is of the form~$\sv$ for an ordered partition $I$ of $\ivl$. Then it is clear from the definitions that $I$ is a potential sweep of~$\Pset$. 
 Suppose that there are $1\leq i < j < k\leq n$ such that $(Y_{(i,j)}, Y_{(j,k)}, Y_{(i,k)})$ is one of the forbidden patterns in \cref{lem:OMtransitivity}. Let $\sigma \in \Pset$ be a sweep permutation such that $Z:=Y\circ\sv[\sigma]=\sv[\sigma]$. 
 We denote $\tilde{\sigma}$ the permutation in $\Pset$ such that $\tilde{Z} := Y\circ(-\sv[\sigma]) = \sv[\tilde{\sigma}]$.
 The fact that $Z$ and $\tilde{Z}$ satisfy the transitivity condition implies that the forbidden pattern of $Y$ must be one of the last six ones (with two zeroes). 
 We consider the case $(Y_{(i,j)}, Y_{(j,k)}, Y_{(i,k)})=(0,0,-)$, the other ones are similar. Then we must have $\{(Z_{(i,j)}, Z_{(j,k)}, Z_{(i,k)}), \, (\tilde{Z}_{(i,j)}, \tilde{Z}_{(j,k)}, \tilde{Z}_{(i,k)})\} = \{(+,-,-), (-,+,-)\}$, i.e.\ the elements $i,j,k$ are ordered $k,i,j$ and $j,k,i$ in $\sigma$ and $\tilde{\sigma}$. 
 As a consequence of condition {\ref{it:NEWASaxiomTRANS}}, in any allowable sequence in $\Pset$ from $\sigma$ to $\tilde{\sigma}$, there must be a permutation where the elements $i,j,k$ are ordered $k,j,i$.
 Such $\tau$ satisfies $Y\circ \sv[\tau]=\sv[\tau]$.
 Indeed, any pair $(k,l)$ with $Y_{(k,l)}\neq 0$ satisfies $Z_{(k,l)}=\tilde{Z}_{(k,l)}$, thus it cannot be reversed in an allowable sequence from $\sigma$ to $\tilde{\sigma}$. 
 But then the covector $Y\circ (-\sv[\tau])$ should belong to $\topes_\Pset$ while it has the forbidden pattern $(+,+,-)$. We conclude that any coboundary satisfies the transitivity condition from \cref{lem:OMtransitivity}.

 To finish, it is clear that any sweep $I$ of~$\Pset$ gives a covector $\sv\in \{+,-,0\}^{\ipairs}$ such that for any $\sigma\in \Pset$, $\sv\circ \sv[{\sigma}]=\sv[{I\circ \sigma}]\in \topes_\Pset$, thus $\sv$ is a face of~$\topes_\Pset$.  For the converse, note that any face $Y$ of~$\topes_\Pset$ is a coboundary, and hence it must be of the form $\sv$ associated to a potential sweep~$I$. The condition of being a face shows that this potential sweep is indeed a sweep. 
\end{proof}
Note in particular that the \defn{poset of sweeps} of an allowable graph of permutations, augmented with a top element, is always a lattice, as it is isomorphic to the big face lattice of an acycloid.

\subsection{Sweep oriented matroids from sweep acycloids and allowable graphs of permutations}\label{sec:sweepOMfromsweepacycloids}

The set of topes of an oriented matroid is always an acycloid, but the converse statement is not true. However, the conditions in the definition of sweep acycloid guarantee that, whenever they correspond to an oriented matroid, it is a sweep oriented matroid. 

Note that, for this, the transitivity condition~\ref{it:NEWASaxiomTRANS} on the parallelism classes of sweep acycloids is necessary. Indeed, $(+,+,+), (-,-,-), (-,+,+), (+,-,-)$ satisfy the conditions of \cref{lem:OMtransitivity} (they are orthogonal to $(+,+,-)$) and they are the topes of an oriented matroid, but not a sweep oriented matroid. 
This gives an acycloid whose topes fulfill the transitivity condition from \cref{lem:OMtransitivity} and that arises from an oriented matroid, but that is not a sweep oriented matroid. 
However, thanks to \cref{lem:facescoboundaries}, we know that the conditions on topes and subtopes in the definition of sweep acycloids extend to the whole set of covectors.

\begin{corollary}\label{cor:sweepacycloidmatroidissweepmatroid}
 The set of topes of a sweep oriented matroid is a sweep acycloid. Conversely, if a sweep acycloid is the set of topes of an oriented matroid, then it is a sweep oriented matroid.
\end{corollary}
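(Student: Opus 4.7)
The plan is to verify the two directions of the corollary separately, using Lemmas~\ref{lem:OMtransitivity} and~\ref{lem:facescoboundaries}.

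For the forward direction, let $\OM$ be a sweep oriented matroid with tope set $\topes$. That $\topes$ satisfies the acycloid axioms is standard for any oriented matroid, and the tope transitivity is immediate from Lemma~\ref{lem:OMtransitivity}, which holds for all covectors. The only delicate point is the parallelism transitivity~\ref{it:NEWASaxiomTRANS}. Suppose $(i,j)$ and $(j,k)$ with $i<j<k$ belong to the same parallelism class. If they are parallel with the same sign, Lemma~\ref{lem:OMtransitivity} directly forces $X_{(i,k)}=X_{(i,j)}=X_{(j,k)}$ for every covector $X$, so $(i,k)$ joins the class. If they are parallel with opposite sign, I would argue by contradiction: were $(i,k)$ not in the same class, topes $T,T'$ could be chosen with $T_{(i,j)}=T'_{(i,j)}$ but $T_{(i,k)}\neq T'_{(i,k)}$; applying the elimination axiom between $T$ and $-T'$ at the coordinate $(i,j)$ would produce a covector $Z$ with $Z_{(i,j)}=0$, $Z_{(i,k)}\neq 0$, and, by the parallelism hypothesis, $Z_{(j,k)}=0$, yielding the forbidden pattern $(0,0,\pm)$.

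For the converse direction, let $\topes$ be a sweep acycloid that is the tope set of an oriented matroid $\OM$. The proof of Lemma~\ref{lem:facescoboundaries} relies only on the two transitivity conditions of a sweep acycloid (applied via the bijection between topes and ordered partitions supplied by Theorem~\ref{prop:sweepacycloidsaresweeppermutationsets}, after the mild reduction to full support recalled in the excerpt), and therefore shows that every face of $\topes$ in Handa's sense is of the form $\sv[I]$ for an ordered partition $I$ of $\ivl$, and so satisfies the transitivity condition of Lemma~\ref{lem:OMtransitivity}. As noted earlier in the excerpt, for the tope set of an oriented matroid, faces and coboundaries coincide with the covectors. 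Hence every covector of $\OM$ satisfies the transitivity condition, and $\OM$ is a sweep oriented matroid.

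The main obstacle is the opposite-sign parallel case of the forward direction: Lemma~\ref{lem:OMtransitivity} places no constraint on $X_{(i,k)}$ when $(X_{(i,j)},X_{(j,k)})\in\{(+,-),(-,+)\}$, so the failure of $(i,k)$ to be parallel cannot be detected at the level of topes alone and must be exhibited by a non-tope covector produced via the elimination axiom together with the parallelism hypothesis.
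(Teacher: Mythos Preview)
Your converse direction is correct and is exactly the paper's intended argument: Lemma~\ref{lem:facescoboundaries} shows that every coboundary of a sweep acycloid corresponds to an ordered partition, and since for an oriented matroid the covectors coincide with the faces and coboundaries, every covector satisfies the transitivity condition of Lemma~\ref{lem:OMtransitivity}.

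Your forward verification of condition~(ii), however, has a gap in the case analysis. By writing ``$(i,j)$ and $(j,k)$ with $i<j<k$'' you force the shared index~$j$ to be the \emph{middle} element; but condition~\ref{it:NEWASaxiomTRANS} must hold for every choice of shared index. Sorting the triple as $a<b<c$, the shared index may equally be $a$ or~$c$, and in those cases your ``direct via Lemma~\ref{lem:OMtransitivity}'' same-sign argument fails: for instance, if $(a,b)$ and $(a,c)$ are same-sign parallel, the allowed patterns for $(X_{(a,b)},X_{(b,c)},X_{(a,c)})$ with equal first and third entries include $(+,+,+)$, $(+,-,+)$, and $(+,0,+)$, so $X_{(b,c)}$ is unconstrained. (The paper's example $\{\pm(+,+,+),\pm(-,+,+)\}$ right before the corollary is precisely an oriented matroid whose topes satisfy~(i) while two pairs sharing an extreme index are parallel and the third is not.) The fix is that your elimination argument already handles \emph{all} cases uniformly, with no same-sign/opposite-sign split needed: whenever two of the three pairs $e,e'$ are parallel and the third $e''$ is not, choose topes $T,T'$ agreeing on~$e$ but not on~$e''$ (replace $T'$ by $-T'$ if necessary); eliminating $T$ and $-T'$ at~$e$ yields a covector~$Z$ with $Z_e=0$, hence $Z_{e'}=0$ by parallelism, while $Z_{e''}\neq 0$. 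Every pattern on $\{(a,b),(b,c),(a,c)\}$ with exactly one nonzero entry is forbidden by Lemma~\ref{lem:OMtransitivity}, giving the desired contradiction.
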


The following hierarchy summarizes our current knowledge:
\begin{theorem}\leavevmode
\begin{center}
\(\Big\{ \text{Posets of sweeps of point configurations} \Big\} \)

\(\rotatebox{-90}{$\subsetneq$} \)

\(\Big\{ \text{Posets of sweeps of sweep oriented matroids} \Big\}\)

\(\rotatebox{-90}{$\subseteq$}\)

\(\Big\{ \text{Posets of sweeps of sweep acycloids} \Big\}\)

 \end{center}
\end{theorem}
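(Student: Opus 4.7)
For the inclusion $\{\text{posets of sweeps of point configurations}\}\subseteq\{\text{posets of sweeps of sweep oriented matroids}\}$, given a point configuration $\pc\in\RR^{d\times\ivl}$, \cref{lem:equiv_sweepcomplex_sweepOM} identifies $\Scom$ with the covector poset of the realizable sweep oriented matroid $\MOMA$ from \cref{def:oms}(ii); hence the poset of sweeps of $\pc$ is already the poset of sweeps of a sweep oriented matroid.

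For the inclusion $\{\text{posets of sweeps of sweep oriented matroids}\}\subseteq\{\text{posets of sweeps of sweep acycloids}\}$, given a sweep oriented matroid $\OM$ I would consider its tope set $\topes_{\OM}$, which is a sweep acycloid by \cref{cor:sweepacycloidmatroidissweepmatroid}. By \cref{lem:facescoboundaries}, the sweeps of this acycloid are in bijection with its faces; since for an oriented matroid the faces of the tope acycloid coincide with its covectors, the poset of sweeps of $\topes_{\OM}$ agrees with $\Scom[\OM]$.

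To establish the strictness of the first inclusion, I would exhibit a sweep oriented matroid whose poset of sweeps does not arise from any point configuration. By \cref{thm:hierarchy} the poset of sweeps cryptomorphically determines both the sweep and the big oriented matroid; and by \cref{rmk:realizability} a big oriented matroid is realizable in the sense of \cref{def:oms}(iii) if and only if it is realizable as an abstract oriented matroid. Hence it suffices to produce a sweep oriented matroid whose big oriented matroid is not realizable. The Goodman--Pollack unrealizable pentagon (\cref{fig:unrealizable_pent}) provides an unrealizable allowable sequence; via the equivalence between allowable sequences and sweep oriented matroids of rank~$2$, together with \cref{lem:rank_BOM} which places the associated big oriented matroid in rank~$3$, this yields precisely such an example.

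The main (though mild) obstacle lies in the strictness step: one must combine the classical Goodman--Pollack non-realizability result with the rigidity statement of \cref{thm:hierarchy} to transport non-realizability from the level of big oriented matroids back down to the level of posets of sweeps.
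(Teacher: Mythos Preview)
Your proposal is correct and follows essentially the same approach as the paper: the inclusions are derived from the results you cite, and the strictness of the first inclusion comes from Goodman--Pollack's unrealizable pentagon via \cref{thm:hierarchy} and \cref{rmk:realizability}. The paper's own justification is simply the sentence following the theorem, which invokes the unrealizable pentagon for strictness and leaves the inclusions implicit from the preceding sections; you have just made those steps explicit.
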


Goodman and Pollack's unrealizable pentagon proves that the first inclusion is strict. For the second inclusion, it is known that there are acycloids that are not oriented matroids, but we do not know of any example that has the additional structure given by the transitivity condition from \cref{lem:OMtransitivity}.

\cref{cor:sweepacycloidmatroidissweepmatroid} allows us to use characterizations of acycloids arising from oriented matroids to characterize which allowable graphs of permutations arise from sweep oriented matroids. We know three families of such characterizations, summarized in~\cite[Cor.~7.2]{KM20}. In the language of permutations, da Silva's characterization {\cite[Thm.~4.1]{daSilva95}} concerns sweeps and potential sweeps. Handa's characterization is stated in terms of contractions. If $\Pset$ is an allowable graph of permutations, and $m\in \moves$ is one of its moves, the \defn{elementary contraction} $\Pset/m$ is obtained by taking all permutations $\gamma\in \Pset$ that are separated from another permutation of~$\Pset$ by $m$, and replacing the substring $m$ by its minimal element. One obtains this way a new set of permutations on the ground set $\ivl\ssm m\cup \{\min(m)\}$. For a collection of moves  $M=\{m_1, \ldots, m_l\}$, the \defn{contraction} \defn{$\Pi/M$}, is defined inductively by $\Pi/M=(((\Pi/m
_1)/m_2)\cdots)/m_l$. The characterization by Knauer and Marc~\cite[Cor.~7.2]{KM20} is in terms of excluded partial cube minors. This operation goes outside the scope of allowable graphs of permutations. We will hence not present its details and refer the reader to the source~\cite{KM20}. 

\begin{corollary}\label{cor:characterizations}
Let $\Pset$ form an allowable graph of permutations. The following conditions are equivalent:
\begin{enumerate}[(i)]
 \item $\Pset$ arises from a sweep oriented matroid,
 \item every potential sweep of $\Pset$ is a sweep,
 \item all its contractions are allowable graphs of permutations,
 \item the graph is in $\cF(\cQ^-)$ in the sense of~\cite{KM20}.
\end{enumerate}
\end{corollary}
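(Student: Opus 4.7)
The plan is to exploit the correspondence between allowable graphs of permutations and sweep acycloids given by \Cref{prop:sweepacycloidsaresweeppermutationsets} together with \Cref{cor:sweepacycloidmatroidissweepmatroid}, and then invoke the three classical characterizations of acycloids that are oriented matroids summarized in~\cite[Cor.~7.2]{KM20}.

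First I would reduce (i) to a property of the associated acycloid: by \Cref{prop:sweepacycloidsaresweeppermutationsets}, the sign-vectors $\topes_\Pset$ form a sweep acycloid, and by \Cref{cor:sweepacycloidmatroidissweepmatroid} this acycloid is an oriented matroid if and only if it is a sweep oriented matroid. Hence (i) amounts to $\topes_\Pset$ being the tope set of an oriented matroid. The remaining equivalences then arise by translating the three classical characterizations of this property. For (ii) I would apply da Silva's criterion \cite[Thm.~4.1]{daSilva95} that an acycloid is an oriented matroid if and only if every coboundary is a face, combined with the bijections of \Cref{lem:facescoboundaries} identifying coboundaries of $\topes_\Pset$ with potential sweeps of $\Pset$ and faces with sweeps. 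For (iii) I would use Handa's characterization~\cite{Handa93} in terms of contractions. For (iv) I would transfer Knauer and Marc's excluded partial cube minor characterization \cite[Cor.~7.2]{KM20} through the tope-graph of $\topes_\Pset$, which by construction coincides with the graph of $\Pset$ viewed as an antipodal partial cube.

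The main obstacle is verifying the compatibility between the two notions of contraction needed for step~(iii). Specifically, one must check that replacing the substring $m$ by its minimum in the permutations of $\Pset$ admitting $m$ as a substring coincides, after relabeling, with the standard acycloid contraction of $\topes_\Pset$ along the parallelism class $\invset_m$. This should follow from \Cref{lem:parallel_classes}, which identifies moves with parallelism classes, together with the transitivity condition~\ref{it:NEWASaxiomTRANS} ensuring that merging the elements of $m$ into a single label yields a well-defined allowable graph of permutations on the reduced ground set. The remaining translations are immediate from the definitions and the preceding lemmas.
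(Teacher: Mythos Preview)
Your proposal is correct and follows essentially the same approach as the paper: reduce (i) to the acycloid $\topes_\Pset$ being an oriented matroid via \Cref{cor:sweepacycloidmatroidissweepmatroid}, then translate the three characterizations summarized in~\cite[Cor.~7.2]{KM20} using \Cref{lem:facescoboundaries} for (ii), the contraction dictionary for (iii), and the tope-graph identification for (iv). You even single out the one point the paper leaves implicit, namely the compatibility of the permutation-level contraction with the acycloid contraction along a parallelism class.
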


These characterizations might be useful to answer the question whether all sweep acycloids are sweep oriented matroids. We have not been able to construct any counterexample, but we do not have any evidence on why the properties defining sweep acycloids should force these conditions to be satisfied.

\begin{question}\label{q:sweepacycloidOM}
Is every sweep acycloid an oriented matroid?
\end{question}

\section{Further directions}\label{sec:further}

\subsubsection*{Elementary homotopies between sweep oriented matroids}

In~\cite{Felsner2004,FelsnerWeil2001} it is proven that if an allowable sequence has two consecutive moves with disjoint support, then these can be merged into a single move and the result is still an allowable sequence; and that conversely, if a move consists of more than one disjoint substrings, these can be split into two disjoint moves. These operations induce an equivalence relation among sweep oriented matroids of rank~$2$ whose equivalence classes are in correspondence with the associated little oriented matroids.

Extending this result to higher rank is closely related to some of the open questions indicated in the paper. First of all, the higher analogue of the operation of merging would consist in collapsing some flats of a sweep oriented matroid to get a flat whose rank is lower than the one expected by~\eqref{eq:Dilworthrank}. The reverse operation would break a flat with unexpected low rank into pieces fulfilling~\eqref{eq:Dilworthrank}. Understanding this procedure would provide a method to prove~\Cref{conj:Dilworth}.

Even if the operations were well described, it is not clear that one could find a connectivity result analogous to that by Felsner and Weil in rank~$2$~\cite{FelsnerWeil2001}. Note that, even if \Cref{thm:retract} goes in this direction, as it shows that all sweep oriented matroids are homotopy equivalent in the complex of pseudo-sweeps, it is not clear that there is a way to do this where all the intermediate steps are also sweep oriented matroids.

\subsubsection*{Are all sweep acycloids oriented matroids?}

Another natural problem that is left open is \Cref{q:sweepacycloidOM}, which asks whether every sweep acycloid is an oriented matroid. The answer would be very interesting in either direction. If it is affirmative, then the two categories of sweep acycloids and sweep oriented matroids would collapse into a single concept. This would make allowable graphs of permutations a useful alternative characterization of sweep oriented matroids. If, on the contrary, the answer is negative, then it would be interesting to understand the gap between the two categories.

We do not have any good reason to conjecture that every sweep acycloid is an oriented matroid, beyond the fact that we could not find any. This does not tell much, because the naive approaches to computationally generate all allowable graphs of permutations of a certain size fail badly very soon because of the rapid growth of these objects. 

\subsubsection*{Allowable graphs in Coxeter groups}

We already saw the hyperoctahedral group~$B_n$ naturally appear before. First, in \Cref{sec:crosspolytope}, because the permutahedron of type~$B$ is the sweep polytope of the crosspolytope. Then also in \Cref{ex:braidBOM} to explain the supersolvability of the associated matroid.
In fact, the definition of allowable graph extends naturally to any Coxeter group, specially in the simple case; namely, a \defn{simple allowable graph of Coxeter permutations} is a symmetric set~$\Pi$ of elements of the Coxeter group, in which for every pair of elements $w,w'\in\Pi$ there is a path from $w$ to $w'$ following a reduced decomposition of~$w^{-1}  w'$. For the non-simple case one has to partition the generators into a collection of disjoint subsets to define the allowable moves.

\subsubsection*{Higher sweep oriented matroids and permutahedra}

\begin{figure}[htbp]
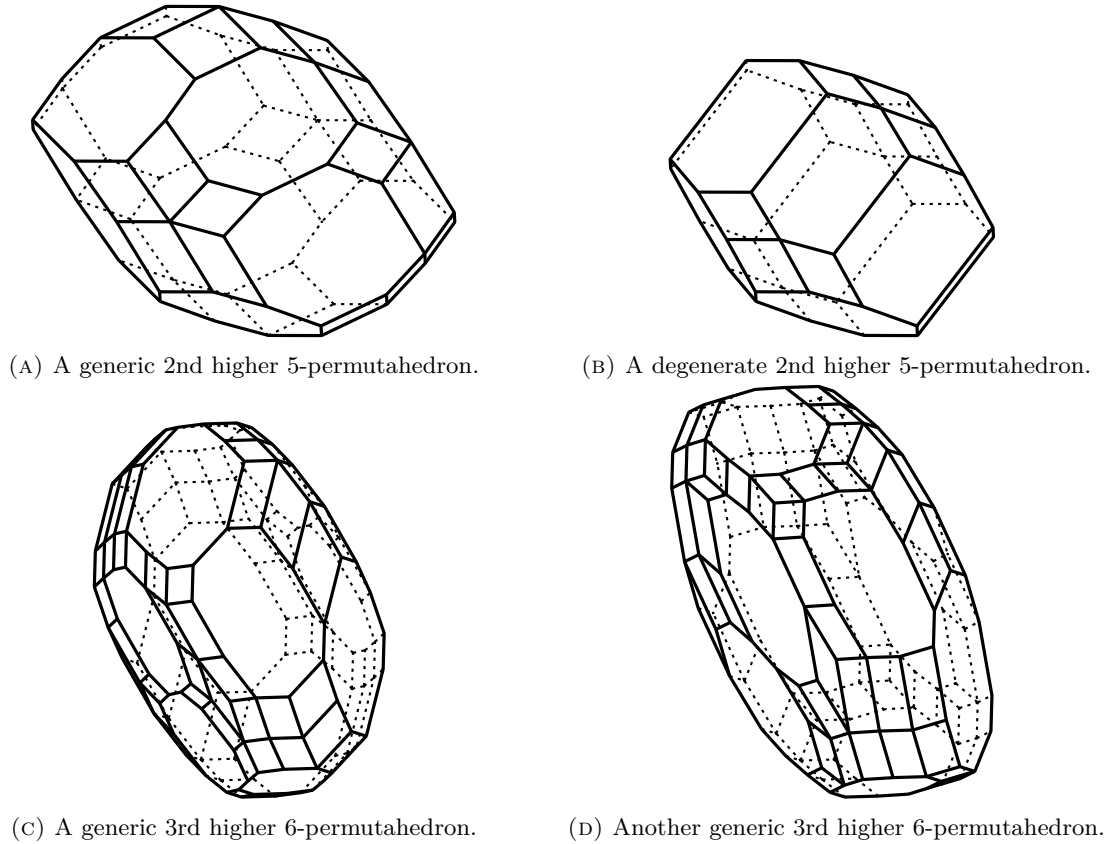

    \centering
    \begin{subfigure}[b]{0.48\textwidth} %
    
        \centering
   \input{figures/TikzFigures/genericP52}
\caption{A generic $2$nd higher $5$-permutahedron.}\label{fig:genericP52}
    \end{subfigure}
    ~
    \begin{subfigure}[b]{0.48\textwidth}

        \centering

\input{figures/TikzFigures/nongenericP52}
        \caption{A degenerate $2$nd higher $5$-permutahedron.}\label{fig:nongenericP52}
    \end{subfigure}

    \begin{subfigure}[b]{0.48\textwidth}
         \centering

\input{figures/TikzFigures/generic3rdhigher6permutahedron}
        \caption{A generic $3$rd higher $6$-permutahedron.}\label{fig:generic3rdhigher6permutahedron}
    \end{subfigure}
    ~
    \begin{subfigure}[b]{0.48\textwidth}
         \centering

\input{figures/TikzFigures/anothergeneric3rdhigher6permutahedron}
        \caption{Another generic $3$rd higher $6$-permutahedron.}\label{fig:anothergeneric3rdhigher6permutahedron}
    \end{subfigure}
    \caption{The first row shows a generic and a degenerate $2$nd higher $5$-permutahedra. The second row depicts two combinatorially different generic $3$rd higher $6$-permutahedra.}\label{fig:higherpermutahedra}
\end{figure}

As we saw in \Cref{sec:Dilworth}, sweep oriented matroids are closely related to the first Dilworth truncation. What about higher truncations? In the realizable case, instead of studying the intersection of the lines spanned by the points of~$\pc$ with a hyperplane (at infinity), we would study the intersection of a flat $F$ of codimension~$k$ (playing the role of hyperplane at infinity) with every flat spanned by $k+1$ points of~$\pc$. In~\cite[Thm.~8]{Stan15}, Stanley states (in the polar formulation) that for a sufficiently generic choice of the flat, this gives rise to an arrangement whose lattice of flats is the $k$th Dilworth truncation of the original arrangement. Let's call this operation the \defn{$k$th Dilworth truncation} of $\pc$ with respect to~$F$.
Doing the $k$th Dilworth truncation of an standard $(n-1)$-simplex gives rise to ``higher'' analogues of braid arrangements, which are the normal fans of the \defn{$k$th higher $n$-permutahedra}. However, in comparison with the $k=1$ case, there is no $\Sym$-invariant subspace that gives a canonical choice for~$F$. Indeed, different choices for~$F$ can give rise to different combinatorial types of hyperplane arrangements and zonotopes, even if the flats are sufficiently generic in the sense of Stanley. See \Cref{fig:higherpermutahedra} for some examples. Nevertheless, every zonotope associated to a $k$th Dilworth truncation of a point configuration still arises as the projection of some $k$th higher permutahedron.

\subsubsection*{Which matroids are little oriented matroids?}

In \Cref{sec:extendability} we proved that not every oriented matroid is a little oriented matroid. This begs the question of which are the oriented matroids that are sweepable, in the sense that they can be extended to a big oriented matroid. Or, at least, to find sufficient conditions. For example, we know that realizable oriented matroids are sweepable, and also all oriented matroids of rank~$3$, by \Cref{thm:sweepabilityRank3}.

As shown in~\cite{Hochstattler2016}, Euclidean oriented matroids (see \cite[Section 10.5]{BLSWZ99}) always admit topological sweepings (see \cref{sec:terminology}). Is there a relation between being Euclidean and being sweepable? Our example of non-sweepable oriented matroid in \cref{sec:extendability} is based on a well-known example of non-Euclidean oriented matroid.

\section*{Acknowledgements}

We are very grateful to Keiichi Handa, who sent us a copy of his Ph.D. thesis manuscript. We also want to thank 
Raul Cordovil, Kolja Knauer, Jean-Philippe Labb\'e, Germain Poullot, Francisco Santos, and Raman Sanyal for their helpful comments on previous versions of this manuscript. 
Finally, we would like to thank the anonymous reviewers for their detailed comments and suggestions to improve our presentation.

\bibliographystyle{amsalpha}
\bibliography{biblio_sweepotopes} 
\label{sec:biblio}

\appendix

\section{Another fiber polytope construction}\label{sec:alternativefiber}
In this section we present another way to construct sweep polytopes as fiber polytopes. As we will see, it is strongly related to the monotone path construction we gave in \cref{sec:asFiberPolytope}.

Define the \defn{Lawrence polytope} of a point configuration $\pc\in \RR^{d\times\ivl}$ as
\[\Law=\conv\set{\p e_i\times (-\h {\p a}_i) ,\p e_i\times \h {\p a}_i}{i\in \ivl}\subset\RR^{n+d+1}.\]
Then the intersection of $\Law$ with the subspace $\p x_{1}=\cdots =\p x_{n}$ is a homothety of the zonotope~$\Z$, and the \defn{Cayley trick} provides a bijection between (regular) subdivisions of $\Law$ and (coherent) zonotopal tilings of~$\Z$, see~\cite[Sec.~9.2]{DRS10}. In fact, the fiber polytopes associated to the canonical projections $\simp[2n-1]\to \Law$ and $\cube\to \Z$ are normally equivalent~\cite[Thm.~5.1]{Sturmfels1994}.

Consider (the vertex set of) the standard $(n-1)$-simplex~$\simp[n-1]$ and the $0$-dimensional configuration~$\pc[O]\in \RR^{0\times\ivl}$ consisting of~$n$ copies of a point. The chain of linear maps
\begin{center}
\begin{tikzcd}[column sep=huge]
\simp[n-1] \arrow[r, "{\lm[\h \pc]}"] & \h \pc \arrow[r, "\height"] &\h {\pc[O]},
\end{tikzcd}
\end{center}
induces chains of projections between the corresponding Lawrence polytopes and associated zonotopes, respectively, that can be arranged in the following commutative diagram:
\begin{center}
\begin{tikzcd}[column sep=huge]
\Z[{\simp[n-1]}] \arrow[r, "{\lm[\h \pc]}"] \arrow[hook,d] \arrow[bend left]{rr}{ s } & \Z\arrow[r, "\height"]\arrow[hook,d] & \Z[{\h {\pc[O]}}]\arrow[hook,d]\\
\Law[{\simp[n-1]}] \arrow[r, "{\id\times\lm[\h \pc]}"] \arrow[bend right]{rr}{\id\times s }& \Law\arrow[r, "\id\times\height"] & \Law[{\h {\pc[O]}}]\\
\end{tikzcd}
\end{center}

Note that $\Z[{\simp[n-1]}]$ is just the cube~$\cube$, and~$\Z[{\h {\pc[O]}}]$ a segment, and hence $\fib[{\Z[{\simp[n-1]}]}][ s ]$ and $\fib[\Z][\height]$
are the $n$-permutahedron and the sweep polytope~$\Sp$ by \Cref{ex:permutahedronfiber} and \Cref{prop:spasmonotonepath}, respectively.

Moreover, $\Law[{\simp[n-1]}]$ and~$\Law[{\h {\pc[O]}}]$ are the (non-standard) $(2n-1)$-simplex~$\conv\set{\p e_i\pm \p e_{i+n}}{i\in\ivl}$ and a prism over~$\simp[n-1]$, respectively.
The same proof as in {{\cite[Thm.~6.2.6]{DRS10}}} shows that $\fib[{\Law[{\simp[n-1]}]}][\id\times s ]$ is a homothety of the $n$-permutahedron embedded into~$\RR^{2n}$. By \Cref{lem:proj_fiber_pol} we obtain that:

\begin{corollary}
The fiber polytope $\fib[\Law][\id\times\height]$
is a homothety of the sweep polytope~$\Sp$ embedded into
$\RR^{n+d+1}$.
\end{corollary}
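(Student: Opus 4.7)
The plan is to apply \cref{lem:proj_fiber_pol} to the bottom row of the commutative diagram displayed above, i.e.\ to the chain of linear surjections $\Law[{\simp[n-1]}]\xrightarrow{\theta}\Law\xrightarrow{\pi}\Law[{\h{\pc[O]}}]$ with $\theta=\id\times\lm[\h\pc]$ and $\pi=\id\times\height$. I would first verify the two hypotheses required by the lemma: that $\theta$ is a linear surjection of $\Law[{\simp[n-1]}]$ onto $\Law$—which follows at once from the fact that $\lm[\h\pc]$ sends $\p e_i$ to $\h{\p a}_i$, and therefore sends each pair of antipodal vertices of $\Law[{\simp[n-1]}]$ to the corresponding pair of vertices of $\Law$—and that $\pi\circ\theta=\id\times s$ with $s=\height\circ\lm[\h\pc]=\lfv[\ones]$ the sum-of-coordinates form of \cref{ex:permutahedronfiber}, which is exactly the commutativity of the outer square.

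An immediate application of \cref{lem:proj_fiber_pol} then gives
\[ \fib[\Law][\id\times\height]\;=\;\theta\bigl(\fib[{\Law[{\simp[n-1]}]}][\id\times s]\bigr). \]
By the Lawrence-polytope analogue of \cref{ex:permutahedronfiber} recalled in the paragraph preceding the corollary (proved exactly as in \cite[Thm.~6.2.6]{DRS10}, via the Cayley-trick identification with $\fib[\cube][s]$), the right-hand fiber polytope is a homothety of the centered $n$-permutahedron $\cPn$, embedded into $\RR^{2n}=\RR^n\times\RR^n$ through the diagonal subspace $\{(\p x,\p y):\p x=\p y\}$. In particular, its second factor is contained in the hyperplane $s^{-1}(0)$.

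To conclude, I would apply $\theta$ to this embedded copy. Since $\theta$ acts as the identity on the first factor and as $\lm[\h\pc]$ on the second, and $\cPn\subset s^{-1}(0)$, \cref{prop:sweep polytopeprojection} gives $\lm[\h\pc](\cPn)=(\lm(\cPn),0)=\Sp\times\{0\}$, so $\theta$ carries the embedded homothety of $\cPn$ to a homothety of $\Sp$ embedded in $\RR^{n+d+1}=\RR^n\times\RR^{d+1}$, as claimed. The only delicate point is the bookkeeping required to track the Cayley-trick embedding of $\cPn$ inside $\Law[{\simp[n-1]}]$ and the precise homothety ratio—but this is already packaged in the quoted preparatory result, so the proof reduces to assembling the three ingredients $(\textrm{i})$ surjectivity of $\theta$, $(\textrm{ii})$ \cref{lem:proj_fiber_pol}, and $(\textrm{iii})$ the identity $\lm[\h\pc](\cPn)=\Sp\times\{0\}$.
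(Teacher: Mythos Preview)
Your approach is essentially the paper's: apply \cref{lem:proj_fiber_pol} to the bottom row of the diagram and use the preparatory fact that $\fib[{\Law[{\simp[n-1]}]}][\id\times s]$ is a homothety of $\cPn$. The paper's own proof is just the one-line invocation of \cref{lem:proj_fiber_pol}, so you are filling in details the paper leaves implicit.

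One minor inaccuracy worth fixing: the embedded permutahedron does \emph{not} sit on the diagonal $\{(\p x,\p y):\p x=\p y\}$. The fiber polytope lies in the fiber of $\id\times s$ over the centroid of the prism $\Law[{\h{\pc[O]}}]$, and that fiber has \emph{constant} first factor (equal to $\tfrac{1}{n}\ones$) while the second factor ranges over $s^{-1}(0)$. So the homothety of $\cPn$ lives entirely in the second block of coordinates, with the first block fixed. Your conclusion that the second factor lies in $s^{-1}(0)$ is correct, but for this reason rather than because of any diagonal embedding. Once this is corrected, your step~(iii) applying $\theta=\id\times\lm[\h\pc]$ and invoking $\lm[\h\pc](\cPn)=\Sp\times\{0\}$ goes through exactly as you wrote.
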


\section{Proofs of \cref{thm:BOMisOM,cor:decorations}}\label{sec:appendix}

We include below the technical details of the proof of \cref{thm:BOMisOM}. We first recall the notations and the statement of the theorem.

For a covector~$X$ of a sweep oriented matroid, let $\sur[X]:\ivl\to [l_X]$ be the surjection associated to the corresponding ordered partition.
For each $1\leq k \leq  2l_X+1$, let $X^k\in \{+,-,0\}^{\ivl\cup\ipairs}$ be the sign-vector:
\begin{align*}
{X}^k_i &=
\begin{cases}
- &\text{ if } p_X(i)\leq \lfloor \frac{k-1}{2} \rfloor, \\
+ &\text{ if } p_X(i)>\lfloor \frac{k}{2} \rfloor, \\
0 &\text{ if $k$ is even and } p_X(i)=\frac{k}{2}.
\end{cases}&&\text{ for }1\leq i\leq n;\\
{X}^k_{(i,j)} &= X_{(i,j)} &&\text{ for all } 1\leq i < j \leq n.
\end{align*}

\begin{theorem*}[\ref{thm:BOMisOM}]
If $\cov$ is the set of covectors of a sweep oriented matroid, then
\[\BOM[\cov] = \set{{X}^k}{ X\in \cov, \, 1\leq k\leq 2l_X+1}\] is the set of covectors of an oriented matroid.
\end{theorem*}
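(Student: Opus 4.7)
The plan is to verify the four covector axioms of Definition~\ref{def:axioms_OM} directly for $\BOM[\cov]$, using the oriented matroid structure of $\cov$ and the transitivity condition of Lemma~\ref{lem:OMtransitivity} to handle the interaction between $\ivl$- and $\ipairs$-coordinates. Axiom~\ref{it:COVaxiom0} is immediate: applying the construction to $\zero \in \cov$ (with $l_{\zero}=1$ and $p_{\zero}\equiv 1$) yields $\zero^2 = \zero_{\ivl\cup\ipairs}$. For~\ref{it:COVaxiomSYM}, I would establish the identity $-X^k = (-X)^{2l_X+2-k}$: on $\ipairs$-coordinates this is clear since $(-X)_{(i,j)} = -X_{(i,j)}$, and on $\ivl$-coordinates it follows from a case split on the parity of $k$ combined with $p_{-X}(i) = l_X - p_X(i) + 1$; the index $2l_X+2-k$ lies in the valid range $[1,2l_{-X}+1]$.

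The heart of the argument is axioms~\ref{it:COVaxiomCOMP} and~\ref{it:COVaxiomELIM}. For composition, observe first that $X^k\circ Y^{k'}$ restricted to $\ipairs$ equals $Z|_{\ipairs}$, where $Z := X\circ Y \in \cov$. The ordered partition $\op[Z]$ refines $\op[X]$: each part $I^X_s$ is split into consecutive sub-parts following the $Y$-induced ordering on $I^X_s$. The ``cut'' of $X^k$ in $\op[X]$ translates into a cut in $\op[Z]$: if it sits strictly between $\op[X]$-parts it remains between $\op[Z]$-parts, while if it falls inside an $\op[X]$-part, the auxiliary cut given by $Y^{k'}$ locates it precisely within the corresponding sub-parts of $\op[Z]$. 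A direct case analysis on the parities of $k,k'$ then produces an explicit index $m$ (in terms of $k$, $k'$, and the refinement data) such that $Z^m = X^k\circ Y^{k'}$, proving $X^k\circ Y^{k'}\in\BOM[\cov]$.

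For elimination, fix $X^k, Y^{k'}\in \BOM[\cov]$ and $e\in S(X^k, Y^{k'})$. If $e=(i,j)\in \ipairs$, then $e \in S(X,Y)$ in $\cov$, and applying~\ref{it:COVaxiomELIM} there yields $Z\in \cov$ with $Z_{(i,j)} = 0$ and $Z_f = (X\circ Y)_f$ for $f\in \ipairs\setminus S(X,Y)$; lifting via the index $m$ identified in the composition step provides the required element of $\BOM[\cov]$, since the $\ivl$-coordinates of $Z^m$ depend only on where the cut falls in $\op[Z]$. If instead $e = i\in \ivl$, then without loss of generality $X^k_i = +$ and $Y^{k'}_i = -$, so that $p_X(i) > \lfloor k/2\rfloor$ and $p_Y(i) \leq \lfloor (k'-1)/2\rfloor$. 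One then seeks $Z\in\cov$ in which $i$ is a singleton part at a suitable position and the cut $m = 2p_Z(i)$ reproduces the required signs on the remaining $\ivl$-coordinates; the naive choice $Z = X\circ Y$ does not always suffice, and one must apply elimination in $\cov$ iteratively on pairs $(i',i)$ or $(i,i')$ that separate $X$ and $Y$ to reorder the parts suitably. The consistency of the resulting $Z^m$ with $X^k\circ Y^{k'}$ off $S(X^k, Y^{k'})$ then follows from the transitivity condition of Lemma~\ref{lem:OMtransitivity}, which rules out any inconsistent sign pattern on triples $(i',j',i)$.

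The main obstacle will be the case $e\in\ivl$ of elimination: the refinement $\op[X\circ Y]$ of $\op[X]$ need not place $i$ at a position compatible with the required signs at other $\ivl$-coordinates, so one cannot simply lift $X\circ Y$. Constructing the correct $Z\in\cov$ requires iterated elimination in $\cov$ at pairs selected to reorder $i$ past those elements $i'$ where $X$ and $Y$ separate, and the verification that the final $Z^m$ has the right signs everywhere relies essentially on the forbidden-pattern characterization of Lemma~\ref{lem:OMtransitivity}. In the realizable setting this reduces to perturbing the sweeping hyperplane so that it passes through the point indexed by $i$, which is geometrically transparent; abstractly, it demands careful combinatorial bookkeeping of cut indices and refinements.
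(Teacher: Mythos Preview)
Your overall strategy matches the paper's: verify \ref{it:COVaxiom0}--\ref{it:COVaxiomELIM} directly, with \ref{it:COVaxiom0} via $\zero^2$, \ref{it:COVaxiomSYM} via $-X^k=(-X)^{2l_X+2-k}$, and \ref{it:COVaxiomCOMP} via $X^k\circ Y^{k'}=(X\circ Y)^m$ for a suitable~$m$. These parts are fine.

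Your plan for \ref{it:COVaxiomELIM} has two inaccuracies worth flagging. First, in the case $e=(i,j)\in\ipairs$: the index ``$m$ identified in the composition step'' was computed for $X\circ Y$, but the $Z\in\cov$ produced by elimination is in general \emph{not} $X\circ Y$, so you cannot simply reuse that index. The paper instead introduces the sets $E_-,E_0,E_+\subseteq\ivl$ of elements with fixed sign in $X^k\circ Y^{k'}$, and checks directly from $Z_{(p,q)}=(X\circ Y)_{(p,q)}$ for $(p,q)\notin S(X,Y)$ that $p_Z$ orders $E_-$ before $E_0$ before $E_+$; any cut $r$ compatible with this works (taking $r=2p_Z(q)$ for $q\in E_0$ when $E_0\neq\emptyset$).

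Second, and more substantively, your treatment of the case $e=i\in\ivl$ is both too complicated and slightly misdirected. You anticipate needing \emph{iterated} elimination in $\cov$ and seek $Z$ with $i$ a \emph{singleton} part; neither is correct. The paper's argument is simpler: if $E_0=\emptyset$ then $Z=X\circ Y$ with $r=2p_Z(i)$ already works (the point is that $p\in E_-$ forces $p_Y(p)<p_Y(i)$ and $p\in E_+$ forces $p_X(p)>p_X(i)$, and these inequalities survive in $X\circ Y$). If $E_0\neq\emptyset$, pick any $j\in E_0$; then $(i,j)\in S(X,Y)$, and a \emph{single} application of elimination in $\cov$ at $(i,j)$ yields~$Z$. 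The transitivity condition of Lemma~\ref{lem:OMtransitivity} then forces $Z_{(i,k)}=0$ for every $k\in E_0$, so $i$ lies in the same $Z$-part as all of $E_0$ (not a singleton), and $r=2p_Z(i)$ does the job. So the obstacle you identify is real, but its resolution is cleaner than the iterative scheme you sketch.
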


\begin{proof}
We have to check that $\cov$ satisfies the axioms of \cref{def:axioms_OM}, namely:
\begin{description}
 \item[(V0)] $\zero \in \BOM[\cov]$,
 \item[(V1)] $X\in \BOM[\cov]$ implies $-X\in \BOM[\cov]$,
 \item[(V2)] $X,Y \in \BOM[\cov]$ implies $X\circ Y \in \BOM[\cov]$,
 \item[(V3)] if $X,Y\in \BOM[\cov]$ and $e\in \sep[X][Y]$ then there exists $Z \in \BOM[\cov]$ such that $Z_e=0$ and $Z_f=(X\circ Y)_f$ for all $f\notin S(X,Y)$.
 \end{description}

$(V0)$ $\zeros \in \cov$, associated to the one part ordered partition $(\{1, 2, \ldots, n\})$. Then $(\zeros)^2$ is the zero vector and it is in $\BOM[\cov]$.

$(V1)$ Let $X^k$ be an element of $\BOM[\cov]$. Then, $-X^k= (-X)^{2l_X+2-k}$, so it is still in $\BOM[\cov]$.

$(V2)$ Let $X^k, Y^h$ be two elements of $\BOM[\cov]$. Then $X^k \circ Y^h = (X\circ Y)^t$, where $t= 2(r_1 + \ldots + r_{\frac{k-1}{2}-1})+1$ if $k$ is odd (with the same notations as in the definition of the composition between two ordered partitions), $t=2(r_1 + \ldots + r_{\frac{k}{2}-1})+ j$ if $k$ is even and  $j$ is the index corresponding to $h$ when the elements of $I_k$ are ordered according to $Y$ (that is to say, for all $i \in I_k$, $p_{X\circ Y}(i)\leq \lfloor \frac{t-1}{2} \rfloor \Leftrightarrow p_Y(i)\leq \lfloor \frac{h-1}{2} \rfloor$ and $p_{X\circ Y}(i)> \lfloor \frac{t}{2} \rfloor \Leftrightarrow \lfloor p_Y(i) \rfloor > \lfloor \frac{h}{2} \rfloor$).

$(V3)$ Let $X^k, Y^h$ be two elements of $\BOM[\cov]$, and $e \in \sep[X^k][Y^h]$. It remains to find $Z\in \cov$ and $r \in \{1, \ldots, 2l_Z+1\}$ such that $(Z^r)_e=0$ and $(Z^r)_f=(X^k \circ Y^h)_f$ for any $f \notin \sep[X^k][Y^h]$. $e$ can be of two types: $e=(i,j)$ or $e=i$.

In both cases, it will be convenient to define
\begin{align*}
E_-&=\Big\{ p \mid 1\leq p \leq n\text{ and }
\{(X^k)_p, (Y^h)_p\} \in \{ \{-,-\}, \{0,-\}\} \Big\} \\
&=\Big\{p \in \{1, \ldots, n\} \setminus  \sep[X^k][Y^h] \mid (X^k \circ Y^h)_p=-\Big\},\\
E_+ &= \Big\{p\mid 1\leq p \leq n\text{ and } \{(X^k)_p, (Y^h)_p\} \in \{ \{+,+\}, \{0,+\}\Big\},\\
E_0 &= \Big\{p\mid 1\leq p \leq n\text{ and } \{(X^k)_p, (Y^h)_p\} =\{0,0\}\Big\}.
\end{align*}

Then $E_-\cup E_+ \cup E_0 = \{1, \ldots, n\}\setminus \sep[X^k][Y^h]$ and part of the condition is that $(Z^r)_p=\varepsilon$ for all $p \in E_{\varepsilon}$, $\varepsilon\in \{-, +, 0\}$.

1) If $e=(i,j)$, up to exchanging $X^k$ and $Y^h$, one can suppose that $X_{(i,j)} = -$ and $Y_{(i,j)}=+$.
Let $Z\in \cov$ be given by $(V3)$ on $\cov$. For any $r$ we will have that $(Z^r)_e=0$ and $(Z^r)_f=(X^k \circ Y^h)_f$ for any $f \notin \sep[X^k][Y^h]$ of the form $f=(p,q)$, because in that case, $f$ is an index for $X$ and $Y$ that is not in $\sep[X][Y]$. Can we find $r$ such that $(Z^r)_p=(X^k \circ Y^h)_p$ for any $ 1\leq p \leq n$ such that $p \notin \sep[X^k][Y^h]$ ?
It is sufficient to check that $p_Z(p)<p_Z(q)$ for all $(p,q) \in E_-\times E_+\cup E_-\times E_0 \cup E_0\times E_+$ and $p_Z(p)=p_Z(q)$ for all $(p,q) \in E_0\times E_0$.
$(p,q) \in E_0\times E_0$ and $p<q$ implies that $X_{(p,q)}=Y_{(p,q)}=0$, hence $Z_{(p,q)}=0$ and $p_Z(p)=p_Z(q)$. 

If $E_0\neq \emptyset$, we take $r = 2 p_Z(q)$ for any $q \in E_0$.
Then, we treat the case $(p,q) \in E_-\times E_0$, since the case $(p,q) \in E_0\times E_+$ is similar. If $p<q$, then $\{(X^k)_{(p,q)}, (Y^h)_{(p,q)}\} \in \{\{+,+\}, \{+,0\}\}$ and $Z_{(p,q)}=+$. If $p>q$, then $\{(X^k)_{(q,p)}, (Y^h)_{(q,p)}\} \in \{\{-,-\}, \{-,0\}\}$ and $Z_{(q,p)}=-$. In any case, $p_Z(p)<p_Z(q)$, thus $(Z^r)_p=-$.

If $E_0 = \emptyset$, there may be several possibilities for $r$. The same reasoning as precedently shows that for any $(p,q) \in E_-\times E_+$, $p_Z(p)<p_Z(q)$. Hence there is at least one appropriate $r$ which separates the parts that contain elements in $E_-$ from parts that contain elements in $E_+$.

2) If $e=i$ for some $1\leq i \leq n$, up to exchanging $X^k$ and $Y^h$, one can suppose that $(X^k)_{i} = -$ and $(Y^h)_{i}=+$.

First, we consider the case where $E_0=\emptyset$.
We take $Z=X\circ Y$ and $r=2p_{X\circ Y}(i)$ (corresponding to the part of $i$ in $Z$). It only remains to check that if $p\in E_{-}$ (resp. $E_+$), than $(Z^r)_p=-$ (resp. $+$).

$p \in E_- \Rightarrow p_Y(p)<p_Y(i) \Rightarrow p_Z(p)<p_Z(i) \Rightarrow (Z^r)_p=-$,

$p \in E_+ \Rightarrow p_X(p)>p_X(i) \Rightarrow p_Z(p)>p_Z(i) \Rightarrow (Z^r)_p=+$.

If $E_0\neq \emptyset$, let $j$ be the smallest element of $E_0$. Than $p_X(i)<p_X(j)$ and $p_Y(i)>p_Y(j)$, thus $(i,j) \in \sep[X][Y]$. Let $Z \in \cov$ be given by axiom $(V3)$ applied to $\cov$ with $X, Y$ and $(i,j)$. Than, for any $k \in E_0$ other than $j$, $Z_{(j,k)}=0$ because $X_{(j,k)}=0$ and $Y_{(j,k)}=0$ (resp.\ $Z_{(k,j)}=0$ because $X_{(k,j)}=0$ and $Y_{(k,j)}=0$), and thus $Z_{(i,k)}=0$ (resp.\ $Z_{(k,i)}=0$), because $Z_{(i,j)}=0$ and $\cov$ satisfies the transitivity condition from \cref{lem:OMtransitivity}. We choose $r=2p_{Z}(i)$ (corresponding to the part of $Z$ that contains $i$ and all $k\in E_0$). Then:
\begin{align*}
p \in E_- &&\Rightarrow&&
\begin{cases}
p_X(p)<p_X(j) \\
p_Y(p)\leq p_Y(j)
\end{cases} \text{or }
\begin{cases}
p_X(p)= p_X(j) \\
p_Y(p)< p_Y(j)
\end{cases}
&&\Rightarrow&& p_Z(p)<p_Z(j) &&\Rightarrow&& (Z^r)_p=-,\\
p \in E_+ &&\Rightarrow&&
\begin{cases}
p_X(p)>p_X(i) \\
p_Y(p)\geq p_Y(j)
\end{cases} \text{or }
\begin{cases}
p_X(p)=p_X(i) \\
p_Y(p)>p_Y(j)
\end{cases}
&&\Rightarrow&& p_Z(p)>p_Z(j) &&\Rightarrow&& (Z^r)_p=+.
\end{align*}
\end{proof}

For the proof of \cref{cor:decorations}, recall that for any simple oriented matroid~$\OM'$ on the ground set~$F$, we call a \defn{valid decoration} a couple of maps~$\delta:F\to 2^{\ipairs}$ and $\epsilon:\ipairs\to\{+,-\}$
for a certain~$n$, such that:
\begin{itemize}[leftmargin=0.5cm]
\item the decorations form a partition of $\ipairs$, with empty parts accepted: $\ipairs = \bigcup_{f\in F} \delta(f)$ with $ \delta(f)\cap \delta(f')=\emptyset$ whenever $f\neq  f'$; and
\item the covectors $X\in\OM$, seen as elements of $\{+,-,0\}^{\ipairs}$ by considering $X_{(i,j)}=\epsilon(i,j)X_f$ if $(i,j)\in \delta(f)$, satisfy the transitivity condition from \cref{lem:OMtransitivity}.
\end{itemize}

\begin{corollary*}[\ref{cor:decorations}]
If $\OM'$ is a simple oriented matroid on~$F$ with a valid decoration~$(\delta,\epsilon)$, then $\OM'$ can be extended to a unique oriented matroid~$\OM$ for which $F$ is a modular hyperplane and $(\delta,\epsilon)$ is the decoration of $F$ induced by $\OM$.

In particular, an oriented matroid $\OM$ with a modular hyperplane~$F$ is completely determined by $\restr{\OM}{F}$ together with the decoration of~$F$ induced by~$\OM$.
\end{corollary*}
\begin{proof}
The proof is very simple, as it relies entirely on \cref{thm:BOMisOM}, but it involves some auxiliary oriented matroids and some cumbersome notation to identify them.

With the help of the decoration, we will first add to $\OM'$ the elements of $\ipairs$ to get a new oriented matroid $\tilde \OM'$ on $F\cup \ipairs$. We do this by adding for each $f\in F$ the parallel elements $(i,j)= \epsilon(i,j) f$ for $(i,j)\in \delta(f)$. The restriction of $\tilde \OM'$ to $\ipairs$ is a sweep oriented matroid, as it fulfills the transitivity condition from \cref{lem:OMtransitivity} by hypothesis.
We want to apply \cref{thm:BOMisOM} to find the associated big oriented matroid. While \cref{thm:BOMisOM} is only stated to extend a matroid from $\ipairs$ to $\ivl\cup \ipairs$, the same proof carries on almost verbatim to extend a matroid from $F\cup \ipairs$ to $F\cup \ivl\cup \ipairs$. We associate a family of covectors $X^k$ on $F\cup\ivl\cup \ipairs$ to every covector $X$ of~$\tilde \OM'$ in the very same way, just ignoring the entries in~$F$ when generating the values for $\ivl$ in~$X^k$. These are the covectors of an oriented matroid~$\tilde \OM$ (by the same argument as in \cref{thm:BOMisOM}), and its restriction to $\ivl\cup F$ is the desired oriented matroid~$\OM$.
\end{proof}

\end{document}